\newcommand{\Ga}{\Gamma}
\newcommand{\Si}{ {\Sigma} }
\newcommand{\si}{ {\sigma} }
\newcommand{\bC}{ {\mathbb{C}} }
\newcommand{\bD}{\mathbb{D}}
\newcommand{\bE}{\mathbb{E}}
\newcommand{\bF}{\mathbb{F}}
\newcommand{\bK}{\mathbb{K}}
\newcommand{\bL}{\mathbb{L}}
\newcommand{\bP}{\mathbb{P}}
\newcommand{\bQ}{\mathbb{Q}}
\newcommand{\bR}{\mathbb{R}}
\newcommand{\bS}{\mathbb{S}}
\newcommand{\bT}{\mathbb{T}}
\newcommand{\bZ}{\mathbb{Z}}
\newcommand{\cA}{\mathcal{A}}
\newcommand{\cD}{\mathcal{D}}
\newcommand{\cE}{\mathcal{E}}
\newcommand{\cF}{\mathcal{F}}
\newcommand{\cI}{\mathcal{I}}
\newcommand{\cK}{\mathcal{K}}
\newcommand{\cL}{\mathcal{L}}
\newcommand{\cM}{\mathcal{M}}
\newcommand{\cO}{\mathcal{O}}
\newcommand{\cS}{\mathcal{S}}
\newcommand{\cT}{\mathcal{T}}
\newcommand{\cX}{\mathcal{X}}
\newcommand{\cV}{\mathcal{V}}
\newcommand{\cHom}{\mathcal{H}om}
\newcommand{\age}{\mathrm{age}}
\newcommand{\Aut}{\mathrm{Aut}}
\newcommand{\CR}{ {\mathrm{CR}} }
\newcommand{\orb}{\mathrm{orb}}
\newcommand{\Hom}{\mathrm{Hom}}
\newcommand{\End}{\mathrm{End}}
\newcommand{\Jac}{ {\mathrm{Jac}} }
\newcommand{\Ker}{\mathrm{Ker}}
\newcommand{\Res}{\mathrm{Res}}
\newcommand{\Spec}{\mathrm{Spec}}
\newcommand{\Pic}{\mathrm{Pic}}
\renewcommand{\Re}{\mathrm{Re}}
\renewcommand{\Im}{\mathrm{Im}}
\newcommand{\can}{ {\mathrm{can}} }
\newcommand{\eff}{ {\mathrm{eff}} }
\newcommand{\ext}{ {\mathrm{ext}} }
\newcommand{\ev}{\mathrm{ev}}
\newcommand{\val}{ {\mathrm{val}} }
\newcommand{\vir}{ {\mathrm{vir}} }
\newcommand{\inv}{\mathrm{inv}}
\newcommand{\Int}{\mathrm{Int}}
\newcommand{\pt}{\mathrm{pt}}
\newcommand{\Nef}{{\mathrm{Nef}}}
\newcommand{\NE}{{\mathrm{NE}}}
\newcommand{\Picst}{\Pic^\mathrm{st}}
\newcommand{\FM}{\mathrm{FM}}
\renewcommand{\Box}{\mathrm{Box}}
\newcommand{\depth}{{\mathrm{depth}}}
\newcommand{\comb}{{\mathrm{comb}}}
\newcommand{\Ext}{\mathrm{Ext}}
\newcommand{\mir}{\mathrm{mir}}
\newcommand{\Coh}{\mathrm{Coh}}
\newcommand{\BM}{\mathrm{BM}}
\newcommand{\bfS}{\mathbf{S}}
\newcommand{\be}{\mathbf{e}}
\newcommand{\bff}{\mathbf{f}}
\newcommand{\bu}{\mathbf{u}}
\newcommand{\one}{\mathbf{1}}
\newcommand{\bmu}{\boldsymbol{\mu}}
\newcommand{\btau}{\boldsymbol{\tau}}
\newcommand{\bsi}{{\boldsymbol{\si}}}
\newcommand{\brho}{{\boldsymbol{\rho}}}
\newcommand{\bgamma}{{{\boldsymbol{\gamma}}}}
\newcommand{\bSi}{\mathbf{\Si}}
\newcommand{\fg}{\mathfrak{g}}
\newcommand{\fl}{\mathfrak{l}}
\newcommand{\fm}{\mathfrak{m}}
\newcommand{\fn}{\mathfrak{n}}
\newcommand{\fp}{\mathfrak{p}}
\newcommand{\fr}{\mathfrak{r}}
\newcommand{\fs}{\mathfrak{s}}
\newcommand{\fI}{\mathfrak{I}}
\newcommand{\w}{\mathsf{w}}
\newcommand{\su}{\mathsf{u}}
\newcommand{\sv}{\mathsf{v}}
\newcommand{\sw}{\mathsf{w}}
\newcommand{\sfa}{\mathsf{a}}
\newcommand{\sfb}{\mathsf{b}}
\newcommand{\hH}{\hat{H}}
\newcommand{\hx}{\hat{x}}
\newcommand{\hy}{\hat{y}}
\newcommand{\hxi}{\hat{\xi}}
\newcommand{\htheta}{\hat{\theta}}
\newcommand{\hGa}{\hat{\Gamma}}
\newcommand{\hpsi}{\hat{\psi}}
\newcommand{\hubsi}{\hat{\underline {\boldsymbol{\sigma}}}}
\newcommand{\hubrho}{\hat{\underline {\boldsymbol{\rho}}}}
\newcommand{\tC}{\widetilde{C}}
\newcommand{\tF}{\widetilde{F}}
\newcommand{\tI}{\widetilde{I}}
\newcommand{\tM}{\widetilde{M}}
\newcommand{\tN}{\widetilde{N}}
\newcommand{\tQ}{\widetilde{Q}}
\newcommand{\tS}{\widetilde{S}}
\newcommand{\tb}{\widetilde{b}}
\newcommand{\tp}{\widetilde{p}}
\newcommand{\tbsi}{{\widetilde{\bsi}}}
\newcommand{\tbT}{\widetilde{\bT}}
\newcommand{\tch}{\widetilde{\mathrm{ch}} }
\newcommand{\tNef}{\widetilde{\Nef}}
\newcommand{\tNE}{\widetilde{\NE}}
\newcommand{\vGa}{\vec{\Gamma}}
\newcommand{\bGa}{\mathbf{\Gamma}}
\newcommand{\Mbar}{\overline{\cM}}
\newcommand{\spa}{ {\ \ \,} }
\newcommand{\Cbar}{\overline{C}}
\newcommand{\Dbar}{\bar{D}}
\newcommand{\Ibar}{\overline{I}}
\newcommand{\ST}{ {S_{\bT'}} }
\newcommand{\RT}{ {R_{\bT'}} }
\newcommand{\bST}{ {\bar{S}_{\bT'}} }
\newcommand{\bSTQ}{ \bST \llbracket \tQ,\tau'' \rrbracket }
\newcommand{\bSt}{ {\bar{S}_{\bT'}} }
\newcommand{\nov}{\Lambda_{\mathrm{nov}}}
\newcommand{\novT}{\bar{\Lambda}^{\bT'}_{\mathrm{nov}} }
\newcommand{\XX}{X_{\bff}}
\newcommand{\YY}{Y_{\bff}}
\newcommand{\chZ}{\check{Z}}
\newcommand{\chcX}{\check{\cX}}
\newcommand{\inner}[1]{\left\langle  #1 \right\rangle}
\newcommand{\ceil}[1]{\lceil  #1 \rceil}
\newcommand{\floor}[1]{\lfloor  #1 \rfloor}
\newcommand{\formal}[1]{\left\llbracket #1  \right\rrbracket}
\newcommand{\double}[1]{\left\llangle #1 \right\rrangle}
\newcommand{\uchi}{{\underline{\chi}}}
\newtheorem{dummy}{dummy}[section]
\newtheorem{lemma}[dummy]{Lemma}
\newtheorem{theorem}[dummy]{Theorem}
\newtheorem{corollary}[dummy]{Corollary}
\newtheorem{proposition}[dummy]{Proposition}
\newtheorem{remark}[dummy]{Remark}
\newtheorem{definition}[dummy]{Definition}
\newtheorem{convention}[dummy]{Convention}
\begin{document}

\title{Remodeling Conjecture with Descendants}

\author{Bohan Fang}
\address{Bohan Fang, Beijing International Center for Mathematical Research, Peking University, 5 Yiheyuan Road, Beijing 100871, China}
\email{bohanfang@gmail.com}

\author{Chiu-Chu Melissa Liu}
\address{Chiu-Chu Melissa Liu, Department of Mathematics, Columbia University, 2990 Broadway, New York, NY 10027}
\email{ccliu@math.columbia.edu}

\author{Song Yu}
\address{Song Yu, Yau Mathematical Sciences Center, Tsinghua University, Haidian District, Beijing 100084, China}
\email{song-yu@tsinghua.edu.cn}

\author{Zhengyu Zong}
\address{Zhengyu Zong, Department of Mathematical Sciences, Tsinghua University, Haidian District, Beijing 100084, China}
\email{zyzong@mail.tsinghua.edu.cn}

\begin{abstract}

We formulate and prove the Remodeling Conjecture with descendants, which is a version of all-genus equivariant descendant mirror symmetry for semi-projective toric Calabi-Yau 3-orbifolds with integral structures. We construct an isomorphism between the $K$-group of equivariant coherent sheaves on the toric Calabi-Yau 3-orbifold with support bounded in a direction and a certain integral relative first homology group of the equivariant mirror curve. Under this isomorphism, we prove the equivariant mirror symmetric Gamma conjecture which equates quantum cohomology central charges of coherent sheaves and oscillatory integrals along corresponding relative 1-cycles. As a consequence in the non-equivariant setting, we prove a conjecture of Hosono which equates central charges of compactly supported coherent sheaves and period integrals of integral 3-cycles on the Hori-Vafa mirror 3-fold. Furthermore, we establish a correspondence between all-genus equivariant descendant Gromov-Witten invariants with $K$-theoretic framings and oscillatory integrals (Laplace transforms) of the Chekhov-Eynard-Orantin topological recursion invariants along relative 1-cycles on the equivariant mirror curve.

\end{abstract}
\maketitle

\setcounter{tocdepth}{1}

\tableofcontents

% !TEX root = descendantBKMP.tex

\section{Introduction} \label{sec:Intro} 

\subsection{Background and motivation}
Mirror symmetry relates the A-model on a K\"ahler manifold or orbifold $\cX$, defined by the symplectic structure, to the B-model on the mirror defined by the complex structure. In particular, mirror symmetry relates the Gromov-Witten theory of $\cX$, which can be viewed as a mathematical theory of the topological A-model, to the topological B-model on the mirror. The integral structure of the A-model variation of Hodge structure, in general given by B-branes (coherent sheaves), is expected to match the integral structure of the B-model variation of Hodge structure, in general given by A-branes (Lagrangian submanifolds).

% Mirror symmetry relates A-model on a Calabi-Yau 3-fold $\cX$, defined by the symplectic structure on $\cX$, to the B-model on a mirror Calabi-Yau 3-fold $\check{\cX}$, defined by the complex structure on $\check{\cX}$. In particular, mirror symmetry relates Gromov-Witten theory of $\cX$, which can be viewed as a mathematical theory of the topological A-model  on $\cX$, to the topological B-model on $\check{\cX}$.

\subsubsection{Mirror symmetric Gamma conjecture for toric Fano orbifolds}\label{sect:MSGammaFano}

When $\cX$ is an $r$-dimensional projective toric Fano orbifold, the mirror is a Landau-Ginzburg model $((\bC^*)^r, W)$ where the superpotential $W: (\bC^*)^r \to \bC$ is a family of Laurent polynomials defined by the toric geometry of $\cX$. The genus-zero Gromov-Witten theory of $\cX$ is related to oscillatory integrals on the mirror, which, when $\cX$ is a manifold, is manifested in the equation
\begin{equation}\label{eqn:MSGammaFano}
    \int_\gamma e^{-W(X,q)/z} \frac{dX_1}{X_1} \wedge \cdots \wedge \frac{dX_r}{X_r} = \int_{\cX} J(\btau, -z)\hGa_{\cX}^z \tch_{z}(\cE).
\end{equation}
Here, the left-hand side is an oscillatory integral taken along a cycle $\gamma \in H_r((\bC^*)^r, \Re(W) \gg 0; \bZ)$ and is a function in the complex parameter $q$ and a formal variable $z$. The right-hand side involves the following ingredients:
\begin{itemize}
    \item $J(\btau,z)$ is the \emph{$J$-function} \cite{Givental98} from the genus-zero Gromov-Witten theory of $\cX$, parameterized by the stringy K\"{a}hler parameter $\btau$ of $\cX$;
    
    \item $\hGa_{\cX}^z$ is the ($z$-modified) \emph{Gamma class} of $\cX$, which is a characteristic class of the tangent bundle $T\cX$ defined by the Euler $\Gamma$-function;
    
    \item $\tch_{z}(\cE)$ is the ($z$-modified twisted) Chern character of a coherent sheaf $\cE$ on $\cX$.
\end{itemize}
In particular, Equation \eqref{eqn:MSGammaFano} implies that the Gamma class of $\cX$ determines the asymptotics of the oscillatory integrals on the mirror. It is termed the \emph{mirror symmetric Gamma conjecture} for $\cX$ in \cite{Iritani23}. The appearance of the Gamma class in the B-model periods has been observed since the early studies of mirror symmetry \cites{HKTY95,Libgober99}.

In general, Equation \eqref{eqn:MSGammaFano} builds on an explicit mirror map that relates the complex parameter $q$ and the stringy K\"{a}hler parameter $\btau$. Under the mirror map, the B-model variation of Hodge structures (VHS) defined by the Gauss-Manin connection is identified with the A-model VHS defined by quantum cohomology \cite{Iritani09,CCIT15}. A significance of \eqref{eqn:MSGammaFano} is the underlying correspondence between the integral cycles $\gamma$, which give the integral structure in the B-model VHS, and the coherent sheaves $\cE$ viewed as elements of the $K$-group $K(\cX)$, which give an integral structure in the A-model VHS \cite{Iritani09,KKP08}. Put differently, Equation \eqref{eqn:MSGammaFano} identifies the central charge of (the A-brane) $\gamma$, defined by the oscillatory integral, with the quantum cohomology central charge of (the B-brane) $\cE$ defined by the right-hand side via the Gamma class. The correspondence between the cycle $\gamma$ and the sheaf $\cE$ is expected to be consistent with well-studied mirror constructions, including homological mirror symmetry and Strominger-Yau-Zaslow (SYZ) $T$-duality \cites{SYZ96,LYZ00}. For projective toric Fano orbifolds, Iritani \cite{Iritani09} proved the orbifold and equivariant generalization of \eqref{eqn:MSGammaFano} and the first author \cite{Fang20} provided an explicit description of the correspondence between the cycles and the sheaves which was verified to be consistent with toric SYZ mirror duality \cite{Abouzaid09,fltz12}.

\subsubsection{Mirror symmetric Gamma conjecture for toric Calabi-Yau 3-orbifolds}\label{sect:MSGammaCY}

In this paper, we consider the mirror symmetric Gamma conjecture for a semi-projective toric Calabi-Yau 3-orbifold $\cX$, which must be non-compact. In this local Calabi-Yau setting, one may consider the B-model on the \emph{Hori-Vafa mirror} $(\check{\cX}_q, \Omega_q)$ \cite{HV00}, where 
$$
    \check{\cX}_q =\{ (u,v, X, Y)\in\bC^2\times (\bC^*)^2: uv=H(X,Y,q) \}
$$
is a non-compact Calabi-Yau 3-fold, and 
$$
    \Omega_q := \Res_{\check{\cX}_q} \left(\frac{1}{H(X,Y,q)-uv} du\wedge dv \wedge \frac{dX}{X}\wedge \frac{dY}{Y} \right)
$$
is a holomorphic 3-form on $\check{\cX}_q$. When $\cX$ is a manifold, we have the following analog of \eqref{eqn:MSGammaFano} \cite{Hosono06}:
\begin{equation}\label{eqn:MSGammaCY}
    \int_\gamma \Omega_q = \int_{\cX} J(\btau, -z)\hGa_{\cX}^z \tch_{z}(\cE),
\end{equation}
which can be generalized to the orbifold case. Here, the left-hand side is a period of $\Omega_q$ along a 3-cycle $\gamma \in H_3(\check{\cX};\bZ)$. Since $\cX$ is non-compact, for the right-hand side to be defined, a compact-support condition is imposed on $\cE$. A long-standing conjecture of Hosono \cite{Hosono06} predicts a canonical isomorphism
\begin{equation}\label{eqn:HosonoCycleIso}
    K^c(\cX) \to H_3(\check{\cX};\bZ),
\end{equation}
where $K^c(\cX)$ is the $K$-group of $\cX$ with support on the \emph{core} (see Section \ref{sect:IntroSupport}), under which \eqref{eqn:MSGammaCY} holds. The isomorphism is also expected to be consistent with SYZ $T$-duality and homological mirror symmetry \cite{Seidel10,AAK16,GM18}. Gross-Matessi \cite{GM18} proposed an explicit correspondence between the cycles and sheaves above and a precise homological mirror symmetry conjecture when $\cX$ is a manifold. However, verifying \eqref{eqn:MSGammaCY} and in particular computing the periods have been challenging in general. Iritani \cite{Iritani23} studied the case $\cX = K_F$ for a smooth toric Fano surface $F$ and consider the structure sheaf $\cE = \cO_F$ of the zero section. Han \cite{Han} studied a dual version of Hosono's conjecture (see Remark \ref{rem:Han}). We note that \eqref{eqn:MSGammaCY} has also been considered for (compact) Batyrev mirror Calabi-Yau pairs in \cite{AGIS20,Yamamoto24}.

We approach Hosono's conjecture via the reduction of the Hori-Vafa B-model to a theory on the \emph{mirror curve} \cite{HV00}, which is the affine curve
$$
    C_q =\{ (X, Y)\in (\bC^*)^2: H(X,Y,q) = 0\}.
$$
The periods of $\Omega_q$ on $\check{\cX}_q$ can be reduced to the periods of the (multivalued) holomorphic 1-form $ydx$ on $C_q$, where $x = -\log X$ and $y = -\log Y$. We further consider the equivariant situation with respect to the Calabi-Yau 2-torus $\bT'$ of $\cX$ using the Landau-Ginzburg model $(\tC_q, \hx)$, where $\tC_q$ is the pullback of $C_q$ under the universal covering $\bC^2_{x,y} \to (\bC^*)^2_{X, Y}$ and
$$
    \hx = u_1x + u_2y
$$
is a holomorphic function on $\tC_q$ viewed as the $\bT'$-equivariant superpotential, with $u_1, u_2$ the equivariant parameters. Set $\Phi = y d \left( x + \frac{u_2}{u_1} y \right)$. In this paper, we prove the following $\bT'$-equivariant version of \eqref{eqn:MSGammaCY}:
\begin{equation}\label{eqn:MSGammaEquiv}
    2\pi\sqrt{-1}\int_\gamma e^{-\hx/z}\Phi = \int_{\cX} J_{\bT'}(\btau, -z)\hGa_{\cX}^z \tch_{z,\bT'}(\cE),
\end{equation}
where the left-hand side is an oscillatory integral along a relative 1-cycle $\gamma$ in $H_1(\tC_q,  \Re (\hx) \gg 0; \bZ)$. This is also the equivariant analog of \eqref{eqn:MSGammaFano}. We prove this statement by constructing an explicit correspondence between the relative 1-cycles on $\tC_q$ and $\bT'$-equivariant coherent sheaves $\cE$ on $\cX$ with support bounded in a prescribed direction, and computing the respective central charges. Such sheaves form a $K$-group $K^+_{\bT'}(\cX)$ that interpolates the usual $K$-group and the $K$-group with compact support (see Section \ref{sect:IntroSupport} for additional discussions), which we show provides an integral structure in the $\bT'$-equivariant A-model and is isomorphic to $H_1(\tC_q,  \Re (\hx) \gg 0; \bZ)$. In the non-equivariant limit, our correspondence of equivariant cycles and sheaves induces the the canonical isomorphism \eqref{eqn:HosonoCycleIso} under which Hosono's conjecture \eqref{eqn:MSGammaCY} holds.

% \subsection{The Remodeling Conjecture and all-genus open-closed mirror symmetry for semi-projective toric Calabi-Yau 3-orbifolds}  

% \subsection{The Remodeling Conjecture for toric Calabi-Yau 3-orbifolds}  \label{sec:primaryBKMP}  

% A toric variety $X$ (over $\bC$) is semi-projective if it contains a torus fixed point and is projective over its affinization $X_0:=\Spec\left(H^0(X,\cO_X)\right)$, while a toric orbifold is semi-projective if its coarse moduli space is semi-projective. When $\cX$ is a semi-projective toric Calabi-Yau 3-manifold/3-orbifold (which must be non-compact), the Hori-Vafa mirror of $\cX$  is 
% a family of non-compact Calabi-Yau 3-folds $(\check{\cX}_q, \Omega_q)$,  where 
% $$
% \check{\cX}_q =\{ (u,v, X, Y)\in\bC^2\times (\bC^*)^2: uv=H(X,Y,q) \}
% $$
% is a hypersurface in $\bC^2\times (\bC^*)^2$, and 
% $$
% \Omega_q := \Res_{\check{\cX}_q} \left(\frac{1}{H(X,Y,q)-uv} du\wedge dv \wedge \frac{dX}{X}\wedge \frac{dY}{Y} \right)
% $$
% is a holomorphic 3-form on $\check{\cX}_q$.   The stringy K\"{a}hler parameter $\btau=(\tau_1,\ldots, \tau_{\fp})$ of $\cX$
% and the complex parameter $q=(q_1,\ldots, q_{\fp})$ of the mirror $\check{\cX}$ are related by an explicit mirror map. 

\subsubsection{All-genus descendant mirror symmetry via topological recursion}
For a toric orbifold $\cX$ with the action of a torus $T$, we denote $\kappa_z^{T}(\cE) = \hGa_{\cX}^z \tch_{z,T}(\cE)$ and rewrite the quantum cohomology central charge of $\cE$ as the correlator
$$
    \double{\frac{\kappa_z^{T}(\cE)}{z(z + \hpsi)}}^{\cX,T}_{0,1}
$$
of genus-zero, 1-pointed, $T$-equivariant descendant Gromov-Witten invariants, where $\hpsi$ is the descendant class. The mirror symmetric Gamma conjecture is then a descendant mirror symmetry statement with integral structures. It is natural to generalize the statement to the genus-$g$, $n$-pointed case for all $(g,n)$ beyond $(0,1)$.

When the mirror of $\cX$ is given by a curve, the higher-genus topological B-model may be provided by the \emph{Chekhov-Eynard-Orantin topological recursion} \cite{EO07} invariants $\omega_{g,n}$ on the curve. The procedure may be viewed as a B-model analog of the Givental-Teleman quantization \cite{Givental01,Teleman12} in higher-genus Gromov-Witten theory \cite{DOSS14}. In the example $\cX = \bP^1$ with the action of $T = (\bC^*)^2$ scaling the two homogeneous coordinates, Fang-Liu-Zong \cite{FLZ17} proved that the $T$-equivariant Gromov-Witten correlator
$$
    \double{\frac{\kappa_{z_1}^T(\cE_1)}{z_1(z_1 + \hpsi)},\dots,\frac{\kappa_{z_n}^T(\cE_n)}{z_n(z_n + \hpsi)} }^{\bP^1, T}_{g,n}
$$
can be recovered by the oscillatory integral of the topological recursion invariant $\omega_{g,n}$ on the $T$-equivariant Landau-Ginzburg mirror $(\bC^*, W^T)$ along the product of the relative cycles $\gamma_1, \dots, \gamma_n$ corresponding to $\cE_1, \dots, \cE_n$ respectively. This result generalizes (the $T$-equivariant version of) Equation \eqref{eqn:MSGammaFano} and in fact uses it as an essential ingredient in the proof. The result is generalized by Tang \cite{Tang} to the weighted projective line and by Lan \cite{Lan25} to footballs.

% Norbury-Scott \cite{NS14}

In this paper, we establish the all-genus descendant mirror symmetry for the $\bT'$-equivariant theory on a toric Calabi-Yau 3-orbifold $\cX$ that recovers the correlator
$$
    \double{\frac{\kappa_{z_1}^{\bT'}(\cE_1)}{z_1(z_1 + \hpsi)},\dots,\frac{\kappa_{z_n}^{\bT'}(\cE_n)}{z_n(z_n + \hpsi)} }^{\cX, \bT'}_{g,n}
$$
from the oscillatory integral of topological recursion invariant $\omega_{g,n}$ on the mirror curve along corresponding cycles. The use of topological recursion on the mirror curve to reformulate the higher-genus topological B-model on the Hori-Vafa mirror was proposed by Bouchard-Klemm-Mari\~{n}o-Pasquetti (BKMP) \cite{BKMP09, BKMP10}, based on the work of Eynard-Orantin \cite{EO07} and Mari\~{n}o \cite{Marino08}. In particular, they conjectured a precise correspondence, known as the  {\em Remodeling Conjecture}, 
between  $\omega_{g,n}$ and a generating function $F_{g,n}^{\cX, \cL}$ of open Gromov-Witten invariants counting
holomorphic maps from bordered Riemann surfaces with $g$ handles and $n$ holes to $\cX$ with boundaries in an Aganagic-Vafa 
Lagrangian brane $\cL$. This extends the genus-zero open mirror symmetry on disk invariants \cite{AV00, AKV02, FL13, FLT22} which corresponds to the case
$(g,n)=(0,1)$. In the closed string sector, the Remodeling Conjecture relates the B-model genus-$g$ free energy $\check{F}_g= \omega_{g,0}$ to a generating function $F_g^{\cX}$ of genus-$g$ primary Gromov-Witten invariants of $\cX$. 
Simply put, the BKMP Remodeling Conjecture can be viewed as a version of all-genus primary and open mirror symmetry. Eynard-Orantin provided a proof of the Remodeling Conjecture for general smooth  semi-projective toric Calabi-Yau 3-folds \cite{EO15}. Fang-Liu-Zong proved the Remodeling Conjecture for general semi-projective  toric Calabi-Yau 3-orbifolds \cite{flz2020remodeling}. Our result on the all-genus descendant mirror symmetry can then be viewed as the Remodeling Conjecture \emph{with descendants}. It is a higher-loop version of the $(0,1)$-case in \eqref{eqn:MSGammaEquiv} and builds upon the explicit identification of integral structures there.

\subsection{Cohomologies and $K$-theories of toric orbifolds with support conditions}\label{sect:IntroSupport}
% The descendant mirror symmetry studied in this paper builds upon an identification of the A- and B-model integral structures. 
As discussed in Section \ref{sect:MSGammaCY}, the A-model integral structure for a semi-projective toric Calabi-Yau 3-orbifold is provided by its $K$-theory with certain support conditions. In this paper, we introduce the $K$-theory with bounded below/above support for a general semi-projective toric orbifold. It provides the required integral structure in \eqref{eqn:MSGammaEquiv} in 3-dimensional Calabi-Yau case and may be of independent interest.

%\subsubsection{Ordinary and compactly-supported cohomologies and $K$-theories of semi-projective toric varieties/orbifolds} \label{sec:KKc}

A toric variety $X$ (over $\bC$) is semi-projective if it contains a torus fixed point and is projective over its affinization $X_0:=\Spec\left(H^0(X,\cO_X)\right)$, while a toric orbifold is semi-projective if its coarse moduli space is semi-projective. Let $X$ be a semi-projective simplicial toric variety of dimension $r$ and $X_0$ be its affinization. Let $\cX$ be the minimal orbifold having $X$ as the coarse moduli space. Then $\cX$ is a toric orbifold and contains
the algebraic torus $\bT=(\bC^*)^r$ as the unique $r$-dimensional torus orbit. The rational cohomology ring $H^*(X;\bQ)$ of $X$ admits a 
Stanley-Reisner presentation \cite[Section 2]{HS02}, generalizing the classical results in the projective case. The Chen-Ruan orbifold cohomology ring $H^*_{\CR}(\cX;\bQ)$ of $\cX$ also admits a Stanley-Reisner presentation \cite{JT08}, generalizing the results of Borisov-Chen-Smith \cite{BCS05} in the projective case. 
The cohomology group $H_c^*(X;\bC)$ with compact support (and with complex coefficients) is a module over $H^*(X;\bC)$ and admits a Stanley-Reisner type presentation \cite[Section 2]{BH15}, and similarly for the Chen-Ruan orbifold cohomology $H^*_{\CR,c}(\cX;\bC)$ with compact support.

\begin{comment}

The integral (resp. rational) cohomology ring of a  semi-projective smooth (resp. simplicial) toric variety admits a 
Stanley-Reisner presentation \cite[Section 2]{HS02}, generalizing the classical results on projective smooth (resp. simplicial) toric varieties. 
A smooth toric Deligne-Mumford (DM) stack \cite{BCS05, FMN10}  is (semi-)projective if its coarse moduli space, which is a simplicial toric variety, is (semi-)projective. 
The Chen-Ruan orbifold cohomology ring  of a semi-projective smooth toric DM stack admits a Stanley-Reisner presentation \cite{JT08}, generalizing
the results by Borisov-Chen-Smith on projective smooth toric DM stacks \cite{BCS05}. 

Let $X$ be a semi-projective simplicial toric variety. Then  $H_c^*(X;\bC)$, the cohomology of compact support with complex coefficients, is a module over
the cohomology ring $H^*(X;\bC)$, and admits a Stanley-Reisner type presentation \cite[Section 2]{BH15}. Let $\cX$ be the minimal orbifold having $X$ as 
the coarse moduli space. Then $\cX$ is a toric orbifold, i.e., a smooth toric DM stack with trivial generic stabilizer, and contains
the algebraic torus $\bT=(\bC^*)^r$ as the unique $r$-dimensional torus orbit, where $r=\dim \cX$.  

\end{comment}

The core  $\cX^c$ of $\cX$ is the union of all compact
orbit closures in $\cX$; it is also the preimage of the origin (the unique torus fixed point in $X_0$) under the map 
$\cX \to X \to X_0$.  Let $D^c(\cX)$ denote the full subcategory of $D(\cX):= D^b(\Coh(\cX))$ consisting of complexes of sheaves whose cohomology sheaves are supported on 
$\cX^c$.  Borisov-Horja \cite{BH06,BH15} provided combinatorial descriptions of the Grothendieck groups $K(\cX):= K_0(D(\cX))$ and $K^c(\cX):= K_0(D^c(\cX))$, and define combinatorial Chern character maps
$$
\tch: K(\cX) \to  H,  \qquad
\tch^c: K^c(\cX)\to  H^c
$$
where $H= H^*_{\CR}(\cX;\bC)$ and $H^c = H^*_{\CR,c}(\cX;\bC)$.
% are Chen-Ruan orbifold cohomology and Chen-Ruan orbifold cohomology with compact support, respectively. 
The Chern character maps induce  $\bC$-linear isomorphisms
$K(\cX)\otimes \bC\cong H$ and $K^c(\cX)\otimes \bC \cong H^c$. There is a non-degenerate Euler characteristic pairing $\chi: K(\cX)\times K^c(\cX)\to \bZ$. 

%\subsubsection{$K$-theories and cohomologies with bounded below/above support} 

% Let $\cX$ be as in Section \ref{sec:KKc}, and 
% In this paper, we introduce the cohomology and $K$-theory of the semi-projective toric orbifold $\cX$ with bounded below/above support. 
Now we discuss the cohomology and $K$-theory with bounded below/above support
. Let $\bT_\bR \cong U(1)^r$ be the maximal compact subgroup of $\bT \cong (\bC^*)^r$. 
Then $\cX$ can be equipped with a K\"{a}hler form  $\omega$ such that the holomorphic action of $\bT$ on the complex orbifold $\cX$ restricts to a Hamiltonian action of $\bT_\bR$ on the symplectic orbifold  $(\cX,\omega)$ with a moment map $\mu_{\bT_\bR}: \cX\to M_\bR =M\otimes_{\bZ}\bR\cong \bR^r$, where $M:=\Hom(\bT, \bC^*)\cong \bZ^r $ is the character lattice of $\bT$. 
Let $\lambda_{\sv}:\bC^*\to \bT$ be a one-parameter subgroup of $\bT$ associated to a primitive vector  $\sv$ in the cocharacter lattice
$N:=\Hom(\bC^*, \bT)$ of $\bT$, which restricts to a Hamiltonian circle action with moment map $\mu_{\sv} =\langle \mu_{\bT_\bR},  \sv \rangle:\cX\to \bR$; we choose $\sv$ generically such that  the fixed  substack of this one-parameter subgroup is equal to $\cX^{\bT}$. 

Let $\cX^+$ (resp. $\cX^-$) be the union of $\bT$ orbit closures in $\cX$ on which the $\bR$-valued function 
$\mu_{\sv}$ is bounded below (resp. above). We introduce the bounded below/above $K$-group
$K^\pm(\cX):= K_0(D^\pm(\cX))$, which is the Grothendieck group of the full subcategory $D^\pm(\cX)$ of $D(\cX)$ consisting 
of complexes of sheaves whose cohomology sheaves are supported on $\cX^\pm$, and the bounded below/above Chen-Ruan cohomology group $H^\pm$. 
We provide combinatorial descriptions of $K^\pm(\cX)$ and define combinatorial Chern character maps
$$
\tch^\pm: K^\pm(\cX)\to H^\pm 
$$
which induce $\bC$-linear isomorphisms $K^\pm(\cX)\otimes \bC \cong H^\pm$. There is a non-degenerate Euler characteristic pairing 
$\chi: K^+(\cX)\times K^-(\cX)\to \bZ$. The definitions may be generalized to the equivariant setting with respect to $\bT$ or an algebraic subtorus.

If $\sv$ is in the support $|\Sigma|$ of the simplicial fan $\Sigma$ defining $X$, then
$$
\cX^+ =\cX, \quad \cX^- = \cX^c,\quad K^+(\cX) = K(\cX), \quad K^-(\cX)= K^c(\cX).
$$
So bounded below/above $K$-theories $K^\pm(\cX)$ include $K(\cX)$ and $K^c(\cX)$ as special cases, and similarly
bounded below/above cohomology groups $H^\pm$ include $H$ and $H^c$ as special cases. 

\begin{comment}
We may also consider the equivariant 
versions
$$
K_\bT(\cX) = K_0\left( D_{\bT}(\cX)\right), \quad 
K_{\bT}^c(\cX)= K_0\left(D_{\bT}^c(\cX)\right), \quad
K_{\bT}^\pm(\cX) = K_0\left(D_{\bT}^\pm (\cX)\right). 
$$
\end{comment}

\subsection{Main results} \label{sec:main-results}
Let $\cX$ be a semi-projective toric Calabi-Yau 3-orbifold and let $\bT' \cong (\bC^*)^2$ be the Calabi-Yau $2$-subtorus of $\bT$ which acts trivially on the canonical line bundle of $\cX$.
% (See Section \ref*{sect:Prelim} for more details.)
% On the A-model side, we 
We consider the generating functions of genus-$g$, $\bT'$-equivariant descendant Gromov-Witten invariants of $\cX$ of the form
$$
\left\llangle \frac{\gamma_1}{z_1-\hat{\psi}},\dots, \frac{\gamma_n}{z_n-\hat{\psi}} \right\rrangle^{\cX,\bT'}_{g,n}
=\sum_{a_1,\dots,a_n\in \bZ_{\ge 0}}
\left\llangle \gamma_1 \hat{\psi}^{a_1}, \dots, \gamma_n \hat{\psi}^{a_n} \right\rrangle^{\cX,\bT'}_{g,n}\prod_{i=1}^n z_i^{-a_i-1}.
$$
where $z_1,\cdots,z_n$ are formal variables, $\gamma_1,\ldots,\gamma_n \in H^*_{\CR,\bT'}(\cX;\bC)$ are $\bT'$-equivariant Chen-Ruan cohomology classes of $\cX$, and $\hat{\psi}$ is the descendant class. (See Section \ref{sect:genfun} for more details.) 
% In particular, 
% $$
% \llangle \gamma_1,\ldots,  \gamma_n\rrangle^{\cX,\bT'}_{g,n}
% $$
% is a generating function of $\bT'$-equivariant primary genus-$g$ Gromov-Witten invariants of $\cX$, and 
% $$
% F^{\cX}_g = \llangle \rrangle^{\cX, \bT'}_{g,0}.
% $$

% \subsubsection{Integral structures and equivariant Hodge-theoretic mirror symmetry}
\subsubsection{Mirror symmetric Gamma conjecture}
Our first main result is the $\bT'$-equivariant mirror symmetric Gamma conjecture for $\cX$, as discussed in Section \ref{sect:MSGammaCY}. 
% We first establish a version of  $\bT'$-equivariant Hodge-theoretic mirror symmetry with integral structures. 
% In general,  the integral structure of the A-model variation of Hodge structure (defined by genus-zero Gromov-Witten invariants) is given by 
% B-branes (coherent sheaves), while the integral structure of  the B-model variation of Hodge structure is given by A-branes (Lagrangian submanifolds).  
We consider the $K$-theory of $\cX$ with bounded below/above support defined by a cocharacter $\sv$ of the Calabi-Yau 2-torus $\bT'$, i.e. $\sv \in N' :=\Hom(\bC^*,\bT')$. Let $M' :=\Hom(\bT',\bC^*) \cong \bZ^2$ be the character lattice. Then $M'$ acts on  $D_{\bT'}(\cX)$, $D^\pm_{\bT'}(\cX)$ (which can be viewed as categories of  B-branes on $\cX$)   and their Grothendieck groups $K_{\bT'}(\cX) = K_0(D_{\bT'}(\cX))$,  $K_{\bT'}^\pm(\cX) = K_0(D_{\bT'}^\pm(\cX))$, by tensoring with characters of $\bT'$.  For $\cE \in K_{\bT'}(\cX)$,  we define $\bT'$-equivariant $K$-theoretic framing
$$
\kappa^{\bT'}_z(\cE) := \hGa_{\cX}^z \tch_{z, \bT'}(\cE)
$$ 
where $\hGa_{\cX}^z$ is the $\bT'$-equivariant Gamma class of $\cX$ and $\tch_{z, \bT'}(\cE)= (\frac{-2\pi\sqrt{-1}}{z})^{\deg/2}\tch_{\bT'}(\cE)$ is the $\bT'$-equivariant twisted Chern character. Define the $\bT'$-equivariant quantum cohomology central charge of $\cE$ by 
%\begin{equation}\label{eqn:equivariant-central-charge}
$$
    Z_{\bT'}(\cE):= \double{\frac{\kappa_z^{\bT'}(\cE)}{z(z + \hpsi)}}^{\cX, \bT'}_{0,1}.
$$
In particular, the definition is made for $\cE \in K_{\bT'}^+(\cX)$.

The mirror curve  $C_q$ is a hypersurface in the  2-torus $(\bT')^\vee = M'\otimes_{\bZ} \bC^*$ dual to $\bT'$. The $\bT'$-equivariant mirror curve $\tC_q$ of $\cX$ is the preimage of $C_q$ under the universal covering
map $M'_{\bC}:= M'\otimes_{\bZ} \bC  \cong \bC^2 \to  (\bT')^\vee \cong (\bC^*)^2$:
$$
\tC_q  = C_q \times_{(\bT')^\vee} M'_\bC = C_q\times_{(\bC^*)^2}\bC^2. 
$$
In particular, $p:\tC_q\to C_q$ is a regular covering with fiber $M'\cong \bZ^2$. Let $x =-\log X$ and $y= -\log Y$ be complex coordinates on 
$M'_\bC =\bC^2$, which restrict to holomorphic functions on $\tC_q$.  We consider
$$
\hx= u_1 x + u_2 y,\quad \hy=\frac{y}{u_1}.
$$
where $u_1,u_2\in \bC$, $u_1\neq 0$.  The $M'$-action on $\tC_q$ by deck transformation induces an $M'$-action on the group of integral relative 1-cycles
$$
    H_1(\tC_q,  \Re (\hx) \gg 0; \bZ). 
$$

\begin{theorem}[={\bf Theorem \ref{thm:EquivMir}}) (Genus-zero descendant mirror theorem]  \label{thm:equivMir}
There is a unique homomorphism
$$
    \mir^+_{\bT'}: K^+_{\bT'}(\cX) \to H_1(\tC_q,  \Re (\hx) \gg 0; \bZ)
$$
such that for any $\cE \in K^+_{\bT'}(\cX)$, we have
$$
Z_{\bT'}(\cE) = 2\pi\sqrt{-1}\int_{\mir^+_{\bT'}(\cE)} e^{- \hx/z} \hy d\hx 
$$
under the mirror map $\btau=\btau(q)$.  Moreover, $\mir^+_{\bT'}$ is $M'$-equivariant and is an isomorphism.
\end{theorem}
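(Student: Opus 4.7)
The plan is to proceed in three stages: first establish a $\bC$-linear version of the correspondence via matching flat sections of the quantum connection with oscillatory integrals, then upgrade to $\bZ$-integral cycles using Iritani's $\hGa$-integral structure adapted to the bounded-below setting, and finally deduce bijectivity by a rank-and-basis count.

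First I would recall that $K^+_{\bT'}(\cX) \otimes \bC \cong H^+$ via the combinatorial Chern character $\tch^+$, and that the $\bT'$-equivariant quantum cohomology central charges $Z_{\bT'}(\cE)$ are encoded by the $\bT'$-equivariant genus-zero descendant potential (the $J$- or $S$-function of $\cX$). On the B-side, the oscillatory integrals $\int e^{-\hx/z} \hy\, d\hx$ over relative 1-cycles in $H_1(\tC_q, \Re(\hx) \gg 0; \bC)$ form flat sections of the Gauss--Manin/GKZ connection on the base. By the equivariant mirror theorem of Coates--Corti--Iritani--Tseng, applied to semi-projective toric Calabi-Yau 3-orbifolds as in \cite{flz2020remodeling}, these two families of flat sections agree under the mirror map $\btau = \btau(q)$, so the central-charge identity holds over $\bC$ and determines $\mir^+_{\bT'}$ uniquely as a $\bC$-linear map once non-degeneracy of the oscillatory pairing is noted.

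The hard part will be showing that this complex-linear map sends $K^+_{\bT'}(\cX)$ into the \emph{integral} lattice $H_1(\tC_q, \Re(\hx) \gg 0; \bZ)$. To do this, I would test the conjectural map on a generating set consisting of equivariant structure sheaves $\cO_V$ (and their $M'$-twists) of $\bT$-invariant closed substacks $V \subset \cX^+$ corresponding to cones of $\bSi$ in the $\sv$-bounded-below half-space. For each such $V$, construct by hand an explicit integral relative 1-cycle $\tGamma_V$ on $\tC_q$: combinatorially, a lift to the universal cover of an edge-path in the amoeba of the mirror curve dual to the cone of $V$. The left-hand central charge $Z_{\bT'}(\cO_V)$ is computed by $\bT'$-equivariant localization as a product of Gamma factors (supplied by $\hGa_{\cX}^z$) multiplied by Mellin--Barnes type hypergeometric series. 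The right-hand Laplace transform $\int_{\tGamma_V} e^{-\hx/z} \hy\, d\hx$ is computed by stationary-phase asymptotics near the critical points of $\hx$ on the relevant portion of $\tC_q$, yielding exactly the same Gamma factors and series. This coincidence is precisely Iritani's $\hGa$-integral-structure statement, and extends in the descendant direction the matching of \cite{flz2020remodeling}, with the factor $1/(z + \hpsi)$ on the A-side being absorbed into the Laplace kernel $e^{-\hx/z}$ on the B-side.

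Once $\mir^+_{\bT'}$ is defined with integral image, $M'$-equivariance is automatic: tensoring $\cE$ with a character $\chi^m$ shifts $Z_{\bT'}(\cE)$ by a factor that matches translating the relative cycle by the deck transformation $m \in M'$ on $\tC_q$. Bijectivity then follows from a rank-and-basis count: $K^+_{\bT'}(\cX)$ is generated as an $M'$-module by $\{[\cO_V]\}_V$ and $H_1(\tC_q, \Re(\hx) \gg 0; \bZ)$ is generated as an $M'$-module by $\{\tGamma_V\}_V$, both indexed by the $\sv$-bounded-below cones of $\bSi$, with the map sending basis to basis. Injectivity is also guaranteed by the non-degeneracy of the Euler pairing $\chi : K^+_{\bT'}(\cX) \times K^-_{\bT'}(\cX) \to \bZ$ together with its B-side counterpart, the topological intersection pairing between relative 1-cycles of opposite support on $\tC_q$.
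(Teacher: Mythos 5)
Your overall architecture is close to the paper's: reduce to the generating set $\cO_{\cV(\sigma)} \otimes \cL$ of $K^+_{\bT'}(\cX)$, build an explicit relative cycle on $\tC_q$ for each generator, match central charges, and then check $M'$-equivariance and bijectivity. But there are three concrete gaps where your proposed mechanisms would not go through.

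First, the matching step: you say the Laplace transform $\int_{\tGamma_V} e^{-\hx/z}\hy\,d\hx$ is computed ``by stationary-phase asymptotics near the critical points of $\hx$,'' yielding the same Gamma factors. Stationary phase only produces a formal power series in $z$; it cannot recover the nonperturbative Gamma functions $\Gamma(1+w_{i,\sigma}/z - c_i(v))$ appearing in the Gamma class. The paper instead evaluates these oscillatory integrals \emph{exactly} as Mellin--Barnes type contour integrals on each affine chart (the formulas \eqref{eqn:pair-divisor}, \eqref{eqn:pair-curve}, imported from \cite{flz2020affine}), then controls the error between the local chart computation and the global integral by $O(\exp(-\epsilon t))$ estimates after patching (Lemmas \ref{lem:PolygonDirection}, \ref{lem:PolygonCorner}). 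The exact evaluation is essential, because the coefficients $a_{\sigma,v}$ appearing in the GKZ expansion of $I_\gamma$ must be matched on the nose, not asymptotically.

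Second, your injectivity argument invokes the Euler characteristic pairing on $K$-theory and ``its B-side counterpart, the topological intersection pairing.'' For this to prove injectivity, you need to know that $\mir^\pm_{\bT'}$ \emph{intertwine} those two pairings — but that is exactly what Remark \ref{rmk:NonEquivMirIso} leaves as a conjecture, and establishing it is not simpler than the theorem itself (and would be circular as written, since it presupposes properties of the map being constructed). The paper avoids this entirely: injectivity of $\mir^+_{\bT'}$ follows from injectivity of the central charge homomorphism $K^+_{\bT'}(\cX) \to \bfS_{\bT'}$ (Proposition \ref{prop:PMCentralInjective}), which in turn reduces to injectivity of the twisted Chern character, and uniqueness of each cycle is forced by injectivity of the oscillatory integral homomorphism (Proposition \ref{prop:IntegralInjective}), proved by comparing the leading exponentials $e^{-\hx(\tp_{\tbsi})/z}$ of the Lefschetz thimble asymptotics.

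Third, the ``rank-and-basis count'' for surjectivity is not automatic. That the cycles $\{\tGamma_V\}$ generate $H_1(\tC_q, \Re(\hx) \gg 0; \bZ)$ over $M'$ is precisely what has to be proved, and the paper does this by a Mayer--Vietoris argument for the decomposition \eqref{eqn:equiv-curve-decomposition} of $\tC_q$ into affine charts and connecting tubes (Lemma \ref{lem:MirSurj}), starting from the affine case where the thimbles are an explicit basis, and then showing the image of $\mir^+_{\bT'}$ under the connecting map $\delta$ surjects onto $\ker\iota$ by an inductive argument over the total order on $\Sigma(3)$ defined by the moment map. A rank count over $\bC$ would only show rank equality, not that the map is onto over $\bZ$.

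A minor point: the preliminary ``$\bC$-linear version via flat sections'' step is not needed; the paper folds the mirror theorem directly into the definition of $Z_{\bT'}(\cE) = (\kappa_z^{\bT'}(\cE), I_{\bT'}(q,-z))_{\cX,\bT'}$ and works with the GKZ solution space $\bfS_{\bT'}$ from the start.
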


% Theorem \ref{thm:equivMir} can be viewed as an analog of the results for compact toric orbifolds as studied in \cite{Iritani09,Fang20}. 

The construction of $\mir^+_{\bT'}$ is given in Sections \ref{sec:divisor-cycle}, \ref{sec:curve-cycle}. On a high level, we construct the mirror cycles of generators of $K^+_{\bT'}(\cX)$ in the form of a $\bT'$-equivariant line bundle on a bounded-below toric divisor or curve in $\cX$. Representing the line bundle by a twisted polytope \cite{fltz14}, i.e. the data of its induced $\bT'$-characters at the torus fixed points, we find the local cycles that are mirror to the characters and patch them into a global cycle. The equality of central charges is verified locally. Moreover, underlying the isomorphism $\mir^+_{\bT'}$ is an identification of the defining parameters $\sv$ and $(u_1, u_2)$ in a wall-and-chamber structure on $N'_{\bR}:= N'\otimes_{\bZ} \bR$. See Section \ref{sect:EquivMir} for additional details and an illustration.

\subsubsection{Non-equivariant situation: Hosono's conjecture}
In the non-equivariant setting, we use Theorem \ref{thm:equivMir} to prove Hosono's conjecture \cite[Conjecture 6.3]{Hosono06} that relates non-equivariant genus-zero Gromov-Witten invariants 
of $\cX$ to period integrals of the holomorphic 3-form $\Omega_q$ of the Hori-Vafa mirror $\chcX_q$. As discussed in Section \ref{sect:MSGammaCY}, the A-model integral structure comes from the (non-equivariant) $K$-theory of $\cX$ with compact support. We define the central charge of $\cE \in K^c(\cX)$ by 
$$
Z^c(\cE) :=  \double{\frac{\kappa_z(\cE)}{z(z + \hpsi)}  }^{\cX}_{0,1}  = \double{ \one, \frac{\kappa_z(\cE)}{z + \hpsi}  }^{\cX}_{0,2}  
$$
where $\kappa_z(\cE) \in z^2 H^2_{\CR,c}(\cX;\bC) \oplus z H^4_{\CR,c}(\cX;\bC) \oplus H^6_{\CR,c}(\cX;\bC)$ is the non-equivariant $K$-theoretic framing. It turns out
that $Z^c(\cE)$ does not depend on $z$ and (by the dilaton equation) is a generating function of genus-zero non-equivariant primary Gromov-Witten invariants. % For the B-model, the integral structures on the Hori-Vafa mirror and the mirror curve are related by dimensional reduction.

\begin{theorem}[={\bf Theorem \ref{thm:Kc3Mir}}) (Hosono's conjecture]\label{thm:kc3Mir}
There is a unique homomorphism
$$
    \mir^c_3: K^c(\cX) \to H_3(\chcX_q; \bZ)
$$
such that for any $\cE \in K^c(\cX)$, we have
$$    
Z^c(\cE)=     -\int_{\mir^c_3(\cE)} \Omega_q 
$$
under the mirror map $\btau=\btau(q)$.  Moreover, $\mir^c_3$ is an isomorphism.
\end{theorem}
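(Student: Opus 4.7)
The plan is to deduce Theorem~\ref{thm:kc3Mir} from the equivariant mirror theorem (Theorem~\ref{thm:equivMir}) by combining a non-equivariant specialization with a dimensional reduction from the mirror curve to the Hori-Vafa mirror 3-fold.

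I first set up the dimensional reduction. The Hori-Vafa mirror $\chcX_q = \{uv = H(X,Y,q)\}$ is a conic fibration over $(\bC^*)^2$, with generic fiber $\bC^*$ degenerating to a nodal union of two lines over $C_q = \{H = 0\}$. Using the $\bC^*$-action $(u,v) \mapsto (\lambda u, \lambda^{-1} v)$, view $\chcX_q$ as an $S^1$-fibration with vanishing $S^1$'s over $C_q$. A Leray/Gysin argument with vanishing cycles then yields a ``tube'' isomorphism $\mathrm{DR}: H_1(C_q;\bZ) \xrightarrow{\sim} H_3(\chcX_q;\bZ)$, sending a 1-cycle $\gamma \subset C_q$ to the total space of the associated vanishing $S^1$-bundle over a Seifert disk bounded by $\gamma$ in $(\bC^*)^2$. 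Writing $\Omega_q = (du/u)\wedge d\log X\wedge d\log Y$ on the locus $u \ne 0$, integrating $du/u$ along the $S^1$-fiber contributes $2\pi\sqrt{-1}$, and since $d\log X \wedge d\log Y = -d(\hy\, d\hx)$, Stokes' theorem produces the key identity
$$
\int_{\mathrm{DR}(\gamma)} \Omega_q \;=\; -2\pi\sqrt{-1}\int_\gamma \hy\, d\hx.
$$

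Next, I construct $\mir^c_3$ as a non-equivariant specialization of $\mir^+_{\bT'}$. Choose $\sv \in |\Sigma|\cap N'$, so that $K^+_{\bT'}(\cX) = K_{\bT'}(\cX)$ and $K^-(\cX) = K^c(\cX)$. For each $\cE \in K^c(\cX)$, pick an equivariant lift $\widetilde\cE \in K^c_{\bT'}(\cX) \subset K_{\bT'}(\cX)$. Compact support of $\widetilde\cE$ forces the ends of $\mir^+_{\bT'}(\widetilde\cE) \in H_1(\tC_q, \Re\hx \gg 0;\bZ)$ at infinity to be $M'$-translates of one another; descending along the covering $p:\tC_q \to C_q$ with deck group $M'$ glues these ends together to yield a genuine closed 1-cycle $\mir^c_1(\cE) \in H_1(C_q;\bZ)$. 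The $M'$-equivariance of $\mir^+_{\bT'}$ guarantees independence of the choice of equivariant lift, and I set $\mir^c_3 := \mathrm{DR}\circ \mir^c_1$.

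I then verify the central charge identity by specializing Theorem~\ref{thm:equivMir} for $\widetilde\cE$ to the non-equivariant limit. On the A-side, $Z_{\bT'}(\widetilde\cE)$ remains finite under the limit (by compact support) and converges to $Z^c(\cE)$. On the B-side, using the fact that $Z^c(\cE)$ is independent of $z$, I may send $z \to \infty$ so that $e^{-\hx/z} \to 1$ uniformly on compact subsets; the oscillatory integral on $\tC_q$ collapses to the period $2\pi\sqrt{-1}\int_{\mir^c_1(\cE)} \hy\, d\hx$ on $C_q$. Combining with the dimensional reduction identity gives $Z^c(\cE) = -\int_{\mir^c_3(\cE)} \Omega_q$. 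Uniqueness of $\mir^c_3$ follows from the non-degeneracy of the period pairing of $\Omega_q$ on $H_3(\chcX_q;\bZ)$, which is inherited from the non-degeneracy of the oscillatory pairing in Theorem~\ref{thm:equivMir} via $\mathrm{DR}$. The isomorphism property of $\mir^c_3$ is then the composition of three isomorphisms: Theorem~\ref{thm:equivMir}, the $M'$-coinvariant quotient matching the forgetful map $K^c_{\bT'}(\cX) \twoheadrightarrow K^c(\cX)$, and $\mathrm{DR}$.

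The principal technical obstacle will be controlling the non-equivariant limit: one must verify carefully that for a compactly supported $\widetilde\cE$ the relative 1-cycle in the universal cover $\tC_q$ descends to a well-defined closed 1-cycle on $C_q$, and that the oscillatory factor $e^{-\hx/z}$ collapses in a controlled way as $z \to \infty$, so that the equivariant identity of Theorem~\ref{thm:equivMir} passes correctly to the non-equivariant identity claimed here. This will require explicit tracking of the combinatorial description of $\mir^+_{\bT'}$ on the compactly supported equivariant $K$-classes.
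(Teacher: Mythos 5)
Your strategy of dimensional reduction from the mirror curve to the Hori--Vafa threefold combined with a non-equivariant specialization of Theorem~\ref{thm:equivMir} is the right conceptual framework, and it is essentially the paper's framework too. But there is a genuine gap in the middle: your ``tube'' map $\mathrm{DR}$ is built on the wrong homology group, and this breaks both the isomorphism claim and the well-posedness of your key identity.

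Concretely, $H_1(C_q;\bZ)$ has rank $2\fg+\fn-1$ (an open genus-$\fg$ surface with $\fn$ punctures), whereas $H_3(\chcX_q;\bZ)$ and $K^c(\cX)$ both have rank $2\fg+\fn-2 = \dim_{\bC}H^*_{\CR}(\cX;\bC)$. So no ``tube'' isomorphism from $H_1(C_q;\bZ)$ to $H_3(\chcX_q;\bZ)$ can exist. The actual isomorphism from the literature (Gross, Doran--Kerr, Chan--Lau--Tseng) is $\alpha\colon H_2((\bC^*)^2,C_q;\bZ)\xrightarrow{\sim} H_3(\chcX_q;\bZ)$, and it is $H_2((\bC^*)^2,C_q;\bZ)$ that has rank $2\fg+\fn-2$. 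The vanishing-cycle picture you describe only applies to a $1$-cycle $\gamma$ that \emph{bounds} in $(\bC^*)^2$, i.e. lies in the subgroup $K_1(C_q;\bZ)=\ker\bigl(H_1(C_q;\bZ)\to H_1((\bC^*)^2;\bZ)\bigr)$; even then the Seifert disk is an extra choice, and the correct source is the relative group $H_2((\bC^*)^2,C_q;\bZ)$, not $H_1(C_q;\bZ)$. Relatedly, your ``key identity'' $\int_{\mathrm{DR}(\gamma)}\Omega_q = -2\pi\sqrt{-1}\int_\gamma\hy\,d\hx$ for $\gamma\in H_1(C_q;\bZ)$ is not well-posed: $\hx,\hy$ are multi-valued on $C_q$, and $\int_\gamma\hy\,d\hx$ is only defined up to $(2\pi\sqrt{-1})^2\bZ$. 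Passing to $H_2((\bC^*)^2,C_q;\bZ)$ and integrating the single-valued form $\frac{dX}{X}\wedge\frac{dY}{Y}$ resolves this ambiguity.

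The same rank discrepancy sinks your descent map $\mir^c_1$. Projecting $\mir^+_{\bT'}(\widetilde\cE)\in H_1(\tC_q;\bZ)$ along $p_*\colon H_1(\tC_q;\bZ)\to H_1(C_q;\bZ)$ kills the class corresponding to $[T^2]\in H_2((\bC^*)^2;\bZ)$; in $K$-theoretic terms, the kernel of $\mir^c\colon K^c(\cX)\to K_1(C_q;\bZ)$ is $\bZ[\cF_\sigma]$, the class of the (twisted) structure sheaf of a torus-fixed point. So $\mir^c_1$ is not injective, and $\mir^c_3=\mathrm{DR}\circ\mir^c_1$ cannot be an isomorphism. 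The paper avoids this by composing the boundary isomorphism $\partial\colon H_2(\bC^2,\tC_q;\bZ)\xrightarrow{\sim}H_1(\tC_q;\bZ)$ with $\mir^c_{\bT'}$ first, then pushing down along the covering to $H_2((\bC^*)^2,C_q;\bZ)$ (not to $H_1(C_q;\bZ)$) to get $\mir^c_2$, and only then applying $\alpha$. The class $[\cF_\sigma]$ is mapped to $[T^2]$ rather than to $0$, and everything becomes an isomorphism. Finally, the non-equivariant limit you want is $u_1,u_2\to 0$ (which sends $\hx\to 0$ and hence $e^{-\hx/z}\to 1$ uniformly on the \emph{compact} cycle $\mir^c_{\bT'}(\cE)\subset\tC_q$), not $z\to\infty$; on the unbounded relative cycles the $z\to\infty$ limit is not controlled.
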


The construction of $\mir^c_3$ is induced by $\mir^+_{\bT'}$ from Theorem \ref{thm:equivMir} in the non-equivariant limit, in combination with the dimensional reduction from the Hori-Vafa mirror to the mirror curve. We expect that the construction agrees with the correspondence proposed by \cite{GM18}.

\begin{remark}\label{rem:Han}\rm{
Han \cite{Han} studied a dual version of the Hosono's conjecture  for semi-projective toric Calabi-Yau orbifolds of any dimension $r$. When $r=3$, the 
B-brane central charges $\big(Z^{B, \cE}_c \big)_{ c\in C^\circ}$ in \cite[Definition 1.4]{Han},
where $\cE$ is any class in $K(\cX)$, are related to 
generating functions  $\Big\{  \double{ z^{-\deg/2} a, \frac{\kappa_z(\cE)}{z + \hpsi} }^{\cX}_{0,2}(\btau) : a \in H^c \Big\}$ of genus-zero Gromov-Witten invariants of $\cX$ 
under the mirror map $\btau=\btau(q)$, where $\kappa_z(\cE) \in z^3 H^0_{\CR}(\cX;\bC)  \oplus z^2 H^2_{\CR}(\cX;\bC) \oplus z H^4_{\CR}(\cX;\bC)$.
}\end{remark}

\subsubsection{All-genus descendant mirror symmetry, a.k.a. Remodeling Conjecture with descendants}
Finally, we promote Theorem \ref{thm:equivMir} to higher-genus. Specifically, we obtain all-genus $\bT'$-equivariant descendant Gromov-Witten invariants of $\cX$ from oscillatory integrals of the topological recursion invariants $\omega_{g,n}$ defined from the meromorphic functions $\hx, \hy$ and the fundamental bidifferential of the second kind on the mirror curve.

\begin{itemize}
\item  Given $\cE_1, \dots, \cE_n \in K^+_{\bT'}(\cX)$, we define
$$
    Z^+_{g,n}(\cE_1, \dots, \cE_n) := \double{\frac{\kappa_{z_1}^{\bT'}(\cE_1)}{z_1(z_1 + \hpsi)},\dots,\frac{\kappa_{z_n}^{\bT'}(\cE_n)}{z_n(z_n + \hpsi)} }^{\cX, \bT'}_{g,n}.
$$  
In particular, $Z^+_{0,1}(\cE) = Z_{\bT'}(\cE)$. 

\item Given $\gamma_1, \dots, \gamma_n \in H_1(\tC_q,  \Re (\hx) \gg 0; \bZ)$, we define
$$
    \chZ^+_{g,n}(\gamma_1, \dots, \gamma_n) := (2\pi\sqrt{-1})^n \int_{p_1\in\gamma_1}\cdots\int_{p_n\in\gamma_n } e^{ - (\hx(p_1)/z_1+\cdots+\hx(p_n)/z_n)} \omega_{g,n}(p_1,\dots,p_n).
$$
In particular, $\chZ^+_{0,1}(\gamma) = 2\pi\sqrt{-1} \displaystyle{ \int_\gamma  e^{- \hx/z} \hy d\hx}$ (since $\omega_{0,1}= \hy d\hx$). 
\end{itemize}

\begin{theorem}[={\bf Theorem \ref{thm:All-genus-mirror}}) (All-genus descendant mirror theorem]  \label{thm:all-genus-mirror}
For any $g \in \bZ_{\ge 0}$, $n \in \bZ_{\ge 1}$, and $\cE_1, \dots, \cE_n \in K^+ _{\bT'}(\cX)$, we have
$$
Z^+_{g,n}(\cE_1, \dots, \cE_n) = (-1)^{g-1+n}  \chZ^+_{g,n}\left(\mir^+_{\bT'}(\cE_1), \dots, \mir^+_{\bT'}(\cE_n) \right) 
$$
under the mirror map $\btau = \btau(q)$.
\end{theorem}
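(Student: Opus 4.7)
The plan is to reduce both sides to a common stable-graph expansion anchored at the critical points of $\hx$ on the equivariant mirror curve $\tC_q$, and to match them using Theorem \ref{thm:equivMir} as the genus-zero seed. On the A-side, the $\bT'$-equivariant Chen-Ruan quantum cohomology $QH^*_{\CR,\bT'}(\cX)$ is generically semisimple for a toric Calabi-Yau 3-orbifold, so by the Givental-Teleman classification the descendant potential is obtained by acting with a Givental R-matrix $R_{\btau}(z)$ on the product of $\chi(\cX^{\bT})$ Kontsevich-Witten tau-functions, one at each canonical coordinate. On the B-side, the Chekhov-Eynard-Orantin invariants $\omega_{g,n}$ on $(\tC_q,\hx,\hy,B)$ admit the Dunin-Barkowski-Orantin-Shadrin-Spitz graph-sum expansion as a sum over stable graphs of local Airy-type contributions at the ramification points of $\hx$, glued by edges built from the fundamental bidifferential $B$.

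First I would verify that the R-matrix $R_{\btau}(z)$ entering the A-model Givental reconstruction coincides with the matrix of asymptotic expansions of the oscillatory kernels $e^{-\hx/z}\hy\, d\hx$ near each critical point of $\hx$, under the mirror map $\btau = \btau(q)$. This identification is implicit in the genus-zero theorem (Theorem \ref{thm:equivMir}): the isomorphism $\mir_{\bT'}^+$ identifies the quantum connection on $H_{\CR,\bT'}^*(\cX;\bC)$ with the Gauss-Manin connection governing the oscillatory integrals on $\tC_q$, hence the ratios of these flat sections must agree. A careful check that the canonical coordinates on the two sides match, and that the normalization of $R_{\btau}(z)$ is compatible with the $\hat\Gamma$-class appearing in $\kappa_z^{\bT'}$, upgrades the genus-zero identification to a full identification of Givental data.

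Next I would analyze each relative cycle $\mir_{\bT'}^+(\cE_i)\in H_1(\tC_q,\Re(\hx)\gg 0;\bZ)$. Since the integrand $e^{-\hx/z_i}$ decays as $\Re(\hx)\to+\infty$, standard Morse-Picard-Lefschetz theory lets me represent each relative cycle as an integer combination of Lefschetz thimbles attached to the critical points of $\hx$, modulo cycles at infinity that contribute nothing. The Laplace/stationary-phase expansion of $\chZ_{g,n}^+$ along a thimble at a critical point $p_\alpha$ then reproduces, term by term in the graph sum, the local descendant insertion of a Kontsevich-Witten vertex in canonical coordinates at $p_\alpha$, weighted by the leaf factor coming from the local expansion of $e^{-\hx/z}$.

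The final step is a graph-by-graph comparison: vertex factors on both sides are Kontsevich-Witten intersection numbers at the same critical points; edge factors agree because both are built from $R_{\btau}(z)$ via step one; leaf factors agree because the Lefschetz-thimble decomposition of $\mir_{\bT'}^+(\cE_i)$ yields exactly the K-theoretic framings $\kappa_{z_i}^{\bT'}(\cE_i)$ after Laplace transform, by Theorem \ref{thm:equivMir}. The overall sign $(-1)^{g-1+n}$ is tracked by the standard convention discrepancy between the Eynard-Orantin normalization of $\omega_{g,n}$ and the Givental quantization of the descendant potential. The main obstacle I expect is the passage from integer relative cycles to Lefschetz thimbles in a way that is compatible with the $M'$-equivariance and covers the full equivariant $K$-group $K_{\bT'}^+(\cX)$; handling this uniformly requires combining the combinatorial description of $K_{\bT'}^+(\cX)$ from Section~\ref{sec:primaryBKMP} with a Morse-theoretic study of $\hx$ on $\tC_q$ at the boundary of the chamber of $\sv$, and ensuring that all thimble decompositions extend across the wall-and-chamber structure on $N'_\bR$ used to define $\mir_{\bT'}^+$.
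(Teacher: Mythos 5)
Your overall strategy coincides with the paper's: both sides are expanded as stable-graph sums (Zong's A-model formula and the Dunin-Barkowski--Orantin--Shadrin--Spitz B-model formula), the vertex and edge weights are matched through the R-matrix identification, and the genus-zero theorem (Theorem \ref{thm:equivMir}) serves as the seed that pins down the leaf weights. The paper likewise cites the graph-sum weight identification from \cite{flz2020remodeling} and reduces the theorem to genus zero.

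However, there is a genuine gap in the step where you assert that ``leaf factors agree because the Lefschetz-thimble decomposition of $\mir_{\bT'}^+(\cE_i)$ yields exactly the K-theoretic framings $\kappa_{z_i}^{\bT'}(\cE_i)$ after Laplace transform, by Theorem \ref{thm:equivMir}.'' Theorem \ref{thm:equivMir} computes $\int_\gamma e^{-\hx/z}\,\hy\,d\hx$, i.e.\ the pairing of the mirror cycle with $\omega_{0,1}=\Phi$. The B-model descendant leaf, by contrast, is built from the 1-forms $\theta_\bsi^k$ (equivalently $\htheta_\bsi^k$), so what you actually need is the Laplace transform $\int_{\mir_{\bT'}^+(\cE)} e^{-\hx/z}\,\htheta_\bsi^k$ and its identification with an entry of the $S$-operator in the normalized canonical basis. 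This is not a formal consequence of stationary phase along a thimble: one must first express $\hat\phi_\bsi(\btau(q))$ in the $\{\mathbf{1},H_a,H_a\star_{\btau}H_b\}$-frame of $QH^*_{\bT'}(\cX)\cong\Jac(W^{\bT'}_q)$, use the corresponding differential operator in the $\tau_a$'s to convert $\Phi$ into $\htheta_\bsi^0$ (via \cite[Proposition 6.3]{flz2020remodeling}), apply that operator to the genus-zero identity, and then integrate by parts inductively to reach arbitrary $k$. Without this derivation (Lemma \ref{lem:LeafInt} in the paper), the leaf-matching step is an assertion, not a proof. A related omission is the unstable case $(g,n)=(0,2)$, which does not fit the stable graph sum: the paper treats it separately via the $S$-operator unitary identity \eqref{eqn:SIdentity} after expanding $\omega_{0,2}$ as a bilinear sum in the $\htheta_\bsi^0$'s (Proposition \ref{prop:02Case}), and your outline gives no argument for it. Finally, you propose to re-verify the R-matrix identification rather than invoking \cite[Theorem 7.2]{flz2020remodeling}; this is legitimate but amounts to re-deriving a substantial piece of the primary Remodeling Conjecture and is not ``implicit'' in the genus-zero theorem in any short way -- the paper's strategy of citing it directly as Theorem \ref{thm:graph-match} is the efficient route.
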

Theorem \ref{thm:all-genus-mirror} can be viewed as an analog of \cite[Theorem B]{FLZ17} for the projective line $\bP^1$, \cite[Theorem 2]{Tang} for weighted projective lines, and \cite[Theorem 2]{Lan25} for footballs. Its proof relies on the graph sum formulas on both A-model (Theorem \ref{thm:Zong}) and B-model (Theorem \ref{thm:DOSS}). In \cite[Theorem 7.2]{flz2020remodeling}, the graph sum formulas on A-model and B-model are identified modulo a certain leaf factor, which is the genus-zero data. Therefore, the proof of Theorem \ref{thm:all-genus-mirror} is reduced to the case when $(g,n)=(0,1)$, which is 
Theorem \ref{thm:equivMir}.

\subsection{Future work}
We remark on the application of the results in this paper to the study of the Crepant Transformation Conjecture (CTC) for semi-projective toric Calabi-Yau $3$-orbifolds. The Crepant Transformation Conjecture, originally proposed by Y. Ruan \cite{Ruan02,Ruan06}, relates the Gromov-Witten theory of a pair of $K$-equivalent manifolds/orbifolds.
%Two orbifolds $\cX_1$ and $\cX_2$ are a \emph{crepant pair} if they resolve a singular variety $X$ and their canonical classes are pullbacks of the canonical class of $X$. The CTC relates Gromov-Witten invariants for a crepant pair $\cX_1$ and $\cX_2$. 
It was later refined and then extended to the higher-genus situation \cite{BryanGraber-crepant,CoatesIritaniTseng-crepant, Iritani09, CoatesRuan13-crepant}. 
%The genus-zero CTC for toric orbifolds and their complete intersections was proved by Coates-Iritani-Jiang \cite{CIJ18}. Coates-Iritani proved the all-genus CTC for compact weak-Fano toric orbifolds \cite{CI18} and for the crepant resolution $K_{\bP^2} \to [\bC^3/\bmu_3]$ \cite{CI21}. 
Various versions of all-genus CTC of toric Calabi-Yau 3-orbifolds have been proved for the minimal resolution $\cA_n\times \bC\to [\bC^2/\bmu_{n+1}]\times \bC$  \cite{Zhou08, Brini-Cavalieri-Ross}, toric Calabi-Yau 3-folds with transverse $A$-singularities \cite{Ross15},  
and the crepant resolution $K_{\bP^2}\to [\bC^3/\bmu_3]$ \cite{Lho-Pandharipande, CI21}.

Let $\cX_\pm$ be a pair of $K$-equivalent toric Calabi-Yau $3$-orbifolds. They are different phases over a \emph{global} stringy K\"ahler moduli, and correspond to two top-dimensional cones (GIT chambers) of their common secondary fan. In the B-model, we have a family of mirror curves $\mathcal C$ over the toric variety $\mathcal M_B$ defined by this secondary fan, and the two top-dimensional cones define two points $\fs_\pm$ in $\mathcal M_B$. The mirror curves $C_{q_\pm}$ of $\cX_\pm$ are the fibers of $\mathcal C$ near the limit points $\fs_\pm$ respectively. An analytic continuation from $C_{q_-}$ to $C_{q_+}$ identifies their integral homology by the Gauss-Manin connection, depending on the path. As the A-model counterpart, the integral structures are identified by the Fourier-Mukai transform on the $K$-groups, where note that the support condition is preserved by $K$-equivalence (cf. \cite{BH15} for the case of compact support). We have the following diagram where all arrows are isomorphisms of abelian groups:
\[
    \xymatrix@C+3pc{
        K^+_{\bT'}(\cX_-) \ar[d]^-{\mir^+_{\bT'}} \ar[r]^-{\text{Fourier-Mukai}} & K^+_{\bT'} (\cX_+) \ar[d]^{\mir_{\bT'}^+} \\
		H_1(\tC_{q_-}, \Re (\hx) \gg 0; \bZ) \ar[r]^-{\text{Gauss-Manin}} & H_1(\tC_{q_+}, \Re (\hx) \gg 0; \bZ).
    }
\]
This implies the transformation of genus-zero and higher-genus Gromov-Witten invariants under the Crepant Transformation Conjecture as the following.
\begin{itemize}
\item %The genus-zero descendant Hodge-theoretic mirror theorem (Theorem \ref{thm:equivMir}) identifies $Z_{\bT'}(\cE)$ with $Z_{\bT'}(\FM(\cE))$ after analytic continuation, for $\cE \in K^+_{\bT'}(\cX_-)$. 
    By Theorem \ref{thm:equivMir}, the quantum cohomology central charge $Z_{\bT'}(\cE)$ is identified with $Z_{\bT'}(\FM(\cE))$ after analytic continuation, for $\cE \in K^+_{\bT'}(\cX_-)$. In particular, the symplectic transformation on the state space (cf. \cite{CIJ18}) in CTC takes $\kappa_z^{\bT'}(\cE)$ to $\kappa_z^{\bT'}(\FM(\cE))$.

\item The modular transform in the higher-genus CTC naturally arises in the analytic continuation of the recursion kernel $\omega_{0,2}$ (see Section \ref{sec:eynard-orantin}) from $C_{q_-}$ to $C_{q_+}$. Thus $Z^+_{g,n}(\cE_1,\dots,\cE_n)$ is related to $Z^+_{g,n}(\FM(\cE_1),\dots,\FM(\cE_n))$ under the analytic continuation and a modular transform, for $\cE_i \in K^+_{\bT'}(\cX_-)$. 
\end{itemize}
We will investigate the all-genus primary, descendant, and open Crepant Transformation Conjecture in the forthcoming work \cite{FLYZ-crepant}.

\subsection{Outline of the paper}
In Section \ref{sect:Prelim}, we study the geometry of semi-projective toric Calabi-Yau 3-orbifolds and their equivariant Chen-Ruan cohomology. In Section \ref{sect:GW}, we study the equivariant Gromov-Witten theory of toric Calabi-Yau 3-orbifolds and give a graph sum formula (Theorem \ref{thm:Zong}). 
% In Section \ref{sec:mirrorcurve}, we study the geometry of the mirror curves and the topological recursion. We state a graph sum formula (Theorem \ref{thm:DOSS}) for the Eynard-Orantin invariants $\omega_{g,n}$. 
In Section \ref{sect:Sheaves}, we study the $K$-theory with different support conditions and compute quantum cohomology central charges.
% In Section \ref{sect:Sheaves}, we define various versions of the $K$-groups for toric Calabi-Yau 3-orbifolds and study the corresponding central charges and $K$-theoretic framings. 
In Section \ref{sect:Integrals}, we compute oscillatory integrals on the mirror curve and construct the mirror cycle homomorphism $\mir_{\bT'}^+$. In Section \ref{sect:GenusZero}, we prove Theorems \ref{thm:equivMir}=\ref{thm:EquivMir} and \ref{thm:kc3Mir}=\ref{thm:Kc3Mir}.  In Section \ref{sec:allgenus}, we prove Theorem \ref{thm:all-genus-mirror}=\ref{thm:All-genus-mirror}.

\subsection{Acknowledgments}

The authors would like to thank Simons Center for Geometry and Physics (2023 Simons Math Summer Workshop and Recent Developments in Higher Genus Curve Counting in 2025), and Les Houches School of Physics (Quantum Geometry in August 2024) for their support and hospitality, where important progress of this work was made during these programs.

The work of the first author is partially supported by National Key R\&D Program of China 2023YFA1009803, NSFC 12125101, NSFC 11890661 and NSFC 11831017. The work of the fourth author is partially supported by the Natural Science Foundation of Beijing, China grant No. 1252008 and NSFC grant No. 11701315.

\section{Geometry of toric Calabi-Yau 3-orbifolds}\label{sect:Prelim}
In this section, we review the basic notions of semi-projective toric Calabi-Yau 3-orbifolds and set up notation. We work over $\bC$.

\subsection{Simplicial fan and associated toric variety}\label{sect:Fan}
Let $N \cong \bZ^3$ and $\Sigma$ be a finite simplicial fan in $N_{\bR} := N \otimes \bR$. For $d = 0, 1, 2, 3$, let $\Sigma(d)$ denote the set of $d$-dimensional cones in $\Sigma$. We further label the set of 1-cones as
$$
    \Sigma(1) = \{\rho_1, \dots, \rho_{3+\fp'}\}
$$
where $\fp' := |\Sigma(1)|-3$, and let $b_i \in N$ be the primitive integral vector on the ray $\rho_i$, i.e. $\rho_i \cap N = \bZ_{\ge 0} b_i$. We assume that $\Sigma$ is 3-dimensional and every cone in $\Sigma$ is contained in some 3-cone.

Let $X$ be the 3-dimensional simplicial toric variety associated to $\Sigma$, which admits the action of the algebraic torus $\bT := N \otimes \bC^* \cong (\bC^*)^3$. The lattice $N$ is canonically identified with the cocharacter lattice $\Hom(\bC^*, \bT)$ of $\bT$, and the dual $M := \Hom(N, \bZ)$ is canonically identified with the character lattice $\Hom(\bT, \bC^*)$. In this paper, we assume the following:
\begin{itemize}
    \item $X$ is Calabi-Yau: the canonical bundle $K_X$ is trivial;
    
    \item $X$ is semi-projective: it is projective over its affinization $X_0 = \Spec(H^0(X, \cO_X))$.
\end{itemize}

The Calabi-Yau condition is equivalent to the existence of $e_3^\vee \in M$ such that $\inner{e_3^\vee, b_i} = 1$ for all $i = 1, \dots, 3+\fp'$. Let $N' := \ker (e_3^\vee: N \to \bZ) \cong \bZ^2$ and $\bT' := N' \otimes \bC^* \cong (\bC^*)^2$ be the Calabi-Yau 2-subtorus of $\bT$. The dual lattice $M' := \Hom(N', \bZ)$ is canonically identified with the character lattice $\Hom(\bT', \bC^*)$ of $\bT'$.

We complete $e_3^\vee$ into a $\bZ$-basis $\{e_1^\vee, e_2^\vee, e_3^\vee\}$ of $M$ and let $\{e_1, e_2, e_3\}$ be the dual $\bZ$-basis of $N$. Then the coordinate of $b_i$ is $(m_i, n_i, 1)$ for some $m_i, n_i \in \bZ$. Let $P \subset N'_{\bR} := N' \otimes \bR$ be the convex hull of $\{(m_i, n_i) : i = 1, \dots, 3+ \fp'\}$. Let $C \subset N_{\bR}$ be the cone over $P \times \{1\}$, which is a 3-dimensional finite strongly convex polyhedral cone. The semi-projectivity condition implies that $C$ is the support of $\Sigma$. In particular, $H^0(X, \cO_X) = \bC[C^\vee \cap M]$, where $C^\vee \subset M_{\bR} := M \otimes \bR$ is the dual cone of $C$, and thus $X_0$ is defined by $C$.

\subsection{Stacky fan and orbifold structure}
Let $\bSi^\can = (N, \Sigma, (b_1, \dots, b_{3+\fp'}))$ be the \emph{canonical stacky fan} associated to the simplicial fan $\Sigma$ and $\cX$ be the associated smooth toric Deligne-Mumford stack \cite{BCS05,FMN10}. In our case, $\cX$ is a toric orbifold, i.e. has trivial generic stabilizers. Moreover, $\cX$ is Calabi-Yau and has semi-projective coarse moduli space $X$.

We will use the alternative description of $\cX$ by the \emph{extended stacky fan} \cite{Jiang08} $\bSi^\ext = (N, \Sigma, (b_1, \dots, b_{3+\fp}))$. Here, $\fp := |P \cap N'| -3$ and we take the additional vectors $b_i = (m_i, n_i, 1) \in N$, $i = 4+\fp', \dots, 3+\fp$ such that
$$
    P \cap N' = \{(m_i, n_i) : i = 1, \dots, 3 + \fp\}.
$$
The extended list of vectors defines a surjective homomorphism
$$
    \phi: \tN := \bigoplus_{i=1}^{3+\fp} \bZ \tb_i \to N, \qquad \tb_i \mapsto b_i.
$$
Let $\bL := \Ker(\phi) \cong \bZ^{\fp}$, which fits into the short exact sequence
\begin{equation}\label{eqn:NExactSeq}
    \xymatrix{
        0 \ar[r] & \bL \ar[r]^\psi & \tN \ar[r]^\phi & N \ar[r] & 0.
    }
\end{equation}
Applying $-\otimes \bC^*$ to the above, we obtain a short exact sequence
\begin{equation}\label{eqn:TExactSeq}
    \xymatrix{
        1 \ar[r] & G \ar[r]^\psi & \tbT \ar[r]^\phi & \bT \ar[r] & 1.
    }
\end{equation}
Now consider $\bC^{3+\fp} = \Spec(\bC[Z_1, \dots, Z_{3+\fp}])$ with the standard action of $\tbT \cong (\bC^*)^{3+\fp}$ and the action of $G \cong (\bC^*)^{\fp}$ via the inclusion above. In the coordinate subspace $\bC^{3+\fp'} = \Spec(\bC[Z_1, \dots, Z_{3+\fp'}])$, let $Z(\Sigma)$ be closed subvariety defined by the ideal generated by monomials of form $\prod_{\rho_i \not \subseteq \sigma} Z_i$ for $\sigma \in \Sigma$. The subset
$$
    U := (\bC^{3+\fp'} \setminus Z(\Sigma)) \times (\bC^*)^{\fp - \fp'}
$$
is dense, open in $\bC^{3+\fp}$, and $\tbT$-invariant. Then
$$
    X = U/G, \qquad \cX = [U/G].
$$
The semi-projectivity assumption implies that the above are GIT quotients. Moreover, $\cX$ may be presented as a symplectic quotient under the Hamiltonian action of the maximal compact torus of $G$ on $\bC^{3+\fp}$. This endows $\cX$ with the structure of a K\"ahler orbifold. We refer to e.g. \cite[Section 2.9]{flz2020remodeling} for details.

\subsection{Cones, toric closed substacks, and stabilizers}\label{sect:Stabilizers}
Let $\sigma \in \Sigma(d)$. We denote
$$
    I'_\sigma := \{ i \in \{1, \dots, 3+\fp'\} : \rho_i \subseteq \sigma\}, \qquad I_\sigma := \{1, \dots, 3+\fp\} \setminus I'_\sigma.
$$
Then $|I'_\sigma| = d$. Let $\cV(\sigma)$ denote the $\bT$-invariant closed substack of $\cX$, which has codimension $d$. We note that the action of the Calabi-Yau subtorus $\bT'$ has the same $0$- and $1$-dimensional orbits as that of $\bT$. Depending on $d$, we will adopt the following notation:
\begin{itemize}
    \item $\sigma \in \Sigma(3)$: $\cV(\sigma)$ is a $\bT$-fixed point and is denoted by $\fp_\sigma$;
    
    \item $\sigma \in \Sigma(2)$: $\cV(\sigma)$ is a $\bT$-invariant line and is denoted by $\fl_\sigma$;
    
    \item $\sigma = \rho_i \in \Sigma(1)$: $\cV(\rho_i)$ is a $\bT$-divisor and is denoted by $\cD_i$.

\end{itemize}
We use $\iota_\sigma$ to denote the inclusion $\cV(\sigma) \to \cX$, which is $\bT$-equivariant.

Now suppose $d = 3$ or $2$. Let $N_\sigma$ be the rank-$d$ sublattice of $N$ spanned by $\{b_i : i \in I'_\sigma\}$, and let $N'_\sigma := N_\sigma \cap N'$, which has rank $d-1$. Consider the dual lattices $M_\sigma := \Hom(N_\sigma, \bZ)$, $M'_\sigma := \Hom(N'_\sigma, \bZ)$ which have ranks $d$, $d-1$ respectively. When $d = 3$, $M_\sigma$ is an overlattice of $M$ in $M_{\bQ} := M \otimes \bQ$ and $M'_\sigma$ is an overlattice of $M'$ in $M'_{\bQ} := M' \otimes \bQ$. For an inclusion of cones $\tau \subseteq \sigma$, we have an inclusion $N'_\tau \subseteq N'_\sigma$ and a projection $M'_\sigma \to M'_\tau$.

For $\sigma \in \Sigma$, let $G_\sigma$ denote the group of generic stabilizers of $\cV(\sigma)$, which is a finite subgroup of $G$, and let $G^*_\sigma := \Hom(G_\sigma, \bC^*)$ denote the dual group. For an inclusion of cones $\tau \subseteq \sigma$, we have an inclusion $G_\tau \subseteq G_\sigma$ and a projection $G^*_\sigma \to G^*_\tau$. When $\sigma \in \Sigma(3)$ is maximal, we have identifications
$$
    G_\sigma \cong N/N_\sigma \cong N'/N'_\sigma , \qquad G_\sigma^* \cong M_\sigma / M \cong M'_\sigma / M'.
$$
In particular, $G_\sigma$ is in bijection with the set of representatives
$$
    \Box(\sigma) := \left\{v \in N : v = \sum_{i \in I'_\sigma} c_i(v) b_i \text{ for some } c_i(v) \in [0,1) \cap \bQ \right\}
$$
of $N/N_\sigma$. The above equation defines the quantities $c_i(v)$; note that $c_i(v) = 0$ for all $i \in I_\sigma$. The definition of $\Box(\sigma)$ is valid when $\sigma$ is not necessarily maximal. If $\tau\subseteq \sigma$ then $\Box(\tau)\subset\Box(\sigma)$.

\subsection{Flags}\label{sect:Flags}
Let $F(\Sigma)$ be the set of \emph{flags} in $\Sigma$. A flag is a pair $(\tau, \sigma)$ where $\tau \in \Si(2)$, $\si \in \Si(3)$, and $\tau \subset \si$. It corresponds to the inclusion of the fixed point $\fp_\sigma$ in the line $\fl_\tau$. For stabilizers, there is a short exact sequence of finite abelian groups
$$
    \xymatrix{
           1 \ar[r] & G_\tau \ar[r] & G_\sigma \ar[r] & \bmu_{\fr_{(\tau,\si)}} \ar[r] & 1
    }
$$
where $G_\tau \cong \bmu_{\fm_\tau}$ is a cyclic subgroup of order $\fm_\tau := |G_\tau|$, and the quotient is also a cyclic group of order $\fr_{(\tau,\si)} := \frac{|G_\sigma|}{|G_\tau|}$.

A flag $\bff = (\tau,\si)$ determines an ordered triple
$(i_1 = i_1^\bff, i_2 = i_2^\bff, i_3 = i_3^\bff)$, where $i_1,i_2, i_3\in \{1,\ldots, 3+\fp'\}$, characterized by the following three conditions: (i) $I_\si' = \{ i_1, i_2, i_3\}$, (ii) $I_\tau'=\{ i_2, i_3\}$, and (iii)
 $(m_{i_1}, n_{i_1})$, $(m_{i_2}, n_{i_2})$, $(m_{i_3}, n_{i_3})$ are vertices of a triangle $P_\si \subseteq P$ in the counterclockwise order.
 Note that the 3-cone $\si$ is the cone over the triangle $P_\si\times \{1\}$.
 There exists an ordered $\bZ$-basis $\{e_1^{\bff}, e_2^{\bff}, e_3^{\bff}\}$ of $N$ such that
$$
b_{i_1^{\bff}} = \fr_{\bff} e_1^{\bff}   -\fs_{\bff}e_2^{\bff}  + e_3^{\bff} ,\quad
b_{i_2^{\bff}} = \fm_\tau e_2^{\bff}   + e_3^{\bff},\quad
b_{i_3^{\bff}} = e_3^{\bff}
$$
where $\fs_{\bff} \in \{0,1,\ldots, \fr_{\bff}-1\}$. Then
$\{ e_1^{\bff}, e_2^{\bff} \}$ is a $\bZ$-basis of $\bZ e_1\oplus \bZ e_2$ and $e_1^{\bff}\wedge
e_2^{\bff} = e_1\wedge e_2$. Let $\{e_1^{\bff\vee}, e_2^{\bff\vee}, e_3^{\vee}\}$ be the dual $\bZ$-basis of $M$.

In this paper, we fix a reference flag $\bff_0 = (\tau_0,\si_0)$. Assume without loss of generality that $i_1^{\bff_0} = 1$, $i_2^{\bff_0} = 2$, $i_3^{\bff_0} = 3$. Let
$$
\fr = \fr_{\bff_0}, \qquad \fs = \fs_{\bff_0}, \qquad \fm=\fm_{\tau_0}.
$$
We assume that the $\bZ$-basis $\{e_1^\vee, e_2^\vee, e_3^\vee\}$ of $M$ in Section \ref{sect:Fan} is chosen such that $e_1^\vee = e_1^{\bff_0\vee}$, $e_2^\vee = e_2^{\bff_0\vee}$. We have
$$
    b_1 = \fr e_1 - \fs e_2 + e_3 ,\quad
b_2 = \fm e_2   + e_3, \quad
b_3 = e_3.
$$
We use the notation $\su_1, \su_2, \su_3$ for $e_1^\vee, e_2^\vee, e_3^\vee$ respectively when viewed as characters of $\bT$. We also use $\su_1, \su_2$ to denote the induced characters of $\bT'$, viewed as elements in $M'$. Then
$$
    M' = \bZ\su_1 \oplus \bZ\su_2, \qquad H^*_{\bT'}(\pt) = \bZ[\su_1, \su_2].
$$

\subsection{Character lattices and equivariant line bundles}\label{sect:LineBundles}
Applying $\Hom(-, \bZ)$ to \eqref{eqn:NExactSeq} gives the short exact sequence of dual lattices
$$    
    \xymatrix{
        0 \ar[r] & M \ar[r]^{\phi^\vee} & \tM \ar[r]^{\psi^\vee} & \bL^\vee \ar[r] & 0
    }
$$
which are the character lattices of the tori in \eqref{eqn:TExactSeq}. We have the identifications
$$
    \tM \cong H^2_{\tbT}(\bC^{3+\fp}; \bZ), \qquad \bL^\vee \cong H^2_G(\bC^{3+\fp}; \bZ).
$$
For $i = 1, \dots, 3+ \fp$, let $D_i^{\bT} \in \tM$ (resp. $D_i \in \bL^\vee$) be the $\tbT$-equivariant (resp. $G$-equivariant) Poincar\'e dual of the coordinate hyperplane $\{Z_i = 0\}$ in $\bC^{3+\fp}$. The $\bZ$-basis $\{D_1^{\bT}, \dots, D_{3+\fp}^{\bT}\}$ of $\tM$ is dual to the $\bZ$-basis $\{\tb_1, \dots, \tb_{3+\fp}\}$ of $\tN$. We have $\psi^\vee(D_i^{\bT}) = D_i$ for all $i$. The inclusion of the open subset $U \subseteq \bC^{3+\fp}$ induces surjective homomorphisms
$$
    \kappa_{\bT}: \tM \cong H^2_{\tbT}(\bC^{3+\fp}; \bZ) \to H^2_{\tbT}(U; \bZ) \cong H^2_{\bT}(\cX; \bZ), \quad
    \kappa: \bL^\vee \cong H^2_G(\bC^{3+\fp}; \bZ) \to H^2_G(U; \bZ) \cong H^2(\cX; \bZ)
$$
whose kernels are $\bigoplus_{i = 4+\fp'}^{3+ \fp} \bZ D_i^{\bT}$, $\bigoplus_{i = 4+\fp'}^{3+ \fp} \bZ D_i$ respectively. We have the following commutative diagram where the rows and columns are exact:
$$
    \xymatrix{
        & & 0 \ar[d] & 0 \ar[d] & \\
        & & \displaystyle \bigoplus_{i = 4+\fp'}^{3+ \fp} \bZ D_i^{\bT} \ar[r]^{\cong} \ar[d] & \displaystyle \bigoplus_{i = 4+\fp'}^{3+ \fp} \bZ D_i \ar[d] & \\
        0 \ar[r] & M \ar[r]^{\phi^\vee} \ar[d]^{=} & \tM \ar[r]^{\psi^\vee} \ar[d]^{\kappa_{\bT}} & \bL^\vee \ar[r] \ar[d]^{\kappa} & 0\\
        0 \ar[r] & M \ar[r] & H^2_{\bT}(\cX; \bZ) \ar[r] \ar[d] & H^2(\cX; \bZ) \ar[r] \ar[d] & 0\\
        & & 0 & 0 &
    }
$$

\begin{comment}
The first Chern class homomorphisms
$$
    c_1^{\bT}: \Pic_{\bT}(\cX) \to H^2_{\bT}(\cX; \bZ), \qquad c_1: \Pic(\cX) \to H^2(\cX; \bZ)
$$
are isomorphisms. Any $\bT$-equivariant line bundle on $\cX$ can be written uniquely as
$$
    \cO_{\cX}\left(\sum_{i = 1}^{3+\fp'} r_i \cD_i \right), \qquad r_i \in \bZ.
$$
\end{comment}

To pass to the $\bT'$-equivariant setting, consider the following commutative diagram where the rows and columns are exact:
\begin{equation}\label{eqn:TPrimeDiagram}
    \xymatrix{
        & 0 \ar[d] & 0 \ar[d] & & \\
        & \bZ e_3^\vee \ar[r]^{\cong \phantom{opera}} \ar[d] & \bZ \left( \sum_{i=1}^{3+\fp'} \Dbar_i^{\bT}\right) \ar[d] & & \\
%        0 \ar[r] & M \ar[r] \ar[d]^{=} & \tM \ar[r] \ar[d]^{\kappa_{\bT}} & \bL^\vee \ar[r] \ar[d]^{\kappa} & 0\\
        0 \ar[r] & M \ar[r] \ar[d] & H^2_{\bT}(\cX; \bZ) \ar[r] \ar[d] & H^2(\cX; \bZ) \ar[r] \ar[d]^{=} & 0\\
        0 \ar[r] & M' \ar[d] \ar[r] & H^2_{\bT'}(\cX; \bZ) \ar[r] \ar[d] & H^2(\cX; \bZ) \ar[r] & 0\\
        & 0 & 0 & &
    }
\end{equation}
where $e_3^\vee \in M$ is specified by the Calabi-Yau condition and $\Dbar_i^{\bT} := \kappa_{\bT}(D_i^{\bT})$. Let $\Dbar_i^{\bT'} \in H^2_{\bT'}(\cX; \bZ)$, $\Dbar_i \in H^2(\cX; \bZ)$ denote the images of $\Dbar_i^{\bT}$. The first Chern class homomorphisms
$$
    c_1^{\bT'}: \Pic_{\bT'}(\cX) \to H^2_{\bT'}(\cX; \bZ), \qquad c_1: \Pic(\cX) \to H^2(\cX; \bZ)
$$
are isomorphisms, where $c_1^{\bT'}$ identifies $\Dbar_i^{\bT'}$ with $\cO_{\cX}(\cD_i)$ viewed as a $\bT'$-equivariant line bundle. Any $\bT'$-equivariant line bundle on $\cX$ can be written as
$$
    \cO_{\cX}\left(\sum_{i = 1}^{3+\fp'} r_i \cD_i \right), \qquad r_i \in \bZ
$$
and two tuples $(r_i)$, $(r_i') \in \bZ^{3+\fp'}$ represent the same $\bT'$-equivariant line bundle if and only if
$$
    r_1 - r_1' = \cdots = r_{\fp'} - r_{\fp'}'.
$$

For the closed substack $\cV(\sigma)$, $\sigma \in \Sigma$, the restriction gives surjective homomorphisms
$$
    \Pic_{\bT'}(\cX) \to \Pic_{\bT'}(\cV(\sigma)), \qquad \Pic(\cX) \to \Pic(\cV(\sigma)).
$$
In Section \ref{sect:Integrals}, we will also use a description of $\bT'$-equivariant line bundles on $\cV(\sigma)$ in terms of \emph{twisted polytopes} \cite{fltz14}, which is the data of the $\bT'$-weights of the line bundles at the torus fixed points. For $i = 1, \dots, 3+\fp'$ and $\sigma \in \Sigma(3)$, denote
$$
    \sw_{i, \sigma} := \Dbar_i^{\bT'} \big|_{\fp_\sigma} \qquad \in M'_\sigma \subset M'_{\bQ}
$$
which is non-zero if and only if $i \in I_\sigma'$.

On the other hand, for $\sigma \in \Sigma(3)$, consider the associated open substack $\cX_\sigma = [\bC^3/G_\sigma]$ of $\cX$ which is an affine toric Calabi-Yau 3-orbifold. Its fan consists of the single 3-cone $\sigma$ and its coarse moduli space $X_\sigma$ is the affine simplicial toric variety $\Spec(\bC[\sigma^\vee \cap M]) \cong \bC^3/G_\sigma$. We have identifications
$$
    H^2_{\bT'}(\cX_\sigma; \bZ) \cong M'_\sigma, \qquad G^*_\sigma \cong \Pic(\cX_\sigma) \cong H^2(\cX_\sigma; \bZ)
$$
and the third row of \eqref{eqn:TPrimeDiagram} applied to $\cX_\sigma$ reads
\begin{equation}\label{eqn:TPrimeAffine}
    \xymatrix{
        0 \ar[r] &  M' \ar[r] &  M'_\si \ar[r] & G_\si^* \ar[r] & 0.     
    }
\end{equation}

\subsection{Extended nef and Mori cones}
Let $\sigma \in \Sigma(3)$ be a maximal cone. Let
$
    \bK_\sigma^\vee := \bigoplus_{i \in I_\sigma} \bZ D_i
$
which is a sublattice of $\bL^\vee$ of finite index. The \emph{extended $\sigma$-nef cone} is
$$
    \tNef_\sigma := \sum_{i \in I_\sigma} \bR_{\ge 0} D_i
$$
which is a top-dimensional cone in $\bL^\vee_{\bR} := \bL^\vee \otimes \bR$. Dually, denote $\bL_{\bQ} := \bL \otimes \bQ$, $\bL_{\bR} := \bL \otimes \bR$. Let
$
    \bK_\sigma := \{\beta \in \bL_{\bQ} : \inner{D, \beta} \in \bZ \text{ for all } D \in \bK_\sigma^\vee\}
$
be the dual lattice of $\bK_\sigma^\vee$, which is an overlattice of $\bL$ in $\bL_{\bQ}$. The \emph{extended $\sigma$-Mori cone} is
$$
    \tNE_\sigma := \{\beta \in \bL_{\bR} : \inner{D, \beta} \ge 0 \text{ for all } D \in \tNef_\sigma\} 
$$
which is the dual cone of $\tNef_\sigma$. We set
$
    \bK_{\eff, \sigma} := \bK_\sigma \cap \tNE_\sigma.
$

Taking all maximal cones into account, we define the \emph{extended nef cone} of $\cX$ to be
$$
    \tNef(\cX) := \bigcap_{\sigma \in \Sigma(3)} \tNef_\sigma
$$
and the \emph{extended Mori cone} of $\cX$ to be
$$
    \tNE(\cX) := \bigcup_{\sigma \in \Sigma(3)} \tNE_\sigma.
$$
We further set
$$
    \bK := \bigcup_{\sigma \in \Sigma(3)} \bK_\sigma, \qquad \bK_{\eff} := \bK \cap \tNE(\cX) = \bigcup_{\sigma \in \Sigma(3)} \bK_{\eff, \sigma}.
$$

For $\sigma \in \Sigma(3)$, we have an identification $G_\sigma\cong \bK_\sigma / \bL$, and a bijection $\bK_\sigma / \bL \to \Box(\sigma)$ is induced by the map
$$
    v: \bK_\sigma \to \Box(\sigma), \qquad \beta \mapsto \sum_{i \in I'_\sigma}  \{-\inner{D_i, \beta}\} b_i.
$$
Here for $x \in \bR$, $\{x\} = x - \floor{x}$ denotes the fractional part of $x$. In particular, $c_i(v(\beta)) = \{-\inner{D_i, \beta}\}$. This bijection restricts to a bijection $G_\tau \to \Box(\tau)$ for any face $\tau \subseteq \sigma$.

\subsection{The inertia stack}
Let
$$
    \Box(\cX) := \bigcup_{\sigma \in \Sigma} \Box(\sigma) = \bigcup_{\sigma \in \Sigma(3)} \Box(\sigma).
$$
This set indexes the connected components of the inertia stack $\cI\cX$ of $\cX$. Let $\cX_v$ denote the component indexed by $v \in \Box(\cX)$, which is a $\bT$-invariant closed substack of $\cX$. Let
$$
    \inv: \cI \cX \to \cI \cX, \qquad \inv: \Box(\cX) \to \Box(\cX)
$$
denote the natural involutions on $\cI \cX$ and the index set $\Box(\cX)$.

The dimension of the inertia component $\cX_v$ is
$$
    \dim \cX_v = 3 - |\{i \in \{1, \dots, 3+\fp'\} : c_i(v) \neq 0\}|.
$$
The \emph{age} of $\cX_v$ is defined to be
$$
    \age(v) := \sum_{i = 1}^{3+\fp'} c_i(v) \quad \in \{0, 1, 2\}.
$$
For $v = 0$, $\cX_v = \cX$ is referred to as the \emph{untwisted sector} and has age $0$. The other components are refereed to as \emph{twisted sectors} and have positive age.

For any line bundle $\cL \in \Pic(\cX)$ and any $v \in \Box(\cX)$, the age of $\cL$ along $\cX_v$ is the number $\age_v(\cL) \in [0,1) \cap \bQ$ such that if $x$ is any point in $\cX_v$ and $k \in \Aut(x)$ corresponds to $v$, then $k$ acts on the fiber $\cL_x$ with eigenvalue $\exp(2\pi\sqrt{-1}\age_v(\cL))$. More explicitly, if we write $\cL = \cO_{\cX}\left(\sum_{i = 1}^{3+\fp'} r_i \cD_i \right)$, $r_i \in \bZ$, then for $v \in \Box(\cX)$ we have
$$
    \age_v(\cL) = \left\{ \sum_{i = 1}^{3+\fp'} r_ic_i(v) \right\}.
$$
If $\cL$ is pulled back from the coarse moduli space $X$ of $\cX$, then $\age_v(\cL) = 0$ for any $v$. Thus the age only depends on the class of $\cL$ in the \emph{stacky Picard group}
$$
    \Picst(\cX) := \Pic(\cX) / \Pic(X).
$$

\subsection{Chen-Ruan orbifold cohomology}
Let $\bF = \bQ$, $\bR$, or $\bC$. The \emph{Chen-Ruan orbifold cohomology} \cite{CR02} of $\cX$ with coefficients in $\bF$ is
$$
    H^*_{\CR}(\cX; \bF) = \bigoplus_{v \in \Box(\cX)} H^*(\cX_v; \bF)[2\age(v)]
$$
where $[2\age(v)]$ denotes the degree shift by $2\age(v)$. For $v \in \Box(\cX)$, let $\one_v$ denote the unit of $H^*(\cX_v; \bF)$, which has degree $2\age(v)$ in $H^*_{\CR}(\cX; \bF)$.

The dimensions of the homogeneous components of $H^*_{\CR}(\cX; \bF)$ can be retrieved from the combinatorial data of the polytope $P$. Let
$$
    \fg := |\Int(P) \cap N'|, \qquad \fn := | \partial P \cap N'|
$$
denote the number of lattice points in the interior $\Int(P)$ and on the boundary $\partial P$ of $P$ respectively. Note that
$$
    \fg + \fn = |P \cap N'| = 3 + \fp.
$$
Then we have
\begin{align*}
    & \dim_{\bF} H^0_{\CR}(\cX; \bF)  = 1, && \dim_{\bF} H^2_{\CR}(\cX; \bF) = \fp = \fg + \fn - 3,\\
    & \dim_{\bF} H^4_{\CR}(\cX; \bF) = \fg, && \dim_{\bF} H^*_{\CR}(\cX; \bF) = 1 + \fp + \fg = 2\fg - 2 + \fn.
\end{align*}

In the $\bT'$-equivariant setting, let
$$
    R_{\bT'} := H^*_{\bT'}(\pt; \bC) = \bC[\su_1, \su_2], \qquad
    S_{\bT'} = \bC(\su_1, \su_2).
$$
Over $\bF = \bC$, the $\bT'$-equivariant Chen-Ruan cohomology of $\cX$ is
$$
    H^*_{\CR, \bT'}(\cX; \bC) = \bigoplus_{v \in \Box(\cX)} H^*_{\bT'}(\cX_v; \bC)[2\age(v)].
$$
Let $\star_{\cX}$ denote the $\bT'$-equivariant Chen-Ruan orbifold cup product (which note is not the component-wise multiplication). Let
$$
    \left(\phi, \phi'\right)_{\cX, \bT'} := \int_{\cI\cX} \inv^*(\phi) \phi'
$$
denote the $\bT'$-equivariant orbifold Poincar\'e pairing, where the integral is defined via $\bT'$-equivariant localization and will be described in detail in the following subsection. The pairing is non-degenerate over the field $S_{\bT'}$. Then $\star_{\cX}$ and $\left(-, -\right)_{\cX, \bT'}$ together make $H^*_{\CR, \bT'}(\cX; \bC) \otimes_{R_{\bT'}} S_{\bT'}$ into a Frobenius algebra over $S_{\bT'}$.

\subsection{Semi-simplicity}
For $\sigma \in \Sigma(3)$, consider the open immersion
$$
    \iota_{\sigma, o}: \cX_\sigma \to \cX.
$$
Recall the bijection $G_\sigma \to \Box(\sigma) = \Box(\cX_\sigma)$. For $h \in G_\sigma$ we use $\age(h)$ to denote the age of the corresponding $\Box$ element. The inertia stack of $\cX_\sigma$ is
$$
    \cI\cX_\sigma = \bigcup_{h \in G_\sigma} \cX_h, \qquad \cX_h = [(\bC^3)^h/G_\sigma].
$$
Over $\bC$, the $\bT'$-equivariant Chen-Ruan cohomology of $\cX_\sigma$ is
$$
    H^*_{\CR, \bT'}(\cX_\sigma; \bC) = \bigoplus_{h \in G_\sigma} \bC \one_h
$$
where $\one_h$ has degree $2\age(h)$. The $\bT'$-equivariant orbifold cup product is given by
$$
    \one_h \star_{\cX_\sigma} \one_{h'} = \one_{hh'} \prod_{i \in I'_\sigma}^3 \sw_{i, \sigma}^{c_i(h) + c_i(h') - c_i(hh')}.
$$
The $\bT'$-equivariant orbifold Poincar\'e pairing is given by
$$
    \left(\one_h, \one_{h'}\right)_{\cX_\sigma, \bT'} = \frac{\delta_{hh', 1}}{|G_\sigma| e_{\bT'}(T_{\fp_\sigma}\cX_h)}
$$
where
$$
    e_{\bT'}(T_{\fp_\sigma}\cX_h) = \prod_{i \in I'_\sigma} \sw_{i, \sigma}^{\delta_{c_i(h), 0}}.
$$

Now, let $\bSt$ be the minimal field extension of $S_{\bT'}$ that contains
$$
    \left\{\sw_{i, \sigma}^{c_{i}(h)} : \sigma \in \Sigma(3), i \in I'_\sigma, h \in G_\sigma \right\} \cup \left\{\sqrt{e_{\bT'}(T_{\fp_\sigma}\cX)} = \prod_{i \in I'_\sigma} \sw_{i, \sigma}^{\frac{1}{2}} : \sigma \in \Sigma(3) \right\}.
$$
For $\sigma \in \Sigma(3)$ and $h \in G_\sigma$, we define
$$
    \bar{\one}_h := \frac{\one_h}{\prod_{i \in I'_\sigma}\sw_{i, \sigma}^{c_{i}(h)}} \qquad \in H^*_{\CR, \bT'}(\cX_\sigma; \bC) \otimes_{R_{\bT'}} \bSt.
$$
Moreover, for $\gamma \in G_\sigma^*$, writing $\chi_\gamma: G_\sigma \to \bC^*$ for the corresponding character of $G_\sigma$, we define
$$
    \bar{\phi}_\gamma := \frac{1}{|G_\sigma|} \sum_{h \in G_\sigma} \chi_\gamma(h^{-1}) \bar{\one}_h.
$$
Then we have
$$
    \bar{\phi}_\gamma \star_{\cX_\sigma} \bar{\phi}_{\gamma'} = \delta_{\gamma, \gamma'} \bar{\phi}_\gamma, \qquad \left(\bar{\phi}_\gamma, \bar{\phi}_{\gamma'}\right)_{\cX_\sigma, \bT'} = \frac{\delta_{\gamma, \gamma'}}{|G_\sigma|^2 \prod_{i \in I'_\sigma} \sw_{i, \sigma}}.
$$
Therefore, $\{\bar{\phi}_\gamma\}_{\gamma \in G_\sigma^*}$ is a canonical basis of the semi-simple Frobenius algebra
$$
    \left(H^*_{\CR, \bT'}(\cX_\sigma; \bC) \otimes_{R_{\bT'}} \bSt, \star_{\cX_\sigma}, \left(-, -\right)_{\cX_\sigma, \bT'} \right)
$$
over $\bSt$.

The pullbacks $\iota_{\sigma, o}^*$ to $\bT$-invariant affine charts $\cX_\sigma$ give an isomorphism of Frobenius algebras
$$
    \bigoplus_{\sigma \in \Sigma(3)} \iota_{\sigma, o}^*: H^*_{\CR, \bT'}(\cX; \bC) \otimes_{R_{\bT'}} \bSt \to \bigoplus_{\sigma \in \Sigma(3)} H^*_{\CR, \bT'}(\cX_\sigma; \bC) \otimes_{R_{\bT'}} \bSt.
$$
over $\bSt$. Introduce the index set
$$
    I_\Sigma := \{ \bsi = (\sigma, \gamma) : \sigma \in \Sigma(3), \gamma \in G_\sigma^* \}.
$$
For $\bsi = (\sigma, \gamma) \in I_\Sigma$, let $\phi_{\bsi}$ be the unique element in $H^*_{\CR, \bT'}(\cX; \bC) \otimes_{R_{\bT'}} \bSt$ such that $\phi_{\bsi} \big|_{\cX_\sigma} = \bar{\phi}_\gamma$ and $\phi_{\bsi} \big|_{\fp_{\sigma'}} = 0$ for all $\sigma' \in \Sigma(3) \setminus \{\sigma\}$. Then we have
$$
    \phi_{\bsi} \star_{\cX} \phi_{\bsi'} = \delta_{\bsi, \bsi'} \phi_{\bsi}, \qquad \left(\phi_{\bsi}, \phi_{\bsi'}\right)_{\cX, \bT'} = \frac{\delta_{\bsi, \bsi'}}{|G_\sigma|^2 \prod_{i \in I'_\sigma} \sw_{i, \sigma}}.
$$
Therefore, $\{\phi_{\bsi}\}_{\bsi \in I_\Sigma}$ is a canonical basis of the semi-simple Frobenius algebra
$$
    \left(H^*_{\CR, \bT'}(\cX; \bC) \otimes_{R_{\bT'}} \bSt, \star_{\cX_\sigma}, \left(-, -\right)_{\cX, \bT'} \right)
$$
over $\bSt$.

For $\bsi = (\sigma, \gamma) \in I_\Sigma$, we write $\Delta^\bsi := |G_\sigma|^2 \prod_{i \in I'_\sigma} \sw_{i, \sigma}$ and define
$$
    \hat{\phi}_\bsi := \sqrt{\Delta^{\bsi}} \phi_{\bsi}.
$$
Then we have
$$
    \hat{\phi}_{\bsi} \star_{\cX} \hat{\phi}_{\bsi'} = \delta_{\bsi, \bsi'} \sqrt{\Delta^{\bsi}} \hat{\phi}_{\bsi}, \qquad \left(\hat{\phi}_{\bsi}, \hat{\phi}_{\bsi'}\right)_{\cX, \bT'} = \delta_{\bsi, \bsi'}.
$$
We call $\{\hat{\phi}_{\bsi}\}_{\bsi \in I_\Sigma}$ the \emph{classical normalized canonical basis} of $H^*_{\CR, \bT'}(\cX; \bC) \otimes_{R_{\bT'}} \bSt$.

\begin{comment}

Let $\inv$ denote the natural involutions $\cI \cX \to \cI \cX$, $\Box(\cX) \to \Box(\cX)$. Given
$$
\phi = \bigoplus_{v \in \Box(\cX)} \phi_v \one_v, \quad \psi = \bigoplus_{v \in \Box(\cX)} \psi_v \one_v \quad \in H^*_{\CR, \bT'}(\cX; \bC),
$$
their $\bT'$-equivariant orbifold Poincar\'e pairing is given via localization as
$$
\left(\phi, \psi\right)_{\cX, \bT'} = \int_{\cI\cX} \inv^*(\phi) \psi = \sum_{\sigma \in \Sigma(3)} \sum_{v \in \Box(\sigma)} \frac{(\phi_{\inv(v)}\psi_v) \big|_{\fp_\sigma}}{|G_\sigma| \prod_{i \in I'_\sigma, c_i(v) = 0} \sw_{i, \sigma}}.
$$

\end{comment}

% !TEX root = descendantBKMP.tex

\section{Gromov-Witten theory}\label{sect:GW} 
In this section, we review the Gromov-Witten theory of the toric Calabi-Yau 3-orbifold $\cX$. We state the A-model graph sum formula for the all-genus equivariant descendant potentials. Moreover, we define the equivariant GKZ-type system associated to $\cX$ and characterize its solutions.

\subsection{Equivariant descendant Gromov-Witten invariants}\label{sect:genfun}
Recall that the coarse moduli space of $\cX$ is the simplicial toric variety $X$. Let $\NE(\cX) \subset H_2(\cX;\bR)=H_2(X;\bR)$ be the Mori cone generated by effective curve classes in $X$, and $E(\cX)$ denote the semigroup $\NE(\cX)\cap H_2(X;\bZ)$. For $g, n \in \bZ_{\ge 0}$ and $\beta \in E(\cX)$, let $\Mbar_{g,n}(\cX, \beta)$ be the moduli space of genus-$g$, $n$-pointed, degree-$\beta$ twisted stable maps to $\cX$. Let $\ev_i:\Mbar_{g,n}(\cX,\beta)\to \cI\cX$ be the evaluation map
at the $i$-th marked point. The $\bT'$-action on $\cX$ induces
$\bT'$-actions on the moduli space $\Mbar_{g,n}(\cX,\beta)$ and on the inertia stack $\cI\cX$, and
the evaluation map $\ev_i$ is $\bT'$-equivariant. Similarly, we can define the moduli space $\Mbar_{g,n}(X,\beta)$.

For $i=1,\dots,n$, let $\bL_i$ be the $i$-th tautological line bundle over $\Mbar_{g,n}(X, \beta)$ formed
by the cotangent line at the $i$-th marked point. Define the $i$-th descendant class $\psi_i$ as
$$
\psi_i := c_1(\bL_i)\in H^2(\Mbar_{g,n}(X, \beta);\bQ).
$$
The $\bT'$-action on $X$ induces a $\bT'$-action on
$\Mbar_{g,n}(X, \beta)$, and we choose a $\bT'$-equivariant
lift $\psi_i^{\bT'}\in H^2_{\bT'}(\Mbar_{g.n}(X, \beta);\bQ)$
of $\psi_i$. Consider the map $p:\Mbar_{g,n}(\cX, \beta)\to \Mbar_{g,n}(X, \beta)$ induced by $\cX\to X$, which is $\bT'$-equivariant. For $i=1,\dots,n$, let
$$
\hat{\psi}_i:=p^*\psi_i \in H^2(\Mbar_{g,n}(\cX, \beta);\bQ), \quad 
\hat{\psi}_i^{\bT'}:=p^*\psi_i^{\bT'} \in H^2_{\bT'}(\Mbar_{g,n}(\cX, \beta);\bQ).
$$

Given $\gamma_1,\dots, \gamma_n\in H_{\bT'}^*(\cX,\bC)$ and $a_1,\dots,a_n\in \bZ_{\ge 0}$, we define the genus-$g$, degree-$\beta$, $\bT'$-equivariant \emph{descendant Gromov-Witten invariant}
\begin{align*}
\langle \tau_{a_1}(\gamma_1), \dots, \tau_{a_n}(\gamma_n)\rangle^{\cX,\bT'}_{g,n,\beta} & = 
\langle \gamma_1\hat{\psi}^{a_1}, \dots,\gamma_n\hat{\psi}^{a_n}\rangle^{\cX,\bT'}_{g,n,\beta}\\
& := \int_{[\Mbar_{g,n}(\cX, \beta)^{\bT'}]^{w,\bT'}} \frac{\iota^*\big(\prod_{i=1}^n \ev_i^*(\gamma_i)(\hat{\psi}_i^{\bT'})^{a_i}\big)}{e_{\bT'}(N^\vir)}
\quad \in \ST=\bQ(\su_1,\su_2)
\end{align*}
where $\Mbar_{g,n}(\cX, \beta)^{\bT'}$ is the $\bT'$-fixed locus, $e_{\bT'}(N^\vir)$ is the $\bT'$-equivariant Euler class of the virtual normal bundle of $\Mbar_{g,n}(\cX,\beta)^{\bT'}$ in $\Mbar_{g,n}(\cX,\beta)$, $[\Mbar_{g,n}(\cX, \beta)^{\bT'}]^{w,\bT'}$ is the weighted virtual fundamental class, and $\iota: \Mbar_{g,n}(\cX, \beta)^{\bT'}\hookrightarrow \Mbar_{g,n}(\cX, \beta)$ is the inclusion map. The invariant
$$
\langle \tau_0(\gamma_1), \dots, \tau_0(\gamma_n)\rangle^{\cX,\bT'}_{g,n,\beta} = \langle \gamma_1,\dots, \gamma_n \rangle_{g,n,\beta}^{\cX,\bT'} %:=\left\langle \tau_0(\gamma_1) \cdots \tau_0(\gamma_n)\right\rangle_{g,n,\beta}^{\cX,\bT'}.
$$
is the genus-$g$, degree-$\beta$ \emph{primary Gromov-Witten invariant}.

Define the Novikov ring
\[
\nov:=\widehat{\bC[E(\cX)]}= \left\{ \sum_{\beta \in E(\cX)} c_\beta Q^\beta: c_\beta\in \bC \right\}
\]
where $Q$ is the Novikov variable.
Given $\gamma_1,\dots,\gamma_n\in H^*_{\CR,\bT'}(\cX)\otimes_\RT \bST$ and $a_1,\dots, a_n\in \bZ_{\ge 0}$, define the following generating function
$$
	\left\llangle  \tau_{a_1}(\gamma_1), \dots, \tau_{a_n}(\gamma_n) \right\rrangle^{\cX,\bT'}_{g,n} = 
	\left\llangle  \gamma_1\hat{\psi}^{a_1},  \dots, \gamma_n\hat{\psi}^{a_n} \right\rrangle^{\cX,\bT'}_{g,n}
	:=\sum_{m \ge 0} \sum_{\beta \in E(\cX)}\frac{Q^\beta}{m!} \left\langle
	\gamma_1\hat{\psi}^{a_1}, \dots, \gamma_n\hat{\psi}^{a_n}, t^m \right\rangle^{\cX,\bT'}_{g,n+m,\beta}
$$
where $t\in H^*_{\CR,\bT'}(\cX)\otimes_{R_\bT}\bST$. We choose an $\ST$-basis $\{ T_i: i=0,1,\dots,E-1\}$ of $H^*_{\CR,\bT'}(\cX;\ST)$ such that
$$
T_0= \text{$\one$},\qquad T_a=\bar{D}_{3+a}^{\bT'} \quad \text{for  $a=1,\dots,\fp'$},\qquad
T_a= \text{$\one_{b_{3+a}}$} \quad \textup{for $a=\fp'+1,\dots, \fp$}
$$
and that for $i=\fp+1,\dots,E-1$, $T_i$ is of the form $T_aT_b$ for some $a,b\in \{1,\dots, \fp\}$.
Write $t=\sum_{i=0}^{E-1}\tau_i T_i$, and let $\tau'=(\tau_1,\dots, \tau_{\fp'})$,
$\tau''=(\tau_0, \tau_{\fp'+1},\dots, \tau_{E-1})$.
By the divisor equation,
$$
	\left\llangle  \gamma_1\hat{\psi}^{a_1},  \dots, \gamma_n\hat{\psi}^{a_n} \right\rrangle^{\cX,\bT'}_{g,n}
	=\sum_{m=0}^\infty \sum_{\beta \in E(\cX)}\frac{\tQ^\beta}{m!} \left\langle
	\gamma_1\hat{\psi}^{a_1}, \dots, \gamma_n\hat{\psi}^{a_n}, (t'')^m \right\rangle^{\cX,\bT'}_{g,n+m,d} \quad \in \bSTQ
$$
where $\tQ^\beta = Q^\beta \exp(\sum_{a=1}^{\fp'}\tau_a \langle T_a, \beta \rangle)$ and $t'' = \tau_0 T_0 + \sum_{i=\fp'+1}^{E-1}\tau_i T_i$. In particular, the restriction to $Q=1$ is well-defined.

% $$
% \llangle T_i,T_j,T_k\rrangle^{\cX,\bT}_{0,3} \in \ST \formal{ \tQ, \tau''},\quad
% \llangle \hat{\phi}_{\bsi}, \hat{\phi}_{\bsi'}, \hat{\phi}_{\bsi''}\rrangle^{\cX,\bT}_{0,3}\in \bSTQ,
% $$

For $i=1,\dots,n$, introduce formal variables
$$
\bu_i =\bu_i(z)= \sum_{a\ge 0}(u_i)_a z^a
$$
where $(u_i)_a \in H^*_{\CR,\bT'}(\cX;\bC)\otimes_\RT\bST$.
Define
$$
	\left\llangle \bu_1,\dots, \bu_n  \right\rrangle_{g,n}^{\cX,\bT'} =
	\left\llangle \bu_1(\hat{\psi}),\dots, \bu_n(\hat{\psi})  \right\rrangle_{g,n}^{\cX,\bT'}
	=\sum_{a_1,\dots,a_n\ge 0}
	\left\llangle (u_1)_{a_1}\hat{\psi}^{a_1}, \dots, (u_n)_{a_n}\hat{\psi}^{a_n} \right\rrangle_{g,n}^{\cX,\bT'}.
$$
Let $z_1,\dots,z_n$ be formal variables and $\gamma_1,\dots,\gamma_n\in H^*_{\CR,\bT'}(\cX)\otimes_\RT \bST$. Define
$$
\left\llangle \frac{\gamma_1}{z_1-\hat{\psi}},\dots, \frac{\gamma_n}{z_n-\hat{\psi}} \right\rrangle^{\cX,\bT'}_{g,n}
=\sum_{a_1,\dots,a_n\in \bZ_{\ge 0}}
\left\llangle \gamma_1 \hat{\psi}^{a_1}, \dots, \gamma_n \hat{\psi}^{a_n} \right\rrangle^{\cX,\bT'}_{g,n}\prod_{i=1}^n z_i^{-a_i-1}.
$$

\begin{comment}
\begin{align*}
	\left\llangle \bu_1,\dots, \bu_n  \right\rrangle_{g,n}^{\cX,\bT'} &=
	\left\llangle \bu_1(\hat{\psi}),\dots, \bu_n(\hat{\psi})  \right\rrangle_{g,n}^{\cX,\bT'}\\
	&=\sum_{a_1,\dots,a_n\ge 0}
	\left\llangle (u_1)_{a_1}\hat{\psi}^{a_1}, \dots, (u_n)_{a_n}\hat{\psi}^{a_n} \right\rrangle_{g,n}^{\cX,\bT'}.
\end{align*}
\end{comment}

\begin{convention} \rm{
In this paper, to simplify the expressions, we adopt the following conventions for the unstable terms in $\llangle \rrangle^{\cX,\bT'}_{0,n}$ corresponding to $\beta = 0$ and $m = 0$:
$$
	\left\langle \frac{b}{z-\hat{\psi}} \right\rangle^{\cX,\bT'}_{0,1,0} = z(\one,b)_{\cX,\bT'}, \quad \left\langle a,\frac{b}{z-\hat{\psi}} \right\rangle^{\cX,\bT'}_{0,2,0} = (a,b)_{\cX,\bT'}, \quad
	\left\langle \frac{a}{z_1-\hat{\psi}},\frac{b}{z_2-\hat{\psi}} \right\rangle^{\cX,\bT'}_{0,2,0} = \frac{(a,b)_{\cX,\bT'}}{z_1 + z_2}.
$$
}
\end{convention}

\subsection{Quantum cohomology and Frobenius manifolds}\label{sec:QH}
The $\bT'$-equivariant quantum cohomology $QH^*_{\bT'}(\cX)$ of $\cX$ is defined by its genus-zero primary Gromov-Witten invariants. More concretely, for any  $a,b,c\in H_{\CR,\bT'}^*(\cX;\bST)$, define the quantum product $\star_t$ by
$$	
	(a\star_t b,c)_{\cX,\bT'} = \left\llangle a,b,c \right\rrangle_{0,3}^{\cX,\bT'} \qquad \in \bSTQ.
$$

Moreover, we can define a formal Frobenius manifold as follows. Let
$$
\novT:= \bST\otimes_{\bC}\nov =\bST \formal{E(\cX)}.
$$
Then $H^*_{\CR,\bT'}(\cX;\novT)$ is a free $\novT$-module of rank $E$. Let
$$
H=\mathrm{Spec}(\novT [ t^{\bsi}:\bsi\in I_\Si])
$$
and $\hH$ be the formal completion of $H$ along the origin:
$$
\hH :=\mathrm{Spec}(\novT \formal{ t^{\bsi}:\bsi\in I_\Si }).
$$
Let $\cO_{\hH}$ be the structure sheaf on $\hH$ and $\cT_{\hH}$ be the tangent sheaf on
$\hH$.
Then $\cT_{\hH}$ is a sheaf of free $\cO_{\hH}$-modules of rank $E$.
Given an open set $U$ in $\hH$, we have
$$
\cT_{\hH}(U)  \cong \bigoplus_{\bsi\in I_\Si}\cO_{\hat{H}}(U) \frac{\partial}{\partial t^{\bsi}}.
$$
The quantum product and the $\bT'$-equivariant Poincar\'{e} pairing define the structure of a formal Frobenius manifold on $\hH$ over $\Lambda_{\cX}^{\bT'}$:
$$
\frac{\partial}{\partial t^{\bsi}} \star_t \frac{\partial}{\partial t^{\bsi'}}
=\sum_{\brho\in I_\Si} \left\llangle \hat{\phi}_{\bsi},\hat{\phi}_{\bsi'},\hat{\phi}_{\brho} \right\rrangle_{0,3}^{\cX,\bT'}
\frac{\partial}{\partial t^{\brho}}
\in \Gamma(\hat{H}, \cT_{\hat{H}}), \qquad
\left( \frac{\partial}{\partial t^{\bsi}},\frac{\partial}{\partial t^{\bsi'}}\right)_{\cX,\bT'} =\delta_{\bsi,\bsi'}.
$$

\subsection{Semi-simplicity and canonical coordinates}\label{sec:A-canonical}
The semi-simplicity of the classical cohomology $H^*_{\CR, \bT'}(\cX;\bC)\otimes_\RT \bST$ implies the semi-simplicity of the quantum cohomology $QH^*_{\bT'}(\cX)$. In fact, there exists a canonical basis $\{\phi_{\bsi}(t)\}_{\bsi\in I_\Si}$ of $QH^*_{\bT'}(\cX)$ characterized by the property that for all $\bsi\in I_\Si$,
$$
	\phi_{\bsi}(t)\to  \phi_{\bsi},\quad\mathrm{when }\quad t,\tQ\to 0.
$$
We define $\{\phi^{\bsi}(t)\}_{\bsi\in I_\Si}$ to be the basis dual to $\{\phi_{\bsi}(t)\}_{\bsi\in I_\Si}$ with respect to the metric $(-,-)_{\cX,\bT'}$.

The canonical coordinates $\{ u^{\bsi}=u^{\bsi}(t):\bsi\in I_\Si\}$ on the formal Frobenius
manifold $\hat{H}$ are characterized by
%\begin{equation}\label{eqn:partial-u}
$$
	\frac{\partial}{\partial u^{\bsi}} = \phi_{\bsi}(t)
$$
up to additive constants in $\novT$. We choose canonical coordinates
such that they vanish when $\tQ=0,\hat{t}^\bsi=0,\bsi\in I_\Si$.

We define $\Delta^{\bsi}(t)$ by
$$
(\phi_{\bsi}(t), \phi_{\bsi'}(t))_{\cX,\bT'} =\frac{\delta_{\bsi,\bsi'}}{\Delta^{\bsi}(t)}
$$
and define the normalized canonical basis of $(\hat{H},\star_t)$ as
$$
\{ \hat{\phi}_{\bsi}(t):= \sqrt{\Delta^{\bsi}(t)}\phi_{\bsi}(t): \bsi\in I_\Si\}.
$$
They satisfy
$$
\hat{\phi}_{\bsi}(t)\star_t \hat{\phi}_{\bsi'}(t) =\delta_{\bsi, \bsi'}\sqrt{\Delta^{\bsi}(t)}\hat{\phi}_{\bsi}(t),\qquad
(\hat{\phi}_{\bsi}(t), \hat{\phi}_{\bsi'}(t))_{\cX,\bT'}=\delta_{\bsi,\bsi'}.
$$
Let $\Psi=(\Psi_{\bsi'}^{\spa  \bsi})$ be the transition matrix defined as 
%\begin{equation}\label{eqn:Psi-phi}
$$
	\hat{\phi}_{\bsi'}=\sum_{\bsi\in I_\Si} \Psi_{\bsi'}^{\spa \bsi} \hat{\phi}_\bsi(t).
$$

\subsection{Quantum differential equation and fundamental solution}
We consider the Dubrovin connection $\nabla^z$, which is a family
of connections parameterized by $z\in \bC\cup \{\infty\}$, on the tangent bundle
$T_{\hat{H}}$ of the formal Frobenius manifold $\hat{H}$:
$$
\nabla^z_{\bsi}=\frac{\partial}{\partial t^{\bsi}} -\frac{1}{z} \hat{\phi}_{\bsi}\star_t.
$$
The commutativity (resp. associativity)
of $*_t$ implies that $\nabla^z$ is a torsion
free (resp. flat) connection on $T_{\hat{H}}$ for all $z$. The equation
\begin{equation}\label{eqn:qde}
	\nabla^z \mu=0
\end{equation}
for a section $\mu\in \Gamma(\hat{H},\cT_{\hat{H}})$ is called the $\bT'$-equivariant
{\em big quantum differential equation} (big QDE). Let
$$
\cT_{\hat{H}}^{f,z}\subset \cT_{\hat{H}}
$$
be the subsheaf of flat sections with respect to the connection $\nabla^z$.
For each $z$, $\cT_{\hat{H}}^{f,z}$ is a sheaf of
$\novT$-modules of rank $E$.

An element $L(z)\in \End(\cT_{\hat H})$ is called a {\em fundamental solution} to the $\bT'$-equivariant big QDE if
the $\cO_{\hat{H}}(\hat{H})$-linear map
$$
L(z): \Gamma(\hat{H},\cT_{\hat{H}}) \to \Gamma(\hat{H},\cT_{\hat{H}})
$$
restricts to a $\novT$-linear isomorphism
$$
L(z): \Gamma(\hat{H},\cT_H^{f,\infty})=\bigoplus_{\bsi\in I_{\Si}} \novT \frac{\partial}{\partial t^{\bsi}}
\to \Gamma(\hat{H},\cT_H^{f,z})
$$
between rank-$E$ free $\novT$-modules.

\subsubsection{The $R-$matrix}
Let $U$ denote the diagonal matrix whose diagonal entries are the canonical coordinates.
The results in \cite{Givental97, Givental98} and \cite{Zong15} imply the following statement.
\begin{theorem}\label{R-matrix}
	There exists a unique matrix power series $R(z)= \one + R_1z+R_2 z^2+\cdots$
	satisfying the following properties:
	\begin{itemize}
		\item The entries of each $R_k$ lie in $\bSTQ$.
		\item $\tS=\Psi R(z) e^{U/z}$  is a fundamental solution to the $\bT'$-equivariant
		big QDE \eqref{eqn:qde}.
		\item $R$ satisfies the unitary condition $R^T(-z)R(z)=\one$.
		\item For $(\sigma, \gamma), (\rho, \delta) \in I_\Sigma$, we have
		$$			
		% \begin{aligned}
				\lim_{\tQ,\tau''\to 0} R_{\rho,\delta}^{\spa\si,\gamma}(z)
				= \frac{\delta_{\rho,\si}}{|G_\si|}\sum_{h\in G_\si}\chi_\delta(h) \chi_\gamma(h^{-1})
				\prod_{i=1}^3 \exp\left( \sum_{m=1}^\infty \frac{(-1)^m}{m(m+1)}B_{m+1}(c_i(h))
				\left(\frac{z}{\w_i(\si)}\right)^m \right)
				% \end{aligned}
		$$
		where $B_m(x)$ is the $m$-th \emph{Bernoulli polynomial} defined by the identity
		$
		\frac{te^{tx}}{e^t-1}=\sum_{m\ge 0}\frac{B_m(x)t^m}{m!}
		$.
	\end{itemize}
\end{theorem}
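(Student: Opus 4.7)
The plan is to construct $R(z)$ order by order as the unique formal fundamental solution near $z=0$ adapted to the canonical coordinates, then verify unitarity and the large-radius initial value. First, I would recall that, by Dubrovin's theory of semi-simple Frobenius manifolds (extended to the formal/equivariant setting in Givental's work), the QDE $\nabla^z\mu=0$ admits, in a neighborhood of the origin of $\hat{H}$, a formal fundamental solution of the form $\tS(z)=\Psi\,R(z)\,e^{U/z}$, where $\Psi$ is the transition matrix from the normalized canonical basis to the flat basis and $U=\mathrm{diag}(u^{\bsi})$. Substituting this ansatz into $\nabla^z\tS=0$ and writing it in the normalized canonical frame, the equations separate into a sequence of relations: the $z^0$ term fixes $R_0=\one$, while for $k\ge 1$ the off-diagonal entries of $R_k$ are determined algebraically by $R_{k-1}$ via the difference of canonical coordinates (the factors $u^{\bsi}-u^{\bsi'}$ are invertible in $\bSTQ$ by semi-simplicity, cf.\ Section \ref{sec:A-canonical}), and the diagonal entries are determined by integrating the resulting compatibility equations. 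This gives existence and uniqueness of $R(z)$ as a power series with coefficients in $\bSTQ$, once a normalization (fixing the integration constants) is imposed — and the normalization is exactly fixed by the prescribed value at $\tQ=\tau''=0$ in the last bullet.

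Second, I would prove the unitary relation $R^T(-z)R(z)=\one$. This is Givental's symplectic constraint and follows from the fact that the Dubrovin connection is compatible with the Frobenius pairing: the flat quantum pairing in the normalized canonical frame equals $\one$, so any two fundamental solutions $\tS(z),\tS(-z)$ satisfy $\tS^T(-z)\,\eta\,\tS(z)=\eta$ with $\eta=\one$ in the normalized canonical frame, and the $e^{U/z}e^{-U/z}=\one$ factors cancel, leaving $R^T(-z)R(z)=\one$. Uniqueness of the normalized $R$ then forces this identity in our setting.

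Third, for the initial value as $\tQ,\tau''\to 0$, I would use that in this limit the quantum product degenerates to the classical Chen-Ruan product, and, by the semi-simple decomposition of Section \ref{sect:Stabilizers}, the Frobenius manifold factors through the disjoint union $\bigsqcup_{\si\in\Si(3)}\cX_\si=\bigsqcup_\si[\bC^3/G_\si]$. Hence $R(z)$ becomes block-diagonal with respect to the splitting of $I_\Si$ by $\si$, which produces the factor $\delta_{\rho,\si}$. Within each block the problem reduces to computing the $R$-matrix for the classical Frobenius algebra $H^*_{\CR,\bT'}(\cX_\si;\bC)\otimes_{R_{\bT'}}\bSt$ of $[\bC^3/G_\si]$. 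Using the semi-simple basis $\{\bar{\phi}_\gamma\}_{\gamma\in G_\si^*}$ and the explicit weights $\sw_{i,\si}/|G_\si|$-shifted by the age coefficients $c_i(h)$, this block is exactly the orbifold stationary-phase $R$-matrix computed in \cite{Zong15} (generalizing Faber--Pandharipande's Bernoulli-polynomial formula for a point), whose character-discrete-Fourier transform yields precisely the stated product of $\exp\bigl(\sum_m \tfrac{(-1)^m}{m(m+1)}B_{m+1}(c_i(h))(z/\sw_i(\si))^m\bigr)$.

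The main obstacle will be the bookkeeping in the third step: matching the orbifold sign/age conventions in \cite{Zong15} with the normalization of the canonical basis $\{\hat\phi_\bsi\}$ chosen in Section \ref{sec:A-canonical} (via the character transform between the $\bar\one_h$ and $\bar\phi_\gamma$ bases), so that the Bernoulli-polynomial formula comes out with exactly the coefficients and arguments stated. Once this identification is pinned down at $\tQ=\tau''=0$, uniqueness from the first step propagates the formula to determine $R(z)$ on all of $\hat H$.
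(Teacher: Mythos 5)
Your proposal is correct and follows essentially the same route as the paper: the paper's own ``proof'' of Theorem \ref{R-matrix} is precisely the deferral to \cite{Givental97, Givental98} for the semi-simple existence/uniqueness/unitarity theory and to \cite{Zong15} for the Bernoulli-polynomial initial value at $\tQ,\tau''\to 0$, and your three steps faithfully reconstruct what those references supply. The only imprecision is the closing sentence of your second step: unitarity is not deduced from uniqueness but directly from the flatness of the pairing (that $\tS^T(-z)\,\eta\,\tS(z)$ is a parallel, hence constant, matrix, evaluated to be $\eta$ at the reference point since the large-radius initial block is unitary by the Gamma-function reflection formula); uniqueness is then used only to kill the residual diagonal ambiguity $R\mapsto R\exp(\text{odd in }z)$, exactly as you describe in your first step.
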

The matrix $R(z)$ in Theorem \ref{R-matrix} is called the {\em A-model $R$-matrix}.

\subsubsection{The $\cS$-operator}
For any $a,b\in H_{\CR,\bT'}^*(\cX;\bST)$, define the \emph{$\cS$-operator} by
$$
	(a,\cS(b))_{\cX,\bT'} = \left\llangle a,\frac{b}{z-\hat{\psi}}\right\rrangle^{\cX,\bT'}_{0,2}.
$$
The $\cS$-operator, viewed as an element in $\End(\cT_{\hat{H}})$, is a fundamental solution to the $\bT'$-equivariant
big QDE \eqref{eqn:qde} \cite[Section 10.2]{CK99} \cite{Iritani09}. For later use we record some of its well-known properties; see e.g. \cite{Givental97, GT13}. First, it also satisfies the unitary condition 
$$
	\cS^T(-z)\cS(z) = \one.
$$
Moreover, for any basis $\{\phi_\alpha\}$ of $H^*_{\CR, \bT'}(\cX;\bC)$ and dual basis $\{\phi^\alpha\}$ under the $\bT'$-equivariant Poincar\'e pairing, we have the identity
\begin{equation}\label{eqn:SIdentity}
	\frac{1}{z_1 + z_2}\sum_{\alpha} \left\llangle \phi_\alpha,\frac{a}{z_1-\hat{\psi}}\right\rrangle^{\cX,\bT'}_{0,2} \left\llangle \phi^\alpha,\frac{b}{z_2-\hat{\psi}}\right\rrangle^{\cX,\bT'}_{0,2} = \left\llangle \frac{a}{z_1-\hat{\psi}}, \frac{b}{z_2-\hat{\psi}}\right\rrangle^{\cX,\bT'}_{0,2}.
\end{equation}

We introduce some additional notation. For $\bsi, \bsi'\in I_\Si$, define
$$
S^{\bsi'}_{\spa \bsi}(z) := (\phi^{\bsi'}, \cS(\phi_{\bsi}))_{\cX,\bT'},\qquad S_{\bsi'}^{\spa \widehat{\bsi} }(z) := (\phi_{\bsi'}, \cS(\hat{\phi}^{\bsi}))_{\cX,\bT'}.
$$
We define
$$
S^{\widehat{\underline{\bsi}}}_{\spa \widehat{\underline{\bsi'}} }(z)
:= (\hat{\phi}_{\bsi}(t), \cS(\hat{\phi}_{\bsi'}(t)))_{\cX,\bT'}.
$$
Then $(S^{ \widehat{\underline{\bsi}}  }_{\spa \widehat{\underline{\bsi'}} }(z))$ is the matrix of the $\cS$-operator with
respect to the normalized canonical basis  $\{ \hat{\phi}_{\bsi}(t): \bsi\in I_\Si\} $:
$$
	\cS(\hat{\phi}_{\bsi'}(t))=\sum_{\bsi\in I_\Si} \hat{\phi}_{\bsi}(t)
	S^{\widehat{\underline{\bsi}} }_{\spa \widehat{\underline{\bsi'}} }(z).
$$
Moreover, we define
$$
S^{\widehat{\underline{\bsi}}}_{\spa \bsi'}(z)
:= (\hat{\phi}_{\bsi}(t), \cS(\phi_{\bsi'}))_{\cX,\bT'}.
$$
Then $(S^{ \widehat{\underline{\bsi}}  }_{\spa \bsi'}(z))$ is the matrix of the $\cS$-operator with
respect to the  basis $\{\phi_{\bsi}:\bsi\in I_\Si\}$ and
$\{ \hat{\phi}_{\bsi}(t): \bsi\in I_\Si\} $:
$$
	\cS(\phi_{\bsi'})=\sum_{\bsi\in I_\Si} \hat{\phi}_{\bsi}(t)
	S^{\widehat{\underline{\bsi}} }_{\spa \bsi'}(z).
$$

\subsection{The A-model graph sum} \label{sec:Agraph}
In this subsection, we introduce the graph sum formula for the generating functions of descendant Gromov-Witten invariants.
%In order to state the graph sum formula, we need to introduce some definitions.
Given a connected graph $\Ga$, we introduce the following notation:
\begin{itemize}
	\item $V(\Ga)$ is the set of vertices.
	\item $E(\Ga)$ is the set of edges.
	\item $H(\Ga)$ is the set of half edges.
	\item $L^o(\Ga)$ is the set of ordinary leaves. The ordinary
	leaves are ordered: $L^o(\Ga)=\{l_1,\dots,l_n\}$ where
	$n$ is the number of ordinary leaves.
	\item $L^1(\Ga)$ is the set of dilaton leaves. The dilaton leaves are unordered.
\end{itemize}
With the above notation, we introduce the following labels:
\begin{itemize}
	\item (genus) $g: V(\Ga)\to \bZ_{\ge 0}$.
	\item (marking) $\bsi: V(\Ga) \to I_\Si$. This induces
	$\bsi :L(\Ga)=L^o(\Ga)\cup L^1(\Ga)\to I_\Si$, as follows:
	if $l\in L(\Ga)$ is a leaf attached to a vertex $v\in V(\Ga)$,
	define $\bsi(l)=\bsi(v)$.
	\item (height) $k: H(\Ga)\to \bZ_{\ge 0}$.
\end{itemize}

Given an edge $e$, let $h_1(e),h_2(e)$ be the two half edges associated to $e$. The order of the two half edges does not affect the graph sum formula in this paper. Given a vertex $v\in V(\Ga)$, let $H(v)$ denote the set of half edges
emanating from $v$. The valency of the vertex $v$ is equal to the cardinality of the set $H(v)$: $\val(v)=|H(v)|$.
A labeled graph $\vGa=(\Ga,g,\bsi,k)$ is {\em stable} if
$$
2g(v)-2 + \val(v) >0
$$
for all $v\in V(\Ga)$. The \emph{genus} of a stable labeled graph
$\vGa$ is defined to be
$$
g(\vGa):= \sum_{v\in V(\Ga)}g(v)  + |E(\Ga)|- |V(\Ga)|  +1
=\sum_{v\in V(\Ga)} (g(v)-1) + \left(\sum_{e\in E(\Gamma)} 1\right) +1.
$$

Let $\bGa(\cX)$ denote the set of all stable labeled graphs
$\vGa=(\Gamma,g,\bsi,k)$. For $g, n \in \bZ_{\ge 0}$ define
$$
\bGa_{g,n}(\cX)=\{ \vGa=(\Gamma,g,\bsi,k)\in \bGa(\cX): g(\vGa)=g, |L^o(\Ga)|=n\}.
$$

We assign weights to leaves, edges, and vertices of a labeled graph $\vGa\in \bGa(\cX)$ as follows:
\begin{itemize}
	\item {\em Ordinary leaves.}
	To each ordinary leaf $l_i \in L^o(\Ga)$ with  $\bsi(l_i)= \bsi\in I_\Si$
	and  $k(l)= k \ge 0$, we assign the following descendant  weight:
%	\begin{equation}\label{eqn:u-leaf}
	$$
		(\cL^{\bu})^{\bsi}_k(l_i) := [z^k] \left(\sum_{\bsi',\brho\in I_\Si}
		\left(\frac{\bu_i^{\bsi'}(z)}{\sqrt{\Delta^{\bsi'}(t)} }
		S^{\widehat{\underline{\brho}} }_{\spa
			\widehat{\underline{\bsi'}}}(z)\right)_+ R(-z)_{\brho}^{\spa \bsi} \right),
	$$
	where $(\cdot)_+$ means taking the nonnegative powers of $z$.
	% (K-theoretic leaf)
	%\begin{equation}\label{eqn:K-leaf}
	%(\cL^{\kappa})^{\bsi}_k(l_i) = [z^k] (\sum_{\brho\in I_\Si}
	%\left(\frac{1}{z_i-z} S^{\widehat{\underline{\brho}} }_{\spa \kappa(\cL_i)}
	%(z)\right)_+ R(-z)_{\brho}^{\spa \bsi} )
	%\end{equation}
	%where
	%$$
	% S^{\widehat{\underline{\brho}} }_{\spa \kappa(\cL_i)}(z)=(\hat{\phi}_{\brho}(q), \cS(\kappa(\cL_i)))_{\cX,\bT}.
	%$$
	
	\item {\em Dilaton leaves.} To each dilaton leaf $l \in L^1(\Ga)$ with $\bsi(l)=\bsi
	\in I_\Si$
	and $k(l)=k \ge 2$, we assign
	$$
	(\cL^1)^{\bsi}_k := [z^{k-1}]\left(-\sum_{\bsi'\in I_\Si}
	\frac{1}{\sqrt{\Delta^{\bsi'}(t)}}
	R_{\bsi'}^{\spa \bsi}(-z) \right).
	$$
	
	\item {\em Edges.} To an edge connecting a vertex marked by $\bsi\in I_\Si$ and a vertex
	marked by $\bsi'\in I_\Si$, and with heights $k$ and $l$ at the corresponding half-edges, we assign
	$$
	\cE^{\bsi,\bsi'}_{k,l} := [z^k w^l]
	\left(\frac{1}{z+w} \left(\delta_{\bsi\bsi'}-\sum_{\brho\in I_\Si}
	R_{\brho}^{\spa \bsi}(-z) R_{\brho}^{\spa \bsi'}(-w)\right) \right).
	$$
	\item {\em Vertices.} To a vertex $v$ with genus $g(v)=g\in \bZ_{\ge 0}$ and with
	marking $\bsi(v)=\bsi$, with $n_1$ ordinary
	leaves and half-edges attached to it with heights $k_1, ..., k_{n_1} \in \bZ_{\ge 0}$ and $n_2$
	dilaton leaves with heights $k_{n_1+1}, \dots, k_{n_1+n_2}\in \bZ_{\ge 0}$, we assign
	$$
	\left(\sqrt{\Delta^{\bsi}(t)}\right)^{2g(v)-2+\val(v)}\langle  \tau_{k_1}\cdots\tau_{k_{n_1+n_2}}\rangle_g
	$$
	where $\langle  \tau_{k_1}\cdots\tau_{k_{n_1+n_2}}\rangle_{g}:= \displaystyle \int_{\Mbar_{g,n_1+n_2}}\psi_1^{k_1} \cdots \psi_{n_1+n_2}^{k_{n_1+n_2}}$.
\end{itemize}

We define the \emph{A-model weight} of a labeled graph $\vGa\in \bGa(\cX)$ to be
\begin{align*}
	w_A^{\bu}(\vGa) := & \prod_{v\in V(\Ga)} \left(\sqrt{\Delta^{\bsi(v)}(t)}\right)^{2g(v)-2+\val(v)} \left\langle \prod_{h\in H(v)} \tau_{k(h)} \right\rangle_{g(v)}
	\prod_{e\in E(\Ga)} \cE^{\bsi(v_1(e)),\bsi(v_2(e))}_{k(h_1(e)),k(h_2(e))}\\
	& \cdot \prod_{l\in L^1(\Ga)}(\cL^1)^{\bsi(l)}_{k(l)}\prod_{j=1}^n(\cL^{\bu})^{\bsi(l_i)}_{k(l_i)}(l_i).
\end{align*}
% At present there is only one type of ordinary leaves (descendant leaves), denoted by $\bullet=\bu$ -- we reserve the notion for other types of leaves.
With the above definition, we have
the following theorem which expresses the $\bT'$-equivariant descendant
Gromov-Witten potential of $\cX$ in terms of a graph sum.

\begin{theorem}[{Zong \cite{Zong15}}]\label{thm:Zong}
For $2g-2+n>0$, we have
$$
	\llangle \bu_1,\dots, \bu_n\rrangle_{g,n}^{\cX,\bT'}=\sum_{\vGa\in \bGa_{g,n}(\cX)}\frac{w_A^{\bu}(\vGa)}{|\Aut(\vGa)|}.
$$
\end{theorem}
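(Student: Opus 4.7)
The plan is to derive the formula from the Givental--Teleman reconstruction theorem for semi-simple cohomological field theories, following the strategy of \cite{Zong15}. The setup of Section \ref{sec:A-canonical} and the existence of the $R$-matrix stated in Theorem \ref{R-matrix} already do most of the preparatory work; the remaining task is to repackage the quantized operator $\hat{R}\hat{S}^{-1}$ acting on a product of Witten--Kontsevich tau functions as a sum over decorated graphs.

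First I would record that the equivariant quantum cohomology $QH^*_{\bT'}(\cX)\otimes_{\bST}\bSt$ is semi-simple, since already its classical limit is the product of local algebras at the $\bT$-fixed points (the idempotent basis $\{\hat\phi_\bsi\}$ of Section 2). Canonical coordinates $u^\bsi$ and the normalized canonical frame $\hat\phi_\bsi(t)$ then exist, and Theorem \ref{R-matrix} gives a unique formal $R$-matrix such that $\tS=\Psi R(z)e^{U/z}$ solves the big quantum differential equation with the prescribed semiclassical boundary behavior dictated by the orbifold Hurwitz--Hodge asymptotics at each chart $\cX_\si$. This is the input needed to invoke Givental's quantization.

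Next I would apply the Givental--Teleman classification: since $\cX$ is a toric orbifold target, the equivariant descendant potential of $\cX$ coincides with $\hat R\hat S^{-1}$ applied to the product over $\bsi\in I_\Si$ of copies of the Kontsevich--Witten tau function with canonical coordinate shifts. For smooth toric varieties this is Givental's original theorem \cite{Givental97, Givental98}; the orbifold generalization and its equivariant version with the explicit Bernoulli-polynomial $R$-matrix are exactly what \cite{Zong15} establishes, after checking that the $R$ produced by Theorem \ref{R-matrix} agrees with the one obtained from $\bT'$-equivariant fixed-point localization on $\Mbar_{g,n}(\cX,\beta)$.

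Finally, the graph sum formula is obtained by a Wick-type expansion of the quantization formula. Each copy of the Kontsevich--Witten tau function contributes the $\psi$-class integrals $\langle\prod\tau_{k(h)}\rangle_{g(v)}$ at a vertex; the exponential $\hat R$-action factors into a bilinear operator $\frac{1}{z+w}(1-R(-z)^T R(-w))$ whose expansion gives the edge weight $\cE^{\bsi,\bsi'}_{k,l}$; the shift by the dilaton vector $z(\one-R^{-1}(z)\one)$ produces the dilaton leaves $(\cL^1)^\bsi_k$; and the dressing by $\hat S^{-1}$ with the input $\bu_i$ produces the ordinary-leaf factor $(\cL^\bu)^\bsi_k$, since $(\hat S^{-1}\bu_i)$ expands through $(\cS\hat\phi_{\bsi'}/\sqrt{\Delta^{\bsi'}})$. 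The normalization factor $(\sqrt{\Delta^\bsi(t)})^{2g(v)-2+\val(v)}$ at each vertex is exactly the Jacobian between the canonical frame $\hat\phi_\bsi(t)$ and the orthonormal frame in which the quantization lives, counted once per half-edge minus the Euler characteristic. Summing over labeled stable graphs and dividing by $|\Aut(\vGa)|$ yields the claimed identity.

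The main obstacle is the second step: verifying that the $R$-matrix of Theorem \ref{R-matrix} is indeed the Givental--Teleman $R$-matrix that reconstructs the full descendant theory of $\cX$. This requires either an independent computation via virtual localization on $\Mbar_{g,n}(\cX,\beta)$ identifying the Bernoulli-polynomial expression as the equivariant Euler class contribution of the virtual normal bundle at each fixed locus (as in \cite{Zong15}), or an appeal to the uniqueness part of Givental--Teleman together with a check that $R$ satisfies the unitary and semiclassical constraints. Once the $R$-matrix is pinned down, the graph sum expansion is a purely combinatorial unpacking of the quantized operator, and imposes no further geometric input.
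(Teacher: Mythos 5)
Since the paper cites this theorem from \cite{Zong15} without reproducing a proof, your task was to reconstruct the argument, and your sketch does so adequately along the standard route. You correctly identify the two ingredients: Givental--Teleman reconstruction for the semi-simple CohFT defined by the $\bT'$-equivariant theory over $\bSt$, and the Wick-type expansion of the quantized operator $\hat{R}\hat{S}^{-1}$ acting on a tensor power of the Kontsevich--Witten potential, which produces exactly the vertex, edge, and leaf weights of Section \ref{sec:Agraph}.

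Two remarks on the step you flag as the main obstacle. First, your ``second option'' --- uniqueness of the Teleman $R$-matrix together with the characterizing constraints --- does in fact suffice to close the gap: the property asserted in Theorem \ref{R-matrix}, that $\tS=\Psi R(z)e^{U/z}$ is a fundamental solution of the big QDE with the prescribed semiclassical limit, is precisely the defining property of the Givental--Teleman $R$-matrix, so once that theorem is granted, no independent localization computation is required to derive the graph-sum identity. The localization computation is, however, what \cite{Zong15} actually carries out: there the graph sum arises directly from organizing $\bT'$-fixed-loci contributions on $\Mbar_{g,n}(\cX,\beta)$, and the Bernoulli-polynomial $R$-matrix formula emerges as a byproduct. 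Thus your proposal is a slightly different logical ordering (abstract classification first, then identify the $R$-matrix) compared to the cited proof (direct localization, with the $R$-matrix appearing internally), but the two are equivalent and both are standard. Second, the abstract reconstruction route requires confirming that the equivariant orbifold Gromov--Witten theory of the non-compact $\cX$ genuinely defines a CohFT in Teleman's sense (flat unit, degree axioms, and the passage to the field extension $\bSt$ where semi-simplicity holds). These points are standard but not entirely free in the orbifold setting, and the direct localization argument of \cite{Zong15} sidesteps them, which is presumably why the paper adopts it as the reference.
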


\subsection{Equivariant $J$-function}\label{sect:JFunction}
The $\bT'$-equivariant \emph{big $J$-function} $J_{\bT'}^{\mathrm{big}}$ of $\cX$ is a certain generating function of 1-pointed descendant invariants of $\cX$. It is characterized by
$$
	(J_{\bT'}^{\mathrm{big}}(t,z),b)_{\cX,\bT'} = (\one,\cS(b)\big|_{Q = 1})_{\cX,\bT'}
$$
for any $b\in H_{\CR,\bT'}^*(\cX;\bST)$. Equivalently, we have
$$
	J_{\bT'}^{\mathrm{big}}(t,z) = \sum_{\alpha} \double{\one, \frac{\phi_\alpha}{z - \hpsi}}^{\cX, \bT'}_{0,2} \bigg|_{Q = 1} \phi^\alpha = \sum_{\alpha} \double{\frac{\phi_\alpha}{z(z - \hpsi)}}^{\cX, \bT'}_{0,1} \bigg|_{Q = 1} \phi^\alpha.
$$
Here, $\{\phi_\alpha\}$ is any basis of $H^*_{\CR, \bT'}(\cX;\bC)$ and $\{\phi^\alpha\}$ is the dual basis under the $\bT'$-equivariant Poincar\'e pairing. 
% Similar to the $(0,2)$-case, in $\llangle \frac{b}{z(z - \hpsi)} \rrangle^{\cX,\bT'}_{0,1}$, we adopt the following convention for the unstable term corresponding to $d = 0 \in E(\cX)$ and $m = 0$:
% $$
% 	\left\langle \frac{b}{z(z - \hpsi)} \right\rangle^{\cX,\bT'}_{0,1,0} = (\one,b)_{\cX,\bT'}.
% $$
% The contributions of all such unstable terms in $J_{\bT'}^{\mathrm{big}}(t,z)$ sum to $\one$.

We will further consider the restriction to the small phase space $H^2_{\CR, \bT'}(\cX; \bC)$. For parameter $\btau \in H^2_{\CR, \bT'}(\cX; \bC)$, the $\bT'$-equivariant \emph{small $J$-function} of $\cX$ is defined to be
$$
	J_{\bT'}(\btau, z) := J_{\bT'}^{\mathrm{big}}(t = \btau,z) = \sum_{\alpha} \double{\frac{\phi_\alpha}{z(z - \hpsi)}}^{\cX, \bT'}_{0,1} \bigg|_{t = \btau, Q = 1} \phi^\alpha.
$$

\begin{convention}
\rm{
From this point onwards, we make the specialization
$$
	t = \btau, \qquad Q = 1
$$
in the generating functions $\llangle \rrangle^{\cX,\bT'}_{g,n}$ for all $g, n$.
}
\end{convention}

\subsection{Equivariant small $I$-function and genus-zero mirror theorem}\label{sect:IFunction}
We now state the genus-zero mirror theorem for toric Deligne-Mumford stacks, which follows from \cite{Givental98,CCK15,CCIT15}, in the case of toric Calabi-Yau 3-orbifolds over the small phase space. We choose elements $H_1, \dots, H_{\fp} \in \bL^\vee \cap \tNef(\cX)$ that satisfy the following conditions. We denote $\bar{H}_a := \kappa(H_a)$ where $\kappa: \bL^\vee \to H^2(\cX; \bZ)$ is defined in Section \ref{sect:LineBundles}.
\begin{itemize}
	\item $H_a = D_{3+a}$ for $a = \fp'+1, \dots, \fp$. This implies that $\bar{H}_a = 0$ for such $a$.

	\item $\{H_1, \dots, H_{\fp}\}$ is a $\bQ$-basis of $\bL^\vee_{\bQ} := \bL^\vee \otimes \bQ$. This implies that $\{\bar{H}_1, \dots, \bar{H}_{\fp'}\}$ is a $\bQ$-basis of $H^2(\cX; \bQ)$.
	
	\item Given any 3-cone $\sigma \in \Sigma(3)$, we may uniquely write
	\begin{equation}\label{eqn:sai}
		H_a = \sum_{i \in I_\sigma} s_{ai}^\sigma D_i
	\end{equation}
	for $s_{ai}^\sigma \in \bQ_{\ge 0}$. Then we require that $s_{ai}^\sigma \in \bZ_{\ge 0}$ for all $\sigma, a, i$.
	
\end{itemize}
We introduce formal variables $t_0, q = (q_1, \dots, q_{\fp})$ and write $q_K = (q_1, \dots, q_{\fp'})$, $q_{\orb} = (q_{\fp'+1}, \dots, q_{\fp})$. We write $|q|$ for the maximum modulus of the $q_a$'s, and $|q_K|$, $|q_{\orb}|$ similarly. The limit $q \to 0$ may be viewed as a B-model large complex structure/orbifold mixed type limit. In particular, $q_K \to 0$ is the large complex structure limit which, under mirror symmetry, corresponds to taking the large radius limit while preserving the twisted classes. For $\beta \in \bK$, we define
$$
	q^\beta := \prod_{a = 1}^{\fp} q_a^{\inner{H_a, \beta}}.
$$
It is a \emph{monomial} in the $q_a$'s when $\beta \in \bK_{\eff}$, by the conditions above.

For $a = 1, \dots, \fp'$, let $\bar{H}_a^{\bT'} \in H^2_{\bT'}(\cX; \bZ)$ be the unique $\bT'$-equivariant lift of $\bar{H}_a$ whose restriction to $\fp_{\si_0}$ is zero, where $\si_0$ is the preferred 3-cone fixed in Section \ref{sect:Flags}. For $\sigma \in \Sigma(3)$, denote
$$
    \sw_{a, \sigma} := \bar{H}_a^{\bT'} \big|_{\fp_\sigma} \qquad \in M'_\sigma \subset M'_{\bQ}.
$$

Following \cite{Iritani09,CCIT15}, the $\bT'$-equivariant \emph{small $I$-function} of $\cX$ is defined to be
\begin{align*}
	I_{\bT'}(t_0, q,z) := & e^{(t_0 + \sum_{a = 1}^{\fp'} \bar{H}_a^{\bT'}\log q_a)/z}\\
	& \sum_{\beta \in \bK_{\eff}} q^\beta \prod_{i = 1}^{3+\fp'} \frac{\prod_{m = \ceil{\inner{D_i, \beta}}}^\infty (\bar{D}_i^{\bT'} + (\inner{D_i, \beta}-m)z)}{\prod_{m = 0}^\infty (\bar{D}_i^{\bT'} + (\inner{D_i, \beta}-m)z)} \prod_{i = 4+\fp'}^{3+\fp} \frac{\prod_{m = \ceil{\inner{D_i, \beta}}}^\infty ((\inner{D_i, \beta}-m)z)}{\prod_{m = 0}^\infty ((\inner{D_i, \beta}-m)z)} \one_{v(\beta)}.
\end{align*}
The following theorem follows from \cite{Givental98,CCK15,CCIT15}.

\begin{theorem}\label{thm:ToricMirror}
We have
$$
J_{\bT'}(\btau, z) = I_{\bT'}(t_0, q,z)
$$
under the $\bT'$-equivariant mirror map $\btau = \btau(t_0, q)$ given by the first-order term in the asymptotics expansion
$$
	I_{\bT'}(t_0, q,z) = \one + z^{-1}\btau(t_0, q) + o(z^{-1}).
$$
\end{theorem}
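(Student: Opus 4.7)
The plan is to deduce the statement from the general mirror theorem for smooth toric Deligne-Mumford stacks, which characterizes the relationship between the $I$-function and the $J$-function via Givental's Lagrangian cone formalism. Recall that the genus-zero descendant theory of $\cX$ encodes a Lagrangian cone $\cL_{\cX}$ in the symplectic loop space $H^*_{\CR, \bT'}(\cX;\bC) \otimes \bC\lp z^{-1}\rp$, and the small $J$-function is characterized as the unique family of points on $\cL_{\cX}$ of the form $z + \btau + o(1)$ as $z \to \infty$ that takes values in the small parameter space $H^2_{\CR, \bT'}(\cX;\bC)$. So the task reduces to showing that the $I$-function $I_{\bT'}(t_0, q, z)$ lies on $\cL_{\cX}$ (after setting $Q = 1$), together with reading off the mirror map from its asymptotics.

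First, I would establish that $I_{\bT'}(t_0, q, z)$ satisfies the right asymptotic form. The Calabi-Yau condition $\inner{e_3^\vee, b_i} = 1$ ensures that for any $\beta \in \bK_{\eff}$ with $\inner{D_i, \beta} \in \bZ$, the hypergeometric factor has a simple pole structure in $z$ of total degree at most $0$, so that each summand is of the form $O(z^{-1}) \one_{v(\beta)}$ or contributes to the leading $\one = \one_0$ term (corresponding to $\beta = 0$). Together with the prefactor $e^{(t_0 + \sum_a \bar H_a^{\bT'} \log q_a)/z}$, this gives
$$
I_{\bT'}(t_0, q, z) = \one + z^{-1}\btau(t_0, q) + O(z^{-2}),
$$
so the mirror map $\btau: (t_0, q) \mapsto \btau(t_0, q) \in H^2_{\CR, \bT'}(\cX;\bC)$ is well-defined (after specializing $Q = 1$, which is allowed because the $q^\beta$ factors absorb the Novikov variables once the divisor equation is applied; the conditions imposed on $H_1, \dots, H_{\fp}$ guarantee that each $q^\beta$ is a genuine monomial). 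The involvement of the twisted sectors $\one_{v(\beta)}$ from $\beta$ outside $\bL$ produces the orbifold-parameter dependence on $q_{\orb}$.

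The heart of the argument is the assertion that $I_{\bT'}(t_0, q, z) \in \cL_{\cX}$, restricted appropriately. This is the toric Deligne-Mumford mirror theorem of Coates-Corti-Iritani-Tseng \cite{CCIT15}, which extends the original Givental mirror theorem for toric manifolds \cite{Givental98}; an alternative proof using $\epsilon$-stable quasi-maps is given by Cheong-Ciocan-Fontanine-Kim \cite{CCK15}. I would invoke one of these: for instance, the CCIT approach proves that the combinatorially-defined hypergeometric modification $I$ lies on $\cL_{\cX}$ by induction on the support of curve classes and using $\bT$-equivariant localization on the moduli of genus-zero twisted stable maps together with the recursion structure coming from polynomiality of certain twisted classes. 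The semi-projectivity hypothesis on $\cX$ ensures the required properness for the $\bT'$-equivariant virtual localization to make sense; the Calabi-Yau condition ensures the hypergeometric factors have the right degree in $z$ so that $I_{\bT'}$ defines a point on the cone, not merely a section of an associated bundle.

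Finally, once $I_{\bT'}(t_0, q, z) \in \cL_{\cX}$ is known and its leading asymptotics $\one + z^{-1}\btau(t_0, q) + o(z^{-1})$ has been identified, the uniqueness characterization of $J_{\bT'}(\btau, z)$ on $\cL_{\cX}$ gives the desired equality $J_{\bT'}(\btau(t_0, q), z) = I_{\bT'}(t_0, q, z)$. The main conceptual obstacle, and the reason this is not an elementary manipulation, is the mirror-theorem step showing $I_{\bT'} \in \cL_{\cX}$; the rest is formal once one has the Lagrangian cone framework and the combinatorial calculation of the asymptotic expansion.
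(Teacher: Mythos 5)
Your proposal matches the paper's own treatment: the paper does not give a detailed proof but simply states that the theorem follows from \cite{Givental98,CCK15,CCIT15}, and your sketch is precisely the standard argument behind those citations — the $I$-function lies on Givental's Lagrangian cone by the toric DM mirror theorem, and the uniqueness characterization of the small $J$-function together with the asymptotic expansion yields the mirror map. No meaningful difference in approach.
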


In this paper, we take the specialization $t_0 = 0$. We write $\btau(q) := \btau(0, q)$ and
\begin{align*}
	& I_{\bT'}(q,z) := I_{\bT'}(t_0 = 0, q,z) \\
	& = e^{(\sum_{a = 1}^{\fp'} \bar{H}_a^{\bT'}\log q_a)/z} \sum_{\beta \in \bK_{\eff}} q^\beta \prod_{i = 1}^{3 + \fp'} \frac{\Gamma\left(1 + \frac{\bar{D}_i^{\bT'}}{z} - \{-\inner{D_i, \beta}\} \right)}{\Gamma\left(1 + \frac{\bar{D}_i^{\bT'}}{z} + \inner{D_i, \beta} \right)} \prod_{i = 4+\fp'}^{3+\fp} \frac{\Gamma\left(1 - \{-\inner{D_i, \beta}\} \right)}{\Gamma\left(1 + \inner{D_i, \beta} \right)} \frac{\one_{v(\beta)}}{z^{\age(v(\beta))}}.
\end{align*}
For any $\sigma \in \Sigma(3)$, we have
$$
	I_{\bT'}(q,-z) \big|_{\fp_\sigma} = e^{-(\sum_{a = 1}^{\fp'} \sw_{a, \sigma}\log q_a)/z} \sum_{\beta \in \bK_{\eff, \sigma}} q^\beta \prod_{i \in I_\sigma} \frac{1}{\inner{D_i, \beta}!} \prod_{i \in I'_\sigma} \frac{\Gamma\left(1 - \frac{\sw_{i, \sigma}}{z} - c_i(v(\beta)) \right)}{\Gamma\left(1 - \frac{\sw_{i, \sigma}}{z} + \inner{D_i, \beta} \right)} \frac{\one_{v(\beta)}}{(-z)^{\age(v(\beta))}}.
$$
In obtaining the above we used the following observations. If $\beta \not \in \bK_{\eff, \sigma}$, then either $\beta \not \in \bK_\sigma$, in which case $v(\beta) \not \in \Box(\sigma)$, or $\beta \in \bK_\sigma \setminus \tNE_\sigma$, in which case there exists $i \in I_\sigma$ such that $\inner{D_i, \beta} \in \bZ_{<0}$, which further implies that $\frac{1}{\Gamma\left(1 + \inner{D_i, \beta} \right)} = 0$. On the other hand, for $\beta \in \bK_{\eff, \sigma}$ and $i \in I_\sigma$, we have $\inner{D_i, \beta} \in \bZ_{\ge 0}$ and thus $\Gamma\left(1 + \inner{D_i, \beta} \right) = \inner{D_i, \beta}!$.

We further separate the above restriction of the $I$-function to inertia components of $\fp_\sigma$. Here, we take a specialization of equivariant parameters $\su_1 = u_1$, $\su_2 = u_2$ for $u_1, u_2 \in \bC$. Let 
$$
	w_{i, \sigma} \qquad \text{(resp. $w_{a, \sigma}$)}
$$
denote the resulting specialization of $\sw_{i, \sigma}$ for $i = 1, \dots, 3 + \fp'$ (resp. $\sw_{a, \sigma}$ for $a = 1, \dots, \fp'$). For $v \in \Box(\sigma)$, the restriction is $(-z)^{-\age(v)}I_{\sigma, v}$ where
$$
	I_{\sigma, v} := e^{-(\sum_{a = 1}^{\fp'} w_{a, \sigma}\log q_a)/z} \sum_{\substack{\beta \in \bK_{\eff, \sigma} \\ v(\beta) = v}} q^\beta \prod_{i \in I_\sigma} \frac{1}{\inner{D_i, \beta}!} \prod_{i \in I'_\sigma} \frac{\Gamma\left(1 - \frac{w_{i, \sigma}}{z} - c_i(v) \right)}{\Gamma\left(1 - \frac{w_{i, \sigma}}{z} + \inner{D_i, \beta} \right)}.
$$
There is a unique $\beta \in \bK_{\eff, \sigma}$ with $v(\beta) = v$ such that
\begin{itemize}
	\item $q^\beta$ is a monomial in $(q_a)_{3+a \in I_\sigma, b_{3+a} \in \sigma}$ of degree $\age(v)$;
	\item the coefficient of the $q^\beta$ in the above summation is $1$;
	\item for any $\beta' \in \bK_{\eff, \sigma}$ with $\beta' \neq \beta$ and $v(\beta') = v$, $q^{\beta'}$ is divisible by $q^\beta$ and is a monomial in $q$ in which either there is a variable among $q_K$ or the total power of variables in $q_{\orb}$ is at least $3$.
\end{itemize}
Letting $q^v := q^\beta$, we have the estimate
\begin{equation}\label{eqn:IsvLeading}
	I_{\sigma, v} = e^{-(\sum_{a = 1}^{\fp'} w_{a, \sigma}\log q_a)/z} \left(q^v + O(|q_{\orb}|^3) + O(|q_K|)\right).
\end{equation}

By $\bT'$-equivariant localization, we have
\begin{equation}\label{eqn:ILocalize}
	I_{\bT'}(q,-z) \big|_{\su_1 = u_1, \su_2 = u_2} = \sum_{\substack{\sigma \in \Sigma(3) \\ v \in \Box(\sigma)}} (-z)^{-\age(v)} I_{\sigma, v} \phi_{\sigma, v} \big|_{\su_1 = u_1, \su_2 = u_2}
\end{equation}
where for $\sigma \in \Sigma(3)$, $v \in \Box(\sigma)$ we set 
$$
	\phi_{\sigma, v} := \frac{\iota_{\sigma, *} \one_v}{e_{\bT'}(T_{\fp_\sigma}\cX_v)}.
$$

\subsection{Equivariant GKZ system}\label{sect:GKZ}
The above components of the equivariant $I$-functions are solutions to the $\bT'$-equivariant Gelfand-Kapranov-Zelevinsky (GKZ) type system of differential equations \cite{GKZ89}. We now write down the system using the labeling conventions fixed by the preferred flag $(\tau_0, \si_0)$ as in Section \ref{sect:Flags}. First, let $s_{ai} = s_{ai}^{\sigma_0} \in \bZ_{\ge 0}$ for $a = 1, \dots, \fp$, $i = 4, \dots, 3+\fp$, where recall from \eqref{eqn:sai} that $H_a = \sum_{i = 4}^{3 + \fp} s_{ai} D_i$. For convenience we set $s_{ai} = 0$ for $i = 1, 2, 3$. Conversely, we write
$$
D_i = \sum_{a = 1}^{\fp} m_i^{(a)} H_a, \qquad i = 1, \dots, 3 + \fp
$$
for $m_i^{(a)} \in \bQ$. We denote
$$
\partial_a := q_a\frac{\partial}{\partial q_a}, \quad  a = 1, \dots, \fp, \qquad \partial_i := \sum_{a = 1}^{\fp} m_i^{(a)} \partial_a, \quad i = 1, \dots, 3+\fp.
$$
For $\beta \in \bL$, define the differential operator
$$
\bD_\beta^{\bT'} := q^\beta \prod_{i: \inner{D_i, \beta}<0} \prod_{m=0}^{-\inner{D_i, \beta} -1} \left(\partial_i - m - \frac{w^i}{z} \right) - \prod_{i: \inner{D_i, \beta}>0} \prod_{m=0}^{\inner{D_i, \beta} -1} \left(\partial_i - m - \frac{w^i}{z} \right),
$$
where
$$
(w^1, \dots, w^{3+\fp}) := \left( \frac{1}{\fr}u_1, \frac{\fs}{\fr \fm} u_1 + \frac{1}{\fm} u_2 , -\frac{\fs + \fm}{\fr \fm} u_1 - \frac{1}{\fm} u_2, 0, \dots, 0 \right).
$$
Moreover, define the differential operator
$$
	\bE := z\frac{\partial}{\partial z} + u_1\frac{\partial}{\partial u_1} + u_2\frac{\partial}{\partial u_2}.
$$

% \begin{notation}\rm{
% Let
% $$
% 	\bD^{\bT'} = \{\bD_\beta^{\bT'} : \beta \in \bL\}
% $$
% denote the $\bT'$-equivariant Picard-Fuchs system. Operators involve $u_1, u_2$, and $z$.
% }
% \end{notation}

\begin{definition}\rm{
Let
$
	\bfS_{\bT'}
$
denote the space of functions $f$, depending on $q, u_1, u_2, z$, that satisfy the GKZ-type differential equations
\begin{equation}\label{eqn:GKZ}
	\bD_\beta^{\bT'} f = 0, \quad \text{for all }\beta \in \bL, \qquad \bE f = 0.
\end{equation}
}
\end{definition}

\begin{lemma}\label{lem:ICoeffSoln}
The components $\{I_{\sigma, v}\}_{\sigma \in \Sigma(3), v \in \Box(\sigma)}$ form a basis of the solutions space $\bfS_{\bT'}$ for small $q_a$'s and generic $u_1, u_2$. In particular, any solution in $\bfS_{\bT'}$ can be written uniquely as
$$
	\sum_{\substack{\sigma \in \Sigma(3) \\ v \in \Box(\sigma)}} a_{\sigma, v}\left(\tfrac{u_1}{z}, \tfrac{u_2}{z}\right) I_{\sigma, v}
$$
where each coefficient function $a_{\sigma, v}\left(\tfrac{u_1}{z}, \tfrac{u_2}{z}\right)$ is meromorphic in the arguments.
\end{lemma}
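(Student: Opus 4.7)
The proof plan has three steps: verify each $I_{\sigma, v}$ lies in $\bfS_{\bT'}$, check linear independence, and match the dimension $\dim \bfS_{\bT'} = \sum_\sigma |\Box(\sigma)|$.

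For the membership step, I would verify the GKZ equations \eqref{eqn:GKZ} directly on the hypergeometric series defining $I_{\sigma, v}$ using standard Pochhammer/Gamma manipulations. The key calculation is that multiplication by $q^\beta$ in $\bD_\beta^{\bT'}$ shifts the summation index in $\bK_{\eff, \sigma}$, and the resulting cancellation against the product factors $\prod_m(\partial_i - m - w^i/z)$ follows from iterating $\Gamma(x+1) = x\Gamma(x)$ on the Gamma factors $1/\inner{D_i, \beta}!$ for $i \in I_\sigma$ and $\Gamma(1 - w_{i, \sigma}/z + \inner{D_i, \beta})^{-1}$ for $i \in I'_\sigma$. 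The Euler equation $\bE I_{\sigma, v} = 0$ follows from a homogeneity count: after factoring out the exponential prefactor, each monomial of the series has total weight zero in $(u_1, u_2, z)$.

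For linear independence I would use the leading-order expansion \eqref{eqn:IsvLeading}. The exponential prefactor $e^{-(\sum_a w_{a, \sigma} \log q_a)/z}$ depends on $\sigma$ through the weights $w_{a, \sigma}$; since $\{\bar H_1, \ldots, \bar H_{\fp'}\}$ is a $\bQ$-basis of $H^2(\cX;\bQ)$, the tuples $(w_{a, \sigma})_{a=1}^{\fp'}$ are pairwise distinct across $\sigma \in \Sigma(3)$ for generic $u_1, u_2$, so no linear combination over $\bC(u_1, u_2, z)$ can mix prefactors from different fixed points. Within a fixed $\sigma$, the leading monomials $\{q^v\}_{v \in \Box(\sigma)}$ are pairwise distinct by the uniqueness properties listed below \eqref{eqn:IsvLeading}, giving independence within each $\sigma$-block.

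The main obstacle is the dimension upper bound $\dim \bfS_{\bT'} \le \sum_\sigma |\Box(\sigma)|$. My primary plan is to invoke classical GKZ theory \cite{GKZ89}: the system $\{\bD_\beta^{\bT'}\}_{\beta \in \bL}$ together with the Euler operator $\bE$ is an $A$-hypergeometric system whose generic-parameter solution space has rank equal to the normalized volume of the polytope $P \subset N'_\bR$, and triangulating $P = \bigcup_\sigma P_\sigma$ into the lattice triangles defined by the maximal cones yields normalized area $|G_\sigma| = |\Box(\sigma)|$ per triangle, hence total normalized volume $\sum_\sigma |\Box(\sigma)|$, matching the lower bound already produced by steps one and two. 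An alternative route, avoiding appeal to a rank theorem, is to use Theorem \ref{thm:ToricMirror} together with the $\cS$-operator decomposition of Section \ref{sec:QH} to produce exactly $\dim_{\bST} H^*_{\CR, \bT'}(\cX) \otimes_{R_{\bT'}} \bST$ flat sections and then invoke the localization expansion \eqref{eqn:ILocalize} to express any such section as a $\bST$-linear combination of the $I_{\sigma, v}$.
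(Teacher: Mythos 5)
Your plan is correct and is essentially a self-contained reconstruction of the argument the paper delegates to a reference: the paper's own proof of this lemma is the single sentence ``The lemma follows from \cite[Lemma 4.19]{Iritani09} and its proof,'' and your three steps (membership, linear independence, dimension count) are exactly the structure of that cited argument. What you buy by re-deriving it is self-containedness; what you lose relative to the paper is brevity and the implicit reassurance that the technical details have been checked elsewhere.

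Two points deserve a bit more care in your primary route. First, to invoke the generic-rank theorem for $A$-hypergeometric systems, you need to actually identify the system $\{\bD_\beta^{\bT'}\}_{\beta\in\bL}\cup\{\bE\}$ with a genuine GKZ system after a Batyrev-style coordinate change: the toric variables $x_1,\dots,x_{3+\fp}$ reduce to $q_1,\dots,q_\fp$, the relations built into $\partial_i=\sum_a m_i^{(a)}\partial_a$ encode the Euler equations in the $M$-directions with parameter $\beta=(-u_1/z,-u_2/z,0)$ (the last component vanishes by the Calabi--Yau relation $w^1+w^2+w^3=0$), and $\bE$ supplies the scaling in the $z$-direction. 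The ``generic $u_1,u_2$'' assumption in the statement is precisely what makes this $\beta$ non-resonant, which Adolphson's rank theorem requires; you should state this explicitly rather than rely on an unqualified appeal to \cite{GKZ89}. Second, you gesture at but do not fully argue why the coefficients $a_{\sigma,v}$ are meromorphic functions of $u_1/z,u_2/z$: the argument is that each $I_{\sigma,v}$ is $\bE$-homogeneous of degree $0$, so for a fixed $\bE$-homogeneous solution the coefficients (determined uniquely by the $q\to 0$ asymptotics and hence $q$-independent) must also be annihilated by $\bE$, forcing them to depend only on the ratios $u_1/z,u_2/z$; meromorphy then follows by comparing leading asymptotics, which are Gamma-function expressions in these ratios. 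Your alternative dimension-count route via Theorem \ref{thm:ToricMirror}, the $\cS$-operator, and the localization expansion \eqref{eqn:ILocalize} is also viable and has the advantage of bypassing the rank theorem entirely.
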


The lemma follows from \cite[Lemma 4.19]{Iritani09} and its proof, which applies to weak Fano toric orbifolds that are not necessarily compact. Indeed, our Equation \eqref{eqn:GKZ} is the analog of Equations (77), (78) there. Note that in the Calabi-Yau case, the equation $\bE f = 0$ imposes the homogeneity condition that solutions are homogeneous of degree 0 in the variables whose degrees are prescribed by $\deg(z) = \deg(u_1) = \deg(u_2) = 2$ and $\deg(q^\beta) = 0$ for all $\beta$.

\subsection{Non-equivariant GKZ system}\label{sect:NEGKZ}
In the non-equivariant setting, for $\beta \in \bL$, define the differential operator
$$
\bD_\beta := q^\beta \prod_{i: \inner{D_i, \beta}<0} \prod_{m=0}^{-\inner{D_i, \beta} -1} \left(\partial_i - m \right) - \prod_{i: \inner{D_i, \beta}>0} \prod_{m=0}^{\inner{D_i, \beta} -1} \left(\partial_i - m \right).
$$

\begin{definition}\rm{
Let
$
	\bfS
$
denote the space of functions $f$, depending on $q$, that satisfy the GKZ-type differential equations
%\begin{equation}\label{eqn:NEGKZ}
$$
	\bD_\beta f = 0, \quad \text{for all }\beta \in \bL.
$$
}
\end{definition}

Here note that in the non-equivariant case, the equation $\bE f = 0$ simply means that $f$ is independent of $z$. The system is also referred to as the non-equivariant \emph{Picard-Fuchs system}. It is known that $\bfS$ is a $\bC$-vector space of rank $\dim_{\bC} H^*_{\CR}(\cX; \bC)$ \cite{GKZ90,Iritani09} and that a basis of solutions is given by components of the non-equivariant small $I$-function of $\cX$ \cite{Givental98,Iritani09}
$$	
	I(q,z) = e^{(\sum_{a = 1}^{\fp'} \bar{H}_a\log q_a)/z} \sum_{\beta \in \bK_{\eff}} q^\beta \prod_{i = 1}^{3 + \fp'} \frac{\Gamma\left(1 + \frac{\bar{D}_i}{z} - \{-\inner{D_i, \beta}\} \right)}{\Gamma\left(1 + \frac{\bar{D}_i}{z} + \inner{D_i, \beta} \right)} \prod_{i = 4+\fp'}^{3+\fp} \frac{\Gamma\left(1 - \{-\inner{D_i, \beta}\} \right)}{\Gamma\left(1 + \inner{D_i, \beta} \right)} \frac{\one_{v(\beta)}}{z^{\age(v(\beta))}}.
$$
Specifically, consider $H^*_{\CR, c}(\cX; \bC)$, the Chen-Ruan cohomology of $\cX$ \emph{with compact support}, and the perfect pairing
$$
	(-,-)_{\cX}: H^*_{\CR, c}(\cX; \bC) \times H^*_{\CR}(\cX; \bC) \to \bC.
$$
Then a $\bC$-basis of $\bfS$ can be given by
$$
	[z^{-\frac{6-\deg(a)}{2}}] \left( a,I(q, -z) \right)_{\cX}
$$
as $a$ ranges through any homogeneous $\bC$-basis of $H^*_{\CR, c}(\cX; \bC)$, where the notation $[z^{-k}]$ stands for taking the $z^{-k}$-coefficient in the expansion in $z^{-1}$.

%\input{mirrorcurves}

% !TEX root = descendantBKMP.tex

\section{Central charges of coherent sheaves}\label{sect:Sheaves}
In this section, we define quantum cohomology central charges of $\bT'$-equivariant coherent sheaves on $\cX$. We will focus on the central charges of sheaves whose support in $\cX$ is bounded below/above with respect to a non-zero cocharacter of $\bT'$. We characterize such sheaves by introducing the notion of bounded below/above $K$-groups. This notion extends the study of Borisov-Horja \cite{BH06,BH15} on the $K$-groups and $K$-groups with compact support, and can be generalized to toric orbifolds of higher dimensions. Additional details are provided in Appendix \ref{appdx:Bounded} to supplement this section.

\subsection{$K$-groups and equivariant central charges}\label{sect:KCentral}
Let
$$
    D(\cX) := D^b(\Coh(\cX)), \qquad D_{\bT'}(\cX) := D^b(\Coh_{\bT'}(\cX))
$$
denote the bounded derived category of ($\bT'$-equivariant) coherent sheaves on $\cX$ and
$$
    K(\cX) := K_0(D(\cX)), \qquad K_{\bT'}(\cX) := K_0(D_{\bT'}(\cX))
$$
denote their Grothendieck groups. There is a natural action of the lattice $M'$ on $K_{\bT'}(\cX)$ given by tensoring with characters of $\bT'$.

The $\bT'$-equivariant \emph{twisted Chern character}
$$
    \tch_{z, \bT'}: K_{\bT'}(\cX) \to H^*_{\CR, \bT'}(\cX; \bC)\formal{\su_1, \su_2, z^{-1}}
$$
is the map uniquely characterized by the following properties:
\begin{itemize}
    \item $\tch_{z, \bT'}$ is a homomorphism of additive groups, i.e.
    $$
        \tch_{z, \bT'}(\cE_1 \oplus \cE_2) = \tch_{z, \bT'}(\cE_1) + \tch_{z, \bT'}(\cE_2).
    $$

    \item For any $\bT'$-equivariant line bundle $\cL$ on $X$,
    $$
        \tch_{z, \bT'}(\cL) = \bigoplus_{v \in \Box(\cX)} \one_v \exp 2\pi\sqrt{-1}\left(-\frac{(c_1)_{\bT'}(\cL)}{z} + \age_v(\cL)\right).
    $$
    In particular, for $\cL = \cO_{\cX}\left(\sum_{i = 1}^{3+\fp'} r_i \cD_i \right)$, we have
    $$
        \tch_{z, \bT'}(\cL) = \bigoplus_{v \in \Box(\cX)} \one_v \exp 2\pi\sqrt{-1}\left(-z^{-1}\sum_{i = 1}^{3+\fp'} r_i\Dbar_i^{\bT'} + \sum_{i = 1}^{3+\fp'} r_ic_i(v) \right).
    $$
\end{itemize}

Moreover, let
$$
    \hGa_{\cX}^z := \bigoplus_{v \in \Box(\cX)} z^{3-\age(v)} \one_v \prod_{i = 1}^{3+\fp'} \Gamma\left(1 + \frac{\Dbar_i^{\bT'}}{z} - c_i(v) \right)
$$
where the Gamma function is expanded at $1 - c_i(v) > 0$. The characteristic class $\hGa^z$ may be defined on all of $K_{\bT'}(\cX)$; the above is the class of $T\cX$.

%which we refer to as the $\bT'$-equivariant \emph{Gamma class} of $\cX$. 

\begin{definition}\label{def:KFraming} \rm{
For $\cE \in K_{\bT'}(\cX)$, the $\bT'$-equivariant \emph{$K$-theoretic framing} of $\cE$ is defined as
$$
    \kappa_z^{\bT'}(\cE) := \hGa_{\cX}^z \tch_{z, \bT'}(\cE).
$$
}
\end{definition}

We note that $\tch_{z, \bT'}$, $\hGa_{\cX}^z$, and thus $\kappa_z^{\bT'}$ all admit a non-equivariant limit valued in $H^*_{\CR}(\cX; \bC)[z^{-1}]$, which we denote by
$$
    \tch_z, \qquad  \hGa_{\cX}^z,  \qquad \kappa_z
$$
respectively. Borisov-Horja \cite{BH06,BH15} studied the non-equivariant $K$-group $K(\cX)$ and twisted Chern character extensively and provided explicit combinatorial descriptions; see Sections \ref{apx-sect:KGroup}, \ref{apx-sect:CombChern} for a review.

Motivated by \cite{Hosono06,KKP08,Iritani09, FLZ17,Fang20}, we make the following definition. %\orange{Add more references here... e.g. \cite{FZhou19,FWZ23}}

\begin{definition}\label{def:SheafCentral}\rm{
For $\cE \in K_{\bT'}(\cX)$, the $\bT'$-equivariant quantum cohomology \emph{central charge} of $\cE$ is defined by
$$
    Z_{\bT'}(\cE) := \double{\frac{\kappa_z^{\bT'}(\cE)}{z(z + \hpsi)}}^{\cX, \bT'}_{0,1}.
$$    
}
\end{definition}

For $\cE \in K_{\bT'}(\cX)$, by Theorem \ref{thm:ToricMirror}, we may write
$$
    Z_{\bT'}(\cE) = \left(\kappa_z^{\bT'}(\cE), J_{\bT'}(\btau, -z)\right)_{\cX, \bT'} =  \left(\kappa_z^{\bT'}(\cE), I_{\bT'}(q, -z)\right)_{\cX, \bT'}
$$
under the mirror map $\btau = \btau(q)$. Here and in what follows, we implicitly take the specialization $\su_1 = u_1$, $\su_2 = u_2$. It follows from Lemma \ref{lem:ICoeffSoln} that $Z_{\bT'}(\cE) \in \bfS_{\bT'}$.

\begin{lemma}\label{lem:CentralInjective}
The $\bT'$-equivariant central charge homomorphism
$$
    K_{\bT'}(\cX) \to \bfS_{\bT'}, \qquad \cE \mapsto Z_{\bT'}(\cE)
$$
is injective.
\end{lemma}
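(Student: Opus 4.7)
The strategy is to combine $\bT'$-equivariant localization of the pairing
$$
Z_{\bT'}(\cE) = \left(\kappa_z^{\bT'}(\cE),\, I_{\bT'}(q,-z)\right)_{\cX, \bT'}
$$
(valid under the mirror map by Theorem \ref{thm:ToricMirror}) with the uniqueness of the basis decomposition in $\bfS_{\bT'}$ provided by Lemma \ref{lem:ICoeffSoln}. First, I substitute the localization expansion \eqref{eqn:ILocalize} of $I_{\bT'}(q,-z)$ into this pairing. Since $\phi_{\sigma,v} = \iota_{\sigma,*}\one_v / e_{\bT'}(T_{\fp_\sigma}\cX_v)$ is the $\bT'$-equivariant Poincar\'e dual of the fixed inertia point $(\fp_\sigma)_v$, a direct localization computation of the orbifold pairing (using $(\inv^* \alpha)|_{(\fp_\sigma)_v} = \alpha|_{(\fp_\sigma)_{\inv(v)}}$) produces
$$
Z_{\bT'}(\cE) = \sum_{\sigma \in \Sigma(3)}\sum_{v \in \Box(\sigma)} \frac{(-z)^{-\age(v)}\,\kappa_z^{\bT'}(\cE)\big|_{(\fp_\sigma)_{\inv(v)}}}{|G_\sigma|\,e_{\bT'}(T_{\fp_\sigma}\cX_v)}\, I_{\sigma,v}.
$$

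By Lemma \ref{lem:ICoeffSoln}, this is the unique expansion in the basis $\{I_{\sigma,v}\}$, since the coefficients above are meromorphic in $u_1/z, u_2/z$ (each is a product of values of exponentials and of $\Gamma$-functions with arguments depending homogeneously on $u_1/z, u_2/z$ by the Calabi--Yau condition). Hence $Z_{\bT'}(\cE) = 0$ forces $\kappa_z^{\bT'}(\cE)|_{(\fp_\sigma)_w} = 0$ for every $\sigma \in \Sigma(3)$ and $w \in \Box(\sigma)$. The Gamma factor $\hGa^z_{\cX}|_{(\fp_\sigma)_w}$ is an invertible product of values $\Gamma(1 + w_{i,\sigma}/z - c_i(w))$ with $1 - c_i(w) \in (0,1]$, so no pole is hit for generic $u_1, u_2$. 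Therefore the vanishing is equivalent to $\tch_{z,\bT'}(\cE)|_{(\fp_\sigma)_w} = 0$ at every fixed inertia sector.

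The restriction $\cE|_{\fp_\sigma}$ is a virtual $\bT' \times G_\sigma$-representation, and $\tch_{z,\bT'}(\cE)|_{(\fp_\sigma)_w}$ records the trace of $w \in G_\sigma$ on $\cE|_{\fp_\sigma}$ with each $\bT'$-character $e^\chi$ replaced by $e^{-2\pi\sqrt{-1}\chi/z}$. Linear independence of these exponentials in $z^{-1}$ over distinct $\bT'$-weights, combined with Fourier inversion on the finite abelian group $G_\sigma$ (as $w$ varies), forces $\cE|_{\fp_\sigma}$ to vanish in the complexified representation ring for every $\sigma$. Since $\cX^{\bT'} = \bigsqcup_{\sigma \in \Sigma(3)} \fp_\sigma$, Thomason's localization theorem for $\bT'$-equivariant $K$-theory then gives $\cE = 0$ after inverting the standard multiplicative set in $K_{\bT'}(\pt)$. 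Since $K_{\bT'}(\cX)$ is a free $K_{\bT'}(\pt)$-module for the semi-projective toric orbifold $\cX$ (extending the classical result for smooth projective toric DM stacks), this localization map is injective, and we conclude $\cE = 0$ in $K_{\bT'}(\cX)$.

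The main obstacle is the bookkeeping in the first step: identifying the localization coefficient of $I_{\sigma,v}$ as precisely the stated multiple of $\kappa_z^{\bT'}(\cE)|_{(\fp_\sigma)_{\inv(v)}}$ requires careful tracking of ages, the involution $\inv$, and the equivariant Euler-class denominators through the orbifold Poincar\'e pairing. Once this identification is in place, uniqueness from Lemma \ref{lem:ICoeffSoln} and standard $K$-theoretic localization close the argument.
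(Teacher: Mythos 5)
Your proof is correct. The paper itself proves this lemma in a single sentence: by non-degeneracy of the Poincar\'e pairing, injectivity of $Z_{\bT'}$ reduces to injectivity of $\kappa_z^{\bT'}$, which in turn follows from injectivity of $\tch_{z,\bT'}$. Your argument expands both halves of that sentence. For the first half, your expansion in the basis $\{I_{\sigma,v}\}$ together with Lemma \ref{lem:ICoeffSoln} is exactly what underlies the paper's invocation of ``non-degeneracy'': non-degeneracy of a bilinear form does not by itself make the pairing against a fixed one-parameter family injective, and the genuine input is the linear independence of the GKZ solutions $I_{\sigma,v}$, which is precisely Lemma \ref{lem:ICoeffSoln}. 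For the second half, your Fourier-inversion argument at each fixed inertia sector combined with Thomason localization gives a self-contained proof of injectivity of $\tch_{z,\bT'}$ in the $\bT'$-equivariant setting, whereas the paper instead leans on the Borisov--Horja combinatorial Chern character isomorphism (Appendix \ref{appdx:Bounded}), which it develops explicitly only in the non-equivariant case. The one step you invoke without argument is freeness of $K_{\bT'}(\cX)$ over $K_{\bT'}(\pt)$ for a non-compact semi-projective toric orbifold; this is true (for instance via a $\bT'$-equivariant Stanley--Reisner presentation, or via the cell filtration of $\cX$ by attracting sets of a generic cocharacter of the type used in Appendix \ref{appdx:Bounded}), but it is the load-bearing reason your localization argument detects classes integrally rather than merely after tensoring with the fraction field, so it deserves a precise reference.
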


\begin{proof}
By the non-degeneracy of the pairing $\left(-, -\right)_{\cX, \bT'}$, it suffices to show that the $K$-theoretic framing $\kappa_z^{\bT'}$ is injective, which follows from the injectivity of the twisted Chern character $\tch_{z, \bT'}$.
\end{proof}

\subsection{Sheaves with compact support and non-equivariant central charges}\label{sect:KCompact}
Let $\Sigma^c$ denote the subset of cones in $\Sigma$ whose interior is contained in the interior of the support of $\Sigma$. Set
$$
    \cX^c = \bigcup_{\sigma \in \Sigma^c} \cV(\sigma).
$$
Let $\pi:\cX\to X_0$ be the composition of the map $\cX\to X$ from $\cX$ to its coarse moduli space $X$
and the map $X\to X_0$ from the simplicial toric variety $X$ to its affinization $X_0 = \Spec \left(H^0(X, \cO_X)\right)$. 
Then $\cX^c =\pi^{-1}(0)$, where 0 is the unique $\bT$-fixed point in the affine toric variety $X_0$.

When $X$ is affine (and equal to $X_0$), $\Sigma^c = \Sigma(3)$ consists of a unique 3-cone $\sigma = \sigma_0$ and $\cX^c = \fp_{\sigma_0}$. Otherwise, any minimal cone in $\Sigma^c$ is either 1- or 2-dimensional, and any irreducible component of $\cX^c$ is a compact $\bT$-invariant divisor or curve.

Let $D^c(\cX), D^c_{\bT'}(\cX)$ denote the full subcategories of $D(\cX), D_{\bT'}(\cX)$ respectively consisting of complexes of sheaves whose cohomology sheaves are supported on $\cX^c$, and
$$
    K^c(\cX) := K_0(D^c(\cX)), \qquad K^c_{\bT'}(\cX) := K_0(D^c_{\bT'}(\cX))
$$
denote their Grothendieck groups. The group $K^c(\cX)$ (resp. $K^c_{\bT'}(\cX)$) is a module over the ring $K(\cX)$ (resp. $K_{\bT'}(\cX)$). The non-equivariant version $K^c(\cX)$ is studied in detail in \cite{BH15} and interpreted as a $K$-group with compact support; see Sections \ref{apx-sect:KGroup}, \ref{apx-sect:CombChern} for a review. 
The inclusion of categories induces natural maps
$$
    K^c(\cX) \to K(\cX), \qquad K^c_{\bT'}(\cX) \to K_{\bT'}(\cX)
$$
which are $K(\cX)$-, $K_{\bT'}(\cX)$-linear respectively. Thus the definition of the $\bT'$-equivariant $K$-theoretic framing and central charge can be extended to $K^c_{\bT'}(\cX)$.

Now let $\cE \in K^c(\cX)$. Choose a $\bT'$-equivariant lift $\cE_{\bT'} \in K_{\bT'}(\cX)$. Then the $\bT'$-equivariant central charge $Z_{\bT'}(\cE_{\bT'})$ admits a well-defined non-equivariant limit
$$
    Z^c(\cE) := Z_{\bT'}(\cE_{\bT'}) \bigg|_{u_1 = u_2 = 0}
$$
which does not depend on the choice of lift $\cE_{\bT'}$. We may alternatively describe $Z^c(\cE)$ as follows. Similar to the non-equivariant twisted Chern character $\tch_z$, consider the twisted Chern character with compact support
$$
    \tch_z^c: K^c(\cX) \to H^*_{\CR, c}(\cX; \bC)[z^{-1}]
$$
which fits into a commutative diagram
$$
    \xymatrix{
        K^c(\cX) \ar[r] \ar[d]_{\tch_z^c} & K(\cX) \ar[d]^{\tch_z} \\
        H^*_{\CR, c}(\cX; \bC)[z^{-1}] \ar[r] & H^*_{\CR}(\cX; \bC)[z^{-1}]
    }
$$
(cf. Section \ref{apx-sect:CombChern}). Then we have
%\begin{equation}\label{eqn:Zc} 
$$
    Z^c(\cE) = \left(\hGa_X^z \tch_z^c(\cE), I(q, -z)\right)_{\cX}.
$$
%\end{equation}
By Section \ref{sect:NEGKZ}, we have $Z^c(\cE) \in \bfS$. The same reasoning for Lemma \ref{lem:CentralInjective} gives the following statement which is already observed in \cite[Section 5]{Hosono06} when $\cX$ is smooth.

%where $I(q,z)$ is the non-equivariant small $I$-function of $\cX$ and $(-,-)_{\cX}$ is the perfect pairing between $H^*_{\CR, c}(\cX; \bC)$ and $H^*_{\CR}(\cX; \bC)$.

\begin{lemma}\label{lem:NECentralInjective}
The non-equivariant compact-support central charge homomorphism
$$
    K^c(\cX) \to \bfS, \qquad \cE \mapsto Z^c(\cE)
$$
is injective and is an isomorphism after tensoring the domain with $\bC$.
\end{lemma}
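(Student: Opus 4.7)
The plan is to follow the template of the proof of Lemma \ref{lem:CentralInjective} closely, with the addition of a dimension count to upgrade injectivity to an isomorphism after tensoring with $\bC$.

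For injectivity, rewrite the central charge via the genus-zero mirror theorem (Theorem \ref{thm:ToricMirror} applied in the non-equivariant limit) as
\[
    Z^c(\cE) = \left(\hGa_{\cX}^z \tch_z^c(\cE),\, I(q,-z)\right)_{\cX}.
\]
Two ingredients should reduce the vanishing of $Z^c(\cE)$ to the vanishing of $\tch_z^c(\cE)$: first, the perfect pairing $(-,-)_{\cX}\colon H^*_{\CR,c}(\cX;\bC) \times H^*_{\CR}(\cX;\bC) \to \bC$; and second, the non-equivariant analog of Lemma \ref{lem:ICoeffSoln}, stating that the components of $I(q,-z)$ with respect to any $\bC$-basis of $H^*_{\CR}(\cX;\bC)$ form a $\bC$-basis of $\bfS$ (as discussed in Section \ref{sect:NEGKZ}, following \cite{Givental98,Iritani09}). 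Combining these, the identity $Z^c(\cE)=0$ in $\bfS$ forces $\hGa_{\cX}^z \tch_z^c(\cE)=0$ coefficient by coefficient. Invertibility of the Gamma class (its expansion is a unit when viewed as a multiplicative operator on each inertia component) then gives $\tch_z^c(\cE)=0$, hence $\tch^c(\cE)=0$. Since the combinatorial Chern character $\tch^c\colon K^c(\cX) \to H^*_{\CR,c}(\cX;\bC)$ of Borisov-Horja \cite{BH06,BH15} is integral, injective, and $K^c(\cX)$ is torsion-free (see Section \ref{apx-sect:CombChern}), we conclude $\cE=0$.

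For the isomorphism after $\otimes_{\bZ}\bC$, I plan to invoke a dimension count. By Borisov-Horja, $\dim_{\bC} K^c(\cX)\otimes\bC = \dim_{\bC} H^*_{\CR,c}(\cX;\bC)$. This equals $\dim_{\bC} H^*_{\CR}(\cX;\bC)$ by orbifold Poincar\'e duality, and equals $\dim_{\bC}\bfS$ by Section \ref{sect:NEGKZ}. Since the injective $\bC$-linear map $K^c(\cX)\otimes\bC \to \bfS$ has source and target of equal finite dimension, it must be an isomorphism.

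The main subtlety I anticipate is justifying the pairing argument rigorously: one must verify that the $q$-family $\{I(q,-z)\}$ genuinely separates classes in $H^*_{\CR,c}(\cX;\bC)$, so that extracting suitable coefficients in the pairing recovers each coordinate of $\hGa_{\cX}^z \tch_z^c(\cE)$. This uses not only that the components of $I(q,-z)$ span $\bfS$ with the correct rank, but that they do so in a manner compatible with the compact-support pairing. Once this is set up, the remaining work reduces to bookkeeping on top of the Borisov-Horja theorem and the Stanley-Reisner dimension identities.
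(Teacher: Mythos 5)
Your proof is correct and follows essentially the same route as the paper: the paper's own proof for this lemma simply asserts that the reasoning of Lemma \ref{lem:CentralInjective} carries over, which is precisely the argument you spell out (non-degenerate pairing with the $I$-function whose compact-support components span $\bfS$, invertibility of the Gamma class, injectivity of the Borisov--Horja Chern character), and your dimension count for the "isomorphism after $\otimes\bC$" half is the natural way to make the unspoken last step explicit. The one small point worth being precise about is that the torsion-freeness of $K^c(\cX)$ you invoke is not stated verbatim in Section \ref{apx-sect:CombChern} but is a consequence of the non-degeneracy of the Euler pairing $\chi\colon K(\cX)\times K^c(\cX)\to\bZ$, since that pairing embeds $K^c(\cX)$ into the torsion-free group $\Hom(K(\cX),\bZ)$.
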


\subsection{Sheaves with bounded below/above support}
Now we introduce $K$-groups of $\cX$ with bounded below/above support. 
Let $f \in \bQ$, written as $f = \frac{\sfb}{\sfa}$ for coprime integers $\sfa, \sfb \in \bZ$ such that $\sfa$ is positive. The non-zero primitive cocharacter $\sv = \sfa e_1 + \sfb e_2 \in N' \subset N$ of $\bT'$ (and of $\bT$) determines a 1-subtorus $\bT_f := \ker(\sfa\su_2 - \sfb\su_1) \cong \bC^*$ of $\bT'$. Let $\bT_{f, \bR} \cong U(1)$ be the maximal compact subtorus and
$$
    \mu_{\bT_{f, \bR}}: \cX \to \bR
$$
denote the moment map where we use the identification of the dual Lie algebra of $\bT_{f, \bR}$ with $\bR$ provided by $\sv$. We choose $f$ generically such that for any flag $(\tau, \sigma) \in F(\Sigma)$, the weight of the $\bT_f$-action on the tangent line at $\fp_\sigma$ along $\fl_\tau$ is non-zero; equivalently, the image of any $\fl_\tau$ under $\mu_{\bT_{f, \bR}}$ is non-constant.

Let $\Sigma^+$ (resp. $\Sigma^-$) denote the subset of cones $\sigma \in \Sigma$ such that $\mu_{\bT_{f, \bR}}(\cV(\sigma))$ is a bounded below (resp. above) subset of $\bR$. Let
$$
    \cX^\pm := \bigcup_{\sigma \in \Sigma^\pm} \cV(\sigma).
$$
We have $\cX^+ \cap \cX^- = \cX^c$. Note that any minimal cone in $\Sigma^\pm$ is either 1- or 2-dimensional, and any irreducible component of $\cX^c$ is a $\bT$-invariant divisor or curve.

Now let $D^\pm(\cX), D^\pm_{\bT'}(\cX)$ denote the full subcategories of $D(\cX), D_{\bT'}(\cX)$ respectively consisting of complexes of sheaves whose cohomology sheaves are supported on $\cX^\pm$, and
$$
    K^\pm(\cX) := K_0(D^\pm(\cX)), \qquad K^\pm_{\bT'}(\cX) := K_0(D^\pm_{\bT'}(\cX))
$$
denote their Grothendieck groups. The group $K^\pm(\cX)$ (resp. $K^\pm_{\bT'}(\cX)$) is also a module over the ring $K(\cX)$ (resp. $K_{\bT'}(\cX)$).

% The content below appears in the appendix.
% By definition, if $\sigma \in \Sigma^\pm$ and $\sigma' \supseteq \sigma$, then $\sigma' \in \Sigma^\pm$ also. For generic $f$, we have
% $$
%     \Sigma^+ \cap \Sigma^- = \Sigma^c.
% $$
% In particular, if $\sigma \in \Sigma$ is such that
% $$
%     I'_{\sigma} = I'_{\sigma^+} \cup I'_{\sigma^-}
% $$
% with $\sigma^\pm \in \Sigma^\pm$, then $\sigma \in \Sigma^c$. 

In Appendix \ref{appdx:Bounded}, we give the construction of $K^\pm(\cX)$ for a general semi-projective toric orbifold $\cX$ with respect to a non-zero primitive cocharacter of the algebraic torus. We also give Stanley-Reisner presentations following \cite{BH06, BH15} and study additional properties. Here, we collect a useful result which follows from Lemmas \ref{apx-lem:KGroupGenerator}, \ref{apx-lem:EquivKGroupGenerator}.

%Lemma \ref{apx-lem:KGroupGenerator} and its straightforward generalization to the $\bT'$-equivariant setting.

\begin{lemma}\label{lem:KGroupGenerator}
For $\star \in \{\emptyset, c, +, -\}$, the group $K^\star(\cX)$ is additively generated by sheaves of the form
\begin{equation}\label{eqn:KGroupGenerator}
    \iota_{\sigma, *}\cL
\end{equation}
where $\sigma \in \Sigma^\star$ is minimal and $\cL$ is a line bundle on $\cV(\sigma)$. Moreover, $K^\star_{\bT'}(\cX)$ is generated by sheaves of the form \eqref{eqn:KGroupGenerator} where $\cL$ is a $\bT'$-equivariant line bundle on $\cV(\sigma)$.
\end{lemma}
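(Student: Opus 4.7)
The plan is to deduce the lemma from Lemmas \ref{apx-lem:KGroupGenerator} and \ref{apx-lem:EquivKGroupGenerator} in Appendix \ref{appdx:Bounded}, which establish Stanley-Reisner-type combinatorial presentations of $K^\pm(\cX)$ and $K^\pm_{\bT'}(\cX)$ extending the descriptions of $K(\cX)$ and $K^c(\cX)$ in \cite{BH06, BH15}. Granting these presentations, the generation statement follows from a standard devissage argument, run uniformly for all $\star \in \{\emptyset, c, +, -\}$ and in both the non-equivariant and $\bT'$-equivariant settings.

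First, I would reduce from complexes to coherent sheaves: any $\cE^\bullet \in D^\star(\cX)$ satisfies $[\cE^\bullet] = \sum_i (-1)^i [\mathcal{H}^i(\cE^\bullet)]$ in $K^\star(\cX)$, with each cohomology sheaf a coherent sheaf supported in $\cX^\star$. Second, I would apply noetherian induction on support: the irreducible components of $\cX^\star$ are exactly the $\cV(\sigma)$ with $\sigma$ minimal in $\Sigma^\star$, a statement that relies on the combinatorial observation that $\Sigma^\pm$ is closed under passing to super-cones (closed substacks of a $\cV(\sigma)$ inherit boundedness of moment image). For such a minimal $\sigma$ and a coherent sheaf $\cF$ with $\cV(\sigma) \subseteq \mathrm{Supp}(\cF)$, the subsheaf $\underline{H}^0_{\cV(\sigma)}(\cF)$ of sections supported on $\cV(\sigma)$ admits a finite $\mathcal{I}_\sigma$-adic filtration whose graded pieces are pushed forward from $\cV(\sigma)$, while the quotient has strictly smaller support; induction then yields a $\bZ$-combination of classes $[\iota_{\sigma, *}\mathcal{G}]$ with $\sigma$ minimal in $\Sigma^\star$ and $\mathcal{G}$ coherent on $\cV(\sigma)$. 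Third, on each $\cV(\sigma)$---itself a smooth semi-projective toric orbifold---the (equivariant) $K$-group is generated by (equivariant) line bundles, by a further devissage inside $\cV(\sigma)$ combined with Koszul-type resolutions of structure sheaves of $\bT$-invariant closed substacks, available thanks to the smoothness of the ambient toric orbifold.

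The main obstacle, and the reason for relegating the detailed proof to the appendix, is the combinatorial book-keeping for the bounded below/above cases that does not appear in \cite{BH06, BH15}: one must verify that the Stanley-Reisner relations defining $K^\pm(\cX)$ and $K^\pm_{\bT'}(\cX)$ are consistent with the generators $\iota_{\sigma, *}\cL$ produced by the above devissage and that these generators indeed span the whole $K$-group. This compatibility check, together with the closure-under-super-cones property of $\Sigma^\pm$, is what is carried out in Appendix \ref{appdx:Bounded}.
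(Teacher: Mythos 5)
Your devissage argument — reduction from complexes to coherent sheaves, noetherian induction on support using the $\mathcal{I}_\sigma$-adic filtration, then line-bundle generation on each smooth toric substack $\cV(\sigma)$ via \cite{BH06} — is essentially the same approach as the paper, which simply cites the proof of \cite[Theorem 5.2]{BH15} (itself a devissage of this type) for Lemma \ref{apx-lem:KGroupGenerator} and a "straightforward generalization" for the equivariant case.

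However, your framing of the logical dependencies is backwards in two places, and it is worth flagging because a reader following your sketch could be misled. First, Lemmas \ref{apx-lem:KGroupGenerator} and \ref{apx-lem:EquivKGroupGenerator} are not Stanley-Reisner presentations of $K^\pm(\cX)$; they are verbatim the generation statement of Lemma \ref{lem:KGroupGenerator}, restated in the general $r$-dimensional setting of the appendix — so there is nothing to "deduce" beyond specialization. Second, the devissage you describe proves the generation statement directly and does not require the Stanley-Reisner relations; rather, the generation statement is used as the surjectivity input to establish the presentation $K^\pm_{\comb}(\cX)\cong K^\pm(\cX)$ (Theorem \ref{apx-thm:KpmPresentation}), with injectivity handled separately via the Euler pairing (Lemmas \ref{apx-lem:EulerPairingCompute}, \ref{apx-lem:EulerPairingND}). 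Your "main obstacle" paragraph conflates the generation statement with the consistency-with-relations check needed for the presentation; the latter is indeed where the bounded below/above bookkeeping enters, but it is a downstream concern, not an obstacle to proving the generation lemma itself.
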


The map $\iota_\sigma^*:\Pic(\cX)\to \Pic(\cV(\sigma))$ is surjective, so any line bundle $\cL$ on $\cV(\sigma)$
is of the form $\cL=\iota_\sigma^*\tilde{\cL}$ where $\tilde{\cL}$ is a line bundle on $\cX$, and 
\eqref{eqn:KGroupGenerator} can be rewritten as
$$
\iota_{\sigma, *} \cL= \iota_{\sigma,*} \left(\cO_{\cV(\sigma)}\otimes \cL\right) = 
\iota_{\sigma,*} \left(\cO_{\cV(\sigma)}\otimes \iota_\sigma^* \tilde{\cL}\right) 
= \iota_{\sigma,*}\cO_{\cV(\sigma)} \otimes \tilde{\cL}.
$$
Similarly, given any $\bT'$-equivariant line bundle $\cL$ on $\cV(\sigma)$, there exists a $\bT'$-equivariant line bundle $\tilde{\cL}$ on $\cX$ such that $\iota_{\sigma,*} \cL = \iota_{\sigma,*}\cO_{\cV(\sigma)}  \otimes \tilde{\cL}$.
From now on, we will denote $\iota_{\sigma, *}\cO_{\cV(\sigma)}$ by $\cO_{\cV(\sigma)}$. Lemma \ref{lem:KGroupGenerator} can be reformulated as follows. 
\begin{lemma}\label{lem:KGroupGenerator2}
For $\star \in \{\emptyset, c, +, -\}$, the group $K^\star(\cX)$ is additively generated by sheaves of the form
\begin{equation}\label{eqn:KGroupGenerator2}
    \cO_{\cV(\sigma)} \otimes \cL
\end{equation}
where $\sigma \in \Sigma^\star$ is minimal and $\cL$ is a line bundle on $\cX$. Moreover, $K^\star_{\bT'}(\cX)$ is generated by sheaves of the form \eqref{eqn:KGroupGenerator2} where $\cL$ is a $\bT'$-equivariant line bundle on $\cX$.
\end{lemma}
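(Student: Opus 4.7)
The plan is to deduce Lemma \ref{lem:KGroupGenerator2} from Lemma \ref{lem:KGroupGenerator}, which already generates $K^\star(\cX)$ (resp.\ $K^\star_{\bT'}(\cX)$) by pushforwards $\iota_{\sigma,*}\cL$ of (equivariant) line bundles $\cL$ from closed toric substacks $\cV(\sigma)$ with $\sigma \in \Sigma^\star$ minimal. The only task is to rewrite each such generator in the form $\cO_{\cV(\sigma)} \otimes \tilde{\cL}$ for some (equivariant) line bundle $\tilde{\cL}$ on the ambient $\cX$.

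First I would check that the restriction map $\iota_\sigma^*\colon \Pic(\cX) \to \Pic(\cV(\sigma))$ is surjective. This was noted at the end of Section \ref{sect:LineBundles}: any line bundle on the toric closed substack $\cV(\sigma)$ is a torus-invariant divisor class, hence is the restriction of a torus-invariant divisor on $\cX$ written in terms of the toric boundary divisors $\cD_i$. The same argument works in the $\bT'$-equivariant setting since $\Pic_{\bT'}(\cX) \to \Pic_{\bT'}(\cV(\sigma))$ is also surjective. Thus, given any (equivariant) line bundle $\cL$ on $\cV(\sigma)$, we may choose an (equivariant) line bundle $\tilde{\cL}$ on $\cX$ with $\iota_\sigma^*\tilde{\cL} = \cL$.

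Next I would apply the projection formula to rewrite the generator:
$$
\iota_{\sigma,*}\cL \;=\; \iota_{\sigma,*}\bigl(\cO_{\cV(\sigma)} \otimes \iota_\sigma^*\tilde{\cL}\bigr) \;=\; \iota_{\sigma,*}\cO_{\cV(\sigma)} \otimes \tilde{\cL} \;=\; \cO_{\cV(\sigma)} \otimes \tilde{\cL},
$$
where in the last equality we use the convention $\cO_{\cV(\sigma)} := \iota_{\sigma,*}\cO_{\cV(\sigma)}$ introduced just before the lemma. Combined with Lemma \ref{lem:KGroupGenerator}, this shows that $K^\star(\cX)$ (resp.\ $K^\star_{\bT'}(\cX)$) is generated by the desired sheaves.

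There is essentially no obstacle here: the statement is a cosmetic reformulation obtained by pushing a pullback through the projection formula, and the only substantive input—surjectivity of the restriction on Picard groups for toric closed substacks—is standard for (semi-projective) smooth toric DM stacks and is already invoked in the discussion between Lemmas \ref{lem:KGroupGenerator} and \ref{lem:KGroupGenerator2}. The equivariant case is handled identically using $\Pic_{\bT'}$ in place of $\Pic$.
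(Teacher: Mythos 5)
Your proposal is correct and matches the paper's own argument: the paper establishes Lemma \ref{lem:KGroupGenerator2} in the paragraph immediately preceding it, using exactly the surjectivity of $\iota_\sigma^*$ on (equivariant) Picard groups from Section \ref{sect:LineBundles}, the projection formula $\iota_{\sigma,*}(\cO_{\cV(\sigma)}\otimes\iota_\sigma^*\tilde{\cL}) = \iota_{\sigma,*}\cO_{\cV(\sigma)}\otimes\tilde{\cL}$, and the notational convention $\cO_{\cV(\sigma)} := \iota_{\sigma,*}\cO_{\cV(\sigma)}$.
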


In the present situation, for $\star = \pm$, $\cV(\sigma)$ in \eqref{eqn:KGroupGenerator} is either a divisor or a curve. For $\star = c$, $\cV(\sigma)$ is a \emph{compact} divisor or curve except when $\cX$ is affine, in which case $\sigma = \sigma_0$ is the unique 3-cone and $\cV(\sigma) = \cX^c$ is the point $\fp_{\sigma}$. 
% From now on, we will suppress the notation ``$\iota_{\sigma, *}$'' for the pushforward to $\cX$.
%\orange{Optional sentence here which is used in Section \ref{sect:Integrals}: Recall (from ... - see if we want to introduce twisted polytopes in the preliminaries) that $\bT'$-equivariant line bundles on $\cV(\sigma)$ can be described in terms of twisted polytopes.}

The inclusions of categories induce natural maps
\begin{equation}\label{eqn:KNaturalMaps}
    K^c(\cX) \to K^\pm(\cX) \to K(\cX), \qquad K^c_{\bT'}(\cX) \to K^\pm_{\bT'}(\cX) \to K_{\bT'}(\cX)
\end{equation}
which are $K(\cX)$-, $K_{\bT'}(\cX)$-linear respectively. Thus the definition of the $\bT'$-equivariant $K$-theoretic framing and central charge can be extended to $K^\pm_{\bT'}(\cX)$. Moreover, we have by Lemma \ref{apx-lem:EquivKInjective} that the map $K^\pm_{\bT'}(\cX) \to K_{\bT'}(\cX)$ is injective. This combined with Lemma \ref{lem:CentralInjective} gives the following statement.

\begin{proposition}\label{prop:PMCentralInjective}
The $\bT'$-equivariant central charge homomorphism
$
    K^\pm_{\bT'}(\cX) \to \bfS_{\bT'}
$
is injective. 
\end{proposition}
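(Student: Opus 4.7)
The plan is to observe that the central charge homomorphism on $K^\pm_{\bT'}(\cX)$ factors through the natural map to $K_{\bT'}(\cX)$, and then apply two injectivity results already at hand. More precisely, by the construction just above the proposition, the $K$-theoretic framing $\kappa_z^{\bT'}$ and the central charge $Z_{\bT'}$ are defined on $K^\pm_{\bT'}(\cX)$ by pulling back under the natural map $K^\pm_{\bT'}(\cX) \to K_{\bT'}(\cX)$ from \eqref{eqn:KNaturalMaps}. Hence there is a commutative diagram
$$
\xymatrix{
K^\pm_{\bT'}(\cX) \ar[r] \ar[dr]_{Z_{\bT'}} & K_{\bT'}(\cX) \ar[d]^{Z_{\bT'}} \\
& \bfS_{\bT'}.
}
$$

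First I would invoke Lemma \ref{apx-lem:EquivKInjective} from the appendix to conclude that the top horizontal arrow $K^\pm_{\bT'}(\cX) \to K_{\bT'}(\cX)$ is injective. Next I would apply Lemma \ref{lem:CentralInjective} to conclude that the right-hand vertical arrow $K_{\bT'}(\cX) \to \bfS_{\bT'}$, $\cE \mapsto Z_{\bT'}(\cE)$, is injective. The diagonal map is the composition of these two injective homomorphisms of abelian groups, and is therefore itself injective, which is exactly the claim of the proposition.

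Since both ingredients are already stated, there is no real obstacle here; the proof is a one-line consequence. The only thing worth double-checking is that the definitions of $\tch_{z,\bT'}$, $\hGa_{\cX}^z$, $\kappa_z^{\bT'}$, and $Z_{\bT'}$ on $K^\pm_{\bT'}(\cX)$ are indeed literally obtained by restriction along the natural map to $K_{\bT'}(\cX)$ (as stated in the sentence preceding the proposition), so that the diagram above genuinely commutes. Given this, no further analysis is required.
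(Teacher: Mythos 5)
Your proof is correct and is exactly the argument the paper gives: the sentence immediately preceding Proposition~\ref{prop:PMCentralInjective} states that $K^\pm_{\bT'}(\cX)\to K_{\bT'}(\cX)$ is injective by Lemma~\ref{apx-lem:EquivKInjective}, and that combining this with Lemma~\ref{lem:CentralInjective} yields the proposition. Nothing further is needed.
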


%\subsection{Euler characteristic pairings}
On the categorical level, there is a natural pairing between $D^+(\cX)$ (resp. $D^+_{\bT'}(\cX)$) and $D^-(\cX)$ (resp. $D^-_{\bT'}(\cX)$) given by
$$    
    \inner{\cE^+, \cE^-} := \sum_k (-1)^k \dim_{\bC} \Ext^k(\cE^+, \cE^-).
$$
The pairings descend to the Grothendieck groups
\begin{equation}\label{eqn:EulerPairing}
    \chi: K^+(\cX) \times K^-(\cX) \to \bZ, \qquad \chi_{\bT'}: K^+_{\bT'}(\cX) \times K^-_{\bT'}(\cX) \to \bZ
\end{equation}
which we refer to as the \emph{Euler characteristic pairings}. Additional details and properties of the non-equivariant pairing $\chi$ are supplied in Section \ref{apx-sect:PMEulerPairing}.

\begin{comment}

\color{lightgray}

Based on Lemmas \ref{apx-lem:EulerPairingCompute}, \ref{apx-lem:EquivEulerPairingCompute}, we have the following description of the pairings on the generators \eqref{eqn:KGroupGenerator}. (Here, provide explicit computation results in terms of integers, not just Euler characteristic of some sheaf...)

\begin{lemma}%\label{lem:EulerPairingCompute}
Computation of $\chi$ on the generators \eqref{eqn:KGroupGenerator}
\end{lemma}

\begin{proof}
\end{proof}

\begin{lemma}
Computation of $\chi_{\bT'}$ on the generators \eqref{eqn:KGroupGenerator}
\end{lemma}

\begin{proof}
\end{proof}

\color{black}

\end{comment}

\subsection{Computation of central charges}
We now compute the quantum cohomology central charges of the structure sheaves of toric divisors, curves, and fixed points in $\cX$ with twists by $\bT'$-equivariant line bundles. To begin with, given $\cE \in K_{\bT'}(\cX)$ and writing
$$
    Z_{\bT'}(\cE) =  \left(\kappa_z^{\bT'}(\cE), I_{\bT'}(q, -z)\right)_{\cX, \bT'} = \sum_{\substack{\sigma \in \Sigma(3) \\ v \in \Box(\sigma)}}  a_{\sigma, v}(\cE)I_{\sigma, v}
$$
as in Lemma \ref{lem:ICoeffSoln}, we may use the localization expression \eqref{eqn:ILocalize} to determine the coefficient functions as
$$
    a_{\sigma, v}(\cE) = \frac{(-z)^{-\age(v)} \inv^*(\kappa_z^{\bT'}(\cE)) \big|_{\fp_{\sigma, v}}}{|G_\sigma| e_{\bT'}(T_{\fp_\sigma}\cX_v)}.
$$
For $\sigma \in \Sigma(3)$, $v \in \Box(\sigma)$, we have
\begin{align*}
    z^{-\age(v)} \inv^*\left(\hGa_{\cX}^z\right) \big|_{\fp_{\sigma, v}} &= z^{-\age(v)} z^{3-\age(\inv(v))} \prod_{i \in I'_\sigma} \Gamma\left(1 + \frac{w_{i, \sigma}}{z} - c_i(\inv(v)) \right)\\
    & = z^{\dim \cX_v} \prod_{i \in I'_\sigma} \Gamma\left(\frac{w_{i, \sigma}}{z} + c_i(v) \right) \left(\frac{w_{i, \sigma}}{z} \right)^{\delta_{c_i(v), 0}}\\
    & = e_{\bT'}(T_{\fp_\sigma}\cX_v) \prod_{i \in I'_\sigma} \Gamma\left(\frac{w_{i, \sigma}}{z} + c_i(v) \right)
\end{align*}
which implies that
$$
    a_{\sigma, v}(\cE) = \frac{\inv^*(\tch_{z, \bT'}(\cE)) \big|_{\fp_{\sigma, v}}}{(-1)^{\age(v)} |G_\sigma|} \prod_{i \in I'_\sigma} \Gamma\left(\frac{w_{i, \sigma}}{z} + c_i(v) \right).
$$

\subsubsection{Structure sheaves of toric divisors}
Consider the toric divisor $\cD_j$ corresponding to the 1-cone $\rho_j$. We have
\begin{align*}
    \tch_{z, \bT'}(\cO_{\cD_j}) & = \tch_{z, \bT'}(\cO_{\cX}) - \tch_{z, \bT'}(\cO_{\cX}(-\cD_j)) \\
    & = \bigoplus_{v \in \Box(\cX)} \left(1 - \exp 2\pi\sqrt{-1} \left(\frac{\Dbar_j^{\bT'}}{z} - c_j(v)\right) \right) \one_v \\
    & = \bigoplus_{v \in \Box(\cX)} -2\sqrt{-1} \exp \pi\sqrt{-1} \left(\frac{\Dbar_j^{\bT'}}{z} - c_j(v)\right) \sin \pi \left(\frac{\Dbar_j^{\bT'}}{z} - c_j(v)\right) \one_v.
\end{align*}
For $\sigma \in \Sigma(3)$, $v \in \Box(\sigma)$, we have
\begin{align*}
    \inv^*\left(\tch_{z, \bT'}(\cO_{\cD_j})\right) \big|_{\fp_{\sigma,v}} & = -2\sqrt{-1} \exp \pi \sqrt{-1} \left(\frac{w_{j, \sigma}}{z} - c_j(\inv(v))\right) \sin\pi\left(\frac{w_{j, \sigma}}{z} - c_j(\inv(v))\right) \\
    & = -2\sqrt{-1} \exp \pi \sqrt{-1} \left(\frac{w_{j, \sigma}}{z} + c_j(v)\right) \sin\pi\left(\frac{w_{j, \sigma}}{z} + c_j(v)\right)
\end{align*}
which is non-zero if and only if $j \in I'_\sigma$. Moreover, we have
$$
    \sin\pi\left(\frac{w_{j, \sigma}}{z} + c_j(v)\right) \Gamma\left(\frac{w_{j, \sigma}}{z} + c_j(v) \right) = \frac{\pi}{\Gamma\left(1 - \frac{w_{j, \sigma}}{z} - c_j(v) \right)}.
$$
Therefore we have
\begin{equation}\label{eqn:aside-result-divisor}
    a_{\sigma, v}(\cO_{\cD_j}) = \begin{cases}
        \displaystyle \frac{(-2\pi \sqrt{-1})e^{\pi \sqrt{-1}(w_{j, \sigma}/z + c_j(v))} \Gamma\left(\frac{w_{k_1, \sigma}}{z} + c_{k_1}(v)\right)\Gamma\left(\frac{w_{k_2, \sigma}}{z} + c_{k_2}(v) \right)}{(-1)^{\age(v)} |G_\sigma| \Gamma\left(1 - \frac{w_{j, \sigma}}{z} - c_j(v) \right)} & \text{if } I'_\sigma = \{j, k_1, k_2\},\\
        0 & \text{if } j \not \in I'_\sigma.
    \end{cases}
\end{equation}
% $$
%    Z_{\bT'}(\cO_{\cD_j})  = \sum_{\substack{\sigma \in \Sigma(3) \\ I'_\sigma = \{j, k_1, k_2\} \\ v \in \Box(\sigma)}} -\frac{2\pi \sqrt{-1}e^{\pi \sqrt{-1}w_{j, \sigma}/z} e^{\pi\sqrt{-1}c_j(v)} \Gamma\left(\frac{w_{k_1, \sigma}}{z} + c_{k_1}(v)\right)\Gamma\left(\frac{w_{k_2, \sigma}}{z} + c_{k_2}(v) \right)}{(-1)^{\age(v)} |G_\sigma| \Gamma\left(1 - \frac{w_{j, \sigma}}{z} - c_j(v) \right)} I_{\sigma, v}.
% $$
Adding a twist by a $\bT'$-equivariant line bundle $\cL = \cO_{\cX}\left(\sum_{s = 1}^{3+\fp'} r_s \cD_s \right)$ introduces an overall factor of $\inv^*(\tch_{z, \bT'}(\cL))$. We have
\begin{equation}\label{eqn:aside-result-divisor-twist}
    a_{\sigma, v}(\cO_{\cD_j} \otimes \cL ) = e^{-2\pi \sqrt{-1}\sum_{s = 1}^{3+\fp'} (r_s w_{s, \sigma}/z + r_s c_s(v))} a_{\sigma, v}(\cO_{\cD_j}).
\end{equation}

% \begin{align}
%     \label{eqn:aside-result-divisor}
%     Z_{\bT'}(\cO_{\cD_j} \otimes \cL) = \sum_{\substack{\sigma \in \Sigma(3) \\ I'_\sigma = \{j, k_1, k_2\} \\ v \in \Box(\sigma)}} & -\frac{2\pi \sqrt{-1}e^{\pi \sqrt{-1}(w_{j, \sigma} -2 \sum_{i = 1}^{3+\fp'} r_iw_{i, \sigma})/z} e^{\pi\sqrt{-1}(c_j(v) - 2\sum_{i = 1}^{3+\fp'} r_ic_i(v))}}{(-1)^{\age(v)} |G_\sigma|} \\
%     & \cdot \frac{\Gamma\left(\frac{w_{k_1, \sigma}}{z} + c_{k_1}(v)\right)\Gamma\left(\frac{w_{k_2, \sigma}}{z} + c_{k_2}(v) \right)}{\Gamma\left(1 - \frac{w_{j, \sigma}}{z} - c_j(v) \right)} I_{\sigma, v}.     \nonumber
% \end{align}

\subsubsection{Structure sheaves of toric curves}
Consider the toric curve $\fl_\tau$ corresponding to a 2-cone $\tau$, with $I'_\tau = \{j_1, j_2\}$. Then $\fl_{\tau} = \cD_{j_1} \cap \cD_{j_2}$. We have
\begin{align*}
    \tch_{z, \bT'}(\cO_{\fl_\tau}) &= \left(\tch_{z, \bT'}(\cO_{\cX}) - \tch_{z, \bT'}(\cO_{\cX}(-\cD_{j_1}))\right)\left(\tch_{z, \bT'}(\cO_{\cX}) - \tch_{z, \bT'}(\cO_{\cX}(-\cD_{j_2}))\right) \\
    &= \bigoplus_{v \in \Box(\cX)} (-2\sqrt{-1})^2 \exp \pi\sqrt{-1} \left(\frac{\Dbar_{j_1}^{\bT'} + \Dbar_{j_2}^{\bT'}}{z} - c_{j_1}(v)- c_{j_2}(v)\right) \\
    & \qquad \cdot \sin \pi \left(\frac{\Dbar_{j_1}^{\bT'}}{z} - c_{j_1}(v)\right) \sin \pi \left(\frac{\Dbar_{j_2}^{\bT'}}{z} - c_{j_2}(v)\right) \one_v.
\end{align*}
For $\sigma \in \Sigma(3)$, $v \in \Box(\sigma)$, we have
\begin{align*}
    \inv^*\left(\tch_{z, \bT'}(\cO_{\fl_\tau})\right) \big|_{\fp_{\sigma,v}} = &  (-2\sqrt{-1})^2  \exp \pi \sqrt{-1} \left(\frac{w_{j_1, \sigma} + w_{j_2, \sigma}}{z} + c_{j_1}(v) + c_{j_2}(v)\right) \\
    & \cdot \sin\pi\left(\frac{w_{j_1, \sigma}}{z} + c_{j_1}(v)\right) \sin\pi\left(\frac{w_{j_2, \sigma}}{z} + c_{j_2}(v)\right)
\end{align*}
which is non-zero if and only if $j_1, j_2 \in I'_\sigma$. In fact there are at most two (depending on whether $\tau \in \Sigma(2)_c$) such choices of $\sigma$. We have
\begin{equation}\label{eqn:aside-result-curve}
    a_{\sigma, v}(\cO_{\fl_\tau}) = \begin{cases}
        \displaystyle \frac{(-2\pi\sqrt{-1})^2 e^{\pi \sqrt{-1}((w_{j_1, \sigma} + w_{j_2, \sigma})/z + c_{j_1}(v) + c_{j_2}(v))} \Gamma\left(\frac{w_{k, \sigma}}{z} + c_k(v)\right)}{(-1)^{\age(v)} |G_\sigma| \Gamma\left(1 - \frac{w_{j_1, \sigma}}{z} - c_{j_1}(v) \right) \Gamma\left(1 - \frac{w_{j_2, \sigma}}{z} - c_{j_2}(v) \right)} & \text{if } I'_\sigma = \{j_1, j_2, k\},\\
        0 & \text{if } \{j_1, j_2\} \not \subset I'_\sigma.
    \end{cases}
\end{equation}
% $$
%     Z_{\bT'}(\cO_{\fl_\tau})  = \sum_{\substack{\sigma \in \Sigma(3) \\ I'_\sigma = \{j_1, j_2, k\} \\ v \in \Box(\sigma)}} \frac{(-2\pi\sqrt{-1})^2 e^{\pi \sqrt{-1}(w_{j_1, \sigma} + w_{j_2, \sigma})/z} e^{\pi\sqrt{-1}(c_{j_1}(v) + c_{j_2}(v))} \Gamma\left(\frac{w_{k, \sigma}}{z} + c_k(v)\right)}{(-1)^{\age(v)} |G_\sigma| \Gamma\left(1 - \frac{w_{j_1, \sigma}}{z} - c_{j_1}(v) \right) \Gamma\left(1 - \frac{w_{j_2, \sigma}}{z} - c_{j_2}(v) \right)} I_{\sigma, v}.
% $$
Adding a twist by $\cL = \cO_{\cX}\left(\sum_{s = 1}^{3+\fp'} r_s \cD_s \right)$, we have
\begin{equation}\label{eqn:aside-result-curve-twist}
    a_{\sigma, v}(\cO_{\fl_\tau} \otimes \cL ) = e^{-2\pi \sqrt{-1}\sum_{s = 1}^{3+\fp'} (r_s w_{s, \sigma}/z + r_s c_s(v))} a_{\sigma, v}(\cO_{\fl_\tau}).
\end{equation}

% \begin{align}
%     \label{eqn:aside-result-curve}
%     Z_{\bT'}(\cO_{\fl_\tau} \otimes \cL ) = \sum_{\substack{\sigma \in \Sigma(3) \\ I'_\sigma = \{j_1, j_2, k\} \\ v \in \Box(\sigma)}} & \frac{(-2\pi\sqrt{-1})^2 e^{\pi \sqrt{-1}(w_{j_1, \sigma} + w_{j_2, \sigma} -2 \sum_{i = 1}^{3+\fp'} r_iw_{i, \sigma})/z} e^{\pi\sqrt{-1}(c_{j_1}(v) + c_{j_2}(v) - 2\sum_{i = 1}^{3+\fp'} r_ic_i(v))}}{(-1)^{\age(v)} |G_\sigma|} \\
%     & \cdot \frac{\Gamma\left(\frac{w_{k, \sigma}}{z} + c_k(v)\right)}{\Gamma\left(1 - \frac{w_{j_1, \sigma}}{z} - c_{j_1}(v) \right) \Gamma\left(1 - \frac{w_{j_2, \sigma}}{z} - c_{j_2}(v) \right)} I_{\sigma, v}.     \nonumber
% \end{align}

\subsubsection{Structure sheaves of torus fixed points}
Finally, consider the torus fixed point $\fp_\sigma$ corresponding to a 3-cone $\sigma$, with $I'_\sigma = \{j_1, j_2, j_3\}$. Similar to the above, for $v \in \Box(\sigma)$ we have
$$    
    a_{\sigma, v}(\cO_{\fp_\sigma}) = \frac{(-2\pi\sqrt{-1})^3}{|G_\sigma| \Gamma\left(1 - \frac{w_{j_1, \sigma}}{z} - c_{j_1}(v) \right) \Gamma\left(1 - \frac{w_{j_2, \sigma}}{z} - c_{j_2}(v) \right) \Gamma\left(1 - \frac{w_{j_3, \sigma}}{z} - c_{j_3}(v) \right)}
$$
and $a_{\sigma', v}(\cO_{\fp_\sigma}) = 0$ for any $\sigma' \neq \sigma$ and any $v$. Adding a twist by $\cL = \cO_{\cX}\left(\sum_{s = 1}^{3+\fp'} r_s \cD_s \right)$, we have
$$    
    a_{\sigma, v}(\cO_{\fp_\sigma} \otimes \cL ) = e^{-2\pi \sqrt{-1}\sum_{s = 1}^{3+\fp'} (r_s w_{s, \sigma}/z + r_s c_s(v))} a_{\sigma, v}(\cO_{\fp_\sigma}).
$$

The sheaves $\cO_{\fp_\sigma} \otimes \cL$ represent classes in $K^c_{\bT'}(\cX)$ and their central charges admit non-equivariant limits, as in Section \ref{sect:KCompact}. In the non-equivariant setting, for $\sigma\in \Si(3)$, consider
$$
    \cF_\sigma := \bigoplus_{\gamma \in G_\si^*} 
    \iota_{\sigma,*} \cL_\gamma = \bigoplus_{\gamma\in G_\si^*} \cO_{\fp_\sigma} \otimes \tilde{\cL}_\gamma
$$
where $\cL_\gamma$ is the line bundle on $B\fp_\si \cong BG_\sigma$ associated to $\gamma \in G_\sigma^*$, and
$\tilde{\cL}_\gamma$ is any line bundle on $\cX$ such that $\iota_\sigma^* \tilde{\cL}_\gamma =\cL_\gamma$. Then $\cF_\sigma$ represents a class in $K^c(\cX)$. When $\fp_\sigma$ is a scheme point and $G_\sigma$ is trivial, we have $\cF_\sigma = \cO_{\fp_\sigma}$.

\begin{lemma} \label{lem:StructureSheafPoint} 
For any $\si\in \Si(3)$, we have
%\begin{equation}\label{eqn:ZcStructureSheafPoint} 
$$
    Z^c(\cF_\sigma) = \left(-2\pi\sqrt{-1}\right)^3.
$$
% \end{equation}
\end{lemma}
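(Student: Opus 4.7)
The plan is to evaluate $Z^c(\cF_\sigma)$ as the non-equivariant limit $u_1, u_2 \to 0$ of $Z_{\bT'}(\cF_\sigma)$, using systematically the basis decomposition of Lemma \ref{lem:ICoeffSoln} together with the formulas for $a_{\sigma,v}$ on twisted point sheaves already derived in this section. First, I would lift each summand $\iota_{\sigma,*}\cL_\gamma$ of $\cF_\sigma$ to a $\bT'$-equivariant class by choosing a $\bT'$-equivariant line bundle $\tilde\cL_\gamma$ on $\cX$ whose $\bT'$-weight $\chi_\gamma \in M'_\sigma$ at $\fp_\sigma$ represents the character $\gamma \in G_\sigma^* \cong M'_\sigma/M'$. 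Since $\cF_\sigma$ is supported at the single fixed point $\fp_\sigma$, only the $\sigma' = \sigma$ terms contribute in the expansion of $Z_{\bT'}(\cF_\sigma)$, collapsing it to a double sum over $v \in \Box(\sigma)$ and $\gamma \in G_\sigma^*$.

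The next step is to perform the $\gamma$-sum before passing to the non-equivariant limit. The formula for $a_{\sigma,v}(\cO_{\fp_\sigma} \otimes \tilde\cL_\gamma)$ factors out of $a_{\sigma,v}(\cO_{\fp_\sigma})$ an exponential prefactor in $w_\gamma(\sigma)/z$ and $\sum_s r_s^{(\gamma)} c_s(v)$. At $u_1 = u_2 = 0$ all equivariant weights $w_{s,\sigma}$ vanish, and the remaining prefactor reduces to $e^{-2\pi\sqrt{-1}\langle\chi_\gamma,v\rangle} = \chi_\gamma(v)^{-1}$, independent of the choice of lift. Orthogonality of characters on $G_\sigma^*$ then gives $\sum_\gamma \chi_\gamma(v)^{-1} = |G_\sigma|\,\delta_{v,0}$, and combining this with the evaluation $a_{\sigma,0}(\cO_{\fp_\sigma})|_{u=0} = (-2\pi\sqrt{-1})^3/|G_\sigma|$ (all $\Gamma$-factors becoming $\Gamma(1) = 1$) yields
\[
    Z^c(\cF_\sigma) = (-2\pi\sqrt{-1})^3 \cdot I_{\sigma,0}|_{u=0}.
\]

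It then remains to verify $I_{\sigma,0}|_{u=0} = 1$. Here I would use that $v(\beta) = 0$ for $\beta \in \bK_{\eff,\sigma}$ forces $\beta \in \bL$, so $\langle D_i, \beta\rangle \in \bZ$ for all $i$. The Calabi-Yau condition (all $b_i$ lie on the hyperplane $\langle e_3^\vee, \cdot\rangle = 1$) translates to $\sum_i \langle D_i, \beta\rangle = 0$ for every $\beta \in \bL$; on the other hand, any $i$ with $\langle D_i,\beta\rangle < 0$ contributes a factor $1/\Gamma(1 + \text{negative integer}) = 0$, so non-vanishing requires $\langle D_i,\beta\rangle \ge 0$ for all $i$. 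These constraints pin $\beta = 0$, leaving only the trivial term of value $1$, and the lemma follows. The main obstacle is the careful bookkeeping in the $\gamma$-sum---checking that the result is independent of the choice of equivariant lift $\tilde\cL_\gamma$ and that the specialization $u \to 0$ commutes with the infinite sum; everything else is essentially a direct computation using Calabi-Yau degree constraints.
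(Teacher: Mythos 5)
Your proposal is correct and follows essentially the same route as the paper: lift to $\bT'$-equivariant classes, use the explicit formulas for $a_{\sigma,v}$ on twisted point sheaves, apply character orthogonality on $G_\sigma^*$ to kill the twisted sectors, and read off $(-2\pi\sqrt{-1})^3$ from the untwisted $I$-function. You supply a bit more detail than the paper does — in particular, the Calabi-Yau argument ($\sum_i \langle D_i,\beta\rangle = 0$ plus nonnegativity forcing $\beta = 0$) that justifies $I_{\sigma,0}|_{u=0} = 1$, which the paper states without proof — but the structure of the argument is the same.
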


Lemmas \ref{lem:NECentralInjective} and \ref{lem:StructureSheafPoint} imply that the class of $\cF_\sigma$ in $K^c(\cX)$ is independent of $\sigma$.

\begin{proof}[Proof of Lemma \ref{lem:StructureSheafPoint}]
We consider the non-equivariant limit of $I_{\sigma, v}$, $a_{\sigma, v}$ above without changing notation. For $v \in \Box(\sigma)$ corresponding to $h \in G_\sigma$ we have
$$
    a_{\sigma, v}(\cO_{\fp_\sigma}) = \frac{(-2\pi\sqrt{-1})^3}{|G_\sigma| \Gamma\left(1 - c_{j_1}(v) \right) \Gamma\left(1 - c_{j_2}(v) \right) \Gamma\left(1 - c_{j_3}(v) \right)},
$$
while for $\gamma \in G_\sigma^*$ the twisting is
$$
    a_{\sigma, v}(\cO_{\fp_\sigma} \otimes \tilde{\cL}_\gamma ) = \chi_\gamma(h^{-1}) a_{\sigma, v}(\cO_{\fp_\sigma}).
$$
The lemma follows from that for any twisted sector $v$, the sum of $\chi_\gamma(h^{-1})$ over $\gamma \in G_\sigma^*$ is zero, while for the untwisted sector $v$, we have the non-equivariant limit $I_{\sigma, v} = 1$.
\end{proof}

\begin{comment}

For any $\sigma\in \Si(3)$, let $j_\sigma : \fp_\si =\cV(\si)\hookrightarrow \cX$ be the inclusion map, which is 
$\bT$-equivariant. Let 
$L_\chi$ be the line bundle on $B\fp_\si \cong BG_\sigma$ associated to 
the character $\chi \in G_\sigma^*$, and let 
$$
\cF_\sigma := j_{\sigma *} \Big(\bigoplus_{\chi\in G_\si^*} L_\chi\Big)
$$
Then $\cF_\sigma$ represents a class in $K^c(\cX)$. 

\begin{lemma}
For any $\si\in \Si(3)$, 
\begin{equation}
Z^c(\cF_\sigma) = \left(-2\pi\sqrt{-1}\right)^3
\end{equation}
\end{lemma}
\begin{proof} Suppose that $\sigma$ is spanned by $b_{i_1}, b_{i_2}, b_{i_3}$. Let
$I= \{i_1, i_2, i_3\}$.  Then 
\begin{equation} 
\tch_z^c(\cF_\sigma) = \left( \frac{-2\pi\sqrt{-1}}{z^3}\right) |G_\sigma| F_I 
\end{equation} 
where $F_I \in H^6_c(X)$ is defined in Section 
\ref{apx-sect:CombChern}. Equation \eqref{eqn:ZcStructureSheafPoint} follows from \eqref{eqn:Zc} and \eqref{eqn:chStructureSheafPoint}. 
\end{proof}
\color{black}

\end{comment}

% !TEX root = descendantBKMP.tex

\section{Mirror curves and oscillatory integrals}\label{sect:Integrals}
The mirror of the toric Calabi-Yau $3$-orbifold $\cX$ is an affine curve in $(\bC^*)^2$. In this section, we give its definition and study oscillatory integrals along relative 1-cycles. In particular, we construct cycles mirror to generators of the equivariant $K$-group $K^\pm_{\bT'}(\cX)$ and show that the integrals match the quantum cohomology central charges computed in Section \ref{sect:Sheaves}.

%\subsection{Notation} \label{sec:H}

\subsection{Mirror curves} \label{sec:mirror-curve}

Given any 3-cone $\si\in \Si(3)$ and $i\in  \{1,\ldots, 3+\fp\}$, we define a monomial $a^{\si}_i(q)$ in $q=(q_1,\ldots,q_\fp)$ as follows:
$$
a_i^\si(q):=
\begin{cases}
1, & i\in I'_\si,\\
\prod_{a=1}^{\fp} q_a^{s^\si_{ai}}, & i\in I_\si
\end{cases}
$$
where $s^\si_{ai}$ are the non-negative integers defined by \eqref{eqn:sai}. Recall that $q_K = (q_1, \dots, q_{\fp'})$, $q_{\orb} = (q_{\fp'+1}, \dots, q_{\fp})$. Observe that:
\begin{itemize}
\item In the large complex structure limit $q_K\to 0$,
$$
\lim_{q_K\to 0} a_i^{\si}(q)=0 \qquad \text{if } i\in I_\si.
$$
\item If $\fp'+1\le a\le \fp$ then $s_{ai}^\si = \delta_{i, 3+a}$. So
$$
\lim_{q_{\mathrm{orb}}\to 0} a_i^{\si}(q)=0 \qquad \text{if }  i> 3+\fp'.
$$
\end{itemize}

In Section \ref{sect:Flags}, we have chosen a preferred flag $\bff_0 = (\tau_0,\si_0)$ to fix coordinates. Define $a_i(q):= a_i^{\si_0}(q)$, and define
$$
H(X,Y,q):=  \sum_{i=1}^{3+\fp} a_i(q) X^{m_i} Y^{n_i} = X^{\fr} Y^{-\fs} + Y^{\fm} + 1 + \sum_{i=4}^{3+\fp} a_i(q) X^{m_i} Y^{n_i}
$$
which is an element in the ring $\bZ[q_1,\ldots,q_{\fp}][ X, X^{-1}, Y, Y^{-1}]$.
The \emph{mirror curve} of $\cX$ is
$$
    C_q:=\{(X,Y)\in (\bC^*)^2: H(X,Y,q)=0\}.
$$
For fixed $q\in \bC^{\fp}$, $C_q$ is an affine curve in
$(\bC^*)^2$.

The polytope $P$ determines a toric surface $\bS_P$ with a polarization $L_P$, and $H(X,Y,q)$ extends to a section $s_q\in H^0(\bS_P,L_P)$. The compactified mirror curve $\Cbar_q\subset \bS_P$ is the zero locus of $s_q$. For generic $q\in \bC^{\fp}$, $\Cbar_q$ is a compact Riemann surface of genus $\fg$ and $\Cbar_q$ intersects
the anti-canonical divisor $\partial \bS_P=\bS_P\setminus (\bC^*)^2$ transversally at $\fn$ points, where recall $\fg$ and $\fn$ are the number of lattice points in the interior and the boundary of $P$, respectively.

Moreover, consider the covering map $\bC^2_{x,y} \to (\bC^*)^2_{X, Y}$ where
$$
    x = -\log X, \qquad y = -\log Y.
$$
The \emph{equivariant mirror curve}
\[
\tC_q= C_q\times_{(\bC^*)^2} \bC^2
\]
is given by the inclusion $C_q\hookrightarrow (\bC^*)^2$ and the cover $\bC^2 \to (\bC^*)^2$ above. The induced projection
$$
    p: \tC_q \to C_q
$$
is a regular covering with fiber $\bZ^2$ which will be identified with $M'$. The mirror curve $C_q$ is connected, and by Lemma 5.10 of \cite{Yu25} 
the inclusion $C_q\hookrightarrow (\bC^*)^2$ induces a surjection $\pi_1(C_q)\to \pi_1( (\bC^*)^2)\cong \bZ^2$,  so 
$\tC_q$ is also connected. There is an action of $M'$ on $\tC_q$ given by 
$$
    x\mapsto  x - 2m_1 \pi \sqrt{-1}, \qquad y\mapsto y - 2 m_2 \pi \sqrt{-1} \qquad \text{for $m_1 \su_1 + m_2 \su_2 \in M'$,}
$$
which is the deck transformation on $\tC_q$.

For $u_1, u_2 \in \bC$, $u_1 \neq 0$, we define
$$
    \hx= u_1 x + u_2 y,\qquad \hy=\frac{y}{u_1}.
$$

\subsection{Reparameterizations}
We may reparameterize the mirror curve by coordinates specified by any flag $\bff = (\tau, \si)$. Continuing the notation in Section \ref{sect:Flags}, we define integers $a_{\bff}, b_{\bff}, c_{\bff}, d_{\bff}$ by
$$
e_1^{\bff} = a_{\bff} e_1 + b_{\bff} e_2,\quad
e_2^{\bff} = c_{\bff}e_1 + d_{\bff} e_2.
$$
Then $a_{\bff} d_{\bff}-b_{\bff}c_{\bff}=1$. For $i=1,\ldots, 3+\fp$, we define coordinates $(m_i^{\bff}, n_i^{\bff})$ by
$
b_i = m_i^{\bff} e_1^{\bff}  + n_i^{\bff} e_2^{\bff}  + e_3^{\bff}.
$
Note that $m_i^{\bff}$ and $n_i^{\bff}$ are determined by $\{ (m_i,n_i): i=1,\ldots,3+\fp\}$ and $\bff$. We define
$$
H_{\bff} (\XX , \YY, q):= \sum_{i=1}^{3+\fp} a_i^\si(q) \XX ^{m_i^{\bff}} \YY^{n_i^{\bff}}
$$
which is an element in the ring  $\bZ[q_1,\ldots,q_\fp] [ \XX, \XX^{-1}, \YY, \YY^{-1} ]$. Note that
%\begin{equation}\label{eqn:untwisted-affine}
$$
    H_{\bff}(\XX, \YY,0) = \XX ^{\fr_\bff} \YY ^{-\fs_\bff} + \YY^{\fm_\tau} +1.
$$

Then
$$
C_q\cong \{ (X_{\bff}, Y_{\bff})\in (\bC^*)^2: H_{\bff}(X_{\bff}, Y_{\bff},q)=0\}.
$$
More explicitly, for fixed $q\in (\bC^*)^{\fp}$ the isomorphism is induced by the following reparametrization of
$(\bC^*)^2$:
$$
    X_{\bff} = X^{a_{\bff}} Y^{b_{\bff}} a_{i_1^{\bff}}(q)^{\frac{1}{\fr_\bff}} a_{i_2^{\bff}}(q)^{\frac{\fs_\bff}{\fr_\bff \fm_\tau}} a_{i_3^{\bff}}(q)^{-\frac{\fm_\tau + \fs_\bff}{\fr_\bff \fm_\tau}},  \quad Y_{\bff} = X^{c_{\bff}} Y^{d_{\bff}} \left(\frac{a_{i_2^{\bff}}(q)}{a_{i_3^{\bff}}(q)}\right)^\frac{1}{\fm_\tau}.
$$
% \label{eqn:Xab}
% \label{eqn:Ycd}
% where
% \begin{equation}\label{eqn:ccc}
% $$
%     w_1^{\bff}=\frac{1}{\fr_\bff},\quad
%    w_2^{\bff}= \frac{\fs_\bff}{\fr_\bff \fm_\tau},  \quad
%    w_3^{\bff} = -w_1^{\bff}-w_2^{\bff}.
% $$
Under the above change of variables, we have
%\begin{equation}\label{eqn:HH}
$$
    H(X,Y,q) = a_{i_3^{\bff}}(q) X^{m_{i_3^{\bff}}} Y^{m_{i_3^{\bff}}}H_{\bff}(X_{\bff}, Y_{\bff}, q).
$$

Let $\su_1^{\bff}, \su_2^{\bff}$ be the characters of $\bT'$ corresponding to $e_1^{\bff\vee}$, $e_2^{\bff\vee}$ viewed as elements of $M'$, and write $u_1^{\bff}, u_2^{\bff}$ for their values after the specialization $\su_1 = u_1$, $\su_2 = u_2$. Then
$$
    u_1^{\bff} = d_{\bff} u_1 - c_{\bff} u_2, \qquad u_2^{\bff} = -b_{\bff} u_1 + a_{\bff} u_2.
$$
For $j = 1, 2, 3$, writing $w_j^{\bff} = w_{i_j^\bff, \sigma}$, we have
\begin{equation}\label{eqn:w123}
    w_1^{\bff} = \frac{u_1^\bff}{\fr_\bff}, \qquad w_2^{\bff}=\frac{\fs_{\bff} u_1^{\bff} + \fr_{\bff} u_2^{\bff}}{\fr_{\bff}\fm_{\tau}}, \qquad w_3^{\bff} = - w_1^{\bff} - w_2^{\bff}.
\end{equation}
For the variables
$$
    x_{\bff} = -\log X_{\bff}, \qquad y_{\bff} = - \log Y_{\bff}
$$
on the cover, define
$$
    \hx_{\bff} = u_1^{\bff} x_{\bff} + u_2^{\bff} y_{\bff},\qquad \hy_{\bff} = \frac{y_{\bff}}{u_1^{\bff}}.
$$
We may compute that
$$
    \hx_\bff = \hx - c_{\si},\qquad c_\sigma= \sum_{j = 1}^3 w_j^{\bff} \log a_{i_j^{\bff}}(q) = \sum_{a=1}^{\fp'} w_{\sigma,a} \log q_a.
$$
In particular, $c_{\si}$ only depends on the 3-cone $\sigma$ in $\bff$. We call it the \emph{tropical distance} between $\sigma$ and the preferred 3-cone $\sigma_0$.
% Note that \eqref{eqn:IsvLeading} now reads
% $$	
%     I_{\sigma, v} = e^{-c_\sigma/z} \left(q^v + O(|q_{\orb}|^3) + O(|q_K|)\right).
% $$

\begin{comment}

\subsection{Tropical distance}
Under a change of variable
\[
\hx_\bff= u_1^\bff x_\bff + u_2^\bff y_\bff,\ \hy_\bff = y_\bff/u_1^\bff,
\]
such that
\begin{align*}
    &\hx_\bff = \hx - c_{\si},\ c_\sigma= \sum_{a=1}^{\fp'} v_{\sigma,a} \log q_a,\\
    & I_\gamma =  e^{-c_{\si}/z} \int_{\gamma} e^{-\hx_\bff/z} d\hy_\bff.
\end{align*}
 If one sets the equivariant parameter $\su_1=u_1,\su_2=u_2$, $v_{\sigma,a}=\iota_\sigma^* H^{\bT'}_a\vert_{\su_1=u_1,\su_2=u_2}$, $w_{\bff,j}=\iota^*_{\bff} D^{\bT'}_{i_j^{\bff}}\vert_{\su_1=u_1,\su_2=u_2}$, and in particular $$w_{\bff_0,1}=\frac{u_1}{\fr}, w_{\bff_0,2}=\frac{\fs u_1+\fr u_2}{\fr\fm}.$$ 
With this identification of equivariant parameters, we regard $(u_1,u_2)\in N'_\bC$, and the \emph{tropical distance} $c_\si \in M'_\bC$ between $\si$ and $\si_0$ is a linear function of $u_1, u_2$, and $v_{\si,a}$ are rational linear functions of $u_1,u_2$.

\end{comment}

\subsection{Superpotential and ramification points}
\label{sec:ramification}
We say \emph{$q$ is sufficiently small} for a statement if there is an $M_0>0$ depending on $\cX$ (or equivalently, the fan $\Si$) only, such that when $|q|<M_0$ such statement holds. 

We consider
$
    \hx= u_1 x + u_2 y
$
as a multi-valued function on $C_q$. As discussed in \cite[Section 4.6(1)]{flz2020remodeling}, when $q$ is sufficiently small and $u_2/u_1$ is generic, the function $\hat x$ is holomorphic Morse and $d\hat x$ is a well-defined meromorphic form on $\Cbar_q$. Define
$$
    \Phi := \hy d\hx = y d \left( x + \frac{u_2}{u_1} y \right)
$$
which is a multi-valued holomorphic 1-form on $C_q$ obtained by restriction from $(\bC^*)^2$. 

%\lambda := \hy d\hx = y d(x + \frac{u_2}{u_1} y),\quad \Phi:= \lambda|_{C_q}.

The $\bT'$-perturbed equivariant superpotential $W^{\bT'}_q: (\bC^*)^3\to \bC$ is given by 
\[
W^{\bT'}_q(X, Y, Z) := H(X,Y,q)Z - u_1\log X - u_2 \log Y.
\]
Under the mirror map $\btau=\btau(q)$, there is an isomorphism of Frobenius algebras
\begin{equation}\label{eqn:FrobIso}
    QH^*_{\bT'}(\cX)\big|_{\su_1=u_1,\su_2=u_2, Q=1, \btau=\btau(q)}\cong \Jac(W^{\bT'}_q)
\end{equation}
where $\Jac(W^{\bT'}_q)$ is the Jacobian ring (\cite{CCIT20}, cf. \cite[Equation (4.15)]{flz2020remodeling}). By direct computation (\cite[Lemma 4.6]{flz2020remodeling}), the critical points $\{dW^{\bT'}_q=0\}$ are in one-to-one correspondence with ramification points $\{d\hat x=0\}$. Thus the ramification points of $\hx$ on $C_q$ may be labeled by the set $I_\Si$. In particular $\hat x$ has $2\fg - 2 + \fn$ ramification points. For $\bsi\in I_\Si$, we denote the corresponding critical point of $\hx$ by $p_\bsi$ and the critical value by $\check u^\bsi=\hat x(p_\bsi)$.

% Thus each ramification point of $\hat x$ is labeled as $p_\bsi$ for $\bsi\in I_\Si$. We denote the critical value of $\hat x$ at $p_\bsi$ by $\check u^\bsi=\hat x(p_\bsi)$. 

Now consider the covering $p: \tC_q \to C_q$. On $\tC_q$, $\hx$ is a holomorphic Morse function whose set of critical points is $p^{-1}(\{p_\bsi\}_{\bsi \in I_\Sigma})$. For each $\bsi \in I_\Sigma$, $p^{-1}(p_{\bsi})$ is an $M'$-torsor given by the deck transformation action. We now introduce an index set to label the critical points. For any $\si \in \Si(3)$, recall the short exact sequence \eqref{eqn:TPrimeAffine}
$$
    \xymatrix{
        0 \ar[r] &  M' \ar[r] &  M'_\si \ar[r]^{\psi_\si^\vee} & G_\si^* \ar[r] & 0.     
    }
$$
Under the standard basis $\su_1$ and $\su_2$, we identify $M'$ with $\bZ^2$ and $M'_\si$ with a rank-$2$ overlattice in $\bQ^2$.
%There is a $M'$-action on $\tC_q$ given by $$x\mapsto  x+ 2m_1 \pi \sqrt{-1}, y\mapsto y+ 2 m_2 \pi \sqrt{-1}$$ for $m_1 e_1^\vee + m_2 e_2^\vee$, which is the deck transformation on $\tC_q$.
Then, we may label the critical points of $\hx$ on $\tC_q$ by
$$
    \tI_\Sigma := \{ \tbsi = (\sigma, \chi) : \sigma \in \Sigma(3), \chi \in M'_\si \}
$$
in a way such that:
\begin{itemize}
    \item For the critical point $\tp_{(\sigma, \chi)} \in \tC_q$ labeled by $(\sigma, \chi)$, we have $p(\tp_{(\sigma, \chi)}) = p_{(\sigma, \psi_\si^\vee(\chi))} \in C_q$.
    
    \item The deck transformation by $m' \in M'$ takes $\tp_{(\sigma, \chi)}$ to $\tp_{(\sigma, \chi - m')}$. In particular, if $m' = (m_1, m_2)$, we have
    \begin{equation}\label{eqn:hxDeck}
        \hx(\tp_{(\sigma, \chi - m')}) = \hx(\tp_{(\sigma, \chi)}) - 2\pi\sqrt{-1}(u_1m_1 + u_2m_2).
    \end{equation}
\end{itemize}
For a more specific choice of labeling, see Section \ref{sec:mirror-curve-affine} below.

%We fix a reference point $P_0$ on $C$, and let $P_\chi=\chi\cdot P_0 \in   (\bC^*)^2$.

\subsection{Relative cycles and central charges}
On the mirror curve $C_q$, we consider the group of relative 1-cycles
$$
    H_1(C_q, \Re (\hx) \gg 0; \bZ).
$$
A basis of this group is given by the \emph{Lefschetz thimbles} $\{\gamma_{\bsi}\}_{\bsi \in I_\Sigma}$ of $\hx$ associated to the critical points $\{p_\bsi\}_{\bsi \in I_\Sigma}$, where $\gamma_\bsi$ is characterized by $\hx(\gamma_{\bsi}) = \hx(p_{\bsi}) + \bR_{\ge 0}$. Note that with $u_1, u_2$ fixed, different choices of branches of logarithms only affect the multivalued function $\hx$ on $C_q$ by imaginary constants, and thus do not affect the definitions of the subset $\{\Re (\hx) \gg 0\}$ and the Lefschetz thimbles.

On $\tC_q$, we consider the group of relative 1-cycles
$$
    H_1(\tC_q, \Re (\hx) \gg 0; \bZ)
$$
which admits an action of $M'$ by the deck transformation on $\tC_q$. The projection
$$
    p_*: H_1(\tC_q, \Re (\hx) \gg 0; \bZ) \to H_1(C_q, \Re (\hx) \gg 0; \bZ)
$$
along the covering map is invariant under the $M'$-action. A basis of this group is given by the Lefschetz thimbles $\{\gamma_{\tbsi}\}_{\tbsi \in \tI_\Sigma}$ of $\hx$ associated to the critical points $\{\tp_\tbsi\}_{\tbsi \in \tI_\Sigma}$, similar to above.

As $C_q$ and $\tC_q$ vary in family over sufficiently small $q$, the groups of relative 1-cycles form local systems. We now define central charges of flat relative cycles via oscillatory integrals of $\hy d\hx$. From now on we take $z \in \bR_{>0}$ with small radius.

\begin{definition}\rm{
The \emph{central charge} of a flat relative cycle $\gamma$ in $H_1(\tC_q, \Re (\hx) \gg 0; \bZ)$ or $H_1(C_q, \Re (\hx) \gg 0; \bZ)$ is defined by
$$
    I_\gamma := \int_\gamma e^{- \hx/z} \hy d\hx.
$$
}
\end{definition}

If $\gamma$ is a cycle on $C_q$, the integral is defined up to a constant depending on a choice of branches of $\hx, \hy$. Note that if we use coordinates specified by a general flag $\bff = (\tau, \si)$, we have
$$
    I_\gamma =  e^{-c_{\si}/z} \int_{\gamma} e^{-\hx_\bff/z} \hy_\bff d \hx_\bff.
$$

The oscillatory integrals are solutions to the $\bT'$-equivariant GKZ system \eqref{eqn:GKZ}.

\begin{proposition}\label{prop:IntSolvesPF}
For any flat relative cycle $\gamma$ in $H_1(\tC_q, \Re (\hx) \gg 0; \bZ)$ or $H_1(C_q, \Re (\hx) \gg 0; \bZ)$, we have $I_\gamma \in \bfS_{\bT'}$.
\end{proposition}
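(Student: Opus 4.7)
The plan is to verify the two types of equations in \eqref{eqn:GKZ} separately. For the homogeneity equation $\bE I_\gamma = 0$, I would argue by direct scaling invariance of the integrand. Under the simultaneous rescaling $(u_1, u_2, z)\mapsto (\lambda u_1,\lambda u_2,\lambda z)$, the exponent $\hx/z = (u_1 x + u_2 y)/z$ is invariant, and the $1$-form $\hy\, d\hx$ has total weight zero (weight $-1$ from $\hy = y/u_1$ cancelled by weight $+1$ from $d\hx$). The subset $\{\Re(\hx)\gg 0\}$ and hence the flat cycle $\gamma$ are preserved by this scaling, so differentiating under the integral sign yields $\bE I_\gamma = 0$.

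For the GKZ operators $\bD_\beta^{\bT'}$, the strategy is to lift $I_\gamma$ to the three-dimensional equivariant oscillatory integral associated with the superpotential
\[
    W_q^{\bT'}(X,Y,Z) = H(X,Y,q)Z - u_1\log X - u_2\log Y
\]
on $(\bC^*)^2\times \bC$ introduced in Section \ref{sec:ramification}. A Poincar\'e residue along $H(X,Y,q) = 0$ (equivalently, a contour in $Z$ picking up the pole of $1/H$) converts $\int_{\hat\gamma} e^{-W_q^{\bT'}/z}\, dZ\wedge \tfrac{dX}{X}\wedge\tfrac{dY}{Y}$ into $I_\gamma$ up to an explicit constant, where $\hat\gamma$ is a suitable $3$-cycle lifting $\gamma$. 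On the lifted integrand, $q_a \partial_{q_a}$ acts by multiplication by $\sum_i s_{ai}\, a_i X^{m_i}Y^{n_i}Z / (-z)$ modulo total derivatives, and via integration by parts against $X\partial_X,\, Y\partial_Y$, using the logarithmic factors $-u_1\log X,\,-u_2\log Y$, one verifies that each shifted operator $\partial_i - m - w^i/z$ acts on the integrand essentially as multiplication by $-a_i X^{m_i}Y^{n_i}Z/z$ (the shift $-w^i/z$ accounting precisely for the logarithmic terms, and $-m$ for iterating the basic operators on monomials).

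Assembling these computations, the two products in $\bD_\beta^{\bT'}$ differ only by the monomial identity
\[
    \prod_{i:\,\inner{D_i,\beta}>0}\bigl(X^{m_i}Y^{n_i}\bigr)^{\inner{D_i,\beta}} \;=\; \prod_{i:\,\inner{D_i,\beta}<0}\bigl(X^{m_i}Y^{n_i}\bigr)^{-\inner{D_i,\beta}},
\]
which holds on $(\bC^*)^2$ because $\sum_i \inner{D_i,\beta}\, b_i = 0$ for $\beta\in\bL = \ker\phi$, together with the matching $q^\beta \prod_i a_i^{\inner{D_i,\beta}} = 1$ up to a unit. Thus $\bD_\beta^{\bT'}$ applied to the integrand is an exact form on $(\bC^*)^2\times\bC$, and Stokes' theorem on $\hat\gamma$ (whose boundary lies in $\{\Re(\hx)\gg 0\}$ where $e^{-\hx/z}$ decays rapidly) yields $\bD_\beta^{\bT'} I_\gamma = 0$. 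The main technical obstacle will be the careful bookkeeping of the shifts $-m - w^i/z$: the integer shifts arise from iterated integration by parts, while the $\bT'$-equivariant shifts $-w^i/z$ come from the logarithmic terms in $W_q^{\bT'}$ and play the role, in this $\bT'$-equivariant extension of the classical GKZ system of \cite{GKZ89}, of Gauss--Manin twists adapted to the equivariant parameters.
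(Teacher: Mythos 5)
Your proposal takes a genuinely different route from the paper's proof. You lift the integral to the three-dimensional Landau--Ginzburg model $((\bC^*)^3, W_q^{\bT'})$ and run a GKZ-style integration-by-parts argument there, then Poincar\'e-reduce along $\{H=0\}$; the paper instead works directly on $C_q$, expressing $\partial_a\Phi$ as $\Res_{C_q}\left(\frac{\partial_a H}{H}\omega\right)$, proving two iterated-residue lemmas that show each shifted operator $\partial_i - m - w^i/z$ raises the power of $1/H$ by one up to exact forms, and then comparing the two products in $\bD_\beta^{\bT'}$ via the identity $q^\beta = \prod_i a_i(q)^{\langle D_i,\beta\rangle}$. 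In fact the paper explicitly acknowledges your route, noting that one can show $\int e^{-W_q^{\bT'}/z}\,\frac{dX}{X}\wedge\frac{dY}{Y}\wedge\frac{dZ}{Z}$ satisfies the same system ``e.g.\ using the argument in \cite[Proposition 5.1]{Iritani07},'' but chooses the direct one-dimensional calculation for self-containedness. Your approach buys compatibility with the standard GKZ literature on toric Landau--Ginzburg mirrors and makes the mechanism of the equivariant shifts $-w^i/z$ (coming from the $-u_1\log X - u_2\log Y$ terms) very transparent; the paper's approach avoids lifting cycles to dimension three, stays entirely on the mirror curve, and gives a more explicit template for the iterated bookkeeping. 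Two small corrections to your sketch: the natural measure on $(\bC^*)^3$ is $\frac{dX}{X}\wedge\frac{dY}{Y}\wedge\frac{dZ}{Z}$ (not $dZ\wedge\frac{dX}{X}\wedge\frac{dY}{Y}$; different $Z$-contours then produce the residue), and the final monomial identity should read $q^\beta\prod_i\bigl(a_i X^{m_i}Y^{n_i}\bigr)^{-\langle D_i,\beta\rangle}=1$ --- one has $q^\beta=\prod_i a_i(q)^{\langle D_i,\beta\rangle}$ and $\sum_i\langle D_i,\beta\rangle\,b_i=0$ for $\beta\in\bL$ --- so the exponent of $a_i$ in your displayed identity carries the wrong sign, though this does not affect the soundness of the argument.
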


The above statement may be well-known. We include a proof in Appendix \ref{appdx:PF} for completeness. Indeed, as mentioned in Section \ref{sec:ramification}, \cite[Section 4.6]{flz2020remodeling} provided a dimensional reduction from the 3-dimensional Landau-Ginzburg model $((\bC^*)^3, W^{\bT'}_q)$ to the 1-dimensional Landau-Ginzburg model $(C_q, \hx)$, and one can show that the oscillatory integral
$$
    \int e^{-W^{\bT'}_q/z} \frac{dX}{X} \wedge \frac{dY}{Y} \wedge \frac{dZ}{Z}
$$
also satisfies the GKZ system \eqref{eqn:GKZ} e.g. using the argument in \cite[Proposition 5.1]{Iritani07}. % The statements can also be generalized to toric Calabi-Yau orbifolds of higher dimensions.

As a consequence of Proposition \ref{prop:IntSolvesPF}, the central charge defines a homomorphism
$$
    H_1(\tC_q, \Re (\hx) \gg 0; \bZ) \to \bfS_{\bT'}, \qquad \gamma \mapsto I_\gamma.
$$

\begin{proposition}\label{prop:IntegralInjective}
The above central charge homomorphism is injective.
\end{proposition}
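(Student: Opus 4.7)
I would prove injectivity via a stationary phase asymptotic analysis of $I_\gamma$ in the limit $z \to 0^+$ at a generic point of the parameter space, using the $\bZ$-basis $\{\gamma_{\tbsi}\}_{\tbsi \in \tI_\Sigma}$ of Lefschetz thimbles. Write any element as a finite integer combination $\gamma = \sum_{\tbsi \in J} n_{\tbsi} \gamma_{\tbsi}$, where $J \subset \tI_\Sigma$ is finite, and suppose $I_\gamma \equiv 0$ as a function of $(q, u_1, u_2, z)$; the goal is to show $n_{\tbsi} = 0$ for every $\tbsi \in J$.

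Since $\hx$ is holomorphic Morse on $\tC_q$ (Section \ref{sec:ramification}), parameterizing each thimble by the real variable $s = \hx - \check{u}^{\tbsi} \in [0,\infty)$ reduces the integral to a standard Laplace integral, and the saddle point method yields
\[
    I_{\gamma_{\tbsi}}(z) = e^{-\check{u}^{\tbsi}/z}\bigl(\hy(\tp_{\tbsi})\,z + O(z^{3/2})\bigr) \quad \text{as } z \to 0^+.
\]
Next, I would choose parameters with sufficiently small $q$ and generic complex $u_1, u_2$ so that (a) every critical point in $J$ is non-degenerate, (b) $\hy(\tp_{\tbsi}) \neq 0$ for every $\tbsi \in J$, and (c) the real parts $\{\Re(\check{u}^{\tbsi})\}_{\tbsi \in J}$ are pairwise distinct. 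Each is a non-empty Zariski open condition; for (c), recall from \eqref{eqn:hxDeck} that two critical values in the same fiber over $C_q$ differ by $2\pi\sqrt{-1}(u_1 m_1 + u_2 m_2)$ for some $(m_1, m_2) \in M' \setminus \{0\}$, while critical values over distinct 3-cones differ by non-trivial tropical distances $c_\sigma - c_{\sigma'}$, and both types of differences have non-constant real parts for generic complex $(u_1, u_2)$.

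Now order $J = \{\tbsi_1, \ldots, \tbsi_k\}$ so that $\Re(\check{u}^{\tbsi_1})$ is minimal, and multiply the identity $I_\gamma = 0$ by $z^{-1} e^{\check{u}^{\tbsi_1}/z}$. The $\tbsi_1$-summand contributes $n_{\tbsi_1}\hy(\tp_{\tbsi_1}) + O(z^{1/2})$, while each other summand acquires an exponentially small factor $e^{(\check{u}^{\tbsi_1} - \check{u}^{\tbsi_i})/z}$. Sending $z \to 0^+$ forces $n_{\tbsi_1}\hy(\tp_{\tbsi_1}) = 0$, hence $n_{\tbsi_1} = 0$ by (b). Iteration removes each thimble in $J$ in turn, completing the proof.

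\textbf{The main obstacle} is checking the genericity condition (c) uniformly over the finite set $J$: in the presence of both the lattice deck shifts within each 3-cone and the tropical-distance shifts between distinct 3-cones, one must verify that no accidental coincidence among the $\Re(\check{u}^{\tbsi})$ persists after imposing finitely many inequalities on $(u_1, u_2)$. This reduces to the linear independence of the relevant characters in $M'$ together with the non-degeneracy of the tropical distances $c_\sigma$ as linear functions of $(u_1, u_2)$, which I would establish using the combinatorics of the fan $\Sigma$ already assembled in Section \ref{sec:ramification}.
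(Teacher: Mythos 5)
Your approach is the same as the paper's: expand the central charges of Lefschetz thimbles by stationary phase and separate the distinct exponentials. However, your stationary phase formula is incorrect, and the error is not harmless as written. You claim
\[
    I_{\gamma_{\tbsi}}(z) = e^{-\check{u}^{\tbsi}/z}\bigl(\hy(\tp_{\tbsi})\,z + O(z^{3/2})\bigr),
\]
but the term $\hy(\tp_{\tbsi})\,z$ in fact vanishes. The Lefschetz thimble $\gamma_{\tbsi}$ is a path passing \emph{through} the critical point, a double cover of the ray $\hx \in \check u^{\tbsi}+\bR_{\ge 0}$, and in a local coordinate $\zeta$ with $\hx = \check u^{\tbsi}+\zeta^2$ the form $\hy\,d\hx$ has leading term $\hy(\tp_{\tbsi})\cdot 2\zeta\,d\zeta$, which is odd in $\zeta$ and integrates to zero over the thimble. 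Equivalently, integrating by parts gives $\int_\gamma e^{-\hx/z}\hy\,d\hx = z\int_\gamma e^{-\hx/z}\,d\hy$, and the leading coefficient of $d\hy$ at $\tp_{\tbsi}$ is $h_1^{\tbsi}$, not $\hy(\tp_{\tbsi})$. The correct leading behavior, as displayed in the paper's proof, is
\[
    I_{\gamma_{\tbsi}} \sim z\,e^{-\check u^{\tbsi}/z}\left(\frac{2\pi z}{\tfrac{d^2\hx}{d\hy^2}(\tp_{\tbsi})}\right)^{1/2},
\]
of order $z^{3/2}$ with coefficient determined by the Hessian.

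As stated, your peeling argument fails: multiplying $I_\gamma\equiv 0$ by $z^{-1}e^{\check u^{\tbsi_1}/z}$ and letting $z\to 0^+$ yields $0=0$, since the $\tbsi_1$-summand is $O(z^{1/2})$ rather than $n_{\tbsi_1}\hy(\tp_{\tbsi_1})+o(1)$. The fix is mechanical: normalize by $z^{-3/2}e^{\check u^{\tbsi_1}/z}$, and the extracted coefficient is $n_{\tbsi_1}\sqrt{2\pi/\hx''(\tp_{\tbsi_1})}$, which is nonzero precisely by your non-degeneracy condition (a); your condition (b) on $\hy(\tp_{\tbsi})$ is then superfluous and should be dropped. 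With these corrections the argument is sound, and your treatment of condition (c) (distinctness of $\Re(\check u^{\tbsi})$ via complex generic $u_1,u_2$, distinguishing deck shifts from tropical-distance shifts) is a legitimate concrete substitute for the paper's briefer assertion that the exponentials $\{e^{-\hx(\tp_{\tbsi})/z}\}$ admit no linear relation as functions of $u_1/z,u_2/z$.
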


%on $C_q$ has $2\fg - 2 + \fn$ non-degenerate critical points $p_{\bsi}$ indexed by $\bsi \in I_\Sigma = \{(\sigma, \gamma) : \sigma \in \Sigma(3), \gamma \in G_\sigma^*\}$. On the cover $\tC_q$, the set of critical points of $\hx$ is $p^{-1}(\{p_\bsi\}_{\bsi \in I_\Sigma})$, and for each $\bsi \in I_\Sigma$, $p^{-1}(p_{\bsi})$ is an $M'$-torsor given by the deck transformation action.

\begin{proof}
We consider the central charges of the basis of $H_1(\tC_q, \Re (\hx) \gg 0; \bZ)$ given by the Lefschetz thimbles $\{\gamma_\tbsi\}_{\tbsi \in \tI_\Sigma}$. By the stationary phase expansion, the central charge of $\gamma_\tbsi$ has the asymptotics
$$
    \int_{\gamma_\tbsi} e^{-\hx/z} \hy d\hx \sim z e^{-\hx(\tp_{\tbsi})/z} \left( \frac{2\pi z}{
    \frac{d^2\hx}{d\hy^2}(\tp_{\tbsi}) } \right)^{\frac{1}{2}} \left(1 + O(z) \right).
$$
By \eqref{eqn:hxDeck}, as $\tbsi$ ranges through $\tI_\Sigma$, there are no non-trivial linear relations among the exponential terms $\{e^{-\hx(\tp_{\tbsi})/z}\}_{\tbsi \in \tI_\Sigma}$ in the asymptotics as functions in $\frac{u_1}{z}, \frac{u_2}{z}$. Thus the central charges of the Lefschetz thimbles are linearly independent, which implies the proposition.
\end{proof}

In the rest of this section, we construct relative cycles that are mirror to generators of the equivariant $K$-group $K^+_{\bT'}(\cX)$ and have matching central charges. The cycles will be glued from local constructions with respect to a decomposition of the mirror curve according to affine toric charts of $\cX$. We note that the construction also applies to $K^-_{\bT'}(\cX)$ while the resulting 1-cycles are relative instead to the subset $\{\Re (\hx) \ll 0\}$ of $C_q$ or $\tC_q$.

\begin{comment}
In what comes after, given a flat cycle $\gamma \in H_1(\tC_q, \Re (\hx) \gg 0; \bZ)$, we will consider instead the integral
\[
    I_\gamma=\int_\gamma e^{-\hx/z}d\hy.
\]
As observed in \cite[Theorem 3.8]{FLZ17}, by integration by parts, we have
$$    
    \int_\gamma e^{-\hx/z} \hy d\hx = z \int_\gamma e^{-\hx/z} d\hy = z I_\gamma.
$$
Proposition \ref{prop:IntSolvesPF} then implies that $I_\gamma$ is also a solution to the Picard-Fuchs system. By Lemma \ref{lem:ICoeffSoln}, it can be written as
\[
I_\gamma = \sum_{\sigma\in \Si(3),v\in \Box(\si)} a_{\sigma}(u_1,u_2,z) I_{\sigma,v},
\]
where $a_{\si,v}(u_1,u_2,z)$ is a meromorphic function of $u_1,u_2,z$.

\end{comment}

\subsection{Mirror curves of affine charts}
\label{sec:mirror-curve-affine}
We first consider the mirror curve $C_\sigma$ of an affine chart $\cX_\si = [\bC^3 / G_\si]$ of $\cX$, for $\si \in \Sigma(3)$. We choose a reference flag $\bff=(\tau,\si)$ but will drop ``$\bff$'' in the superscripts or subscripts as we view the coordinates to be on $C_\sigma$. For instance, $I'_\sigma = \{i_1, i_2, i_3\}$ where $i_j = i_j^\bff$. We also have the notation $w_j = w_{i_j, \sigma}$, $j = 1,2, 3$. Recall from \eqref{eqn:w123} that $w_1= \frac{u_1}{\fr}$, $w_2= \frac{\fs u_1 +\fr u_2}{\fr\fm}$, $w_3 = -w_1 - w_2$. 

For this affine chart, the relevant parameters are in the subcollection $q_\sigma = (q_a)$ of $q_{\orb}$ where $3+a$ ranges through the set
$
    \{i \in I_\sigma : b_i \in \sigma \}.
$
We denote the size of this set by $\fp^\sigma$. The mirror curve equation is
$$1+X^\fr Y^{-\fs} +Y^\fm  +\sum_{i\in I_\si,b_i\in \si}q_{i-3} X^{m_i}Y^{n_i}=0.$$
%Here $\fm=|G_\tau|$.
When $\sigma$ is smooth, $\fp^\sigma = 0$ and $C_\sigma$ is a pair of pants. The stacky Picard group $G_\si^*$ of $\cX_\sigma$ acts on $C_\si$ freely. 

When $q_\sigma=0$, we may lift the action of $G_\si^*$ on $C_\si$ to the action of $M_\si'$ on $\tC_\si$, and the action of the sub-lattice $M'$ on $\tC_\sigma$ is the deck transformation. For the identity element $\be \in M_\si'$, we define the integration cycle $\gamma_\be\subset \tC_\si$ to be one of the connecting component of  $\{Y^\fm \in [0,-1]\}$ such that on this cycle 
\begin{equation}
    a_\be:= \Im(x)=-\Im(\log X)= \frac{\fm +\fs}{\fr\fm}\pi,\quad b_\be:=\Im(y)=-\Im(\log Y)= \frac{\pi}{\fm}.
\label{eqn:cycle-divisor}
\end{equation}
For $\chi = (m_1, m_2) \in M_\si'$, let $\gamma_\chi = (-\chi) \cdot \gamma_\be$ be the image of $\gamma_\be$ under the action of $-\chi$, on which we have
$$
    a_\chi=\Im(x)(\gamma_\chi) = a_\be + 2\pi m_1, \qquad b_\chi=\Im(y) (\gamma_\chi) = b_\be + 2\pi m_2.
$$
We also define the integration cycle $\gamma'_\be \subset \tC_\si$ such that on $\gamma'_\be$
\begin{align}
    \nonumber
&X^\fr Y^{-\fs}= -1+\exp(-\sqrt{-1}t),\quad Y^\fm = -\exp(-\sqrt{-1}t),\ t\in (0,2\pi),\\
&a'_\be=\Im(x)\vert_{t=\pi}= \frac{\pi}{\fr},\quad  b'_\be=\Im(y)\vert_{t=\pi}=0.
\label{eqn:cycle-curve}
\end{align}
Similarly, for $\chi = (m_1, m_2) \in M_\si'$, let $\gamma'_\chi= (-\chi) \cdot \gamma'_\be$ and
$$
    a'_\chi = a'_\be + 2\pi m_1, \qquad b'_\chi = b'_\be + 2\pi m_2.
$$
See Figure \ref{fig:affine-cycles} for an illustration. We extend the definition of $\gamma_\chi$ and $\gamma'_\chi$ over sufficiently small $q_\sigma$ by parallel transport.

On $C_\sigma$ (resp. $\tC_\sigma$), the critical points of $\hx$ are in one-to-one correspondence with $G_\sigma^*$ (resp. $M_\si'$). For $\chi \in M_\si'$, the cycle $\gamma_\chi$ passes through a unique critical point of $\hx$, which we label by $\tp_{(\sigma, \chi)}$. This labeling induces a labeling of the critical points on $C_\sigma$ and satisfies the requirements in Section \ref{sec:ramification}. In addition, when $q = 0$, the action of $\chi' \in M_\si'$ takes $\tp_{(\sigma, \chi)}$ to $\tp_{(\sigma, \chi - \chi')}$. When $\Re(w_1)>0$, $\Re(w_2)>0$, $\Re(w_3)<0$, $\gamma_{\chi}$ is indeed the Lefschetz thimble of $\hx$ associated to $\tp_{(\sigma, \chi)}$; cf. \cite[Section 6.5]{flz2020affine}. When $\Re(w_1)>0$, $\Re(w_2)<0$, $\Re(w_3)<0$, $\gamma'_{\chi}$ is homologous to the Lefschetz thimble of $\hx$ associated to $\tp_{(\sigma, \chi)}$.

Now we consider the punctures of $C_\si$ and $\tC_\si$ where the integration cycles end. We will introduce a labeling of the punctures based on the commutative diagram for the flag $(\tau, \sigma)$
$$
    \xymatrix{
        M'_\si \ar[r]^\pi \ar[d]_{\psi_\si^\vee} & M'_\tau \ar[d] \\
        G^*_\si \ar[r] & G^*_\tau.
    }
$$
The set of punctures $Y^\fm=-1$ of $C_\si$ admits a $G_\tau^*$-action by permutation induced by the $G_\si^*$-action on $C_\si$. We label of these punctures by $G_\tau^*$ such that $(X=0, Y=\exp(-\frac{\pi\sqrt{-1}}{\fm}))$ is labeled by the identity $\be \in G_\tau^*$ and the other punctures are labeled by the permutation action. Similarly, the preimage of the all punctured disks near each puncture $Y^\fm=-1$ in $\tC_\si$ is diffeomorphic to $\bZ$-copies of $(0,\infty)\times\bR$. The set of connected components admits an $M'_\tau$-action by permutation induced by the $M'_\si$-action on $\tC_\si$. We label these components by $M'_\tau$ such that the identity $\be \in M'_\tau$ labels the component where $\Im(y)$ is close to $\frac{\pi}{m}$.

It is clear from the definition that the cycle $\gamma_\chi$ has one end point ending on the component labeled by $\pi(\chi)$ where $\pi: M'_\si \to M'_\tau$. The other end point is not among those given by $Y^\fm=-1$. Moreover, the cycle $\gamma'_\chi$ has two end points ending on the components labeled by the image of $\pi(\chi)$ and $\pi(\chi)+1$. Here we use that $M_\tau'= \Hom(\bZ \fm e_2, \bZ)\cong \bZ$ and the ``$+1$'' is given by the orientation obtained from the flag $(\tau, \si)$.

%Two endpoints of $\gamma_\chi$ are labeled by the images of $\chi$ under $M_{\si}' \to M'_{\tau'}$ respectively for $\tau'=\tau$ and the $2$-cone in $\si$ counterclockwisely next to $\tau$.  The cycle $\gamma'_\chi$ has two end points labeled by the image $\upsilon$ of $\chi$ under $M'_\si \to M'_{\tau}$ and $\upsilon+1$ ($y\mapsto y+2\pi\sqrt{-1}$). 

\begin{figure}
\begin{tikzpicture}
    \draw[->] (0.3,1) -- (0.3,3) node[left] {$-\Re (y)$};
    \draw[->] (1,0) -- (3,0) node[above] {$-\Re (x)$};

    \foreach \y in {1.5,1.75,2}  \draw (1,\y) ellipse [x radius=0.05, y radius=0.1];
    \foreach \y in {0.25,0.5}  \draw (1,\y) ellipse [x radius=0.05, y radius=0.1];
    \draw (2.5,2.5) ellipse [x radius=0.05, y radius=0.1];

    \draw (1,1.4) .. controls (1.2,1.2) and (1.2,0.9) ..  (1, 0.6);
    \draw (1,2.1) .. controls (1.5,2.1) and (2,2.3) ..  (2.48, 2.59);
    \draw (1.03,0.17) .. controls (1.5,1.1) and (2,1.8) ..  (2.52, 2.41);

    \draw (1,0.35) .. controls (1.02,0.37) and (1.02,0.38) ..  (1, 0.4);
    \foreach \y in {0,0.25 }\draw (1,1.6+\y) .. controls (1.02,1.62+\y) and (1.02,1.63+\y) ..  (1, 1.65+\y);

    \draw (1,0.4) node[left] {\small $\tau'$};
    \draw (1,1.7) node[left] {\small $\tau$};

    \draw[magenta] (1.05,1.75) .. controls (1.5,1.7) and (1.5,1.5) .. node[right] {\tiny $\gamma_\chi$} (1.05,0.5);
    \draw[cyan] (1.05,1.56) .. controls (1.5,1.7) and (1.5,1.79) .. node[right] {\tiny $\gamma'_{\chi}$} (0.95,1.79);
\end{tikzpicture}
\caption{Mirror curve $C_\si$ and cycles $\gamma_\chi$, $\gamma'_\chi$. The punctures are illustrated by circles, and the curve may have non-zero genus (not illustrated).}
\label{fig:affine-cycles}
\end{figure}
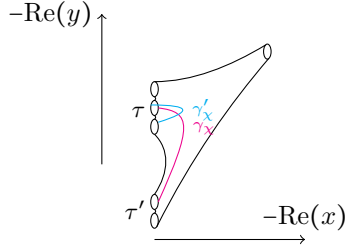

To conclude the local study, we evaluate the oscillatory integrals along the cycles defined above. We set up some notation. For the affine chart $\cX_\sigma$, the extended nef cone is generated by $\{D_i\}_{i \in I_\sigma, b_i \in \sigma }$. Dually, consider the classes
$
    \vec{h} = (h_i)_{i \in I_\sigma, b_i \in \sigma} \in (\bZ_{\ge 0})^{\fp^\sigma}
$
in the extended Mori cone. We write $q^{\vec{h}} = \prod_{i \in I_\sigma, b_i \in \sigma} q_{i-3}^{h_i}$. We define the quantities $c_{i_j}(\vec{h})$, $j = 1, 2, 3$ by
$$
    \sum_{j=1}^3 c_{i_j}(\vec{h}) b_{i_j}=\sum_{i \in I_\sigma, b_i \in \sigma} h_i b_i.
$$
The class $\vec{h}$ represents an element $v = v(\vec{h}) \in \Box(\sigma)$, which is also viewed as an element of $G_\sigma$. For $\chi \in M'_\si$, we write
$
    \chi(\vec{h}) := \psi_\si^\vee(\chi)(v)
$
for the evaluation of $\psi_\si^\vee(\chi) \in G_\si^*$ on $v \in G_\sigma$. Specifically, for the character $\sw_{i_j, \sigma}$, we have
$$
    \sw_{i_j, \sigma}(\vec{h}) = e^{2\pi\sqrt{-1}c_{i_j}(\vec{h})} = e^{2\pi\sqrt{-1}c_{i_j}(v)}.
$$
%We also have the notation $w_j = w_{i_j, \sigma}$, $j = 1,2, 3$. Recall from \eqref{eqn:w123} that $w_1= \frac{u_1}{\fr}$, $w_2= \frac{\fs u_1 +\fr u_2}{\fr\fm}$, $w_3 = -w_1 - w_2$. 

%In this notion $c^\bff_j(v)=c_{i^\bff_j}(v)$ for $v\in \Box(\si)$.

As in \cite[Theorem 7.6]{flz2020affine}, we compute for $\Re(w_1)>0,\Re(w_2)>0$ that 
\begin{align*}
    & I_{\gamma_\chi}  =\int_{\gamma_\chi} e^{-\hx/z}  \hy d \hx    \\
    & = - \sum_{\vec{h}\in(\bZ_{\ge 0})^{\fp^\si}}\frac{e^{-\sqrt{-1}(u_1 a_\chi + u_2 b_\chi)/z} e^{\pi \sqrt{-1}c_{i_3}(\vec{h})}(-\chi)(\vec{h})\Gamma\left(\frac{w_1}{z}+c_{i_1}(\vec{h})\right)\Gamma\left(\frac{w_2}{z}+c_{i_2}(\vec{h})\right)}{(-1)^{\sum{h_i}}|G_\si|\Gamma\left(1-\frac{w_3}{z}-c_{i_3}(\vec{h})\right)}q^{\vec{h}} \\
    & = - \sum_{v \in \Box(\sigma)}\frac{e^{-\sqrt{-1}(u_1 a_\chi + u_2 b_\chi)/z} e^{\pi \sqrt{-1}c_{i_3}(v)}(-\chi)(v)\Gamma\left(\frac{w_1}{z}+c_{i_1}(v)\right)\Gamma\left(\frac{w_2}{z}+c_{i_2}(v)\right)}{(-1)^{\age(v)}|G_\si|\Gamma\left(1-\frac{w_3}{z}-c_{i_3}(v)\right)}\left(q^{v} + O(|q_\sigma|^3) \right)
\end{align*}
where the last equality follows from the argument as in the derivation of \eqref{eqn:IsvLeading}. Here we orient $\gamma_\chi$ in a way opposite to \cite[Theorem 7.6]{flz2020affine} which gives the overall minus sign. Writing $\chi = (m_1, m_2)$, we have in addition that
$$
    -(u_1 a_\chi + u_2 b_\chi) = - (u_1 a_\be + u_2 b_\be) - 2\pi (u_1m_1 + u_2m_2) = \pi w_3 - 2\pi \chi \big|_{\su_1 = u_1, \su_2 = u_2}.
$$
It follows that
\begin{equation}\label{eqn:pair-divisor}
    I_{\gamma_\chi} = - e^{- \frac{2\pi\sqrt{-1}}{z} \chi \big|_{\su_1 = u_1, \su_2 = u_2}} \sum_{v \in \Box(\sigma)}\frac{(-\chi)(v)e^{\pi\sqrt{-1}(w_3/z + c_{i_3}(v))}\Gamma\left(\frac{w_1}{z}+c_{i_1}(v)\right)\Gamma\left(\frac{w_2}{z}+c_{i_2}(v)\right)}{(-1)^{\age(v)}|G_\si|\Gamma\left(1-\frac{w_3}{z}-c_{i_3}(v)\right)}\left(q^{v} + O(|q_\sigma|^3) \right).
\end{equation}

We also compute for $\Re(w_1)>0$ that
\begin{align*}
    &I_{\gamma'_\chi}  =\int_{\gamma'_\chi} e^{-\hx/z}  \hy d \hx    \\
    & = 2\pi\sqrt{-1}\sum_{\vec{h} \in(\bZ_{\ge 0})^{\fp^\si}}\frac{e^{-\sqrt{-1}(u_1 a'_\chi + u_2 b'_\chi)/z} e^{\pi \sqrt{-1}(c_{i_2}(\vec{h})+c_{i_3}(\vec{h}))}(-\chi)(\vec{h})\Gamma\left(\frac{w_1}{z}+c_{i_1}(\vec{h})\right)}{(-1)^{\sum{h_i}}|G_\si|\Gamma\left(1-\frac{w_2}{z}-c_{i_2}(\vec{h})\right)\Gamma\left(1-\frac{w_3}{z}-c_{i_3}(\vec{h})\right)}q^{\vec{h}} \\
    & = 2\pi\sqrt{-1}\sum_{v \in \Box(\sigma)}\frac{e^{-\sqrt{-1}(u_1 a'_\chi + u_2 b'_\chi)/z} e^{\pi \sqrt{-1}(c_{i_2}(v)+c_{i_3}(v))}(-\chi)(v)\Gamma\left(\frac{w_1}{z}+c_{i_1}(v)\right)}{(-1)^{\age(v)}|G_\si|\Gamma\left(1-\frac{w_2}{z}-c_{i_2}(v)\right)\Gamma\left(1-\frac{w_3}{z}-c_{i_3}(v)\right)} \left(q^{v} + O(|q_\sigma|^3) \right).
\end{align*}
We have in addition that
$$
    -(u_1 a'_\chi + u_2 b'_\chi) = - (u_1 a'_\be + u_2 b'_\be) - 2\pi \chi \big|_{\su_1 = u_1, \su_2 = u_2} = \pi(w_2 + w_3) - 2\pi \chi \big|_{\su_1 = u_1, \su_2 = u_2}.
$$
It follows that
\begin{equation}\label{eqn:pair-curve}
    I_{\gamma'_\chi} = 2\pi\sqrt{-1} e^{-\frac{2\pi\sqrt{-1}}{z} \chi \big|_{\su_1 = u_1, \su_2 = u_2}} \sum_{v \in \Box(\sigma)}\frac{(-\chi)(v) e^{\pi\sqrt{-1}((w_2 + w_3)/z + c_{i_2}(v)+c_{i_3}(v))} \Gamma\left(\frac{w_1}{z}+c_{i_1}(v)\right)}{(-1)^{\age(v)}|G_\si|\Gamma\left(1-\frac{w_2}{z}-c_{i_2}(v)\right)\Gamma\left(1-\frac{w_3}{z}-c_{i_3}(v)\right)} \left(q^{v} + O(|q_\sigma|^3) \right).
\end{equation}

%For the cycle $\gamma=\{(X(t),Y(t))\}$ where $X(t)=-1+e^{2\sqrt{-1}t}$, $Y(t)=-1-X(t)$ for $t=[0,2\pi]$, we compute for $\Re(u_1)>0$,
%\begin{align}
 %   \nonumber
  %  I_\gamma & =\int_\gamma e^{-u_1 x - u_2 y}  \hy d \hx    \\
%             & = -\frac{2\pi \sqrt{-1} (-1)^{u_1}\Gamma(u_1)}{\Gamma(1-u_2)\Gamma(1+u_1+u_2)}.
%    \label{eqn:pair-straight}
%\end{align}

\subsection{Decomposition}\label{sec:decomposition}
We now decompose the mirror curve $C_q$ into local patches as in Section \ref{sec:mirror-curve-affine} and tubes connecting them. For any flag $\bff=(\tau,\si)$ such that $\tau \in \Sigma(2)_c$ is interior, we consider
\[
    |x_\bff|< d_{\bff}:=\min_a\{\log|q_a|\}/K_\bff.
\]
We choose $K_\bff>0$ large enough such that for sufficiently small $q$, $d_\bff<\frac{1}{5}|x_{\bff}+x_{\bff'}|$. Here $\bff'=(\tau,\si')$ where $\si' \in \Sigma(3)$ is the other 3-cone containing $\tau$, and the sum between these two $x$-variables is a constant (rational linear combination of $\log q_a$).
Define for $\sigma \in \Sigma(3)$,
\[
C_\si^\circ=\bigcap_{\tau \in \Sigma(2)_c, \tau \subset \si}\{|x_\bff|\le d_\bff\},
\]
and for $\tau=\si\cap \si' \in \Sigma(2)_c$,
\[
C^\circ_\tau= \{|x_\bff|\ge d_\bff, |x_{\bff'}|\ge d_{\bff'}\}\cap \bigcap_{\tau' \subset \si,\tau'\neq \tau }\{ |x_{(\tau',\si)}|\le d_{(\tau',\si)}\}\cap \bigcap_{\tau' \subset \si',\tau'\neq \tau }\{ |x_{(\tau',\si')}|\le d_{(\tau',\si')}\}.
\]
Therefore, for sufficiently small $q$, we have the decomposition
\begin{equation}
    \label{eqn:curve-decomposition}
    C_q=\bigcup_{\si\in \Si(3)}{C_\sigma^\circ} \cup \bigcup_{\tau\in \Si(2)_c} C_\tau^\circ,
\end{equation}
and with $\tC^\circ_\alpha=C^\circ_\alpha \times_{(\bC^*)^2} \bC^2$ for $\alpha \in \Si(3)\cup \Si(2)_c$, we have
\begin{equation}
    \label{eqn:equiv-curve-decomposition}
    \tC_q=\bigcup_{\si\in \Si(3)}{\tC_\sigma^\circ} \cup \bigcup_{\tau\in \Si(2)_c} \tC_\tau^\circ.
\end{equation}
The ``connecting tube'' $C^\circ_\tau$ consists of $\fm_\tau$-many tubes, each homeomorphic to  $S^1\times [0,1]$, and $G_\tau^*$ permutes these components. The covering space $\tC_\tau^\circ$ consists of $\bZ$-many sheets, each homeomorphic to $\bR\times [0,1]$, and $\pi_0(\tC^\circ_\tau)$ is an $M'_\tau$-torsor. %This labeling is consistent with the punctures of $\tC_\si$ (and $\tC^\circ_\si$) described in the previous section.
The covering space $\tC_\sigma^\circ$ is connected.

For any flag $\bff=(\tau, \si)$, let $(X,Y)$ be the associated local coordinates on the affine mirror curve $C_\si$ as in Section \ref{sec:mirror-curve-affine}, and $(X_\bff,Y_\bff)$ be the associated coordinates on $C_q$. When $q$ is sufficiently small, $Y_\bff$ is a local coordinate on $C_\bff^\circ$ with no ramification points, and if we set $Y=Y_\bff$, $X_\bff$ and $X$ both as multi-valued functions of $Y$ differ by
\begin{equation}\label{eqn:XDiffCond}
    X_\bff = X+O(|q|).
\end{equation}
We define an open embedding
\[
    s_{\bff}: C^\circ_\si \hookrightarrow C_\si, \qquad 
              (X_\bff,Y_\bff) \mapsto (X+O(|q|),Y).
\]
% such that
% \[
% X_\bff=X+O(|q|),\
% Y_\bff=Y.
% \]

From now on we fix a sufficiently small $q$ such that the decomposition \eqref{eqn:curve-decomposition} holds and condition \eqref{eqn:XDiffCond} is met. We set
$$
    q(t,t')=((q_1)^t,\dots, (q_{\fp'})^t,(q_{\fp' +1})^{t'},\dots,(q_\fp)^{t'})
$$
for $t,t'\ge 1$. In particular, changing $q(1,t')$ to $q(t,t')$ rescales all $c_\si$ to $tc_\si$.

\subsection{Mirror cycle of a toric divisor}
\label{sec:divisor-cycle}
Consider the toric divisor $\cD_j$ corresponding to the 1-cone $\rho_j$, and let $\rho_{k_1},\dots, \rho_{k_l}$ be the adjacent $1$-cones in the counterclockwise order. We require that $\rho_{j},\rho_{k_i},\rho_{k_{i+1}}$ form a $3$-cone for $i = 1, \dots, l-1$, while $\rho_{j}, \rho_{k_l}, \rho_{k_1}$ may or may not form a $3$-cone. Denote these $3$-cones by $\si_1,\dots, \si_m$ where $m=l$ or $l-1$. In the former case we denote $k_{l+1} = k_1$, $\sigma_{l+1} = \sigma_1$ for convenience.

A $\bT'$-equivariant line bundle $\cL$ on $\cD_j$ determines a collection of characters 
$$\uchi=\{\chi_i\in {M'_{\si_i}}\},\qquad \chi_i=c_1^{\bT'}(\cL\vert_{\fp_{\si_i}}).$$
Such a collection of characters satisfies the condition
\[
\langle \chi_{i}-\chi_{i+1}, b_{k_{i+1}}-b_j\rangle=0,\qquad \text{$i=1,\dots,m$}
\]
along the intersections
$
    \tau_{i+1} := \sigma_i \cap \sigma_{i+1}, I'_{\tau_{i+1}} = \{j, k_{i+1}\}
$
and is called a \emph{twisted polytope} \cite{fltz14}. The condition ensures that under
\begin{equation}\label{eqn:DivisorAdjProj}
    \xymatrix{
        M'_{\si_i} \ar[r] & M'_{\tau_{i+1}} & \ar[l] M'_{\si_{i+1}},
    }
\end{equation}
the characters $\chi_i$, $\chi_{i+1}$ have the same image. Conversely, a twisted polytope uniquely determines a $\bT'$-equivariant line bundle $\cL_\uchi$ on $\cD_j$.

Fix the flags $\bff_i=(\si_i,\tau_{i+1})$, so that
$i_1^{\bff_i} = k_i$, $i_2^{\bff_i} = k_{i+1}$, $i_3^{\bff_i} = j$.
Given a twisted polytope $\uchi = \{\chi_i\}$ on $\cD_j$, we let $\gamma_{\bff_i}^\circ = s_{\bff_i}^{-1}(\gamma_{\chi_i})$ be the cycle in $\tC_\si^\circ$ defined in Section \ref{sec:mirror-curve-affine}, and define the cycle
$
    \gamma_\uchi
$
by patching together $\gamma_{\bff_i}^\circ$. We prove the following lemma to validate the patching, which is illustrated in Figure \ref{fig:divisor-cycle}.

\begin{lemma}
    The paths $\gamma^\circ_{\bff_i}$ and $\gamma^\circ_{\bff_{i+1}}$ connect to the same connected component of $\tC^\circ_{\tau_{i+1}}$. 
\end{lemma}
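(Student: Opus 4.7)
The plan is to identify the connected component of $\tC^\circ_{\tau_{i+1}}$ that each path reaches via the $M'_{\tau_{i+1}}$-torsor structure on $\pi_0(\tC^\circ_{\tau_{i+1}})$, and then invoke the twisted polytope condition to conclude the equality of labels.

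First, on the $\si_i$-side, the flag $\bff_i$ has $\tau_{i+1}$ as its $2$-cone face, so the local analysis in Section \ref{sec:mirror-curve-affine} identifies the endpoint of $\gamma_{\chi_i}\subset \tC_{\si_i}$ at the $\tau_{i+1}$-punctures with the component labelled by $\pi_i(\chi_i)\in M'_{\tau_{i+1}}$, where $\pi_i:M'_{\si_i}\to M'_{\tau_{i+1}}$ is the projection dual to the inclusion $N'_{\tau_{i+1}}\hookrightarrow N'_{\si_i}$. Pulling back via $s_{\bff_i}^{-1}$ and lifting to the cover, $\gamma^\circ_{\bff_i}$ exits $\tC^\circ_{\si_i}$ through this very component.

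Second, the flag $\bff_{i+1}$ has $\tau_{i+2}$ as its preferred $2$-cone, so $\tau_{i+1}$ is the ``other'' $2$-cone of $\si_{i+1}$ containing $\rho_j$. Tracing the construction of $\gamma_\chi$ in \eqref{eqn:cycle-divisor}, its second endpoint (the one not at $Y^{\fm_{\tau_{i+2}}}=-1$) lies at the $\tau_{i+1}$-punctures of $\tC_{\si_{i+1}}$, and by $M'_{\si_{i+1}}$-equivariance of the construction ($\gamma_\chi=(-\chi)\cdot\gamma_\be$) its label is $\pi_{i+1}(\chi_{i+1})\in M'_{\tau_{i+1}}$, where $\pi_{i+1}:M'_{\si_{i+1}}\to M'_{\tau_{i+1}}$ is the analogous canonical projection. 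Hence $\gamma^\circ_{\bff_{i+1}}$ enters $\tC^\circ_{\tau_{i+1}}$ through the component labelled by $\pi_{i+1}(\chi_{i+1})$.

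Third, I will verify that the two $M'_{\tau_{i+1}}$-torsor labellings of $\pi_0(\tC^\circ_{\tau_{i+1}})$ induced from the two sides coincide. Both arise from the intrinsic deck-transformation action of $M'\subset M'_{\tau_{i+1}}$ on $\tC_q$ together with a choice of base component in the local picture, and the two base components are specified by the conditions $(a_\be,b_\be)$ in \eqref{eqn:cycle-divisor} read off from the two flags. A direct check using the change of coordinates from $(X_{\bff_i},Y_{\bff_i})$ to $(X_{\bff_{i+1}},Y_{\bff_{i+1}})$ in the overlap region shows that they pick out the same geometric component, so the two torsor structures agree.

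Finally, the twisted polytope equation $\inner{\chi_i-\chi_{i+1},\,b_{k_{i+1}}-b_j}=0$ says exactly that $\chi_i-\chi_{i+1}$ annihilates the rank-$1$ sublattice $N'_{\tau_{i+1}}\subset N'$ generated (over $\bZ$) by $b_{k_{i+1}}-b_j$. Dualising the inclusion $N'_{\tau_{i+1}}\hookrightarrow N'_{\si_i},N'_{\si_{i+1}}$, this is equivalent to $\pi_i(\chi_i)=\pi_{i+1}(\chi_{i+1})$ in $M'_{\tau_{i+1}}$. Combined with the torsor compatibility, both paths enter the same connected component of $\tC^\circ_{\tau_{i+1}}$, proving the lemma.

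The main obstacle is the third step. Because the counterclockwise orientations of $P_{\si_i}$ and $P_{\si_{i+1}}$ induce opposite orientations on the shared edge corresponding to $\tau_{i+1}$, the roles of $i_2$ and $i_3$ in the flag data essentially swap between the two charts, producing a nontrivial change of variables between the local $(X_\bff,Y_\bff)$-coordinates. Matching the base components of the $M'_{\tau_{i+1}}$-torsors through this swap requires an explicit but unilluminating coordinate computation; once it is carried out, the remainder of the argument is a formal consequence of duality between $N'_{\tau_{i+1}}$ and $M'_{\tau_{i+1}}$.
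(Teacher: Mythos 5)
Your proposal is, structurally, the same proof the paper gives: reduce to the identity element $\chi_i=\chi_{i+1}=\be$ by the $M'_{\tau_{i+1}}$-torsor structure on $\pi_0(\tC^\circ_{\tau_{i+1}})$ and the twisted-polytope condition $\langle\chi_i-\chi_{i+1},\,b_{k_{i+1}}-b_j\rangle=0$, and then verify the base case by an explicit local coordinate computation. The one thing you have not done is the base-case computation itself — and that is precisely where the content of the lemma lives, not an ``unilluminating'' afterthought. The paper carries it out by reducing to the two adjacent affine charts, writing the mirror-curve equations in the two flag coordinate systems with the explicit change of variables $Y=X'^{\fr'/\fm}Y'^{-\fs'/\fm}$, and checking that the constant values of the imaginary parts along the two cycles agree: on the $\bff_i$-side $\Im(\log Y)\sim-\pi/\fm$ from \eqref{eqn:cycle-divisor}, while on the $\bff_{i+1}$-side $\Im(\log X')\sim-\tfrac{\fm'+\fs'}{\fr'\fm'}\pi$ and $\Im(\log Y')\sim-\pi/\fm'$, which under the change of variables also gives $\Im(\log Y)\sim-\pi/\fm$. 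This is exactly the ``coordinate swap'' check you identify as the main obstacle; until it is written out, you have only reduced the lemma to a claim you have not verified. One smaller point: the base labelling scheme set up in Section \ref{sec:mirror-curve-affine} only assigns $M'_\tau$-labels to the punctures with $Y^\fm=-1$ (the $\tau$-punctures for the given flag), so asserting that the \emph{other} endpoint of $\gamma_{\chi_{i+1}}$ carries the label $\pi_{i+1}(\chi_{i+1})$ involves extending that convention to the opposite boundary; this is fine, but it should be made explicit rather than implied by ``$M'_{\si_{i+1}}$-equivariance,'' since it amounts to a choice of base component that your step three is supposed to match against the $\bff_i$-side one.
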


\begin{proof}
It suffices to consider the mirror curve for the fan with only two $3$-cones  $\sigma_i$ and $\sigma_{i+1}$ with twisted sector parameters set to zero. We write the mirror curve equation in $X,Y$ for the flag $\bff_i$
\[
    H_{\bff_i}=1+X^\fr Y^{-\fs} + Y^\fm + q X^{-\bar \fr} Y^{\bar \fs},
\]
while in $X',Y'$ for the flag $\bff_{i+1}$
\[
H_{\bff_{i+1}}=1+X'^{\fr'} Y'^{-\fs'} +Y'^{\fm'} + q' X^{-\bar\fr'} Y'^{\bar \fs'}.
\]
%We let $(X,Y)=(X_{\bff_i},Y_{\bff_i})$ (resp. $(X',Y')=(X_{\bff_{i+1}},Y_{\bff_{i+1}})$) when writing the curve equation for the flag $\bff_i$ (resp. $\bff_{i+1}$).
%   write the curve equation for the flag $\bff_i$, and let $(X,Y)=(X_{\bff_i},Y_{\bff_i})$; we also let $(X',Y')=(X_{\bff_{i+1}},Y_{\bff_{i+1}})$ when writing the curve for the flag $\bff_{i+1}$. 
%For the equation for the flag $\bff_{i}$,
% \[
% H_{\bff_i}=1+X^\fr Y^{-\fs} + Y^\fm + q X^{-\bar \fr} Y^{\bar \fs},
% \]
% while
% \[
% H_{\bff_{i+1}}=1+X'^{\fr'} Y'^{-\fs'} +Y'^{\fm'} + q' X^{-\bar\fr'} Y'^{\bar \fs'}.
% \]
Here $\fm'=\gcd(\bar\fr,\bar\fs)$ and $\fm=\gcd(\fr',\fs')$. The change of coordinates is
\[
Y=X'^{\frac{\fr'}{\fm}} Y'^{-\frac{\fs'}{\fm}},\qquad
Y'=q^\frac{1}{\fm'}X^{-\frac{\bar \fr}{\fm'}} Y^{\frac{\bar \fs}{\fm'}}.
\]

First assume $\gamma_{\bff_i}^\circ= s_{\bff_i}^{-1}(\gamma_{\be})$, $\gamma_{\bff_{i+1}}^\circ= s_{\bff_{i+1}}^{-1}(\gamma_{\be})$ for the identity $\be\in M'_{\si_i}$ or $M'_{\si_{i+1}}$. The component with $Y^\fm=-1$ that $\gamma^\circ_{\bff_i}$ ends in is given by \eqref{eqn:cycle-divisor} as
\[  \Im(\log Y) \sim - \frac{\pi}{\fm},\]
The component with $Y^\fm=-1$ that $\gamma^\circ_{\bff_{i+1}}$ ends in is given by \eqref{eqn:cycle-divisor} as
\[
    \Im(\log X') \sim - \frac{\fm'+\fs'}{\fr'\fm'}\pi,\qquad
    \Im(\log Y') \sim - \frac{\pi}{\fm'}
\]
which also implies that
\[
    \Im(\log Y) \sim  \left( - \frac{\fm'+\fs'}{\fr'\fm'} \frac{\fr'}{\fm} + \frac{\fs'}{\fm\fm'}\right) \pi= -\frac{\pi}{\fm}.
\]
Thus they end on the same component in $\tC^\circ_{\tau_{i+1}}.$

The general case follows from that $M'_{\tau_{i+1}}$ permutes connected components of $\tC_{\tau_{i+1}}^\circ$, and thus condition \eqref{eqn:DivisorAdjProj} ensures that $\gamma^\circ_{\bff_i}$ and $\gamma^\circ_{\bff_{i+1}}$ connect to the same connected component of $\tC^\circ_{\tau_{i+1}}$. 
\end{proof}

\begin{figure}[h]
    \begin{tikzpicture}
        
        \foreach \y in {1.5,1.75,2}  \draw (1,\y) ellipse [x radius=0.05, y radius=0.1];
        \foreach \y in {0.25,0.5}  \draw (1,\y) ellipse [x radius=0.05, y radius=0.1];
        \draw (2.5,2.5) ellipse [x radius=0.05, y radius=0.1];
    
        \draw (2,3) node {$C^\circ_{\si_i}$};
        \draw (-1,3) node {$C^\circ_{\tau_{i+1}}$};
        \draw (-4,3) node {$C^\circ_{\si_{i+1}}$};
    
        \draw (1,1.4) .. controls (1.2,1.2) and (1.2,0.9) ..  (1, 0.6);
        \draw (1,2.1) .. controls (1.5,2.1) and (2,2.3) ..  (2.48, 2.59);
        \draw (1.03,0.17) .. controls (1.5,1.1) and (2,1.8) ..  (2.52, 2.41);
    
        \draw (1,0.35) .. controls (1.02,0.37) and (1.02,0.38) ..  (1, 0.4);
        \foreach \y in {0,0.25 }\draw (1,1.6+\y) .. controls (1.02,1.62+\y) and (1.02,1.63+\y) ..  (1, 1.65+\y);
    
        \draw[magenta] (1.05,1.75) .. controls (1.5,1.7) and (1.5,1.5) .. node[right] {\tiny $\gamma^\circ_{\bff_i}$} (1.05,0.5);
        %\draw[cyan] (1.05,1.76) .. controls (1.5,1.74) and (1.5,1.79) .. node[right] {\tiny $\gamma'_{\chi}$} (0.95,1.79);

        %-----------
        \begin{scope}[xscale=-1,xshift=2cm]
    
            \foreach \y in {1.5,1.75,2}  \draw (1,\y) ellipse [x radius=0.05, y radius=0.1];
            
            \draw (2.5,2.5) ellipse [x radius=0.05, y radius=0.1];

            \draw (1,1.4) .. controls (1.1,1.3) and (1.2,1.3) ..  (1.98, 0.16);
            \draw (1,2.1) .. controls (1.5,2.1) and (2,2.3) ..  (2.48, 2.59);
            \draw (2.01,0.35) .. controls (1.5,1.1) and (2,1.8) ..  (2.52, 2.41);    
        
            %\draw (1,0.35) .. controls (1.02,0.37) and (1.02,0.38) ..  (1, 0.4);
            \foreach \y in {0,0.25 }\draw (1,1.6+\y) .. controls (1.02,1.62+\y) and (1.02,1.63+\y) ..  (1, 1.65+\y);

            \draw[magenta] (1.05,1.75) .. controls (1.5,1.6) and (1.5,1.4) .. node[above] {\tiny $\gamma_{\bff_{i+1}}^\circ$} (1.96,0.3);
        
        \end{scope}
        \foreach \y in {0.25}  \draw (-4,\y) ellipse [x radius=0.05, y radius=0.1];

        %-------------
        \foreach \y in {1.5,1.75,2}  \draw (0,\y) ellipse [x radius=0.05, y radius=0.1];
        \foreach \y in {1.5,1.75,2}  \draw (-2,\y) ellipse [x radius=0.05, y radius=0.1];
        \foreach \y in {1.5,1.75,2}  \draw (-2,\y+0.1) -- (0,\y+0.1);
        \foreach \y in {1.5,1.75,2}  \draw (-2,\y-0.1) -- (0,\y-0.1);
        \draw[magenta] (-1.95,1.75) -- (-1.5,1.75);
        \draw[magenta,dotted] (-1.5,1.75) -- (-0.7,1.75);
        \draw[magenta] (-0.7,1.75) -- (0.05,1.75);

    \end{tikzpicture}
    \caption{Patching cycles for the mirror of a toric divisor. The paths $\gamma^\circ_{\bff_i}$ and $\gamma^\circ_{\bff_{i+1}}$ connect to the same component of $\tC^\circ_{\tau_{i+1}}$. This component is homeomorphic to $\bR \times [0,1]$. Passing down to the component of $C^\circ_{\tau_{i+1}}$ (homeomorphic to $S^1\times [0,1]$) this may have a non-trivial winding (dotted magenta part).} 
    \label{fig:divisor-cycle}
\end{figure}

Now we evaluate the oscillatory integral on $\gamma_\uchi$, starting with the two estimation lemmas. Via the specialization $\su_1 = u_1$, $\su_2 = u_2$ of equivariant parameters, we regard $(u_1,u_2)$ as an element in $N'_\bC := N' \otimes \bC$. The tropical distance functions $c_\sigma$, $\sigma \in \Sigma(3)$, as linear functions in $u_1, u_2$ are viewed as elements in $M'_\bC := M' \otimes \bC$. In particular, the functions $\{\Re(c_{\si_i})\}_{i=1}^m$ are the vertices of a convex polygon in $M'_\bR$ if $m=l$, or a convex non-compact spline if $m=l-1$, such that no $\Re(c_\si)$ lies in the interior.

\begin{lemma}\label{lem:PolygonDirection}
    For $i = 1, \dots, m$, there is an open region $U_i\subset N'_\bR$ and $\epsilon>0$ such that when $(u_1,u_2)\in U_i$,
    \[
        \int_{\gamma_\uchi \setminus \gamma^\circ_{\bff_i}} e^{-\hx_{\bff_i}/z} \hy_{\bff_i} d\hx_{\bff_i} = O(\exp(-\epsilon t)).
    \]
\end{lemma}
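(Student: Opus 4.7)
The plan is to estimate $\int_{\gamma_\uchi \setminus \gamma^\circ_{\bff_i}} e^{-\hx_{\bff_i}/z}\,\hy_{\bff_i}\,d\hx_{\bff_i}$ piece by piece along the decomposition \eqref{eqn:equiv-curve-decomposition}, exploiting that a change of reference flag shifts the superpotential by a tropical distance: $\hx_{\bff_i} = \hx_{\bff_{k'}} + (c_{\sigma_{k'}} - c_{\sigma_i})$ with $d\hx_{\bff_i} = d\hx_{\bff_{k'}}$. Viewing $(u_1,u_2) \in N'_\bR$, each $\Re(c_{\sigma_k})$ is an $\bR$-linear functional on $N'_\bR$, and by hypothesis the values $\{\Re(c_{\sigma_k})\}_{k=1}^m$ are vertices of a convex polygon (or spline) with none in the interior. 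A standard supporting-hyperplane argument then produces a non-empty open cone $U_i \subset N'_\bR$ on which $\Re(c_{\sigma_i})$ is the strict minimum among the $\Re(c_{\sigma_k})$; shrinking $U_i$ if necessary, I may assume that $\delta := \inf_{U_i} \min_{k \neq i}\bigl(\Re(c_{\sigma_k}) - \Re(c_{\sigma_i})\bigr) > 0$, and that each $\gamma_{\chi_k}$ is (homologous to) a Lefschetz thimble of $\hx_{\bff_k}$ in $\tC_{\sigma_k}$.

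On each piece of $\gamma_\uchi \setminus \gamma^\circ_{\bff_i}$ lying in $\tC^\circ_{\sigma_{k'}}$ (with $k' \neq i$) or in a tube $\tC^\circ_{\tau_{j+1}}$ (on a tube, I choose for $\bff_{k'}$ whichever of $\bff_j, \bff_{j+1}$ yields the smaller $\Re(c_\sigma)$), the identity above separates the integrand:
\[
\int e^{-\hx_{\bff_i}/z}\,\hy_{\bff_i}\,d\hx_{\bff_i} \;=\; e^{(c_{\sigma_i} - c_{\sigma_{k'}})/z} \int e^{-\hx_{\bff_{k'}}/z}\,\hy_{\bff_i}\,d\hx_{\bff_{k'}}.
\]
Under $q \mapsto q(t,t')$ the tropical distances rescale to $t c_\sigma$, so the prefactor becomes $\exp\bigl(t(c_{\sigma_i} - c_{\sigma_{k'}})/z\bigr) = O\!\bigl(\exp(-\delta t/z)\bigr)$ on $U_i$. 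Summing over the finitely many pieces and taking any $0 < \epsilon < \delta/z$ then yields the stated bound, provided each residual integral $\int e^{-\hx_{\bff_{k'}}/z}\,\hy_{\bff_i}\,d\hx_{\bff_{k'}}$ stays uniformly bounded as $t \to \infty$.

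This uniform boundedness is the main obstacle. The local affine-chart data at $\sigma_{k'}$ depend only on the orbifold parameters $q_{\orb}$ (scaled by the fixed $t'$), so the critical values and Lefschetz-thimble geometry at $\sigma_{k'}$ do not move with $t$; however, the patches $\tC^\circ_{\sigma_{k'}}$ and $\tC^\circ_\tau$ grow in the $x_{\bff_{k'}}$-direction like $d_{\bff_{k'}} \sim t$. The resolution is to choose $U_i$ inside the cone where $\gamma_{\chi_{k'}}$ is genuinely Lefschetz-thimble-like for $\hx_{\bff_{k'}}$, so that $e^{-\Re(\hx_{\bff_{k'}})/z}$ decays exponentially along the cycle's tail and dominates both the polynomial growth of $\hy_{\bff_i}$ (arising from the change of flag) and of the integration domain. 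Standard stationary-phase estimates on a Lefschetz thimble then furnish the required uniform bound, and the lemma follows.
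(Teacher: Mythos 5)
Your proposal follows essentially the same approach as the paper: choose $(u_1,u_2)$ in an open cone $U_i$ where $\Re(c_{\sigma_i})$ is the strict minimum among the $\Re(c_{\sigma_k})$, then use the tropical distance gap $\Re(c_{\sigma_{k'}}-c_{\sigma_i}) \sim \delta t$ to obtain exponential decay on the pieces outside $\tC^\circ_{\sigma_i}$. The paper's two-sentence proof states exactly this and then says ``this implies the lemma.''

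Where you diverge is in how you justify that the residual integrals stay controlled. You correctly identify this as the step the paper leaves implicit, but your specific resolution — shrinking $U_i$ so that \emph{every} $\gamma_{\chi_{k'}}$, $k'\neq i$, is Lefschetz-thimble-like for $\hx_{\bff_{k'}}$, and then appealing to decay of $e^{-\Re(\hx_{\bff_{k'}})/z}$ along the tail — imposes extra sign conditions on $(u_1,u_2)$ that are not in the paper and are not obviously compatible with $U_i$; moreover ``stationary phase'' is not quite the right invocation (the limit there is $z\to 0$, not $t\to\infty$). The simpler, sign-independent mechanism the paper actually relies on is the patch-size constraint $d_{\bff} < \tfrac{1}{5}|x_{\bff}+x_{\bff'}|$ from Section~\ref{sec:decomposition}: on $\tC^\circ_{\sigma_{k'}}$ one has $\Re(\hx_{\bff_{k'}}) \ge -|u_1^{\bff_{k'}}|\,d_{\bff_{k'}} - C$ simply because the patch is bounded, and since $d_{\bff_{k'}}/t$ is kept small relative to the tropical gap, the pointwise bound $\exp(-\hx_{\bff_i}/z)=O(\exp(-\epsilon t))$ holds uniformly on the piece and survives integration over the polynomially sized domain. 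Also note that your phrase ``shrinking $U_i$ if necessary\ldots $\delta := \inf_{U_i}\cdots > 0$'' cannot hold literally on an open cone (the gap, being homogeneous of degree one, infimizes to zero near the origin); one should restrict to a bounded open set or exploit homogeneity. None of these are fatal — the core idea and the estimate are right — but the bookkeeping should follow the patch-size constraint rather than a Lefschetz-thimble condition at the other charts.
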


\begin{proof}
For the $3$-cone $\si_i$, we can choose $u_1,u_2$ such that $\Re({\si_i})<\Re(c_{\si_{i'}})$ for all $i' \in\{1,\dots,m\}\setminus\{i\}$. Then we have $\exp(-\hx_{\bff_i}/z)=O(\exp(-\epsilon t))$ on each $\tC^\circ_{\si_{i'}}$, $i' \neq i$, as well as each $\tC^\circ_{\tau_{i'}}$, $i' = 2, \dots, m+1$. This implies the lemma.
\end{proof}

% and $\tC^\circ_{\si_{i}\cap \si_{i+1}}$

\begin{lemma}\label{lem:PolygonCorner}
    For $i = 1, \dots, m$, when $(u_1,u_2)\in U_i$,
    \[
        \int_{\gamma^\circ_{\bff_i}} e^{-\hx_{\bff_i}/z} \hy_{\bff_i} d\hx_{\bff_i} = \int_{\gamma_{\chi_i}\subset C_{\si_i}} e^{-\hx/z} \hy d\hx +        O(\exp(-\epsilon t)).
    \]
\end{lemma}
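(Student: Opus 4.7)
The plan is to identify the two integrals by transporting everything to the affine chart $C_{\sigma_i}$ via the open embedding $s_{\bff_i}$, and then to bound the contribution from the tail of $\gamma_{\chi_i}$ outside $s_{\bff_i}(C^\circ_{\sigma_i})$. By definition $s_{\bff_i}(\gamma^\circ_{\bff_i}) = \gamma_{\chi_i} \cap s_{\bff_i}(C^\circ_{\sigma_i})$, so this tail is precisely $\gamma_{\chi_i} \setminus s_{\bff_i}(\gamma^\circ_{\bff_i})$.

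First I would analyze the change of variables induced by $s_{\bff_i}$. By construction, $s_{\bff_i}$ identifies the coordinates $Y_{\bff_i} = Y$ exactly while $X_{\bff_i} = X + O(|q|)$; see \eqref{eqn:XDiffCond}. On the image $s_{\bff_i}(C^\circ_{\sigma_i})$, where $|x_{\bff_i}|$ is bounded by $d_{\bff_i}$ (so $X$ is bounded away from $0$ and $\infty$), this yields $\hy_{\bff_i} = \hy$ exactly and $\hx_{\bff_i} = \hx + O(|q|)$, together with $d\hx_{\bff_i} = d\hx + O(|q|)\,d\hy$. Since $e^{-\hx/z}$ is uniformly bounded on the compact image $s_{\bff_i}(\gamma^\circ_{\bff_i})$, these approximations give
\[
    \int_{\gamma^\circ_{\bff_i}} e^{-\hx_{\bff_i}/z} \hy_{\bff_i}\,d\hx_{\bff_i} = \int_{s_{\bff_i}(\gamma^\circ_{\bff_i})} e^{-\hx/z}\hy\,d\hx + O(|q|).
\]
Under the rescaling $q \to q(t,t')$, $|q|$ becomes $O(e^{-\epsilon_0 t})$ for some $\epsilon_0 > 0$, making this error exponentially small in $t$.

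Next I would bound the tail $\gamma_{\chi_i} \setminus s_{\bff_i}(\gamma^\circ_{\bff_i})$. On this tail, $|x_{\bff_i}| \ge d_{\bff_i}$, which grows linearly in $t$ under the rescaling of $q_K$. By the explicit description \eqref{eqn:cycle-divisor} of $\gamma_\be$, the imaginary part $\Im(x_{\bff_i})$ stays bounded along the path $\gamma_{\chi_i}$, and both tails approach punctures of $C_{\sigma_i}$ at which $\Re(x_{\bff_i}) \to +\infty$. For $(u_1,u_2) \in U_i$ one verifies, from the defining inequalities of $U_i$ (together with the branch data used in Section \ref{sec:mirror-curve-affine}), that $\Re(\hx) \gtrsim t$ everywhere along the tail. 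Hence $|e^{-\hx/z}|$ decays like $e^{-\epsilon_1 t/z}$ for some $\epsilon_1 > 0$. Since $\hy\,d\hx$ has at most polynomial growth in $|x|$, the tail integral is $O(e^{-\epsilon_1 t})$. Adding the two estimates with $\epsilon = \min(\epsilon_0,\epsilon_1)$ gives the lemma.

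The main obstacle is the tail bound: one must verify that for every $(u_1,u_2) \in U_i$, both ends of the reference cycle $\gamma_{\chi_i}$ lie in the regime of exponential decay of $e^{-\hx/z}$. This amounts to translating the conditions on the tropical distances $\Re(c_\sigma)$ defining $U_i$ into sign conditions on the $\Re(w_j)$'s that control the asymptotic behavior of $\hx$ near the punctures of $C_{\sigma_i}$. If the minimality of $\Re(c_{\sigma_i})$ alone is insufficient, the natural remedy is to shrink $U_i$ to a sub-region on which the relevant sign conditions hold simultaneously; this sub-region is still non-empty and open, which is all that is needed for the subsequent analytic continuation arguments.
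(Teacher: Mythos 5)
Your proof takes the same fundamental approach as the paper's — transport the integral to the affine chart $C_{\sigma_i}$ via $s_{\bff_i}$ and control the error via the coordinate comparison \eqref{eqn:XDiffCond} — but is substantially more careful. The paper's one-sentence argument only addresses the pointwise discrepancy $e^{-\hx_{\bff_i}/z}=e^{-\hx/z}+O(|q|^K)$ on the common domain; it leaves implicit the fact that $s_{\bff_i}(\gamma^\circ_{\bff_i})$ is a proper sub-arc of $\gamma_{\chi_i}$, so that the tail of $\gamma_{\chi_i}$ outside the image of $s_{\bff_i}$ must also be shown to contribute $O(e^{-\epsilon t})$. You make this decomposition explicit and supply the tail estimate, which is a genuine improvement in rigor rather than a different route.

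Two small points worth flagging. First, the coordinate error is not literally $O(|q|)$ once converted to the $x$-variable: since $|X|$ can be as small as $e^{-d_{\bff_i}}$ on $C^\circ_{\sigma_i}$, one gets $x_{\bff_i}=x+O(|q|/X)=x+O(|q|^{1-1/K_{\bff_i}})$, so the exponent must be quoted as some $O(|q|^{K'})$ with $K'>0$ (the choice of $K_{\bff_i}$ large in Section \ref{sec:decomposition} is precisely what guarantees $K'>0$). Your conclusion is unaffected since the point is only exponential decay in $t$, but the constant matters if one tries to quote a sharp $\epsilon$. Second, your final caveat is well taken and in fact touches on something the paper does not spell out: the tail decay at the two ends of $\gamma_{\chi_i}$ is governed by the sign conditions $\Re(w_1^{\bff_i})>0$, $\Re(w_2^{\bff_i})>0$ (the same conditions under which the affine evaluation \eqref{eqn:pair-divisor} was derived), and these are not the same inequalities as the minimality of $\Re(c_{\sigma_i})$ that Lemma \ref{lem:PolygonDirection} uses to carve out $U_i$. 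Shrinking $U_i$ to the (open, non-empty) intersection with the sign-condition cone is the right fix: the subsequent argument only compares meromorphic functions on some open subset of $(u_1,u_2)$-space, so any non-empty open $U_i$ suffices.
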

\begin{proof}
    Our local change of coordinates from $C_q$ to $C_{\sigma_i}$ specified at the end of Section \ref{sec:decomposition} satisfies that $X_{\bff_i}=X+O(|q|)$, $Y_{\bff_i} = Y$, which implies that $e^{-\hx_{\bff_i}/z} = e^{-\hx/z}+O(|q|^K)$ for some constant $K>0$. It then follows that the integration differs by $O(\exp(-\epsilon t))$ for some $\epsilon>0$.
\end{proof}

%For any $\sigma'' \notin V(\gamma)$, pick a $\sigma'\in V(\gamma)$, and choose the flag $\bff_\gamma'$ such that $D_{i^{\bff'_\gamma}_3}=D_\gamma$ and $m^{\bff_\gamma'}_{i^{\bff''}_j}\le 0$ and $n^{\bff'_\gamma}_{i^{\bff''}_j}\le 0$ for $j=1,2,3$. For $a=1,\dots,\fp$, we have $\langle H_a, D_{\si'', i^{\bff'_\gamma}_1}^\vee - D_{\si'', i^{\bff'_\gamma}_3}^\vee \rangle>0, \langle H_a, D_{\si'', i^{\bff'_\gamma}_2}^\vee - D_{\si'', i^{\bff'_\gamma}_3}%^\vee \rangle>0$. We have
%\[
%    v_{\si'',a}-v_{\si',a}= w_{\bff',1} \langle H_a, D_{\si'', i^{\bff'_\gamma}_1}^\vee - D_{\si'', i^{\bff'_\gamma}_3}^\vee \rangle  + w_{\bff',2} \langle H_a, D_{\si'', i^{\bff'_\gamma}_2}^\vee - D_{\si'', i^{\bff'_\gamma}_3}^\vee \rangle.
%\]
%Assume $u_1, u_2$ are in the region such that $w_{\bff'_\gamma,1}, w_{\bff'_\gamma,2}>0$, we have $v_{\si'',a}>v_{\si',a}$. The vectors $(w_{\bff'_\gamma,1}, w_{\bff'_\gamma,2})$ and $(u_1,u_2)$ differ by a $\mathrm{GL}_2\bZ$-transformation.

The above lemmas imply that for $i = 1, \dots, m$,  when $(u_1,u_2)\in U_i$, we have the estimate
\begin{equation}\label{eqn:Ichi-estimate}
    I_{\gamma_\uchi}= e^{-c_{\si_i}/z}\left(\int_{\gamma_{\chi_i}\subset C_{\si_i}} e^{-\hx/z} \hy d\hx + O(\exp(-\epsilon t))\right).
\end{equation}

To precisely evaluate the integral, we use Proposition \ref{prop:IntSolvesPF} and Lemma \ref{lem:ICoeffSoln} to write 
$$
    I_{\gamma_\uchi} = \sum_{\substack{\sigma \in \Sigma(3) \\ v \in \Box(\sigma)}} a_{\sigma, v}\left(\tfrac{u_1}{z}, \tfrac{u_2}{z}\right) I_{\sigma,v}
    = \sum_{\substack{\sigma \in \Sigma(3) \\ v \in \Box(\sigma)}} a_{\sigma, v}\left(\tfrac{u_1}{z}, \tfrac{u_2}{z}\right) e^{-c_\si/z} \left(q^v+O(-\exp(3t'))+O(-\exp(t)) \right)
$$
for some coefficient functions $a_{\sigma, v}$, where the second equality follows from \eqref{eqn:IsvLeading}. For any $\si'\notin \{\si_1,\dots,\si_m\}$, since $\{\Re(c_{\si_i})\}_{i=1}^m$ form a convex spline or polygon in $M'_{\bR}$ and $c_{\si'}$ is on its outside, there exists $i \in \{1, \dots, m\}$ such that for $(u_1,u_2)$ in the open subset $U_i$, we have $\Re(c_{\si'})<\Re(c_{\si_{i}}) < \Re(c_{\si_{i'}})$ for $i' \in\{1,\dots,m\}\setminus\{i\}$ as in the proof of Lemma \ref{lem:PolygonDirection}. Then we must have that $a_{\si',v}=0$ for all $v\in \Box(\si')$, since otherwise $a_{\sigma', v}I_{\sigma', v}$ dominates the contributions from the cones $\sigma_1, \dots, \sigma_m$ and invalidates the estimate \eqref{eqn:Ichi-estimate} for $i$. 

Now for $i = 1, \dots, m$ and $v \in \Box(\sigma_i)$, we may use \eqref{eqn:Ichi-estimate} and the explicit evaluation \eqref{eqn:pair-divisor} to determine the coefficient function
$$
a_{\sigma_i,v}\left(\tfrac{u_1}{z}, \tfrac{u_2}{z}\right)= - e^{-\frac{2\pi\sqrt{-1}}{z} \chi_i \big|_{\su_1 = u_1, \su_2 = u_2}}  \frac{(-\chi_i)(v)e^{\pi \sqrt{-1}(w_{j, \sigma_i}/z + c_j(v))} \Gamma\left(\frac{w_{k_i, \sigma_i}}{z} + c_{k_i}(v)\right)\Gamma\left(\frac{w_{k_{i+1}, \sigma_i}}{z} + c_{k_{i+1}}(v) \right)}{(-1)^{\age(v)} |G_{\sigma_i}| \Gamma\left(1 - \frac{w_{j, \sigma_i}}{z} - c_j(v) \right)}.
$$
If $\uchi$ is induced by the $\bT'$-equivariant line bundle $\cL = \cO_{\cX}\left(\sum_{s = 1}^{3+\fp'} r_s \cD_s \right)$, we have $\chi_i = \sum_{s = 1}^{3+\fp'} r_s \sw_{s, \sigma_i}$ and
\begin{equation}\label{eqn:integral-twisting-factor}
    e^{-\frac{2\pi\sqrt{-1}}{z} \chi_i \big|_{\su_1 = u_1, \su_2 = u_2}} = e^{-2\pi\sqrt{-1} \sum_{s = 1}^{3+\fp'} r_s w_{s, \sigma_i}/z}, \qquad (-\chi_i)(v) =  e^{-2\pi\sqrt{-1} \sum_{s = 1}^{3+\fp'} r_s c_s(v)}.
\end{equation}
Comparing with \eqref{eqn:aside-result-divisor}, \eqref{eqn:aside-result-divisor-twist}, we have the following result.

\begin{proposition}\label{prop:DivisorCycle}
We have
$$
    2\pi \sqrt{-1} I_{\gamma_{\uchi}}=  Z_{\bT'}(\cO_{\cD_j} \otimes \cL).
$$
\end{proposition}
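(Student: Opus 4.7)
The strategy is to use the uniqueness of expansion of any GKZ solution in the basis $\{I_{\sigma,v}\}_{\sigma \in \Sigma(3), v \in \Box(\sigma)}$ provided by Lemma \ref{lem:ICoeffSoln} and compare coefficients. Both $Z_{\bT'}(\cO_{\cD_j} \otimes \cL)$ and $2\pi\sqrt{-1} I_{\gamma_\uchi}$ lie in $\bfS_{\bT'}$: the former by Section \ref{sect:KCentral} (using the localization expression \eqref{eqn:ILocalize}), and the latter by Proposition \ref{prop:IntSolvesPF}. Thus the plan is to show the two expansions in $\{I_{\sigma,v}\}$ agree term by term.

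First I would handle the 3-cones $\sigma' \notin \{\sigma_1,\dots,\sigma_m\}$. On the $K$-theory side, \eqref{eqn:aside-result-divisor} shows that $a_{\sigma',v}(\cO_{\cD_j}) = 0$ whenever $j \notin I'_{\sigma'}$, which together with \eqref{eqn:aside-result-divisor-twist} immediately gives $a_{\sigma',v}(\cO_{\cD_j} \otimes \cL) = 0$. On the integral side, this vanishing was already established in Section \ref{sec:divisor-cycle} via the convex spline/polygon argument applied to $\{\Re(c_{\sigma_i})\}$ in $M'_\bR$, using the estimates of Lemmas \ref{lem:PolygonDirection} and \ref{lem:PolygonCorner}.

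Next I would compare the non-vanishing coefficients for $\sigma = \sigma_i$, $i = 1, \dots, m$, and $v \in \Box(\sigma_i)$. The integral-side coefficient is determined in Section \ref{sec:divisor-cycle} by combining the estimate \eqref{eqn:Ichi-estimate} (valid on the open subset $U_i \subset N'_\bR$) with the local oscillatory integral formula \eqref{eqn:pair-divisor} from Section \ref{sec:mirror-curve-affine}, and yields
\[
    a_{\sigma_i,v} = -\,e^{-\frac{2\pi\sqrt{-1}}{z}\chi_i \mid_{\su_1=u_1,\su_2=u_2}}\cdot\frac{(-\chi_i)(v)\,e^{\pi\sqrt{-1}(w_{j,\sigma_i}/z + c_j(v))}\,\Gamma\!\bigl(\frac{w_{k_i,\sigma_i}}{z}+c_{k_i}(v)\bigr)\Gamma\!\bigl(\frac{w_{k_{i+1},\sigma_i}}{z}+c_{k_{i+1}}(v)\bigr)}{(-1)^{\age(v)}\,|G_{\sigma_i}|\,\Gamma\!\bigl(1-\frac{w_{j,\sigma_i}}{z}-c_j(v)\bigr)}.
\]
The $K$-theory side gives $a_{\sigma_i,v}(\cO_{\cD_j}\otimes\cL)$ by combining \eqref{eqn:aside-result-divisor} and \eqref{eqn:aside-result-divisor-twist}. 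Applying \eqref{eqn:integral-twisting-factor} to rewrite the twisting characters $\chi_i = \sum_s r_s \sw_{s,\sigma_i}$ shows that $2\pi\sqrt{-1}\,a_{\sigma_i,v} = a_{\sigma_i,v}(\cO_{\cD_j}\otimes\cL)$, since the overall prefactor $(-2\pi\sqrt{-1})$ in \eqref{eqn:aside-result-divisor} matches the factor $-(2\pi\sqrt{-1})$ arising from the integral expansion.

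The main bookkeeping obstacle is tracking signs, the normalization of $\gamma_{\chi_i}$ across the patches, and the identification of $(-\chi_i)(v)$ with the character evaluation factor $e^{-2\pi\sqrt{-1}\sum r_s c_s(v)}$; this is precisely what the lemma-patching of Section \ref{sec:divisor-cycle} and identity \eqref{eqn:integral-twisting-factor} accomplish. Once these identifications are verified, the proposition follows by the injective linear map $\bfS_{\bT'}$-coefficient comparison: two elements of $\bfS_{\bT'}$ with the same expansion in $\{I_{\sigma,v}\}$ must be equal, giving the desired identity $2\pi\sqrt{-1}\,I_{\gamma_\uchi} = Z_{\bT'}(\cO_{\cD_j}\otimes\cL)$ under the mirror map.
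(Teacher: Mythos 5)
Your proposal is correct and follows essentially the same route as the paper: expand both sides in the GKZ solution basis $\{I_{\sigma,v}\}$, use the convex spline/polygon argument together with the estimates of Lemmas \ref{lem:PolygonDirection} and \ref{lem:PolygonCorner} to kill coefficients for $\sigma' \notin \{\sigma_1,\dots,\sigma_m\}$, and then match the surviving coefficients against \eqref{eqn:aside-result-divisor}--\eqref{eqn:aside-result-divisor-twist} via the local evaluations \eqref{eqn:pair-divisor} and the twisting identity \eqref{eqn:integral-twisting-factor}. The paper actually treats this computation as leading up to the proposition rather than as a separate proof block, but the mathematical content is identical.
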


\subsection{Mirror cycle of a toric curve}\label{sec:curve-cycle}
Consider the toric curve $\fl_\tau$ corresponding to a 2-cone $\tau$, with $I'_\tau = \{j_1, j_2\}$. If $\tau \in \Sigma(2)_c$, $\fl_\tau$ is compact and we let $\si_1,\si_2$ denote the two $3$-cones containing $\tau$. Otherwise $\fl_\tau$ is non-compact and $\tau$ is contained in a unique $3$-cone $\si_1$. Write $I'_{\si_i} = \{j_1, j_2, k_i\}$, and fix the flag $\bff_i=(\tau, \si_i)$, so that $i_1^{\bff} = k_i$, $\{i_2^{\bff}$, $i_3^{\bff}\} = \{j_1, j_2\}$.

When $\tau \not \in \Sigma(2)_c$ is exterior, a twisted polytope $\uchi$ on $\fl_\tau$ is given by a single character $\chi_1 \in M'_{\si_1}$. We define the cycle
$
    \gamma'_\uchi:= s^{-1}_{\bff_1}(\gamma'_{\chi_1}).
$
In the other case $\tau \in \Sigma(2)_c$ is interior, a twisted polytope $\uchi$ on $\fl_\tau$ is given by two characters $\chi_i \in M'_{\si_i}$ for $i=1,2$ satisfying
$$
    \langle \chi_1-\chi_2,b_{j_1}-b_{j_2}\rangle=0.
$$
The condition ensures that the images of $\chi_1,\chi_2$ agree under 
\begin{equation}\label{eqn:CurveAdjProj}
    \xymatrix{
        M'_{\si_1} \ar[r] & M'_\tau & \ar[l] M'_{\si_2}.
    }
\end{equation}
Let $\gamma'^\circ_{\si_i}:=s^{-1}_{\bff_i}(\gamma'_{\chi_i})$, and define the cycle
$
    \gamma'_\uchi
$
by patching together $\gamma'^\circ_{\si_i}$. We prove the following lemma to validate the patching, which is illustrated in Figure \ref{fig:curve-cycle}.

\begin{lemma}
The paths $\gamma'^\circ_{\si_1}$ and $\gamma'^\circ_{\si_{2}}$ end on the same two connected components of $\tC^\circ_{\tau}$.
\end{lemma}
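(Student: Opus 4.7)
The plan is to follow the template of the previous lemma for the divisor case, adapted to the fact that now both endpoints of $\gamma'^\circ_{\si_i}$ lie in the same connecting region $\tC^\circ_\tau$, so a compatibility must be checked at both endpoints rather than one. As in the divisor case, it suffices to consider the mirror curve of the local toric variety whose fan consists of $\si_1$ and $\si_2$ only, with all twisted-sector parameters set to zero; the extension to the full curve is by parallel transport and has no effect on which components are hit in $\tC^\circ_\tau$.

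First, I would reduce to the case $\chi_1=\be_1\in M'_{\si_1}$ and $\chi_2=\be_2\in M'_{\si_2}$ are the identity elements. Indeed, $M'_{\si_i}$ acts on $\tC_{\si_i}$ and, via the projection $M'_{\si_i}\to M'_\tau$ in \eqref{eqn:CurveAdjProj}, this action is compatible with the $M'_\tau$-action permuting the connected components of $\tC^\circ_\tau$. Thus the condition that $\chi_1$ and $\chi_2$ have the same image in $M'_\tau$ reduces the lemma to the identity case.

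Next, in the identity case, I would write the mirror curve equations for flags $\bff_1$ and $\bff_2$ in their respective local coordinates $(X,Y)$ and $(X',Y')$, together with the change-of-coordinates formulas (of the form used in the proof of the divisor lemma, with $Y=X'^{\fr'/\fm}Y'^{-\fs'/\fm}$ for appropriate exponents determined by $b_{k_1}$ and $b_{k_2}$). By \eqref{eqn:cycle-curve}, both endpoints of $\gamma'_{\be_1}$ lie in the set $\{Y^\fm=-1\}$ on the components with $\Im(\log Y)\sim \mp\pi/\fm$, i.e.\ the identity component of $\tC^\circ_\tau$ and its $+1$-shift under the orientation fixed by $\bff_1$. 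The same description applies to $\gamma'_{\be_2}$ using $(X',Y')$, and then a direct computation of $\Im(\log Y)$ along the two endpoints of $\gamma'_{\be_2}$ using the change of coordinates yields the same two components.

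The main subtlety is an orientation bookkeeping: the flags $\bff_1=(\tau,\si_1)$ and $\bff_2=(\tau,\si_2)$ induce opposite orientations on the 1-lattice $M'_\tau$, so the ``$+1$-shift'' from the two perspectives differs by a sign. This means the two endpoints of $\gamma'^\circ_{\si_2}$ may match those of $\gamma'^\circ_{\si_1}$ in swapped order, but as an unordered pair they still pick out the same two components of $\tC^\circ_\tau$, which is exactly what is needed for the patching. Once this sign consistency is verified, the lemma follows; the general case with arbitrary $\chi_1,\chi_2$ is recovered by acting by the common image in $M'_\tau$. See Figure \ref{fig:divisor-cycle} for the analogous picture in the divisor case.
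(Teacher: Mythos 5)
Your overall structure matches the paper's: reduce to the identity twisted polytope via the $M'_\tau$-action, write the local mirror curve equations for the two flags, read off the endpoint positions from \eqref{eqn:cycle-curve} as $\Im(\log Y)\sim\pm\pi/\fm$ and $\Im(\log Y')\sim\pm\pi/\fm$, and then compare. The reduction step and the endpoint computation are both correct.

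The gap is in the change of coordinates. You propose a divisor-case-style formula $Y=X'^{\fr'/\fm}Y'^{-\fs'/\fm}$, but that formula is derived for two flags of the form $(\tau_{i+1},\si_i)$ and $(\tau_{i+2},\si_{i+1})$, which share the divisor index $j$ (so $e_3^{\bff_i}=e_3^{\bff_{i+1}}=b_j$) while having different $\tau$'s. In the present curve case the two flags $\bff_1=(\tau,\si_1)$ and $\bff_2=(\tau,\si_2)$ share the \emph{same} $\tau$, but the counterclockwise ordering on the two triangles traverses the shared edge $\{j_1,j_2\}$ in opposite directions, so $(i_2^{\bff_2},i_3^{\bff_2})$ is $(i_3^{\bff_1},i_2^{\bff_1})$. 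Unwinding $e_2^{\bff}=\frac{1}{\fm_\tau}(b_{i_2^{\bff}}-b_{i_3^{\bff}})$ gives $e_2^{\bff_2}=-e_2^{\bff_1}$, and since $a_{j_1}^{\si_i}=a_{j_2}^{\si_i}=1$ (as $j_1,j_2\in I'_{\si_i}$), the $q$-monomial factors in $Y_\bff$ cancel identically; the upshot is that the change of variables is exactly $Y'=Y^{-1}$, i.e.\ $y'=-y$, with no $X'$-dependence at all. This is precisely what the paper uses, and it is not of the form you wrote. Your ``direct computation of $\Im(\log Y)$ using the change of coordinates'' is deferred, and with the proposed formula it would not close: you would need to control $\Im(\log X')$ at the tube boundary, which is not what happens.

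Worth noting: your ``main subtlety'' paragraph about the opposite orientations on $M'_\tau$ is exactly the content of $y'=-y$ -- the sign flip on $e_2^\bff$ \emph{is} the change of variables, not an extra bookkeeping step on top of a divisor-type transformation. If you replace your proposed change-of-coordinates formula with $y'=-y$ and drop the extra layer, the rest of your argument goes through and coincides with the paper's proof.
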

\begin{proof}
We write the mirror curve equation in $X,Y$ for the flag $\bff_1=(\tau, \si_1)$
\[
    H_{\bff_1}=1+X^\fr Y^{-\fs} + Y^\fm + q X^{-\bar \fr} Y^{\bar \fs},
\]
while in $X',Y'$ for the flag $\bff_2=(\tau, \si_2)$
\[
    H_{\bff_{2}}=Y'^{\fm}+q'X'^{-\bar\fr'} Y'^{\bar \fs'} +1 + X^{\fr'} Y'^{-\fs'}.
\]
We have the change of variables $y'=-y$. When both $\chi_i$ are the identity $\be \in M_{\si_i}$, the end points of the path $\gamma'^\circ_{\si_1}$ are given by \eqref{eqn:cycle-curve} as
\[
    \Im(\log Y) \sim \pm \frac{\pi}{\fm},
\]
while the end points of the path $\gamma'^\circ_{\si_2}$ are given by 
\[
    \Im(\log Y') \sim \pm \frac{\pi}{\fm},
\]
which are in the same components under the transformation $y'=-y$. The general case follows from that $M'_{\tau}$ permutes connected components of $\tC_{\tau}^\circ$.
\end{proof}

\begin{figure}[h]
    \begin{tikzpicture}
        
        \foreach \y in {1.5,1.75,2}  \draw (1,\y) ellipse [x radius=0.05, y radius=0.1];
        \foreach \y in {0.25,0.5}  \draw (1,\y) ellipse [x radius=0.05, y radius=0.1];
        \draw (2.5,2.5) ellipse [x radius=0.05, y radius=0.1];
    
        \draw (2,3) node {$C^\circ_{\si_1}$};
        \draw (-1,3) node {$C^\circ_{\tau}$};
        \draw (-4,3) node {$C^\circ_{\si_{2}}$};
    
        \draw (1,1.4) .. controls (1.2,1.2) and (1.2,0.9) ..  (1, 0.6);
        \draw (1,2.1) .. controls (1.5,2.1) and (2,2.3) ..  (2.48, 2.59);
        \draw (1.03,0.17) .. controls (1.5,1.1) and (2,1.8) ..  (2.52, 2.41);
    
        \draw (1,0.35) .. controls (1.02,0.37) and (1.02,0.38) ..  (1, 0.4);
        \foreach \y in {0,0.25 }\draw (1,1.6+\y) .. controls (1.02,1.62+\y) and (1.02,1.63+\y) ..  (1, 1.65+\y);
    
        %\draw[magenta] (1.05,1.75) .. controls (1.5,1.7) and (1.5,1.5) .. node[right] {\tiny $\gamma^\circ_{\bff_i}$} (1.05,0.5);
        \draw[cyan] (1.05,1.55) .. controls (1.5,1.7) and (1.5,1.79) .. node[right] {\tiny $\gamma'^\circ_{\si_1}$} (0.95,1.79);

        %-----------
        \begin{scope}[xscale=-1,xshift=2cm]
    
            \foreach \y in {1.5,1.75,2}  \draw (1,\y) ellipse [x radius=0.05, y radius=0.1];
            
            \draw (2.5,2.5) ellipse [x radius=0.05, y radius=0.1];

            \draw (1,1.4) .. controls (1.1,1.3) and (1.2,1.3) ..  (1.98, 0.16);
            \draw (1,2.1) .. controls (1.5,2.1) and (2,2.3) ..  (2.48, 2.59);
            \draw (2.01,0.35) .. controls (1.5,1.1) and (2,1.8) ..  (2.52, 2.41);    
        
            %\draw (1,0.35) .. controls (1.02,0.37) and (1.02,0.38) ..  (1, 0.4);
            \foreach \y in {0,0.25 }\draw (1,1.6+\y) .. controls (1.02,1.62+\y) and (1.02,1.63+\y) ..  (1, 1.65+\y);

            \draw[cyan] (1.05,1.55) .. controls (1.5,1.7) and (1.5,1.79) .. node[left] {\tiny $\gamma'^\circ_{\si_2}$} (0.95,1.79);

            %\draw[magenta] (1.05,1.75) .. controls (1.5,1.6) and (1.5,1.4) .. node[above] {\tiny $\gamma_{\bff_{i+1}}^\circ$} (1.96,0.3);
        
        \end{scope}
        \foreach \y in {0.25}  \draw (-4,\y) ellipse [x radius=0.05, y radius=0.1];

        %-------------
        \foreach \y in {1.5,1.75,2}  \draw (0,\y) ellipse [x radius=0.05, y radius=0.1];
        \foreach \y in {1.5,1.75,2}  \draw (-2,\y) ellipse [x radius=0.05, y radius=0.1];
        \foreach \y in {1.5,1.75,2}  \draw (-2,\y+0.1) -- (0,\y+0.1);
        \foreach \y in {1.5,1.75,2}  \draw (-2,\y-0.1) -- (0,\y-0.1);
        \draw[cyan] (-1.95,1.8) -- (-1.5,1.8);
        \draw[cyan,dotted] (-1.5,1.8) -- (-0.7,1.8);
        \draw[cyan] (-0.7,1.8) -- (0.05,1.8);

        \draw[cyan] (-1.95,1.55) -- (-1.5,1.55);
        \draw[cyan,dotted] (-1.5,1.55) -- (-0.7,1.55);
        \draw[cyan] (-0.7,1.55) -- (0.05,1.55);

    \end{tikzpicture}
    \caption{Patching cycles for the mirror of a compact toric curve. The paths $\gamma'^\circ_{\si_1}$ and $\gamma'^\circ_{\si_{2}}$ connect to the same component of $\tC^\circ_{\tau}$. This component is homeomorphic to $\bR \times [0,1]$. Passing down to the component of $C^\circ_{\tau}$ (homeomorphic to $S^1\times [0,1]$) this may have a non-trivial winding (dotted cyan part).} 
    \label{fig:curve-cycle}.
\end{figure}

Similar to the discussion in Section \ref{sec:divisor-cycle}, for $i = 1$ (or $2$), there is an open region $U_i \subset N'_\bR$ and $\epsilon>0$ such that when $(u_1,u_2)\in U_i$,
\[
    I_{\gamma'_\uchi}= e^{-c_{\si_i}/z}\left(\int_{\gamma_{\chi_i}\subset C_{\si_i}} e^{-\hx/z} \hy d\hx +    O(\exp(-\epsilon t))\right).
\]
Writing 
$$
    I_{\gamma'_\uchi} = \sum_{\substack{\sigma \in \Sigma(3) \\ v \in \Box(\sigma)}} a_{\sigma, v}\left(\tfrac{u_1}{z}, \tfrac{u_2}{z}\right) I_{\sigma,v},
$$
we may use the explicit evaluation \eqref{eqn:pair-curve} to obtain that 
$$
    a_{\sigma_i, v}\left(\tfrac{u_1}{z}, \tfrac{u_2}{z}\right) = 2\pi\sqrt{-1} e^{-\frac{2\pi\sqrt{-1}}{z} \chi_i \big|_{\su_1 = u_1, \su_2 = u_2}} \frac{(-\chi_i)(v) e^{\pi \sqrt{-1}((w_{j_1, \sigma_i} + w_{j_2, \sigma_i})/z + c_{j_1}(v) + c_{j_2}(v))} \Gamma\left(\frac{w_{k_i, \sigma_i}}{z} + c_{k_i}(v)\right)}{(-1)^{\age(v)} |G_{\sigma_i}| \Gamma\left(1 - \frac{w_{j_1, \sigma_i}}{z} - c_{j_1}(v) \right) \Gamma\left(1 - \frac{w_{j_2, \sigma_i}}{z} - c_{j_2}(v) \right)}
$$
for $i = 1$ (or $2$), while $a_{\sigma', v} = 0$ for all other $\sigma' \neq \sigma_1$ (and $\sigma_2$). If $\uchi$ is induced by $\cL = \cO_{\cX}\left(\sum_{s = 1}^{3+\fp'} r_s \cD_s \right)$, we may also use the evaluation \eqref{eqn:integral-twisting-factor}. Comparing with \eqref{eqn:aside-result-curve}, \eqref{eqn:aside-result-curve-twist}, we have the following result.

\begin{proposition}\label{prop:CurveCycle}
We have
$$
    2\pi \sqrt{-1} I_{\gamma'_{\uchi}} = Z_{\bT'}(\cO_{\fl_{\tau}} \otimes \cL).
$$
\end{proposition}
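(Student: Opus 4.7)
The plan is to imitate the proof of Proposition \ref{prop:DivisorCycle} point-for-point, with the required estimates being simpler since at most two 3-cones are involved instead of $m$. The cycle $\gamma'_\uchi$ has been constructed and the patching validated by the lemma above, and by Proposition \ref{prop:IntSolvesPF} the integral $I_{\gamma'_\uchi}$ satisfies the GKZ system, so Lemma \ref{lem:ICoeffSoln} yields an expansion
\[
    I_{\gamma'_\uchi} = \sum_{\sigma \in \Sigma(3),\, v \in \Box(\sigma)} a_{\sigma, v}\!\left(\tfrac{u_1}{z}, \tfrac{u_2}{z}\right) I_{\sigma, v}.
\]
The goal is to identify the coefficients $a_{\sigma, v}$ with those implicit in $Z_{\bT'}(\cO_{\fl_\tau}\otimes \cL)$ via \eqref{eqn:aside-result-curve} and \eqref{eqn:aside-result-curve-twist}.

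First I would prove curve-case analogues of Lemmas \ref{lem:PolygonDirection} and \ref{lem:PolygonCorner}: for each $i \in \{1\}$ (exterior $\tau$) or $i \in \{1,2\}$ (interior $\tau$), pick an open region $U_i \subset N'_\bR$ of equivariant parameters in which $\Re(c_{\sigma_i})$ is strictly dominated by $\Re(c_{\sigma'})$ for every other $\sigma' \in \Sigma(3)$ relevant to the comparison. On $U_i$, the integral over the complement $\gamma'_\uchi \setminus \gamma'^\circ_{\sigma_i}$ is exponentially small in the scaling parameter $t$, while via the coordinate comparison \eqref{eqn:XDiffCond} the integral over $\gamma'^\circ_{\sigma_i}$ agrees with the affine-chart integral computed in \eqref{eqn:pair-curve} up to the tropical factor $e^{-c_{\sigma_i}/z}$ and an $O(\exp(-\epsilon t))$ error. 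Combining these asymptotics with \eqref{eqn:IsvLeading} forces $a_{\sigma', v} = 0$ for all $\sigma' \notin \{\sigma_1, \sigma_2\}$ (respectively $\{\sigma_1\}$): otherwise $a_{\sigma', v} I_{\sigma', v}$ would dominate on $U_i$ for a suitable $i$ with $\Re(c_{\sigma'}) < \Re(c_{\sigma_i})$, contradicting the estimate. The remaining coefficients $a_{\sigma_i, v}$ are then read off directly from \eqref{eqn:pair-curve}; inserting the twisting factors \eqref{eqn:integral-twisting-factor} and comparing with \eqref{eqn:aside-result-curve}--\eqref{eqn:aside-result-curve-twist} yields the stated identity.

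The main obstacle is verifying the required tropical domination when $\tau$ is exterior and $\sigma_1$ is the only adjacent 3-cone. One must exhibit an open region $U_1 \subset N'_\bR$ in which $\Re(c_{\sigma_1}) < \Re(c_{\sigma'})$ holds simultaneously for every other $\sigma' \in \Sigma(3)$; this relies on the semi-projectivity of $\cX$ together with the observation that the non-compact direction of $\fl_\tau$ corresponds to a cocharacter along which $\sigma_1$ achieves minimal tropical distance relative to all other cones. In the interior case the analogous statement reduces to convex-position reasoning on the finite set $\{\Re(c_\sigma)\}$ entirely parallel to the polygonal argument used in Proposition \ref{prop:DivisorCycle}.
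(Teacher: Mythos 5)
Your outline matches the paper's proof exactly: expand $I_{\gamma'_\uchi}$ in the $I_{\sigma,v}$ basis via Proposition~\ref{prop:IntSolvesPF} and Lemma~\ref{lem:ICoeffSoln}, localize the integral to a chart to read off the dominant coefficients from \eqref{eqn:pair-curve}, kill the remaining coefficients by a domination argument, and compare with \eqref{eqn:aside-result-curve}--\eqref{eqn:aside-result-curve-twist}. The argument you give in the middle paragraph (``dominate on $U_i$ for a suitable $i$ with $\Re(c_{\sigma'})<\Re(c_{\sigma_i})$'') is the right logic, and is what the paper does.

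The ``main obstacle'' you flag in the final paragraph is not an obstacle, and the condition you propose there is both unnecessary and, in general, false; had you tried to prove it you would likely have gotten stuck. You do \emph{not} need a single open $U_1\subset N'_{\bR}$ on which $\Re(c_{\si_1})<\Re(c_{\si'})$ for \emph{every} other $\si'\in\Si(3)$. Look at how the divisor case is actually closed: the region $U_i$ is only required to validate the estimate $I_{\gamma_\uchi}=e^{-c_{\si_i}/z}(\cdots+O(e^{-\epsilon t}))$, i.e.\ to make $\Re(c_{\si_i})$ minimal among the 3-cones the cycle actually passes through (Lemmas~\ref{lem:PolygonDirection}--\ref{lem:PolygonCorner}); then, \emph{separately for each} $\si'$ outside this set, one picks a sub-open subset of $U_i$ where in addition $\Re(c_{\si'})<\Re(c_{\si_i})$, and a nonzero $a_{\si',v}$ would produce a dominating term on that sub-open set. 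The sub-open set is allowed to depend on $\si'$; no uniform minimality of $c_{\si_1}$ is invoked. In the exterior case this is especially easy, not hard: $\gamma'_\uchi=s^{-1}_{\bff_1}(\gamma'_{\chi_1})$ lies entirely in $\tC^\circ_{\si_1}$, so the patching estimate imposes essentially no constraint on $(u_1,u_2)$, and $U_1$ may be taken to be almost all of $N'_{\bR}$; for any $\si'\neq\si_1$ the functionals $c_{\si'},c_{\si_1}\in M'_{\bR}$ are distinct (distinct tropical vertices), so $\{\Re(c_{\si'})<\Re(c_{\si_1})\}$ is a nonempty open half-space, and you are done. For the interior case the only tropical input needed is that no other $c_{\si'}$ lies on the open segment joining $c_{\si_1}$ and $c_{\si_2}$ -- the one-dimensional version of ``no $\Re(c_\si)$ in the interior of the convex spline/polygon'' used in Proposition~\ref{prop:DivisorCycle} -- which is guaranteed by the fact that an interior 2-cone of $\Si$ corresponds to a bounded edge of the tropical curve dual to the triangulation of $P$ and hence contains no other vertex.
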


\begin{comment}

\color{lightgray}

\subsection{Intersection pairings}
Involves relative Borel-Moore homology groups...

Alternatively, can follow \cite{Iritani09} by first developing the B-model VMHS of differential forms and then studying the pairing via the (dual) pairing of oscillatory integrals

\color{black}

\end{comment}

% !TEX root = descendantBKMP.tex

\section{Genus-zero descendant mirror symmetry and integral structures}\label{sect:GenusZero}
In this section, we combine the ingredients from the previous sections into a mirror theorem for genus-zero equivariant descendant invariants of $\cX$. In the context of all-genus descendant mirror symmetry in Section \ref{sec:allgenus}, this will be used to treat the descendant leaf terms in the graph sums. We further discuss implications in the non-equivariant and compact support cases. In particular, we draw conclusions for the Hori-Vafa mirror of $\cX$ and resolve a conjecture of Hosono \cite{Hosono06}.

\subsection{Genus-zero descendant mirror theorem}\label{sect:EquivMir}
In Section \ref{sect:Sheaves}, we considered central charges of $\bT'$-equivariant sheaves with bounded below/above support. Recall that the $K$-group $K^+_{\bT'}(\cX)$ is a $K_{\bT'}(\cX)$-module and inherits the action of $M'$. We further evaluated the central charges on the class of generators \eqref{eqn:KGroupGenerator} of $K^+_{\bT'}(\cX)$ consisting of twists of structure sheaves of toric divisors and curves.

On the other hand, in Section \ref{sect:Integrals}, we considered central charges of relative cycles in the equivariant mirror curve $\tC_q$ defined by oscillatory integrals. The group $H_1(\tC_q, \Re (\hx) \gg 0; \bZ)$ of such cycles also admits an action of $M'$ via the deck transformation of the covering $p: \tC_q \to C_q$. Moreover, we constructed mirror cycles for the generators of $K^+_{\bT'}(\cX)$ above and evaluated the integrals on these cycles. Our construction and computation lead to the following result.

\begin{theorem}\label{thm:EquivMir}
There is a unique homomorphism
$$
    \mir^+_{\bT'}: K^+_{\bT'}(\cX) \to H_1(\tC_q, \Re (\hx) \gg 0; \bZ)
$$
such that for any $\cE \in K^+_{\bT'}(\cX)$, we have
\begin{equation}\label{eqn:EquivMir}
    2\pi\sqrt{-1}\int_{\mir^+_{\bT'}(\cE)} e^{-\hx/z} \hy d\hx = Z_{\bT'}(\cE) % \double{\frac{\kappa(\cE)}{z + \hpsi}}^{\cX, \bT'}_{0,1}
\end{equation}
under the mirror map $\btau=\btau(q)$. Moreover, $\mir^+_{\bT'}$ is $M'$-equivariant and is an isomorphism.
\end{theorem}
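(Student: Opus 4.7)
The strategy is to construct $\mir^+_{\bT'}$ on a generating set of $K^+_{\bT'}(\cX)$, promote it to a homomorphism using injectivity of central charges, and then deduce the isomorphism property. \textbf{Uniqueness} is immediate from Proposition \ref{prop:IntegralInjective}: once one element $\gamma \in H_1(\tC_q, \Re(\hx) \gg 0; \bZ)$ realizes $2\pi\sqrt{-1}\, I_\gamma = Z_{\bT'}(\cE)$, it is determined by $\cE$.

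For \textbf{existence}, I invoke Lemma \ref{lem:KGroupGenerator2}: $K^+_{\bT'}(\cX)$ is additively generated by sheaves $\cO_{\cV(\sigma)} \otimes \cL$ with $\sigma \in \Sigma^+$ minimal (a 1- or 2-cone) and $\cL$ a $\bT'$-equivariant line bundle. On such generators I set
\begin{equation*}
    \mir^+_{\bT'}(\cO_{\cD_j} \otimes \cL) := \gamma_{\uchi}, \qquad \mir^+_{\bT'}(\cO_{\fl_\tau} \otimes \cL) := \gamma'_{\uchi},
\end{equation*}
where $\uchi$ is the twisted polytope on $\cV(\sigma)$ determined by $\cL$ and the relative cycles are constructed in Sections \ref{sec:divisor-cycle} and \ref{sec:curve-cycle}. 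Propositions \ref{prop:DivisorCycle} and \ref{prop:CurveCycle} confirm that \eqref{eqn:EquivMir} holds on these generators. To see that the assignment descends to a homomorphism, I take a relation $\sum_i n_i [\cE_i] = 0$ among generators in $K^+_{\bT'}(\cX)$ and apply $Z_{\bT'}$ to get $\sum_i n_i Z_{\bT'}(\cE_i) = 0 \in \bfS_{\bT'}$; Proposition \ref{prop:IntegralInjective} then forces $\sum_i n_i \mir^+_{\bT'}(\cE_i) = 0$ in $H_1$, so the assignment is well-defined.

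The \textbf{$M'$-equivariance} is verified on generators. Tensoring with the character corresponding to $m' = m_1 \su_1 + m_2 \su_2 \in M'$ shifts the twisted polytope $\uchi$ by $m'$, which in turn shifts the affine-chart cycles $\gamma_\chi$, $\gamma'_\chi$ in Section \ref{sec:mirror-curve-affine} by the deck transformation $-m'$ on $\tC_q$. The patching in Sections \ref{sec:divisor-cycle}--\ref{sec:curve-cycle} commutes with this action, so $\mir^+_{\bT'}$ intertwines the two $M'$-actions. Alternatively, both $Z_{\bT'}$ and $2\pi\sqrt{-1}\, I$ carry the same multiplicative character of $M'$ (multiplication by $e^{-2\pi\sqrt{-1}(m_1 u_1+m_2 u_2)/z}$), and injectivity of $I$ transfers equivariance from $\bfS_{\bT'}$ back to $H_1$.

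\textbf{Injectivity} of $\mir^+_{\bT'}$ is immediate from Proposition \ref{prop:PMCentralInjective} composed with the definition. The main obstacle is \textbf{surjectivity}. Both source and target are free $\bZ[M']$-modules: the target of $\bZ[M']$-rank $|I_\Sigma| = \sum_{\sigma \in \Sigma(3)} |G_\sigma|$ with Lefschetz-thimble basis $\{\gamma_{\tbsi}\}_{\tbsi \in \tI_\Sigma}$. My plan is to exhibit, for each $\tbsi \in \tI_\Sigma$, an explicit preimage. In the chamber $\Re(w_1), \Re(w_2) > 0$, $\Re(w_3) < 0$ for a 3-cone $\sigma \in \Sigma(3)$, the local cycle $\gamma_\chi \subset \tC_\sigma$ is homologous to the Lefschetz thimble $\gamma_{(\sigma,\chi)}$, and its global lift $\gamma_\uchi$ (built in Section \ref{sec:divisor-cycle}) is the image of a twist of the structure sheaf of a divisor $\cD_j$ with $j \in I'_\sigma$ and $\rho_j \in \Sigma^+$. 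By running through the chambers attached to all $\sigma \in \Sigma(3)$ and all $\chi \in M'_\sigma$, and matching the compact-curve cycles $\gamma'_\uchi$ with the connecting-tube contributions from \eqref{eqn:equiv-curve-decomposition}, one obtains a collection of classes in $K^+_{\bT'}(\cX)$ whose images generate $H_1(\tC_q, \Re(\hx) \gg 0; \bZ)$ as a $\bZ[M']$-module. The technical heart here is a chamber-by-chamber bookkeeping, using the wall-and-chamber correspondence between $\sv \in N'_{\bR}$ and $(u_1,u_2) \in N'_{\bR}$ announced in Section \ref{sec:main-results}, to ensure that the generators attached to minimal cones in $\Sigma^+$ cover precisely the Lefschetz thimble basis on the mirror side.
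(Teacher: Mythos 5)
Your construction of $\mir^+_{\bT'}$ on the generators of Lemma \ref{lem:KGroupGenerator2}, the argument that it descends to a well-defined homomorphism via the injectivity in Proposition \ref{prop:IntegralInjective}, and your treatments of uniqueness, $M'$-equivariance, and injectivity all match the paper's proof of Theorem \ref{thm:EquivMir}. The issue is the surjectivity step, which is both the hardest part of the theorem and the part your proposal leaves unresolved.

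Your sketch for surjectivity has two concrete gaps. First, the patched cycle $\gamma_\uchi$ from Section \ref{sec:divisor-cycle} is not a ``global lift'' of a single local cycle $\gamma_{\chi}$: it is glued from pieces $\gamma^\circ_{\bff_i}$ over \emph{all} the $3$-cones $\sigma_1,\dots,\sigma_m$ adjacent to $\cD_j$, and it generically passes through several critical points, one per $\sigma_i$. So its class in $H_1(\tC_q, \Re(\hx) \gg 0;\bZ)$ is a sum of Lefschetz thimbles with corrections, not a single thimble, and exhibiting ``an explicit preimage'' of a fixed $\gamma_{\tbsi}$ does not reduce to a single divisor twist. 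Second, the phrase ``running through the chambers attached to all $\sigma\in\Sigma(3)$'' conflicts with the fact that both $K^+_{\bT'}(\cX)$ and $H_1(\tC_q, \Re(\hx)\gg 0;\bZ)$ are defined after fixing the chamber of $\sv=(u_1,u_2)\in N'_{\bR}$. Within that fixed chamber the signs of $(\Re w_1^{\bff},\Re w_2^{\bff},\Re w_3^{\bff})$ vary with $\sigma$ (this is precisely the data of the decomposition $I'_\sigma = I'_{\sigma^+}\sqcup I'_{\sigma^-}$ in the appendix), so one cannot assume $\Re(w_1),\Re(w_2)>0,\Re(w_3)<0$ holds for every $\sigma$ simultaneously, nor move $(u_1,u_2)$ across walls, since that changes the target group.

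What the paper does instead (Lemma \ref{lem:MirSurj}) is first establish surjectivity in the affine case $\cX=[\bC^3/G_\sigma]$, where indeed one of $\{\gamma_\chi\}$ or $\{\gamma'_\chi\}$ reproduces the Lefschetz thimble basis depending on the sign pattern, and then for general $\cX$ runs a Mayer--Vietoris sequence for the decomposition \eqref{eqn:equiv-curve-decomposition}. The affine step handles the summands $H_1(\tC^\circ_\sigma, \Re(\hx)\gg 0;\bZ)$, and a separate combinatorial argument (a descending induction over $\Sigma(3)$ in the order defined by $\mu_{\bT_{f,\bR}}(\fp_\sigma)$) shows the image hits the kernel of the $H_0$ connecting map, using the explicit formulas \eqref{eqn:DivisorConnecting}, \eqref{eqn:CurveConnecting} for how $\gamma_\uchi$ and $\gamma'_\uchi$ map under $\delta$. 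To complete your proposal you would need to either reproduce this Mayer--Vietoris argument or give a genuinely different replacement for it; at present the ``chamber-by-chamber bookkeeping'' is a placeholder rather than a proof.
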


In other words, $\mir^+_{\bT'}$ identifies the two lattices embedded in $\bfS_{\bT'}$ via the respective central charges. %This may be viewed as an extension of the results for compact toric orbifolds as studied in \cite{Iritani09,Fang20}.
Recall that the definition of the $K$-group $K^+_{\bT'}(\cX)$ depends on the choice of the cocharacter $\sv \in N'$ of $\bT'$. In fact, there is a wall-and-chamber structure on $N'_{\bR}$ and the definition depends only on the chamber that $\sv$ belongs to; see Remark \ref{apx-rmk:Cocharacter} for the general situation. On the other hand, the subset $\{\Re (\hx) \gg 0\} \subset \tC_q$ depends on $(u_1, u_2)$ which may be viewed as coordinates of an element in $N'$. The same wall-and-chamber structure applies in the sense that the relative homology group depends only on the chamber that $(u_1, u_2)$ belongs to. Indeed, underlying the isomorphism $\mir^\pm_{\bT'}$ we have $\sv = (u_1, u_2)$, or equivalently $u_1 = \inner{\su_1, \sv}$, $u_2 = \inner{\su_2, \sv}$. See Figure \ref{fig:wall-chamber} for an illustration in the example $\cX = K_{\bP^2}$. The parameters $\sv = (u_1, u_2)$ can be taken to be complex and the wall-and-chamber structure governs their real parts. Equation \eqref{eqn:EquivMir} holds for $(u_1, u_2)$ in an open region in $\bC^2$.

\begin{proof}
For a generator $\cE = \cO_{\cV(\sigma)} \otimes \cL$ of $K^+_{\bT'}(\cX)$ where $\cV(\sigma)$ is a toric divisor or curve and $\cL$ is a $\bT'$-equivariant line bundle, Propositions \ref{prop:DivisorCycle}, \ref{prop:CurveCycle} provide a cycle $\mir^+_{\bT'}(\cE) \in H_1(\tC_q, \Re (\hx) \gg 0; \bZ)$ such that \eqref{eqn:EquivMir} is satisfied. By Proposition \ref{prop:IntegralInjective}, equation \eqref{eqn:EquivMir} in fact uniquely characterizes the cycle. Extending the definition linearly gives the homomorphism $\mir^+_{\bT'}$.

Equation \eqref{eqn:EquivMir} further implies that $\mir^+_{\bT'}$ is $M'$-equivariant. The injectivity of $\mir^+_{\bT'}$ follows from the injectivity of the $\bT'$-equivariant central charge $K^+_{\bT'}(\cX) \to \bfS_{\bT'}$ which is a consequence of Proposition \ref{prop:PMCentralInjective}. The surjectivity will be treated in Lemma \ref{lem:MirSurj} below.
\end{proof}

\begin{remark}\label{rmk:MinusCase}\rm{
Analogously, there is an isomorphism
$$
    \mir^-_{\bT'}: K^-_{\bT'}(\cX) \to H_1(\tC_q, \Re (\hx) \ll 0; \bZ)
$$
satisfying similar properties. This also applies to Corollary \ref{cor:NonEquivMir} below in the non-equivariant case and Theorem \ref{thm:All-genus-mirror} below in higher genus.
}\end{remark}

\begin{figure}
    \begin{tikzpicture}
        % TCY side, range = [-7,-1]
        \draw (-6.4,2.4) -- (-1.6,-2.4);
        \draw (-7,-1.5) -- (-1,1.5);
        \draw (-5.5,-3) -- (-2.5,3);

        % coordinates S_i are for supports, top-right chamber has i=1, counterclockwise

        \coordinate (S1) at (-1.2,2.8);

        \draw[draw = blue, fill = blue!15] (S1) -- ($(S1)+(0,-0.5)$) -- ($(S1)+(-0.5,0)$) -- (S1);
        \draw[blue] (S1) -- ($(S1)+(0.5,0.5)$);
        \draw[dashed] ($(S1)+(0,-0.5)$) -- ($(S1)+(0.3,-1.1)$);
        \draw[dashed] ($(S1)+(-0.5,0)$) -- ($(S1)+(-1.1,0.3)$);

        \coordinate (S2) at (-4.4,2.8);

        \draw[draw = blue!15, fill = blue!15] (S2) -- ($(S2)+(-0.5,0)$) -- ($(S2)+(-1.1,0.3)$) -- ($(S2)+(0.5,0.5)$) -- (S2);

        \draw[draw = blue, fill = blue!15] (S2) -- ($(S2)+(0,-0.5)$) -- ($(S2)+(-0.5,0)$) -- (S2);
        \draw[blue] (S2) -- ($(S2)+(0.5,0.5)$);
        \draw[dashed] ($(S2)+(0,-0.5)$) -- ($(S2)+(0.3,-1.1)$);
        \draw[blue] ($(S2)+(-0.5,0)$) -- ($(S2)+(-1.1,0.3)$);

        \coordinate (S3) at (-6.3,0.7);

        \draw[draw = blue, fill = blue!15] (S3) -- ($(S3)+(0,-0.5)$) -- ($(S3)+(-0.5,0)$) -- (S3);
        \draw[dashed] (S3) -- ($(S3)+(0.5,0.5)$);
        \draw[dashed] ($(S3)+(0,-0.5)$) -- ($(S3)+(0.3,-1.1)$);
        \draw[blue] ($(S3)+(-0.5,0)$) -- ($(S3)+(-1.1,0.3)$);

        \coordinate (S4) at (-6.3,-2.3);

        \draw[draw = blue!15, fill = blue!15] ($(S4)+(0,-0.5)$) -- ($(S4)+(-0.5,0)$) -- ($(S4)+(-1.1,0.3)$) -- ($(S4)+(0.3,-1.1)$) -- ($(S4)+(0,-0.5)$);

        \draw[draw = blue, fill = blue!15] (S4) -- ($(S4)+(0,-0.5)$) -- ($(S4)+(-0.5,0)$) -- (S4);
        \draw[dashed] (S4) -- ($(S4)+(0.5,0.5)$);
        \draw[blue] ($(S4)+(0,-0.5)$) -- ($(S4)+(0.3,-1.1)$);
        \draw[blue] ($(S4)+(-0.5,0)$) -- ($(S4)+(-1.1,0.3)$);

        \coordinate (S5) at (-3.3,-2.3);

        \draw[draw = blue, fill = blue!15] (S5) -- ($(S5)+(0,-0.5)$) -- ($(S5)+(-0.5,0)$) -- (S5);
        \draw[dashed] (S5) -- ($(S5)+(0.5,0.5)$);
        \draw[blue] ($(S5)+(0,-0.5)$) -- ($(S5)+(0.3,-1.1)$);
        \draw[dashed] ($(S5)+(-0.5,0)$) -- ($(S5)+(-1.1,0.3)$);

        \coordinate (S6) at (-1.2,-0.4);

        \draw[draw = blue!15, fill = blue!15] (S6) -- ($(S6)+(0,-0.5)$) -- ($(S6)+(0.3,-1.1)$) -- ($(S6)+(0.5,0.5)$) -- (S6);

        \draw[draw = blue, fill = blue!15] (S6) -- ($(S6)+(0,-0.5)$) -- ($(S6)+(-0.5,0)$) -- (S6);
        \draw[blue] (S6) -- ($(S6)+(0.5,0.5)$);
        \draw[blue] ($(S6)+(0,-0.5)$) -- ($(S6)+(0.3,-1.1)$);
        \draw[dashed] ($(S6)+(-0.5,0)$) -- ($(S6)+(-1.1,0.3)$);

        \draw[->, blue] (-4,0) -- (-3,0.1) node[right] {$\sv$};

        \node at (0,0) {$\longleftrightarrow$};

        \draw[->, orange] (4,0) -- (5,0.1) node[right] {\small $(u_1, u_2)$};

        % curve side, range = [1,7]
        \draw (6.4,-2.4) -- (1.6,2.4);
        \draw (7,1.5) -- (1,-1.5);
        \draw (5.5,3) -- (2.5,-3);

        % coordinates C_i are for curves
    
        \coordinate (C1) at (6.8,2.8);

        \draw ($(C1)+(-0.9,0.3)$) .. controls ($(C1)+(-0.5,0.1)$) and ($(C1)+(0,0.1)$) .. ($(C1)+(0.33,0.47)$);
        \draw ($(C1)+(0.3,-0.9)$) .. controls ($(C1)+(0.1,-0.5)$) and ($(C1)+(0.1,0)$) .. ($(C1)+(0.47,0.33)$);
        \draw ($(C1)+(-0.9,0.1)$) .. controls ($(C1)+(-0.5,-0.1)$) and ($(C1)+(-0.1,-0.5)$) .. ($(C1)+(0.1,-0.9)$);
        \draw ($(C1)+(-0.15,-0.1)$) ellipse [x radius=0.1, y radius=0.05];

        \draw ($(C1)+(-0.9,0.2)$) ellipse [x radius=0.05, y radius=0.1];
        \draw ($(C1)+(0.2,-0.9)$) ellipse [x radius=0.1, y radius=0.05];
        % \draw ($(C1)+(0.4,0.4)$) ellipse [x radius=0.1, y radius=0.05];
        \draw[rotate=-45, draw=orange, very thick] ($(C1)+(-0.003,0.565)$) ellipse [x radius=0.1, y radius=0.05];

        \coordinate (C2) at (3.6,2.8);

        \draw ($(C2)+(-0.9,0.3)$) .. controls ($(C2)+(-0.5,0.1)$) and ($(C2)+(0,0.1)$) .. ($(C2)+(0.33,0.47)$);
        \draw ($(C2)+(0.3,-0.9)$) .. controls ($(C2)+(0.1,-0.5)$) and ($(C2)+(0.1,0)$) .. ($(C2)+(0.47,0.33)$);
        \draw ($(C2)+(-0.9,0.1)$) .. controls ($(C2)+(-0.5,-0.1)$) and ($(C2)+(-0.1,-0.5)$) .. ($(C2)+(0.1,-0.9)$);
        \draw ($(C2)+(-0.15,-0.1)$) ellipse [x radius=0.1, y radius=0.05];

        \draw[draw=orange, very thick] ($(C2)+(-0.9,0.2)$) ellipse [x radius=0.05, y radius=0.1];
        \draw ($(C2)+(0.2,-0.9)$) ellipse [x radius=0.1, y radius=0.05];
        % \draw ($(C2)+(0.4,0.4)$) ellipse [x radius=0.1, y radius=0.05];
        \draw[rotate=-45, draw=orange, very thick] ($(C2)+(-0.003,0.565)$) ellipse [x radius=0.1, y radius=0.05];

        \coordinate (C3) at (1.7,0.7);

        \draw ($(C3)+(-0.9,0.3)$) .. controls ($(C3)+(-0.5,0.1)$) and ($(C3)+(0,0.1)$) .. ($(C3)+(0.33,0.47)$);
        \draw ($(C3)+(0.3,-0.9)$) .. controls ($(C3)+(0.1,-0.5)$) and ($(C3)+(0.1,0)$) .. ($(C3)+(0.47,0.33)$);
        \draw ($(C3)+(-0.9,0.1)$) .. controls ($(C3)+(-0.5,-0.1)$) and ($(C3)+(-0.1,-0.5)$) .. ($(C3)+(0.1,-0.9)$);
        \draw ($(C3)+(-0.15,-0.1)$) ellipse [x radius=0.1, y radius=0.05];

        \draw[draw=orange, very thick] ($(C3)+(-0.9,0.2)$) ellipse [x radius=0.05, y radius=0.1];
        \draw ($(C3)+(0.2,-0.9)$) ellipse [x radius=0.1, y radius=0.05];
        % \draw ($(C3)+(0.4,0.4)$) ellipse [x radius=0.1, y radius=0.05];
        \draw[rotate=-45] ($(C3)+(-0.003,0.565)$) ellipse [x radius=0.1, y radius=0.05];

        \coordinate (C4) at (1.7,-2.3);

        \draw ($(C4)+(-0.9,0.3)$) .. controls ($(C4)+(-0.5,0.1)$) and ($(C4)+(0,0.1)$) .. ($(C4)+(0.33,0.47)$);
        \draw ($(C4)+(0.3,-0.9)$) .. controls ($(C4)+(0.1,-0.5)$) and ($(C4)+(0.1,0)$) .. ($(C4)+(0.47,0.33)$);
        \draw ($(C4)+(-0.9,0.1)$) .. controls ($(C4)+(-0.5,-0.1)$) and ($(C4)+(-0.1,-0.5)$) .. ($(C4)+(0.1,-0.9)$);
        \draw ($(C4)+(-0.15,-0.1)$) ellipse [x radius=0.1, y radius=0.05];

        \draw[draw=orange, very thick] ($(C4)+(-0.9,0.2)$) ellipse [x radius=0.05, y radius=0.1];
        \draw[draw=orange, very thick] ($(C4)+(0.2,-0.9)$) ellipse [x radius=0.1, y radius=0.05];
        % \draw ($(C4)+(0.4,0.4)$) ellipse [x radius=0.1, y radius=0.05];
        \draw[rotate=-45] ($(C4)+(-0.003,0.565)$) ellipse [x radius=0.1, y radius=0.05];

        \coordinate (C5) at (4.7,-2.3);

        \draw ($(C5)+(-0.9,0.3)$) .. controls ($(C5)+(-0.5,0.1)$) and ($(C5)+(0,0.1)$) .. ($(C5)+(0.33,0.47)$);
        \draw ($(C5)+(0.3,-0.9)$) .. controls ($(C5)+(0.1,-0.5)$) and ($(C5)+(0.1,0)$) .. ($(C5)+(0.47,0.33)$);
        \draw ($(C5)+(-0.9,0.1)$) .. controls ($(C5)+(-0.5,-0.1)$) and ($(C5)+(-0.1,-0.5)$) .. ($(C5)+(0.1,-0.9)$);
        \draw ($(C5)+(-0.15,-0.1)$) ellipse [x radius=0.1, y radius=0.05];

        \draw ($(C5)+(-0.9,0.2)$) ellipse [x radius=0.05, y radius=0.1];
        \draw[draw=orange, very thick] ($(C5)+(0.2,-0.9)$) ellipse [x radius=0.1, y radius=0.05];
        % \draw ($(C5)+(0.4,0.4)$) ellipse [x radius=0.1, y radius=0.05];
        \draw[rotate=-45] ($(C5)+(-0.003,0.565)$) ellipse [x radius=0.1, y radius=0.05];

        \coordinate (C6) at (6.8,-0.4);

        \draw ($(C6)+(-0.9,0.3)$) .. controls ($(C6)+(-0.5,0.1)$) and ($(C6)+(0,0.1)$) .. ($(C6)+(0.33,0.47)$);
        \draw ($(C6)+(0.3,-0.9)$) .. controls ($(C6)+(0.1,-0.5)$) and ($(C6)+(0.1,0)$) .. ($(C6)+(0.47,0.33)$);
        \draw ($(C6)+(-0.9,0.1)$) .. controls ($(C6)+(-0.5,-0.1)$) and ($(C6)+(-0.1,-0.5)$) .. ($(C6)+(0.1,-0.9)$);
        \draw ($(C6)+(-0.15,-0.1)$) ellipse [x radius=0.1, y radius=0.05];

        \draw ($(C6)+(-0.9,0.2)$) ellipse [x radius=0.05, y radius=0.1];
        \draw[draw=orange, very thick] ($(C6)+(0.2,-0.9)$) ellipse [x radius=0.1, y radius=0.05];
        % \draw ($(C6)+(0.4,0.4)$) ellipse [x radius=0.1, y radius=0.05];
        \draw[rotate=-45, draw=orange, very thick] ($(C6)+(-0.003,0.565)$) ellipse [x radius=0.1, y radius=0.05];

    \end{tikzpicture}
    \caption{Wall-and-chamber structure on $N'_{\bR}$ for $\cX = K_{\bP^2}$. The walls are dual to the images of the three non-compact toric curves under the moment map $\mu_{\bT'_{\bR}}: \cX \to M'_{\bR}$. The blue parts on the left illustrate the images of $\cX^+$ under $\mu_{\bT'_{\bR}}$ for the different chambers. The orange parts on the right highlight the punctures (on $C_q$) contained in $\{\Re (\hx) \gg 0\}$.}
    \label{fig:wall-chamber}
\end{figure}
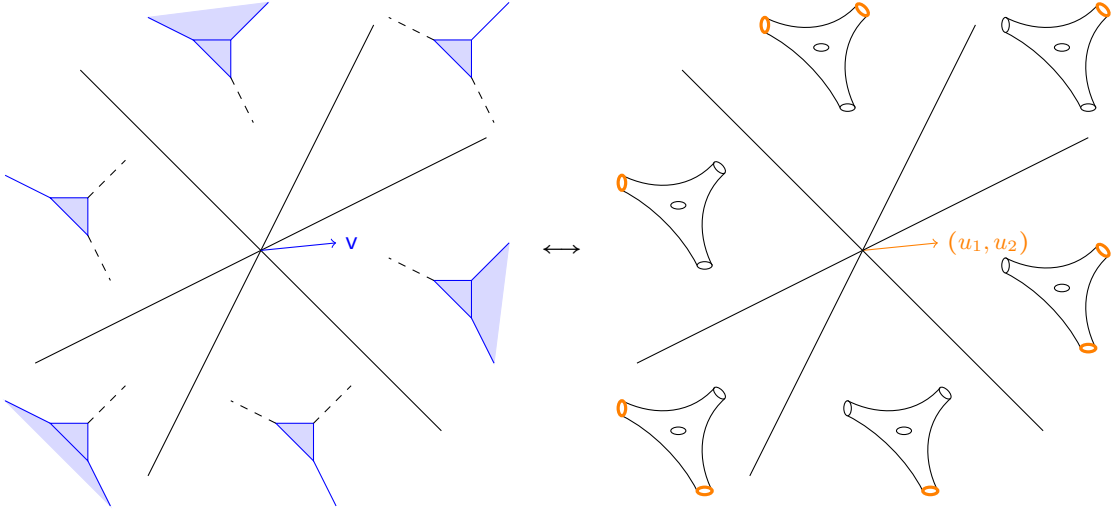

\subsection{Non-equivariant case}
We may take the ``non-equivariant limit'' of the map $\mir^+_{\bT'}$ constructed in Theorem \ref{thm:EquivMir} and obtain the following statement.

\begin{corollary}\label{cor:NonEquivMir}
There is a unique homomorphism
$$
    \mir^+: K^+(\cX) \to H_1(C_q, \Re (\hx) \gg 0; \bZ)
$$
that fits into the commutative diagram
$$
    \xymatrix{
        K^+_{\bT'}(\cX) \ar[r]^-{\mir^+_{\bT'}} \ar[d] & H_1(\tC_q, \Re (\hx) \gg 0; \bZ) \ar[d]^{p_*} \\
        K^+(\cX) \ar[r]^-{\mir^+} & H_1(C_q, \Re (\hx) \gg 0; \bZ).
    }
$$
Moreover, $\mir^+$ is an isomorphism.
\end{corollary}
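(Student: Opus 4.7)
The plan is to obtain $\mir^+$ by descending the composition $p_* \circ \mir^+_{\bT'}$ along the forgetful map $K^+_{\bT'}(\cX) \to K^+(\cX)$, and then to deduce the isomorphism property from the corresponding property of $\mir^+_{\bT'}$ together with a matching of both sides as $M'$-coinvariants.

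First, I would verify that the forgetful map $K^+_{\bT'}(\cX) \to K^+(\cX)$ is surjective: by Lemma \ref{lem:KGroupGenerator2}, $K^+(\cX)$ is generated by classes $\cO_{\cV(\sigma)} \otimes \cL$ with $\sigma \in \Sigma^+$ minimal and $\cL \in \Pic(\cX)$, and since $\Pic_{\bT'}(\cX) \to \Pic(\cX)$ is surjective (as one can see from diagram \eqref{eqn:TPrimeDiagram}), each such generator admits an equivariant lift. This already gives uniqueness of $\mir^+$ if it exists. For existence, note that $\mir^+_{\bT'}$ is $M'$-equivariant by Theorem \ref{thm:EquivMir} and that $p_*$ is invariant under the deck action of $M'$; hence $p_*\circ \mir^+_{\bT'}$ kills every element of the form $(\chi-1)\cdot \cE$ for $\chi\in M'$, $\cE\in K^+_{\bT'}(\cX)$, and these are in the kernel of the forgetful map. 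The technical point will be that these elements generate the whole kernel, which I address next.

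The key observation is that both $H_1(\tC_q,\Re(\hx)\gg 0;\bZ)$ and $K^+_{\bT'}(\cX)$ are free $\bZ[M']$-modules of rank $|I_\Sigma|$. On the mirror-curve side this is concrete: the Lefschetz thimbles $\{\gamma_{(\sigma,\chi)}\}_{(\sigma,\chi)\in \tI_\Sigma}$ form a $\bZ$-basis, and the deck action permutes freely the thimbles sitting above a fixed $\bsi\in I_\Sigma$. Consequently $p_*$ identifies $H_1(C_q,\Re(\hx)\gg 0;\bZ)$ with the $M'$-coinvariants $H_1(\tC_q,\Re(\hx)\gg 0;\bZ)_{M'}$, a free $\bZ$-module on the basis $\{\gamma_\bsi\}_{\bsi\in I_\Sigma}$. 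Transporting this structure through the $M'$-equivariant isomorphism $\mir^+_{\bT'}$ shows that $K^+_{\bT'}(\cX)$ is itself a free $\bZ[M']$-module of rank $|I_\Sigma|$, and the submodule generated by $\{(\chi-1)\cE\}$ coincides with $I_{\mathrm{aug}}\cdot K^+_{\bT'}(\cX)$, whose quotient is a free $\bZ$-module of rank $|I_\Sigma|$.

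Finally, to identify this quotient with $K^+(\cX)$, I would compare generators on both sides and produce a natural surjective homomorphism $K^+_{\bT'}(\cX)_{M'} \twoheadrightarrow K^+(\cX)$ that sends $[\cO_{\cV(\sigma)}\otimes \tcL]$ to $[\cO_{\cV(\sigma)}\otimes \cL]$; this is well-defined precisely because any two $\bT'$-equivariant lifts of a given line bundle differ by a character twist, which is killed in the coinvariants. The remaining step is to show this surjection is an isomorphism, which I expect to be the main obstacle: one way is to use the Stanley-Reisner-type presentation of $K^+(\cX)$ developed in Appendix \ref{appdx:Bounded} to check it is a free $\bZ$-module of the same rank $|I_\Sigma|$; an alternative is to lift the argument for Proposition \ref{prop:PMCentralInjective} to the non-equivariant limit on an $M'$-dual basis (along the lines of Lemma \ref{lem:NECentralInjective}) and conclude by comparison of ranks. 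Once this identification is in hand, $\mir^+$ descends to a $\bZ$-module isomorphism $K^+(\cX) \xrightarrow{\sim} H_1(C_q,\Re(\hx)\gg 0;\bZ)$, fitting into the required commutative square and characterized uniquely by it.
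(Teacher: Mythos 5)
Your proposal is correct and follows the same route as the paper's proof: define $\mir^+$ on generators via equivariant lifts, use the $M'$-transitivity of lifts together with the $M'$-equivariance of $\mir^+_{\bT'}$ and the $M'$-invariance of $p_*$ to get well-definedness, and deduce the isomorphism property from that of $\mir^+_{\bT'}$ plus the rank match supplied by the appendix. The coinvariants bookkeeping you carry out (free $\bZ[M']$-module structure on both sides, identification of $H_1(C_q,\Re(\hx)\gg 0;\bZ)$ with $H_1(\tC_q,\Re(\hx)\gg 0;\bZ)_{M'}$, and the surjection $K^+_{\bT'}(\cX)_{M'}\twoheadrightarrow K^+(\cX)$ being an isomorphism by comparing ranks) is a careful expansion of what the paper compresses into ``follows directly from the properties of $\mir^+_{\bT'}$'' together with the rank remark immediately after the corollary.
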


Note that the two abelian groups $K^+(\cX)$, $H_1(C_q, \Re (\hx) \gg 0; \bZ)$ both have rank equal to $\dim_{\bC} H^*_{\CR}(\cX;\bC)$ (cf. Lemmas \ref{apx-lem:HPMDim}, \ref{apx-lem:PMChern}).

\begin{proof}
On a generator $\cE = \cO_{\cV(\sigma)} \otimes \cL$ of $K^+(\cX)$ as in \eqref{eqn:KGroupGenerator} where $\cL$ is a non-equivariant line bundle, we choose a $\bT'$-equivariant lift $\cE_{\bT'}$ of $\cE$ by endowing $\cL$ with a $\bT'$-equivariant structure, and set
$$
    \mir^+(\cE) := p_*(\mir^+_{\bT'}(\cE_{\bT'})).
$$
This definition in fact does not depend on the choice of $\bT'$-equivariant lift since $M'$ acts transitively on the choices of lifts, $\mir^+_{\bT'}$ is $M'$-equivariant, and the projection $p_*$ respects the $M'$-action on $H_1(\tC_q, \Re (\hx) \gg 0; \bZ)$. We thus obtain the desired homomorphism $\mir^+$. That $\mir^+$ is an isomorphism follows directly from the properties of $\mir^+_{\bT'}$ in Theorem \ref{thm:EquivMir}.
\end{proof}

\begin{remark}\label{rmk:NonEquivMirIso}\rm{
Taking the ``$-$''-case into account (Remark \ref{rmk:MinusCase}), we conjecture that the isomorphisms $\mir^\pm, \mir^\pm_{\bT'}$ intertwine the Euler characteristic pairings \eqref{eqn:EulerPairing} on the $K$-groups and the intersection pairings on the relative homology groups (given by Lefschetz duality).
}    
\end{remark}

\begin{remark}\rm{
Suppose $\cX$ is smooth. By Viro's patchworking technique for real algebraic curves and amoebas, when $q$ is sufficiently small, real, and with prescribed signs, the real locus $\bR C_q$ of the mirror curve $C_q$ has the maximum possible number of connected components $3+\fp$, and their images under the tropicalization map
$$
    \mu: C_q \subset (\bC^*)^2 \to \bR^2
$$
are precisely the connected components of the boundary of the image $\mu(C_q)$, or the amoeba; see e.g. \cite{IV96,Mikhalkin00,Viro01,Gu22}. The components of $\partial\mu(C_q)$ (or the regions they bound in $\bR^2 \setminus \mu(C_q)$) are in bijection with the $3+\fp$ lattice points in the polytope $P$. In this situation, we note that the (non-equivariant) mirror cycles on $C_q$ of the structure sheaves of the toric divisors $\cO_{\cD_j}$, $j = 1, \dots, 3+\fp$ we constructed are precisely the components of $\bR C_q$ or $\partial \mu(C_q)$ and this identification agrees with the above bijection. The (non-equivariant) mirror cycles of the structure sheaves of toric curves are (branched) double covers of chords in $\mu(C_q)$ between boundary components.
}
\end{remark}

\subsection{Compact support case}
Now we discuss the implications of our constructions for sheaves with compact support. Recall from \eqref{eqn:KNaturalMaps} that there is a natural map $K^c_{\bT'}(\cX) \to K^+_{\bT'}(\cX)$ which is injective by Lemma \ref{apx-lem:EquivKInjective}.

\begin{proposition}\label{prop:KcEquivMirror}
The image of $K^c_{\bT'}(\cX)$ under the composition
$$
    \xymatrix{
        K^c_{\bT'}(\cX) \ar[r] & K^+_{\bT'}(\cX) \ar[r]^-{\mir^+_{\bT'}} & H_1(\tC_q, \Re (\hx) \gg 0; \bZ)
    }
$$
is equal to the subgroup $H_1(\tC_q; \bZ)$ of absolute 1-cycles. In particular, we have an isomorphism
$$
    \mir^c_{\bT'}: K^c_{\bT'}(\cX) \to H_1(\tC_q; \bZ)
$$
under which \eqref{eqn:EquivMir} holds.
\end{proposition}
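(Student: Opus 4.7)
The plan is to establish two inclusions: (a) the composition sends $K^c_{\bT'}(\cX)$ into $H_1(\tC_q; \bZ) \subset H_1(\tC_q, \Re(\hx) \gg 0; \bZ)$, and (b) every absolute $1$-cycle so arises. Once both hold, $\mir^c_{\bT'}$ is well-defined and surjective, equation \eqref{eqn:EquivMir} is inherited directly from Theorem \ref{thm:EquivMir}, and injectivity follows from the injectivity of $K^c_{\bT'}(\cX) \hookrightarrow K^+_{\bT'}(\cX)$ and of $\mir^+_{\bT'}$.

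For (a), by Lemma \ref{lem:KGroupGenerator2}, $K^c_{\bT'}(\cX)$ is additively generated by classes $\cE = \cO_{\cV(\sigma)} \otimes \cL$ with $\sigma$ minimal in $\Sigma^c$ and $\cL$ a $\bT'$-equivariant line bundle. In the present three-dimensional semi-projective setting, such $\sigma$ is either (i) an interior $1$-cone $\rho_j$ (so $\cV(\sigma) = \cD_j$ is a compact toric divisor), (ii) an interior $2$-cone $\tau \in \Sigma(2)_c$ (so $\cV(\sigma) = \fl_\tau$ is a compact toric curve), or (iii), when $\cX$ is affine, the unique $3$-cone $\sigma_0$ (so $\cV(\sigma) = \fp_{\sigma_0}$). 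I would verify case by case that $\mir^+_{\bT'}(\cE)$ is a closed cycle. In case (i), compactness of $\cD_j$ forces $m = l$ in the notation of Section \ref{sec:divisor-cycle}: the adjacent $3$-cones $\sigma_1, \dots, \sigma_l$ wrap cyclically around $\rho_j$, so patching the local arcs $\gamma^\circ_{\bff_i}$ through the connecting tubes $\tC^\circ_{\tau_{i+1}}$ closes up with no ends reaching $\{\Re(\hx) \gg 0\}$ (cf.~Figure \ref{fig:divisor-cycle}). In case (ii), each local piece $\gamma'^\circ_{\sigma_i}$ from Section \ref{sec:curve-cycle} is a $U$-shaped arc with both endpoints on the single intermediate tube $\tC^\circ_\tau$, and patching the two pieces yields a closed loop (cf.~Figure \ref{fig:curve-cycle}). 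Case (iii) is handled analogously by a small loop around one of the punctures on $\tC_{\sigma_0}$, whose integral reproduces the central charge computed in Lemma \ref{lem:StructureSheafPoint} and its equivariant twists.

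For (b), I would compare the following two short exact sequences. Since each connected component of $\{\Re(\hx) \gg 0\} \subset \tC_q$ is a contractible strip---one per lift to $\tC_q$ of each puncture of $\bar C_q$ whose asymptotic end lies in $\{\Re(\hx) \gg 0\}$---the long exact sequence of the pair collapses to
$$
0 \longrightarrow H_1(\tC_q; \bZ) \longrightarrow H_1(\tC_q,\, \Re(\hx) \gg 0;\, \bZ) \stackrel{\partial}{\longrightarrow} \widetilde H_0\big(\{\Re(\hx) \gg 0\}; \bZ\big) \longrightarrow \widetilde H_0(\tC_q; \bZ) \longrightarrow 0.
$$
On the algebraic side, using the combinatorial descriptions in Appendix \ref{appdx:Bounded}, the quotient $K^+_{\bT'}(\cX)/K^c_{\bT'}(\cX)$ is additively generated by structure sheaves of non-compact strata in $\cX^+ \setminus \cX^c$, which are indexed by the same combinatorial data (directions of non-compact ends in the direction $\sv$) as the asymptotic ends of $\tC_q$ above. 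Using the explicit computations \eqref{eqn:pair-divisor} and \eqref{eqn:pair-curve}, a direct diagram chase shows that $\mir^+_{\bT'}$ matches these two quotients, and the five lemma together with part (a) and Theorem \ref{thm:EquivMir} then yields $\mir^+_{\bT'}(K^c_{\bT'}(\cX)) = H_1(\tC_q; \bZ)$. The main obstacle is the combinatorial identification of the quotient lattices; an alternative, softer route is a rank comparison combined with the observation that $\mir^+_{\bT'}(K^c_{\bT'}(\cX))$ is saturated in $H_1(\tC_q, \Re(\hx) \gg 0; \bZ)$, the latter because the non-compact generators of $K^+_{\bT'}(\cX)$ are sent by $\partial \circ \mir^+_{\bT'}$ to primitive elements of the boundary lattice.
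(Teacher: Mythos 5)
Your part (a) essentially matches the paper's treatment of the non-affine generators: for $\sigma$ an interior $1$-cone (compact divisor) or interior $2$-cone (compact curve) the patched arcs close up as you describe. The trouble starts with case (iii). You assert that for the affine case the mirror cycle of $\cO_{\fp_{\sigma_0}}\otimes\cL$ is "a small loop around one of the punctures" whose integral "reproduces the central charge computed in Lemma~\ref{lem:StructureSheafPoint}." Two problems. First, Lemma~\ref{lem:StructureSheafPoint} is the \emph{non-equivariant} computation for the class $\cF_\sigma$, and does not directly verify the equivariant identity \eqref{eqn:EquivMir} for a single twist $\cO_{\fp_\sigma}\otimes\cL$. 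Second, and more importantly, no construction of the loop is given. The paper's argument is to write $[\cO_{\fp_{\sigma_0}}\otimes\cL] = [\cO_{\fl_\tau}\otimes\cL] - [\cO_{\fl_\tau}\otimes\cL(-\cD_j)]$ in $K^+_{\bT'}(\cX)$ for a suitable exterior facet $\tau$ and divisor $\cD_j$, observe that the two associated mirror cycles $\gamma'_\chi$ and $\gamma'_{\chi-\sw_{j,\sigma}}$ end on the same two components of $\{\Re(\hx)\gg 0\}$ (because $\sw_{j,\sigma}$ spans $\ker(M'_\sigma\to M'_\tau)$), and conclude that their difference is an absolute cycle. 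That step is where the genuine content is, and your sketch elides it.

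For part (b), you propose comparing the long exact sequence of the pair $(\tC_q,\Re(\hx)\gg 0)$ with a putative short exact sequence $0\to K^c_{\bT'}(\cX)\to K^+_{\bT'}(\cX)\to Q\to 0$ where $Q$ is generated by non-compact strata, and then arguing by a five-lemma diagram chase. This is a genuinely different route from the paper, which proves surjectivity of both $\mir^c_{\bT'}$ and $\mir^+_{\bT'}$ in a separate Lemma~\ref{lem:MirSurj} via a Mayer--Vietoris argument adapted to the decomposition $\tC_q=\bigcup\tC_\sigma^\circ\cup\bigcup\tC_\tau^\circ$ (first settling the affine charts, then running a traversal argument over $\Sigma(3)$). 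Your approach could in principle work, but you have not established that the sequence of $K$-groups is exact with the cokernel you describe, nor that the two boundary maps can be matched; you explicitly flag "the combinatorial identification of the quotient lattices" as the main obstacle and do not resolve it. The "softer route" of rank comparison plus saturation also leaves a real hole: rank comparison at best gives the statement over $\bQ$, and the saturation of the image of $K^c_{\bT'}$ is not obvious and requires an argument of roughly the same difficulty as the thing being avoided. As it stands, surjectivity is a genuine gap in your proposal.
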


\begin{proof}
Here we show that the image is contained in $H_1(\tC_q; \bZ)$; the surjectivity will be treated in Lemma \ref{lem:MirSurj} below. By Lemma \ref{lem:KGroupGenerator}, $K^c_{\bT'}(\cX)$ is generated by sheaves of form $\cE = \cO_{\cV(\sigma)} \otimes \cL$ where $\sigma$ is minimal in $\Sigma^c$. We use $\cE$ to also denote its image in $K^+_{\bT'}(\cX)$ by an abuse of notation. As we observed, when $\cX$ is not affine, $\cV(\sigma)$ is a compact toric divisor or curve and it is clear from the construction that $\mir^+_{\bT'}(\cE)$ is contained in $H_1(\tC_q; \bZ)$.

Now consider the case when $\cX$ is affine and $\sigma = \sigma_0$ is the unique 3-cone. Let $\tau \in \Sigma(2)$ be a facet of $\sigma$ such that $\tau \in \Sigma^+$, and let $j$ be the unique index in $I_{\sigma}' \setminus I_\tau'$. Then $\fp_{\sigma} = \fl_\tau \cap \cD_{j}$ and we have the relation
$$
    [\cE] = [\cO_{\fl_\tau} \otimes \cL] - [\cO_{\fl_\tau} \otimes \cL(-\cD_j)]
$$
in $K^+_{\bT'}(\cX)$. As in Section \ref{sec:curve-cycle}, since $\tau$ is exterior, $\cO_{\fl_\tau} \otimes \cL$ is determined by the character $\chi := c_1^{\bT'}(\cL\vert_{\fp_{\si}}) \in M'_{\si}$ and $\cO_{\fl_\tau} \otimes \cL(-\cD)$ is determined by $c_1^{\bT'}(\cL(-\cD_j) \vert_{\fp_{\si}}) = \chi - \sw_{j, \sigma}$. On the other hand, the kernel of $\pi: M'_{\si} \to M'_{\tau}$ is spanned by $\sw_{j, \sigma}$. By the affine case discussed in Section \ref{sec:mirror-curve-affine}, the mirror cycles
$$
    \mir^+_{\bT'}(\cO_{\fl_\tau} \otimes \cL) = \gamma'_{\chi}, \qquad \mir^+_{\bT'}(\cO_{\fl_\tau} \otimes \cL(-\cD)) = \gamma'_{\chi - \sw_{j, \sigma}}
$$
end on the same two connected components of $\tC^\circ_{\tau}$. Thus their difference belongs to $H_1(\tC_q; \bZ)$.
\end{proof}

Now consider the following commutative diagram as in \cite[Section 4.4]{flz2020remodeling}
\begin{equation}\label{eqn:RelSeq}
    \xymatrix{
        0 \ar[r] & H_2(\bC^2; \bZ) = 0 \ar[r] \ar[d] & H_2(\bC^2, \tC_q; \bZ) \ar[r]^-\partial_-\cong \ar[d] & H_1(\tC_q; \bZ) \ar[r] \ar[d] & 0\\
        0 \ar[r] & H_2((\bC^*)^2; \bZ) \ar[r] & H_2((\bC^*)^2, C_q; \bZ) \ar[r]^-\partial & K_1(C_q; \bZ) \ar[r] & 0
    }
\end{equation}
where the rows are given by long exact sequences of homology and are exact. The vertical maps are given by the pushforward along the covering $p$. In particular, $K_1(C_q; \bZ)$ is the image of $p_*: H_1(\tC_q; \bZ) \to H_1(C_q; \bZ)$ and is identified with the kernel of $H_1(C_q; \bZ) \to H_1((\bC^*)^2; \bZ)$. Given any $\gamma \in K_1(C_q; \bZ)$, the integral
$
    \int_{\gamma} \hy d\hx
$
may be defined via choosing a lift to $H_1(\tC_q;\bZ)$ or $H_2((\bC^*)^2, C_q; \bZ)$ and is well-defined up to additive constants in $(2\pi \sqrt{-1})^2 \bZ$ \cite[Lemma 4.4]{flz2020remodeling}.

%Note: the map $p_*$ above is not injective - the rank of $H_1(\tC_q; \bZ)$ is infinite

Composing the boundary map $\partial$ in the first row of \eqref{eqn:RelSeq} and the map $\mir^c_{\bT'}$ in Proposition \ref{prop:KcEquivMirror}, we obtain an isomorphism
$$
    \mir^c_{2, \bT'}: K^c_{\bT'}(\cX) \to H_2(\bC^2, \tC_q; \bZ)
$$
under which by \eqref{eqn:EquivMir}, for any $\cE \in K^c_{\bT'}(\cX)$, we have
\begin{equation}\label{eqn:Kc2EquivMir}
    -2\pi\sqrt{-1}\int_{\mir^c_{2, \bT'}(\cE)} e^{-\hx/z} d\hx \wedge d\hy = Z_{\bT'}(\cE).
\end{equation}

We now descend to the non-equivariant case. As explained in the proof of \cite[Proposition 14]{CLT13}, the results in \cite{Batyrev93,Stienstra98,KM10} produce an injective homomorphism
$$
    H_2((\bC^*)^2, C_q; \bZ) \to \bfS, \qquad \gamma \mapsto \int_\gamma \frac{dX}{X} \wedge \frac{dY}{Y}
$$
which is an isomorphism after tensoring the domain with $\bC$. Note that the subgroup $H_2((\bC^*)^2; \bZ)$ is isomorphic to $\bZ$ and generated by the class of the maximal compact subtorus $T^2 = U(1)^2 \subset (\bC^*)^2$. We have 
$$
    \int_{[T^2]} \frac{dX}{X} \wedge \frac{dY}{Y} = (2\pi\sqrt{-1})^2.
$$
Combining the above and Lemma \ref{lem:NECentralInjective} into the non-equivariant limit of \eqref{eqn:Kc2EquivMir}, we have the following result.

\begin{corollary}\label{cor:Kc2Mir}
There is a unique homomorphism
$$
    \mir^c_2: K^c(\cX) \to H_2((\bC^*)^2, C_q; \bZ)
$$
such that for any $\cE \in K^c(\cX)$, we have
$$    
    -2\pi\sqrt{-1}\int_{\mir^c_2(\cE)} \frac{dX}{X} \wedge \frac{dY}{Y} = Z^c(\cE).
$$
Moreover, $\mir^c_2$ is an isomorphism and for (any) $\sigma \in \Sigma(3)$, $\mir^c_2([\cF_\sigma]) = [T^2]$.
\end{corollary}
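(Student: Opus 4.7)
The plan is to construct $\mir^c_2$ by descending $\mir^c_{2, \bT'}$ through the covering pushforward $p_* : H_2(\bC^2, \tC_q; \bZ) \to H_2((\bC^*)^2, C_q; \bZ)$ (the middle vertical map in \eqref{eqn:RelSeq}), and then verify the integral identity by taking a non-equivariant limit of \eqref{eqn:Kc2EquivMir}. Concretely, given $\cE \in K^c(\cX)$, I would pick any $\bT'$-equivariant lift $\cE_{\bT'} \in K^c_{\bT'}(\cX)$ (such lifts exist by Lemma \ref{lem:KGroupGenerator}, applied to the generators $\cO_{\cV(\sigma)} \otimes \cL$) and set
$$
    \mir^c_2(\cE) := p_*\left(\mir^c_{2, \bT'}(\cE_{\bT'})\right).
$$
Independence of the lift proceeds exactly as in the proof of Corollary \ref{cor:NonEquivMir}: two lifts differ by the action of some $m' \in M'$, the equivariant map $\mir^c_{2, \bT'}$ is $M'$-equivariant (being obtained from $\mir^+_{\bT'}$ via $\partial$, which is $M'$-equivariant), and the $M'$-action on $H_2(\bC^2, \tC_q; \bZ)$ is by deck transformations that are killed by $p_*$. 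Uniqueness of $\mir^c_2$ satisfying the stated identity is automatic from injectivity of the integration map $\Phi : H_2((\bC^*)^2, C_q; \bZ) \to \bfS$ provided by \cite{Batyrev93,Stienstra98,KM10}.

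Next I would verify the integral formula. Since $x = -\log X$ and $y = -\log Y$, we compute
$$
    d\hx \wedge d\hy = (u_1\,dx + u_2\,dy) \wedge \frac{dy}{u_1} = dx \wedge dy = p^*\!\left(\frac{dX}{X} \wedge \frac{dY}{Y}\right).
$$
Setting $u_1 = u_2 = 0$ in \eqref{eqn:Kc2EquivMir}, the exponential factor $e^{-\hx/z}$ becomes $1$, and the left-hand side reduces, by the standard $p$-pushforward identity for currents, to
$$
    -2\pi\sqrt{-1} \int_{p_* \tilde\gamma} \frac{dX}{X} \wedge \frac{dY}{Y}, \qquad \tilde\gamma = \mir^c_{2,\bT'}(\cE_{\bT'}),
$$
while the right-hand side $Z_{\bT'}(\cE_{\bT'})$ tends to $Z^c(\cE)$ by definition. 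This yields the desired identity.

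To prove $\mir^c_2$ is an isomorphism, I would combine Lemma \ref{lem:NECentralInjective} (the central charge $Z^c : K^c(\cX) \otimes \bC \to \bfS$ is an isomorphism) with the fact that $\Phi \otimes \bC$ is also an isomorphism (same reference); this shows $\mir^c_2 \otimes \bC$ is an isomorphism. Integrality of $(\mir^c_2)^{-1}$ can then be deduced from that of $(\mir^c_{2, \bT'})^{-1}$: any integral class in $H_2((\bC^*)^2, C_q; \bZ)$ lifts through $p_*$ (up to $H_2((\bC^*)^2; \bZ) = \bZ[T^2]$, which is the image of $p_*(H_2(\bC^2, \tC_q; \bZ))$-modulo-deck-group) to an integral class in $H_2(\bC^2, \tC_q; \bZ)$, which corresponds to an element of $K^c_{\bT'}(\cX)$ whose non-equivariant image lies in $K^c(\cX)$. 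Finally, the formula $\mir^c_2([\cF_\sigma]) = [T^2]$ is verified by direct computation: Lemma \ref{lem:StructureSheafPoint} gives $Z^c(\cF_\sigma) = (-2\pi\sqrt{-1})^3$, while
$$
    -2\pi\sqrt{-1} \int_{[T^2]} \frac{dX}{X} \wedge \frac{dY}{Y} = -2\pi\sqrt{-1} \cdot (2\pi\sqrt{-1})^2 = (-2\pi\sqrt{-1})^3,
$$
so injectivity of $\Phi$ forces the identity. The class is independent of $\sigma$, consistent with Lemma \ref{lem:StructureSheafPoint}.

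The main obstacle will be making the non-equivariant limit of the oscillatory integral fully rigorous. The cycle $\tilde\gamma$ lies in $H_2(\bC^2, \tC_q; \bZ)$ without any decay condition, and the factor $e^{-\hx/z}$ was essential to ensure convergence when $(u_1, u_2) \neq 0$. One must justify that the limit $u_1, u_2 \to 0$ commutes with integration, producing a well-defined algebraic period; the ambiguity by $(2\pi\sqrt{-1})^2 \bZ$ noted after \eqref{eqn:RelSeq} is precisely absorbed by passing from $\tilde\gamma \in H_2(\bC^2, \tC_q; \bZ)$ to its image in $H_2((\bC^*)^2, C_q; \bZ)$ under $p_*$, so the bookkeeping here is what makes the descent work cleanly.
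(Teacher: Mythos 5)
Your proposal is correct and follows the paper's approach: define $\mir^c_2$ by pushing forward the equivariant mirror map $\mir^c_{2,\bT'}$ along $p_*$, verify the period formula by taking the non-equivariant limit $u_1,u_2 \to 0$ of \eqref{eqn:Kc2EquivMir} (where indeed $d\hx\wedge d\hy = p^*\bigl(\tfrac{dX}{X}\wedge\tfrac{dY}{Y}\bigr)$), and conclude the isomorphism and the value on $[\cF_\sigma]$ by comparing the two lattices injected into $\bfS$. Your worry about the non-equivariant limit is benign since representative relative $2$-chains are compact so no decay factor is needed for convergence, and the murky parenthetical about $[T^2]$ in the surjectivity step is better replaced by a five-lemma comparison of the second row of \eqref{eqn:RelSeq} with the sequence $0 \to \bZ[\cF_\sigma] \to K^c(\cX) \to K_1(C_q;\bZ) \to 0$, though the direct lifting argument you sketch is logically equivalent.
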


Similar to Theorem \ref{thm:EquivMir}, $\mir^c_2$ is an identification of two integral structures in $\bfS$. The composition of $\mir^c_2$ and the boundary map $\partial$ in the second row of \eqref{eqn:RelSeq} gives a surjective homomorphism
$$
    \mir^c: K^c(\cX) \to K_1(C_q; \bZ)
$$
such that for any $\cE \in K^c(\cX)$, we have
$$
    2\pi\sqrt{-1} \int_{\mir^c(\cE)} \hy d\hx \in Z^c(\cE) + (2\pi\sqrt{-1})^{3} \bZ.
$$
The kernel of $\mir^c$ is $\bZ[\cF_\sigma]$. The map $\mir^c$ is compatible with $\mir^c_{\bT'}$ in Proposition \ref{prop:KcEquivMirror} and $\mir^+$ in Corollary \ref{cor:NonEquivMir}.

\subsection{Hori-Vafa mirrors and Hosono's Conjecture}
We adapt the above discussion on the mirror curve to the \emph{Hori-Vafa mirror} of $\cX$, which is the family of Calabi-Yau 3-folds
$$
    \chcX_q := \{(u,v,X,Y)\in \bC^2 \times (\bC^*)^2: uv = H(X,Y,q)\}
$$
equipped with the holomorphic 3-form
$$
    \Omega_q := \Res_{\chcX_q} \left( \frac{1}{H(X,Y,q) - uv} du \wedge dv \wedge \frac{dX}{X} \wedge \frac{dY}{Y} \right).
$$
We refer to \cite[Section 4.4]{flz2020remodeling} for details on the reduction as genus-zero B-models from the Hori-Vafa mirror to the mirror curve. The projection to the $(X,Y)$-coordinates realizes $\chcX_q$ as a conic fibration over $(\bC^*)^2$ whose discriminant locus is the mirror curve $C_q = \{H(X,Y,q) = 0\}$. Based on this structure, \cite{Gross01,DK11,CLT13} constructed an isomorphism
$
    \alpha: H_2((\bC^*)^2, C_q; \bZ) \to H_3(\chcX_q; \bZ)
$
such that for any $\gamma \in H_2((\bC^*)^2, C_q; \bZ)$, we have
$$
    2\pi\sqrt{-1}\int_{\gamma} \frac{dX}{X} \wedge \frac{dY}{Y} = \int_{\alpha(\gamma)} \Omega_q.
$$
In particular, there is an injective homomorphism
$$
    H_3(\chcX_q; \bZ) \to \bfS, \qquad \gamma \mapsto \int_\gamma \Omega_q
$$
which is an isomorphism after tensoring the domain with $\bC$. Corollary \ref{cor:Kc2Mir} leads to the following result which resolves a conjecture of Hosono \cite[Conjecture 6.3]{Hosono06}.

\begin{theorem}\label{thm:Kc3Mir}
There is a unique homomorphism
$$
    \mir^c_3: K^c(\cX) \to H_3(\chcX_q; \bZ)
$$
such that for any $\cE \in K^c(\cX)$, we have
$$    
    -\int_{\mir^c_3(\cE)} \Omega_q = Z^c(\cE).
$$
Moreover, $\mir^c_3$ is an isomorphism.
\end{theorem}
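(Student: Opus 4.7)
The plan is to construct $\mir^c_3$ as the composition of the isomorphism $\mir^c_2$ from Corollary \ref{cor:Kc2Mir} with the Gross--Danilov--Kharlamov--type isomorphism $\alpha$ that implements the conic fibration reduction. All the hard work has already been done in the preceding two subsections: the construction of the integral mirror map on the mirror curve (Theorem \ref{thm:EquivMir}, Corollary \ref{cor:Kc2Mir}) and the geometric reduction from $\chcX_q$ to the mirror curve $C_q$.

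First, I would define $\mir^c_3 := \alpha \circ \mir^c_2$. This is immediately an isomorphism of abelian groups since both $\mir^c_2$ and $\alpha$ are isomorphisms (the former by Corollary \ref{cor:Kc2Mir} and the latter by the work of \cite{Gross01, DK11, CLT13}). To check the central charge identity, I would chain together the two known equalities: for any $\cE \in K^c(\cX)$, Corollary \ref{cor:Kc2Mir} gives
$$
-2\pi\sqrt{-1}\int_{\mir^c_2(\cE)} \frac{dX}{X}\wedge\frac{dY}{Y} = Z^c(\cE),
$$
while the compatibility of $\alpha$ with periods yields
$$
2\pi\sqrt{-1}\int_{\mir^c_2(\cE)} \frac{dX}{X}\wedge\frac{dY}{Y} = \int_{\alpha(\mir^c_2(\cE))} \Omega_q = \int_{\mir^c_3(\cE)} \Omega_q.
$$
Combining the two equations gives $-\int_{\mir^c_3(\cE)}\Omega_q = Z^c(\cE)$, as desired.

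For uniqueness of $\mir^c_3$, I would invoke the injectivity of the period map $H_3(\chcX_q;\bZ) \to \bfS$, $\gamma \mapsto \int_\gamma \Omega_q$, which is exactly the content stated just before Theorem \ref{thm:Kc3Mir} (this is the $3$-cycle analogue of the corresponding injectivity for the mirror curve, transported via $\alpha$). Then any homomorphism $K^c(\cX) \to H_3(\chcX_q;\bZ)$ satisfying the stated period identity is forced to agree with $\mir^c_3$ after composing with the period map, hence must equal $\mir^c_3$ itself.

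There is no serious obstacle: the statement is really a corollary of Corollary \ref{cor:Kc2Mir} together with the geometric dimensional reduction from the conic bundle $\chcX_q \to (\bC^*)^2$ to its discriminant $C_q$. The only technical point worth being careful about is orienting and normalizing $\alpha$ so that the sign in $-\int_{\mir^c_3(\cE)}\Omega_q = Z^c(\cE)$ comes out correctly; this is handled once and for all by the conventions of \cite{Gross01, DK11, CLT13} and matches the factor of $-2\pi\sqrt{-1}$ appearing in Corollary \ref{cor:Kc2Mir}. The resulting statement completes the non-equivariant genus-zero picture and identifies $K^c(\cX)$ with the integral third homology of the Hori--Vafa mirror, which is precisely \cite[Conjecture 6.3]{Hosono06}.
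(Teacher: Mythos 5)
Your construction of $\mir^c_3$ as $\alpha \circ \mir^c_2$, the sign-check chaining Corollary \ref{cor:Kc2Mir} against the period identity for $\alpha$, and the uniqueness argument via injectivity of the period map on $H_3(\chcX_q;\bZ)$ are exactly the (implicit) argument the paper intends; the theorem is presented there as an immediate consequence of Corollary \ref{cor:Kc2Mir} and the facts recalled about $\alpha$, with no further proof given. Your proposal fills in precisely those details and is correct.
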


As in Corollary \ref{cor:Kc2Mir}, $\mir^c_3$ is an identification of two integral structures in $\bfS$. We note that \cite[Conjecture 6.3]{Hosono06} also concerns the monodromies of the central charges over the parameter $q$ and we leave a treatment of this aspect to future work.

\subsection{Surjectivity}
Here we prove the surjectivity of the mirror cycle homomorphisms.

\begin{lemma}\label{lem:MirSurj}
The homomorphisms
$$
    \mir^+_{\bT'}: K^+_{\bT'}(\cX) \to H_1(\tC_q, \Re (\hx) \gg 0; \bZ), \quad 
    \mir^c_{\bT'}: K^c_{\bT'}(\cX) \to H_1(\tC_q; \bZ)
$$
in Theorem \ref{thm:EquivMir} and Proposition \ref{prop:KcEquivMirror} respectively are surjective.
\end{lemma}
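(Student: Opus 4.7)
The plan is to prove surjectivity of $\mir^+_{\bT'}$ first, and then deduce that of $\mir^c_{\bT'}$. A $\bZ$-basis of the target $H_1(\tC_q, \Re(\hx)\gg 0;\bZ)$ is given by the Lefschetz thimbles $\{\gamma_{\tbsi}\}_{\tbsi\in\tI_\Si}$ attached to the critical points $\{\tp_{\tbsi}\}$ of $\hx$, as was implicitly used in the proof of Proposition \ref{prop:IntegralInjective}. Since $\mir^+_{\bT'}$ is $M'$-equivariant and the deck transformation group $M'$ acts freely on each fiber $\{\tp_{(\si,\chi)} : \chi\in M'_\si\}$ above a critical point $p_{(\si,\psi_\si^\vee(\chi))}\in C_q$, it will suffice, for each $\si\in\Si(3)$ and each coset class $\gamma\in G_\si^*=M'_\si/M'$, to exhibit one lift $\chi\in M'_\si$ of $\gamma$ such that $\gamma_{(\si,\chi)}\in\mathrm{Im}(\mir^+_{\bT'})$.

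The second step is to produce such thimbles by a Mayer-Vietoris style gluing along the decompositions \eqref{eqn:curve-decomposition}, \eqref{eqn:equiv-curve-decomposition}. The local (affine-chart) surjectivity is already established in Section \ref{sec:mirror-curve-affine}: when $\Re(w_1),\Re(w_2)>0$ the cycle $\gamma_\chi$ associated to a $\bT'$-equivariant line bundle on $\fp_\si$ is exactly the Lefschetz thimble $\gamma_{(\si,\chi)}$ of the affine curve $\tC_\si$. Globally, I would order $\Si(3)$ by the real part of the tropical distance $c_\si$ (so that $\si_0$ is minimal in an appropriate chamber of $N'_\bR$) and induct. At the base step, a minimal cone $\rho\in\Si^+$ adjacent to $\si_0$ produces a generator $\cO_{\cV(\rho)}\otimes\cL$ whose mirror cycle, by Lemmas \ref{lem:PolygonDirection}, \ref{lem:PolygonCorner} and \eqref{eqn:Ichi-estimate}, equals a single local thimble at $\si_0$ modulo exponentially suppressed terms, which by Proposition \ref{prop:IntegralInjective} forces equality with the Lefschetz thimble. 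At the inductive step, adding a new 3-cone $\si$ of larger tropical distance, the mirror cycle $\gamma_{\uchi}$ (or $\gamma'_{\uchi}$) of an appropriate divisor/curve generator containing $\fp_\si$ decomposes via the patching construction of Sections \ref{sec:divisor-cycle}, \ref{sec:curve-cycle} into the desired thimble at $\si$ plus a $\bZ$-combination of thimbles at already-treated 3-cones; subtracting the latter yields the new thimble.

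The main technical obstacle will be the integrality within each fiber $M'_\si$. Varying the $\bT'$-equivariant twist $\cL$ on a fixed toric divisor or curve produces, via \eqref{eqn:pair-divisor}, \eqref{eqn:pair-curve}, a collection of mirror cycles indexed by $\chi_i\in M'_{\si_i}$; recovering a single $\gamma_{(\si,\chi)}$ from these requires inverting a finite Fourier transform over $G_\si$. This is where I would use that the twisted-polytope data specify integral characters $\chi_i\in M'_{\si_i}$ (not just classes in $G_\si^*$), so that the full $M'_{\si}$-orbit of thimbles is recovered as integral combinations rather than rational ones; the $M'$-equivariance then propagates $\bZ$-saturation to the whole image. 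Finally, for $\mir^c_{\bT'}$, Proposition \ref{prop:KcEquivMirror} already places the image inside $H_1(\tC_q;\bZ)$, so it remains to check surjectivity onto absolute cycles; this follows by applying the surjectivity of $\mir^+_{\bT'}$ and of its analog $\mir^-_{\bT'}$ (Remark \ref{rmk:MinusCase}) and noting that a class in $H_1(\tC_q, \Re(\hx)\gg 0;\bZ)$ is an absolute cycle iff it also lies in $H_1(\tC_q, \Re(\hx)\ll 0;\bZ)$, which on the $K$-theory side corresponds to $K^+_{\bT'}(\cX)\cap K^-_{\bT'}(\cX)=K^c_{\bT'}(\cX)$ (Appendix \ref{appdx:Bounded}).
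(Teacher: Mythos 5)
Your general strategy---reduce to hitting each Lefschetz thimble, localize via the decomposition of $\tC_q$, and induct along a total order on $\Si(3)$---is broadly similar in spirit to the paper's Mayer--Vietoris proof, but the specific mechanism you propose has a gap that I do not see how to repair.

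The key problem is in the base step (and it propagates to the induction). You assert that the mirror cycle of a divisor generator $\cO_{\cV(\rho)}\otimes\cL$ near $\si_0$ ``equals a single local thimble at $\si_0$ modulo exponentially suppressed terms, which by Proposition \ref{prop:IntegralInjective} forces equality with the Lefschetz thimble.'' This conflates an analytic estimate with a topological identity. The divisor cycle $\gamma_\uchi$ from Section \ref{sec:divisor-cycle} is a chain threading through \emph{all} $m$ charts $\tC_{\si_1}^\circ,\dots,\tC_{\si_m}^\circ$ adjacent to the divisor, and its central charge genuinely has nonzero components $a_{\si_i,v}$ on every one of those cones (see \eqref{eqn:aside-result-divisor}, \eqref{eqn:pair-divisor}). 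The estimate in Lemmas \ref{lem:PolygonDirection}, \ref{lem:PolygonCorner} says one of those components dominates asymptotically in a chamber of $(u_1,u_2)$; it says nothing about the others vanishing. Proposition \ref{prop:IntegralInjective} lets you conclude two cycles are equal only if their central charges agree \emph{as functions}, not merely to leading asymptotic order. So $\gamma_\uchi$ is honestly a $\bZ$-combination of thimbles supported on all of $\si_1,\dots,\si_m$, and is never a single thimble unless $m=1$ (a non-generic situation). Consequently your inductive step cannot subtract off only ``already-treated'' thimbles: the untreated cones $\si_{i'}$ adjacent to the same divisor appear with nonzero coefficient, and these are typically \emph{later} in your tropical-distance order, not earlier. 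The recursion runs the wrong way.

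Your deduction of surjectivity of $\mir^c_{\bT'}$ also quietly uses two unproven compatibility facts: that $K^+_{\bT'}(\cX)\cap K^-_{\bT'}(\cX)=K^c_{\bT'}(\cX)$ inside $K_{\bT'}(\cX)$ (the appendix shows both maps into $K_{\bT'}$ are injective but not that the intersection is exactly $K^c_{\bT'}$), and that for $\cE^\pm$ with $\mir^+_{\bT'}(\cE^+)$ and $\mir^-_{\bT'}(\cE^-)$ mapping to the same absolute class, the classes $\cE^\pm$ arise from a common $\cE^c\in K^c_{\bT'}(\cX)$. Neither is established.

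The paper avoids all of this by working with the Mayer--Vietoris sequence for the decomposition \eqref{eqn:equiv-curve-decomposition}. It first establishes local surjectivity on each $H_1(\tC_\si^\circ,\Re(\hx)\gg 0;\bZ)$ (which is essentially the affine case you correctly cite), and then---crucially---it does \emph{not} try to produce each global thimble from the divisor/curve cycles. Instead it shows that the images of $\delta\circ\mir^+_{\bT'}$ on divisor and curve generators, given explicitly in \eqref{eqn:DivisorConnecting}--\eqref{eqn:CurveConnecting}, generate the kernel of the map $\iota$ in the $H_0$-part of the sequence, via a traversal of $\Si(3)$ in the moment-map order that nullifies one cone's coefficients at a time. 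Surjectivity then follows from exactness, with no need to isolate individual thimbles. The $\mir^c_{\bT'}$ case is treated by the same method via the boundary map $\partial:H_1(\tC_q,\Re(\hx)\gg 0;\bZ)\to H_0(\Re(\hx)\gg 0;\bZ)$, rather than an intersection argument.
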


\begin{proof}
We first establish both surjectivity statements in the case $\cX = [\bC^3/G_\sigma]$ is affine, where $\sigma = \sigma_0$ is the unique 3-cone. We choose a reference flag $\bff = (\tau, \sigma)$ and adopt the notation in Section \ref{sec:mirror-curve-affine}. As discussed there, depending on the signs of $\Re(w_1), \Re(w_2), \Re(w_3)$, the collection of mirror cycles $\{\gamma_\chi\}_{\chi \in M_\sigma'}$ or $\{\gamma'_\chi\}_{\chi \in M_\sigma'}$ recovers the collection of Lefschetz thimbles of $\hx$, which is a basis of $H_1(\tC_\sigma, \Re (\hx) \gg 0; \bZ)$. This proves the surjectivity of $\mir^+_{\bT'}$.

For $\mir^c_{\bT'}$, we view $H_1(\tC_\sigma; \bZ)$ as the kernel of the boundary map
$$
    \partial: H_1(\tC_\sigma, \Re (\hx) \gg 0; \bZ) \to H_0(\Re(\hx) \gg 0; \bZ).
$$ 
Up to cyclic symmetry there are two subcases. First suppose $\Re(w_1)>0, \Re(w_2)<0, \Re(w_3)<0$. The boundary map can be written as
$$
    \partial: \bigoplus_{\chi \in M'_\sigma} \bZ[\gamma'_\chi] \to \bigoplus_{\psi \in M'_{\tau_1}} \bZ e_\psi, \qquad [\gamma'_\chi] \mapsto -e_{\pi_1(\chi)} + e_{\pi_1(\chi)+1}
$$
where $e_\psi$ denotes the class of a point in the component of $\tC_{\tau_1}^\circ$ indexed by $\psi \in M'_{\tau_1}$ and $\pi_1$ is the projection $M'_\sigma \to M'_{\tau_1}$ whose kernel is $\bZ \sw_1$. As in the proof of Proposition \ref{prop:KcEquivMirror}, $K^c_{\bT'}(\cX)$ is generated by the sheaves $\cO_{\fp_\sigma} \otimes \cL$ whose mirror cycles take form 
$$
    [\gamma'_\chi] - [\gamma'_{\chi- \sw_1}], \qquad \chi \in M'_\sigma.
$$
It is a direct verification that such cycles generate the kernel of $\partial$.

In the second subcase suppose $\Re(w_1)>0, \Re(w_2)>0, \Re(w_3)<0$. The boundary map can be written as
$$
    \partial: \bigoplus_{\chi \in M'_\sigma} \bZ[\gamma_\chi] \to \bigoplus_{\psi \in M'_{\tau_1} \sqcup M'_{\tau_2}} \bZ e_\psi, \qquad [\gamma_\chi] \mapsto - e_{\pi_1(\chi)} + e_{\pi_2(\chi)}
$$
where $\pi_i: M'_\sigma \to M'_{\tau_i}$ is the projection whose kernel is $\bZ \sw_i$, $i = 1, 2$. Now since
$$
    [\cO_{\fp_\sigma} \otimes \cL] = [\cO_{\cD_3} \otimes \cL] - [\cO_{\cD_3} \otimes \cL(-\cD_1)] - [\cO_{\cD_3} \otimes \cL(-\cD_2)] + [\cO_{\cD_3} \otimes \cL(-\cD_1-\cD_2)],
$$
the mirror cycles of the sheaves $\cO_{\fp_\sigma} \otimes \cL$ take form 
$$
    [\gamma_\chi] - [\gamma_{\chi- \sw_1}] - [\gamma_{\chi- \sw_2}] + [\gamma_{\chi- \sw_1 - \sw_2}], \qquad \chi \in M'_\sigma.
$$
One again verifies that such cycles generate the kernel of $\partial$.

Now we establish the surjectivity statements for a general $\cX$. We consider $\mir^+_{\bT'}$ and the argument for $\mir^c_{\bT'}$ is similar. Consider the decomposition \eqref{eqn:equiv-curve-decomposition} of the mirror curve $\tC_q$. We apply the Mayer-Vietoris sequence to
$$
    (\tC_q, \Re (\hx) \gg 0) = \left(\bigcup_{\si\in \Si(3)} \tC_\sigma^\circ, \Re (\hx) \gg 0 \right) \cup \left(\bigcup_{\tau\in \Si(2)_c} \tC_\tau^\circ, \emptyset \right).
$$
Recall that each connected component of any $\tC_\tau^\circ$ or any $\tC_\sigma^\circ \cap \tC_\tau^\circ$ is contractible and has trivial $H_1$. The sequence reads
$$
    \displaystyle 0 \to \bigoplus_{\sigma \in \Sigma(3)} H_1(\tC_\sigma^\circ, \Re (\hx) \gg 0; \bZ) \to H_1(\tC_q, \Re (\hx) \gg 0; \bZ) \xrightarrow{\delta}
$$
$$
    \displaystyle \to H_0 \left( \left(\bigcup_{\si\in \Si(3)} \tC_\sigma^\circ \right) \cap \left( \bigcup_{\tau\in \Si(2)_c} \tC_\tau^\circ \right); \bZ \right) \xrightarrow{\iota} \bigoplus_{\sigma \in \Sigma(3)} H_0(\tC_\sigma^\circ, \Re (\hx) \gg 0; \bZ) \oplus  \bigoplus_{\tau \in \Sigma(2)_c} H_0(\tC_\tau^\circ; \bZ) \to \cdots .   
$$

For $\sigma \in \Sigma(3)$, the intersection of $\tC_\sigma^\circ$ with the subset $\{\Re (\hx) \gg 0\} \subset \tC_q$ may be empty or contained in $\tC_\tau^\circ$ for one or two facets $\tau$ of $\sigma$. Let $\sigma'$ be the intersection of all such $\tau$'s; if there is none, $\sigma' = \sigma$. Then the closed substack $\cV(\sigma')$ of $\cX$ is in fact contained in the affine chart $\cX_\sigma$. The discussion in the affine case above implies that the images of the sheaves $\cO_{\cV(\sigma')} \otimes \cL$ under $\mir^+_{\bT'}$ generate the subgroup $H_1(\tC_\sigma^\circ, \Re (\hx) \gg 0; \bZ)$. Therefore, it suffices to show that the image of $\mir^+_{\bT'}$ in $H_1(\tC_q, \Re (\hx) \gg 0; \bZ)$, when passing through the connecting homomorphism $\delta$, surjects onto the kernel of the map $\iota$ in the Mayer-Vietoris sequence.

We introduce the index sets
$$
    \tF := \{ \tilde{\bff} = (\bff, \psi): \bff = (\tau, \sigma) \in F(\Sigma), \tau \in \Sigma(2)_c, \psi \in M'_\tau\}, \quad 
    \Sigma(3)_s := \{ \sigma \in \Sigma(3): \tC_\sigma^\circ \cap \{\Re (\hx) \gg 0\} = \emptyset \}.
$$
Then the map $\iota$ can be written as
$$
    \iota: \bigoplus_{\tilde{\bff} \in \tF} \bZ e_{\tilde{\bff}} \to \bigoplus_{\sigma \in \Sigma(3)_s} \bZ e_\sigma \oplus \bigoplus_{\substack{\tau \in \Sigma(2)_c \\ \psi \in M'_\tau}} \bZ e_\psi, \qquad 
    e_{\tilde{\bff}} \mapsto e_\sigma - e_\psi.
$$
Here, for $\tilde{\bff} \in \tF$, $e_{\tilde{\bff}}$ denotes the class of a point in $\tC_\sigma^\circ \cap \tC_\tau^\circ$. For $\sigma \in \Sigma(3)_s$, $e_\sigma$ denotes the class of a point in $\tC_\sigma^\circ$. To unify notation, for $\sigma \not \in \Sigma(3)_s$ we set $e_\sigma = 0$ since the relative $H_0$ is trivial. In the notation of Sections \ref{sec:divisor-cycle}, \ref{sec:curve-cycle}, for the divisor $\cD_j$ with an $\bT'$-equivariant line bundle $\cL$ corresponding to a twisted polytope $\uchi$, we have
\begin{equation}\label{eqn:DivisorConnecting}
    \delta \circ \mir^+_{\bT'}(\cO_{\cD_j} \otimes \cL) = \sum_{i = 1}^m - e_{((\tau_{i+1}, \sigma_i),\psi_{i+1})} +  e_{((\tau_i, \sigma_i),\psi_i)} 
\end{equation}
where $\psi_{i+1} \in M'_{\tau_{i+1}}$ denotes common image of $\chi_i, \chi_{i+1}$ under \eqref{eqn:DivisorAdjProj}. For the curve $\fl_\tau$, $\tau \in \Sigma(2)_c$, with an $\bT'$-equivariant line bundle $\cL$ corresponding to a twisted polytope $\uchi$, we have
\begin{equation}\label{eqn:CurveConnecting}
    \delta \circ \mir^+_{\bT'}(\cO_{\fl_\tau} \otimes \cL) = - e_{((\tau, \sigma_1),\psi)} +  e_{((\tau, \sigma_1),\psi + 1)} - e_{((\tau, \sigma_2),\psi+1)} + e_{((\tau, \sigma_2),\psi)} 
\end{equation}
where $\psi \in M'_{\tau}$ denotes common image of $\chi_1$, $\chi_2$ under \eqref{eqn:CurveAdjProj} and the ``$+1$'' is chosen with respect to $(\tau, \sigma_1)$. We show that any element
$$
    \alpha = \sum_{\tilde{\bff} \in \tF} a_{\tilde{\bff}}e_{\tilde{\bff}} \in \ker(\iota)
$$
is a linear combination of elements of form \eqref{eqn:DivisorConnecting}, \eqref{eqn:CurveConnecting}. The coefficients $a_{\tilde{\bff}}$ in $\alpha$ satisfy the following conditions:
\begin{enumerate}[label=(\roman*)]
    \item For each $\sigma \in \Sigma(3)_s$, the sum of all $a_{\tilde{\bff}}$ with $\sigma$ appearing in $\tilde{\bff}$ is zero.
    
    \item For each $\tau \in \Sigma(2)_c$ and $\psi \in M'_\tau$, the sum of all $a_{\tilde{\bff}}$ with $\tau, \psi$ appearing in $\tilde{\bff}$ is zero.
\end{enumerate}
Consider the total order $\le$ on $\Sigma(3)$ determined by the values of $\fp_\sigma$ under the moment map $\mu_{\bT_{f, \bR}}$. For $\sigma < \sigma'$ with $\tau = \sigma \cap \sigma' \in \Sigma(2)_c$, we say that $\tau$ is an outgoing facet of $\sigma$ and incoming facet of $\sigma'$. Now consider a minimal $\sigma \in \Sigma(3)$ with respect to $\le$. By condition (i), we may modify $\alpha$ by elements of form \eqref{eqn:DivisorConnecting}, \eqref{eqn:CurveConnecting} such that each $a_{\tilde{\bff}}$ with $\sigma$ appearing in $\tilde{\bff}$ becomes zero. In doing so, for each outgoing facet $\tau \in \Sigma(2)_c$ of $\sigma$, each $a_{\tilde{\bff}}$ with $(\tau, \sigma)$ appearing in $\tilde{\bff}$ also becomes zero. Condition (ii) then implies that if $\sigma' \in \Sigma(3)$ is such that $\tau$ is incoming for $\sigma'$, then each $a_{\tilde{\bff}}$ with $(\tau, \sigma')$ appearing in $\tilde{\bff}$ becomes zero as well. We then proceed to the next 3-cone under $\le$. Noting that the coefficients from incoming facets have been nullified, we may repeat the above modification process. Eventually, after a traversal of all 3-cones, $\alpha$ is modified into zero.
\end{proof}

% !TEX root = descendantBKMP.tex

\section{All-genus descendant mirror symmetry}\label{sec:allgenus}
In this section, we prove all-genus mirror symmetry for equivariant descendant invariants of $\cX$. The higher-genus B-model of $\cX$ is provided by the Chekhov-Eynard-Orantin topological recursion on the mirror curve, which we first review.

\subsection{Differentials of the second kind}
The main reference of this subsection is \cite{Fay73}. Let $\Cbar_q$ be the compactification of the mirror curve $C_q$ as in Section \ref{sec:mirror-curve} and $\cap$ denote the intersection pairing $H_{1}(\Cbar_q;\bC)\times H_{1}(\Cbar_q;\bC)\to \bC$. We choose a symplectic basis $\{ A_i, B_i: i=1,\dots, \fg\}$ of $(H_{1}(\Cbar_q;\bC), \cap)$:
$$
A_i \cap A_j = B_i\cap B_j =0,\quad A_i\cap B_j = -B_j\cap A_i =\delta_{i, j},\quad i,j\in \{1,\dots,\fg\}.
$$

%Recall that on a Riemann surface $C$, a differential of the first kind on $C$ is a holomorphic 1-form; a differential of the second kind on $C$ is a meromorphic 1-form whose residue at any of its pole is zero; a differential of the third kind on $C$ is a meromorphic 1-form with only simple poles. If $\omega$ is a differential of the first or second kind then $\int_A \omega$ is well-defined for $A\in H_1(C;\bZ)$.

The \emph{fundamental differential of the second kind} on $\Cbar_q$ normalized by $A_1,\dots, A_{\fg}$ is a bilinear symmetric meromorphic differential $B(p_1,p_2)$ characterized by:
\begin{itemize}
\item $B(p_1,p_2)$ is holomorphic everywhere except for a double pole along the diagonal $p_1=p_2$, and,
if $z_1$, $z_2$ are local coordinates on $\Cbar_q\times \Cbar_q$ near $(p,p)$ then
$$
B(z_1,z_2)= \left(\frac{1}{(z_1-z_2)^2} + f(z_1,z_2) \right)dz_1 dz_2
$$
where $f(z_1, z_2)$ is holomorphic and $f(z_1,z_2)=f(z_2,z_1)$.
\item $\displaystyle{\int_{p_1\in A_i} B(p_1,p_2) =0}$, $i=1,\dots, \fg$.
\end{itemize}
It is also called the Bergman kernel in \cite{EO07,EO15}. We adopt the choice of A-cycles $A_1,\dots, A_\fg\in H_1(\Cbar;\bC)$ and B-cycles $B_1,\dots, B_\fg \in H_1(\Cbar_q;\bC)$ in \cite[Section 5.9]{flz2020remodeling}. We choose the fundamental differential of the second kind $B$ to be normalized by $A_1,\dots,A_\fg$.

Following \cite{Eynard11, EO15}, given any $\bsi \in I_\Si$, let
$
\zeta_{\bsi} =\sqrt{\hx-\check{u}^{\bsi}}
$
be a local holomorphic coordinate near the critical point $p_{\bsi} \in C_q$. 
Around $p_{\bsi}$ we have
$$
  \hx = \check{u}^{\bsi} +\zeta_\bsi^2, \qquad 
  \hy = \check{v}^{\bsi} +\sum_{d=1}^\infty h^{\bsi}_d \zeta_\bsi^d
$$
where
$$
  h_1^{\bsi} = \sqrt{ \frac{2}{\frac{d^2\hat{x}}{d\hat{y}^2}(p_{\bsi}) } }. %=\sqrt{\frac{2}{-\det\Hess(W_q^{\bT'})(\bp_\bsi)}}.
$$

For any non-negative integer $d$,  define
$$
\theta_{\bsi}^d(p):= -(2d-1)!! 2^{-d}\Res_{p'\to p_{\bsi}}
B(p,p')\zeta_\bsi^{-2d-1}.
$$
Then $\theta_{\bsi}^d$ satisfies the following properties:
\begin{itemize}
  \item $\theta_{\bsi}^d$ is a meromorphic 1-form on $\Cbar_q$ with
  a single pole of order $2d+2$ at $p_{\bsi}$.

  \item In local coordinate $\zeta_{\bsi} =\sqrt{ \hat{x}-\check{u}^{\bsi}}$ near $p_{\bsi}$, we have
  $$
  \theta_{\bsi}^d = \left( \frac{-(2d+1)!!}{2^d \zeta_\bsi^{2d+2}}
  + f(\zeta_\bsi)\right) d\zeta_\bsi
  $$
  where $f(\zeta_\bsi)$ is analytic around $p_{\bsi}$.
  The residue of $\theta_{\bsi}$ at $p_{\bsi}$ is zero,
  so $\theta_\bsi$ is a differential of the second kind.
  
  \item $\int_{A_i} \theta_\bsi^d=0$, $i=1,\dots,\fg$.
\end{itemize}
The meromorphic 1-form $\theta_{\bsi}^d$ is uniquely characterized by the above
properties; $\theta_{\bsi}^d$ can be viewed as a section in
$H^0(\Cbar_q, \omega_{\Cbar_q}((2d+2) p_{\bsi}))$.

Following Eynard-Orantin \cite{EO15}, let
$$
f_{\bsi'}^{\spa \bsi}(u) :=
\frac{e^{u\check{u}^{\bsi}}}{2\sqrt{\pi u}}
\int_{\gamma_{\bsi}} e^{-u\hat{x}}\theta^0_{\bsi'} \quad \in \bC \formal{ u^{-1} }.
$$

\subsection{Topological recursion and the B-model graph sum}
\label{sec:eynard-orantin}

Let $\omega_{g,n}$ be defined recursively by the Chekhov-Eynard-Orantin topological recursion \cite{EO07}:
$$
\omega_{0,1}= \Phi = \hy d\hx,\qquad  \omega_{0,2}=B(p_1,p_2),
$$
and when $2g-2+n>0$,
\begin{align*}
\omega_{g,n}(p_1,\dots, p_n) = & \sum_{\bsi\in I_\Si}\Res_{p \to p_\bsi}
\frac{\int_{\xi = p}^{\bar{p}} B(p_n,\xi)}{2(\Phi(p)-\Phi(\bar{p}))}
\Bigl( \omega_{g-1,n+1}(p,\bar{p},p_1,\dots, p_{n-1}) \\
& + \sum_{g_1+g_2=g}
\sum_{ \substack{ I\cup J=\{1,..., n-1\} \\ I\cap J =\emptyset } }' \omega_{g_1,|I|+1} (p,p_I)\omega_{g_2,|J|+1}(\bar{p},p_J) \Bigr)
\end{align*}
where the prime in the notation $\sum_{g_1, g_2} \sum_{I, J}'$ means that any term with $(g_1 = 0, I = \emptyset)$ or $(g_2 = 0, J = \emptyset)$ is excluded.

We now state the graph sum formula of \cite{DOSS14} for the B-model invariants $\omega_{g,n}$. Recall that labeled graphs and the A-model graph sum are discussed in Section \ref{sec:Agraph}. We introduce some notation.
\begin{itemize}
\item  For any $\bsi\in I_{\Sigma}$, we define
$$
  \check{h}^{\bsi}_{k} :=\frac{(2k-1)!!}{2^{k-1}}h^\bsi_{2k-1}. 
$$
Then
$$
\check{h}^{\bsi}_k = [u^{1-k}]\frac{u^{3/2}}{\sqrt{\pi}} e^{u\check{u}^{\bsi}}
\int_{p\in \gamma_{\bsi}}e^{-u \hat{x}(p)}\Phi(p).
$$

\item For any $\bsi,\bsi'\in I_\Sigma$, we expand
$$
B(p_1,p_2) =\left( \frac{\delta_{\bsi,\bsi'}}{ (\zeta_\bsi-\zeta_{\bsi'})^2}
+ \sum_{k,l\in \bZ_{\geq 0}} B^{\bsi,\bsi'}_{k,l} \zeta_\bsi^k \zeta_{\bsi'}^l \right) d\zeta_\bsi d\zeta_{\bsi'}
$$
near $p_1=p_{\bsi}$ and $p_2=p_{\bsi'}$, and define
%\begin{equation}\label{eqn:BcheckB}
$$
  \check{B}^{\bsi,\bsi'}_{k,l} := \frac{(2k-1)!! (2l-1)!!}{2^{k+l+1}} B^{\bsi,\bsi'}_{2k,2l}.
$$
Then
\[
\check{B}^{i,j}_{k,l}=[u^{-k}v^{-l}]\left(\frac{uv}{u+v}(\delta_{\bsi,\bsi'}
-\sum_{\bgamma\in I_\Si} f^{\ \bsi}_{\bgamma}(u)f^{\ \bsi'}_{\bgamma}(v))\right)
=[z^{k}w^{l}]\left(\frac{1}{z+w}(\delta_{\bsi,\bsi'}
-\sum_{\bgamma\in I_\Si} f^{\ \bsi}_{\bgamma}(\frac{1}{z})f^{\ \bsi'}_{\bgamma}(\frac{1}{w}))\right).
\]

\end{itemize}

Given a labeled graph $\vGa \in \bGa_{g,n}(\cX)$ with
$L^o(\Ga)=\{l_1,\dots,l_n\}$, % \red{\st{$\kappa$}}
we define its \emph{B-model weight} to be
\begin{align*}
w_B^{\bu}(\vGa) =& (-1)^{g(\vGa)-1}\prod_{v\in V(\Gamma)} \left(\frac{h^{\bsi(v)}_1}{\sqrt{-2}}\right)^{2-2g-\val(v)} \left\langle \prod_{h\in H(v)} \tau_{k(h)} \right\rangle_{g(v)}
\prod_{e\in E(\Gamma)} \check{B}^{\bsi(v_1(e)),\bsi(v_2(e))}_{k(e),l(e)}  \\
& \cdot \prod_{l\in \cL^1(\Gamma)}(\check{\cL}^1)^{\bsi(l)}_{k(l)}
\prod_{i=1}^n (\check{\cL}^\bu)^{\bsi(l_i)}_{k(l_i)}(l_i)
\end{align*}
where
\begin{itemize}
\item (dilaton leaf)
$$
(\check{\cL}^1)^{\bsi}_k = \frac{-1}{\sqrt{-2}}\check{h}^{\bsi}_k;
$$
\item (descendant leaf)
$$
(\check{\cL}^\bu)^{\bsi}_k(l_i) =  \frac{1}{\sqrt{-2}} \theta_{\bsi}^k(p_i).
$$
\end{itemize}

In our notation \cite[Theorem 3.7]{DOSS14} is equivalent to:
\begin{theorem}[Dunin-Barkowski--Orantin--Shadrin--Spitz \cite{DOSS14}] \label{thm:DOSS}
For $2g-2+n>0$, we have
$$
\omega_{g,n} = \sum_{\vGa \in \bGa_{g,n}(\cX)}\frac{w_B^{\bu}(\vGa)}{|\Aut(\vGa)|}.
$$
\end{theorem}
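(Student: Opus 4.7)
The strategy, following Dunin-Barkowski--Orantin--Shadrin--Spitz, is to identify the topological recursion output $\omega_{g,n}$ with the correlators of a semisimple cohomological field theory reconstructed via Givental's formula, and then read off the stable graph decomposition directly. Each ingredient of the stated graph sum already has a spectral-curve interpretation: vertices encode $\psi$-class integrals from the Airy local model at a critical point $p_{\bsi}$; edges encode the ``dressing'' comparing local Bergman kernels to the global $B$; ordinary leaves project 1-forms onto polar parts at critical points; dilaton leaves capture the Taylor tail of $\hy$ beyond the Airy leading term.

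First I would verify the local Airy base case. Restricting the Taylor expansions of $\hx$ and $\hy$ around a single critical point $p_\bsi$ gives a rank-one local spectral curve whose pure Eynard-Orantin recursion, with the standard Bergman kernel $d\zeta_1 d\zeta_2/(\zeta_1 - \zeta_2)^2$, reproduces Kontsevich-Witten $\psi$-class integrals weighted by powers of $h_1^\bsi/\sqrt{-2}$. This identifies the vertex weight with $\langle \prod_{h} \tau_{k(h)} \rangle_{g(v)}$ in exactly the stated normalization, and also produces the dilaton-leaf coefficients $\check h^\bsi_k$ as the higher Taylor coefficients of $\hy$ that the local Airy model discards.

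Next I would implement the global-to-local comparison. The difference $B(p_1,p_2) - \delta_{\bsi,\bsi'}\, d\zeta_\bsi\, d\zeta_{\bsi'}/(\zeta_\bsi - \zeta_{\bsi'})^2$ is regular near every pair of critical points, and its Taylor coefficients are precisely $\check B^{\bsi,\bsi'}_{k,l}$. Unrolling the recursion one step at a time, the residue at $p_\bsi$ splits via the expansion of $B$ into a diagonal piece absorbed into a new vertex and an off-diagonal piece that creates an edge to a lower-order invariant. The defining identity
\[
\theta^k_\bsi(p) = -(2k-1)!!\, 2^{-k}\,\Res_{p'\to p_\bsi}\, B(p,p')\,\zeta_\bsi^{-2k-1}
\]
then lets each external variable $p_i$ of $\omega_{g,n}$ be replaced, order by order in its local expansion at $p_{\bsi(l_i)}$, by the ordinary-leaf insertion $\theta^{k(l_i)}_{\bsi(l_i)}/\sqrt{-2}$.

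The argument closes by induction on $2g - 2 + n > 0$: the right-hand-side sum over $\bGa_{g,n}(\cX)$ satisfies the same recursion (obtained combinatorially by cutting the graph at the marked leaf $l_n$ at either an adjacent edge or a genus-reducing vertex), with matching base cases from Steps~1--2. The main obstacle is the bookkeeping of signs and automorphisms: the overall $(-1)^{g(\vGa)-1}$ must be tracked through the $1/(2(\Phi(p) - \Phi(\bar p)))$ kernel and through the sign in $\theta^d_\bsi$, and the orbit count $1/|\Aut(\vGa)|$ must be matched against the intrinsic symmetries of the recursion (swapping $p \leftrightarrow \bar p$, the $g_1 \leftrightarrow g_2$ and $I \leftrightarrow J$ symmetries, and repeated insertions of identical local data). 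Making this identification combinatorially exact is the crux of the DOSS theorem and would occupy the bulk of the written proof.
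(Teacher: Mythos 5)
The paper does not prove Theorem~\ref{thm:DOSS}; it is cited directly as Theorem~3.7 of \cite{DOSS14}, restated here in the authors' notation. So there is no internal proof to compare your sketch against.

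That said, your outline does track the genuine DOSS strategy at a high level: the key idea is indeed that each ramification point contributes a local Airy (Kontsevich--Witten) model, the deviation of the global Bergman kernel from the local one is packaged via Laplace transform into an $R$-matrix, and the topological recursion is then identified with the Givental group action on a product of KW potentials. Where your sketch is thin is precisely at the step the paper's definitions are built around: the formulas
\[
\check B^{\bsi,\bsi'}_{k,l}=[z^{k}w^{l}]\left(\frac{1}{z+w}\Bigl(\delta_{\bsi,\bsi'}
-\sum_{\bgamma\in I_\Si} f^{\ \bsi}_{\bgamma}(\tfrac{1}{z})f^{\ \bsi'}_{\bgamma}(\tfrac{1}{w})\Bigr)\right),
\qquad
f_{\bsi'}^{\spa \bsi}(u) =
\frac{e^{u\check{u}^{\bsi}}}{2\sqrt{\pi u}}
\int_{\gamma_{\bsi}} e^{-u\hat{x}}\theta^0_{\bsi'},
\]
are exactly the DOSS mechanism for converting the Bergman-kernel Taylor coefficients $B^{\bsi,\bsi'}_{2k,2l}$ into the Givental $R$-matrix via Laplace transform along the steepest-descent contours $\gamma_\bsi$; the unitarity-like identity hidden in the $\frac{1}{z+w}$ factor is what makes the edge weight $\check B$ coincide with the Givental edge contribution $\frac{1}{z+w}(1-R^T(-z)R(-w))$. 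Your proposal gestures at this with ``Laplace transform'' and ``dressing,'' but the actual verification that the coefficients defined by residues at $p_\bsi$ satisfy the quadratic relation needed for Givental's graph sum is the core content of \cite{DOSS14}, and your step of ``the right-hand side satisfies the same recursion, obtained by cutting at $l_n$'' is not how DOSS argue --- they instead match both sides against the Givental formula rather than running a parallel induction. As written, the induction you propose would need a separate and nontrivial verification that the graph-sum side satisfies the Eynard--Orantin recursion, which is not obviously easier than the DOSS route. The sign $(-1)^{g(\vGa)-1}$ and the normalization $h_1^\bsi/\sqrt{-2}$ also need to be produced, not just ``tracked,'' and your sketch does not derive them.
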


\subsection{Identification of graph sums}
We now state the identification of the A- and B-model graph sums obtained in \cite{flz2020remodeling}. For $\bsi \in I_\Si$ and $k \in \bZ_{\ge 1}$, define
$$
  \hxi_{\bsi}^k := (-1)^k \left(\frac{d}{d\hx} \right)^{k-1} \frac{\theta_{\bsi}^0}{d\hx}
$$
which is a meromorphic function on $\Cbar_q$. Moreover, define
$$
  \htheta_{\bsi}^0 := \theta_{\bsi}^0, \qquad \qquad \htheta_{\bsi}^k := d \hxi_{\bsi}^k, \quad k \in \bZ_{\ge 1}.
$$
As shown in \cite[Proposition 6.6]{flz2020remodeling}, the differentials $\{\htheta_{\bsi}^k\}_{k \in \bZ_{\ge 0}}$ are related to $\{\theta_{\bsi}^k\}_{k \in \bZ_{\ge 0}}$, which define the B-model descendant leaf, via the \emph{B-model $R$-matrix}.

Moreover, for $i=1,\dots,n$ and $\bsi\in I_\Si$, introduce formal variables
$$
\tilde{\bu}_i^\bsi(z)=\sum_{a\geq 0}(\tilde{u}_i)^\bsi_az^a:=\sum_{\bsi'\in I_\Si}
\left(\frac{\bu_i^{\bsi'}(z)}{\sqrt{\Delta^{\bsi'}(\btau)} }
S^{\widehat{\underline{\bsi}} }_{\spa \widehat{\underline{\bsi'}}}(z)\right)_+.
$$
Define the flat coordinates $\overline \bu_i^\bsi$ by
$$
\sum_{\bsi\in I_\Si} \bu^\bsi_i(z) \phi_\bsi(q)= \sum_{\bsi\in I_\Si} \overline \bu_i^\bsi(z) \phi_\bsi(0).
$$
Recall that $(S^{\hubsi}_{\ \brho})$ is unitary, i.e. $\sum_{\brho} S^{\hubrho}_{\ \bsi}(z) S^{\hubrho}_{\ \bsi'}(-z)=\frac{1}{\Delta^{\bsi}}\delta_{\bsi\bsi'}$. Therefore we have
\begin{equation}\label{eqn:tu}
	\sum_{\bsi\in I_\Si} \bigl( S^{\hubsi}_{\ \brho}(-z) \tilde \bu^{\bsi}_i(z)\bigr)_+=\sum_{\bsi\in I_\Si} \left( \sum_{\bsi'\in I_\Si} S^{\hubsi}_{\ \bsi'}(z) S^{\hubsi}_{\ \brho}(-z) \overline \bu^{\bsi'}_i(z) \right)=\frac{\overline \bu^{\brho}_i(z)}{\Delta^{\brho}}.
\end{equation}

With the above notation, \cite[Theorem 7.2]{flz2020remodeling} provided the following identification of weights in the graph sums (Theorems \ref{thm:Zong}, \ref{thm:DOSS}).

\begin{theorem}\label{thm:graph-match}
Let $2g-2+n>0$. For any $\vGa\in \bGa_{g,n}(\cX)$, we have
$$
w^\bu_B(\vGa)|_{ \frac{1}{\sqrt{-2}}\htheta_{\bsi}^a(p_i)=-(\tilde{u}_i)^\bsi_a}
=(-1)^{g(\vGa)-1+n} w^\bu_A(\vGa)
$$
under the mirror map $\btau = \btau(q)$.
\end{theorem}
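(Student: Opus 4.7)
The plan is to verify the identity by matching, building block by building block, the contributions from vertices, edges, dilaton leaves, and descendant leaves to the A- and B-model graph weights. The substitution $\frac{1}{\sqrt{-2}}\htheta_\bsi^a(p_i) = -(\tilde{u}_i)^\bsi_a$ only directly affects descendant leaves; the other three building blocks will match after a preliminary rewriting of the B-model weight in which the descendant forms $\theta_\bsi^k$ are re-expressed in terms of $\htheta_\bsi^k$.

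First I would invoke \cite[Proposition 6.6]{flz2020remodeling} to rewrite each $\theta_\bsi^k$ as a linear combination of $\htheta_\brho^a$, where the coefficients are extracted from the B-model $R$-matrix $R^B(z)$ constructed from the asymptotic expansion of the oscillatory integrals $f^\bsi_{\bsi'}(u)$. This reorganizes the B-model weight so that the descendant leaves carry only $\htheta$'s (on which the substitution acts), while $R^B$ is redistributed into what becomes the edge and dilaton-leaf contributions. The explicit expression for $\check B^{\bsi,\bsi'}_{k,l}$ recorded in Section \ref{sec:eynard-orantin} already displays the B-model edge factor in exactly the shape of the A-model edge factor $\cE^{\bsi,\bsi'}_{k,l}$, and similarly the formula for $\check h^\bsi_k$ aligns the dilaton-leaf factor with $(\cL^1)^\bsi_k$, once $R^B$ is replaced by the A-model $R$-matrix $R$.

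The central step is the identification $R^B(z) = R(z)$ under the mirror map $\btau=\btau(q)$. My approach would be to check that $R^B$ satisfies all four characterizing properties of the A-model $R$-matrix in Theorem \ref{R-matrix}: the entries lie in the appropriate power-series ring starting with $\one$, unitarity $R^B(-z)^T R^B(z)=\one$ holds, the $\tQ,\tau''\to 0$ limits produce the prescribed Bernoulli-polynomial expressions, and $\Psi R^B(z) e^{U/z}$ is a fundamental solution of the big QDE. The first three are immediate from the defining integrals on each affine chart of the mirror curve. The last is the genuinely hard analytic input: it requires identifying the equivariant quantum connection with the B-model Gauss–Manin connection via the Frobenius algebra isomorphism \eqref{eqn:FrobIso}, and uses Proposition \ref{prop:IntSolvesPF} to verify that oscillatory integrals of $\theta_\bsi^0$ satisfy the equivariant GKZ system \eqref{eqn:GKZ}, combined with a stationary-phase expansion at each critical point $p_\bsi$. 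Once $R^B = R$ is in hand, the vertex normalization identity $h_1^\bsi = \sqrt{-2/\Delta^\bsi(\btau)}$ drops out by comparing the leading stationary-phase coefficient with the normalization of $\hat\phi_\bsi(\btau)$, using the isomorphism between $QH^*_{\bT'}(\cX)$ and $\Jac(W^{\bT'}_q)$. The descendant-leaf match then reduces, via formula \eqref{eqn:tu}, to the equivariant genus-zero mirror theorem (Theorem \ref{thm:EquivMir}), which expresses the matrix entries of $\cS$ in terms of oscillatory integrals of $\hy\, d\hx$ along the Lefschetz thimbles.

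Finally I would track signs. The prefactor $(-1)^{g(\vGa)-1}$ is explicit in $w_B^\bu$; the substitution of descendant leaves contributes $(-1)^n$; and the passage between the B-side edge kernel (in variables $u,v$) and the A-side kernel (in $z=1/u$, $w=1/v$, with the sign flip $z\mapsto -z$ matching $R(-z)$) contributes $(-1)^{|E(\Ga)|}$. Using the identity $g(\vGa)-1=\sum_v(g(v)-1)+|E(\Ga)|$, these combine to $(-1)^{g(\vGa)-1+n}$ as required. The main obstacle throughout is the $R$-matrix identification in the third paragraph: it is a non-trivial analytic comparison, not a combinatorial bookkeeping, and it is the place where the equivariant Gromov–Witten theory of $\cX$ genuinely engages with the Landau–Ginzburg mirror $(C_q,\hx)$. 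The remaining matches are then routine given the $R$-matrix equality and the Frobenius isomorphism.
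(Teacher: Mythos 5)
This theorem is not proved in the paper you are working from; it is quoted verbatim from \cite[Theorem 7.2]{flz2020remodeling}, and the present paper's role is only to reformulate it in the notation of Sections \ref{sec:Agraph}--\ref{sec:eynard-orantin} and then use it as a black box in the proof of Theorem \ref{thm:All-genus-mirror}. Your proposal is therefore a reconstruction of the cited proof rather than a comparison with anything in this paper, and with that caveat your outline correctly identifies the load-bearing step: re-expressing the B-model descendant forms $\theta_\bsi^k$ in terms of $\htheta_\brho^a$ via a B-model $R$-matrix $R^B$ (this is exactly \cite[Proposition 6.6]{flz2020remodeling}, as the paper itself notes), and then proving $R^B=R$ under the mirror map via the uniqueness characterization in Theorem \ref{R-matrix}. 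That characterization, in turn, rests on the Frobenius isomorphism \eqref{eqn:FrobIso} and the genus-zero mirror theorem; you are right that this is the genuinely analytic input. The vertex normalization $h_1^\bsi=\sqrt{-2/\Delta^\bsi(\btau)}$ then makes the vertex and dilaton-leaf factors match, and formula \eqref{eqn:tu} reduces the descendant-leaf factor to genus-zero data.

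Your sign bookkeeping, however, contains an error, even though it lands on the right answer. The edge kernels match \emph{exactly}: the definition of the A-model edge weight $\cE^{\bsi,\bsi'}_{k,l}$ already has $R(-z)R(-w)$ built in, and the second displayed form of $\check{B}^{\bsi,\bsi'}_{k,l}$ in Section \ref{sec:eynard-orantin} is written in the same variables $z=1/u$, $w=1/v$ with $f^{\ \bsi}_{\bgamma}(1/z)$ in place of $R_\bgamma^{\spa\bsi}(-z)$; once $R^B=R$ is established (equivalently $f^{\ \bsi}_\bgamma(1/z)=R_\bgamma^{\spa\bsi}(-z)$), there is no $(-1)^{|E(\Ga)|}$ and no compensating $(-1)^{\sum_v(g(v)-1)}$ from the vertices (which also match exactly, since $h_1^\bsi/\sqrt{-2}=1/\sqrt{\Delta^\bsi(\btau)}$ is raised to the power $2-2g(v)-\val(v)$). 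The correct and much shorter tally is: the prefactor $(-1)^{g(\vGa)-1}$ is explicit in $w_B^\bu$, and the substitution $\frac{1}{\sqrt{-2}}\htheta_\bsi^a(p_i)=-(\tilde u_i)^\bsi_a$ contributes one sign per ordinary leaf, hence $(-1)^n$. Those two alone give $(-1)^{g(\vGa)-1+n}$; invoking the genus identity $g(\vGa)-1=\sum_v(g(v)-1)+|E(\Ga)|$ is unnecessary, and the way you inserted it would require a compensating vertex sign that is simply not there.
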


\subsection{Unstable cases}
In preparation for proving all-genus mirror symmetry, we use the genus-zero, 1-pointed descendant mirror theorem (Theorem \ref{thm:EquivMir}) to deduce the following lemma on descendant leaves.

\begin{lemma}\label{lem:LeafInt}
For $\bsi \in I_\Si$ and $k \in \bZ_{\ge 0}$, and any $\cE \in K^+_{\bT'}(\cX)$, we have
$$  
  2\pi\sqrt{-1} \int_{p\in \mir^+_{\bT'}(\cE)} e^{- \hx(p)/z} \frac{(-\htheta_{\bsi}^k(p))}{\sqrt{-2}} = (-1)^{k+1} z^{-k-2} \left\llangle \hat{\phi}_{\bsi}(\btau(q)), \frac{\kappa_z^{\bT'}(\cE)}{z+\hpsi} \right\rrangle^{\cX, \bT'}_{0,2}  = (-1)^k z^{-k-2} S^{\ \kappa_z^{\bT'}(\cE)}_{\hubsi}(-z)
$$
under the mirror map $\btau = \btau(q)$.
\end{lemma}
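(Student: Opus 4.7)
The second equality in the displayed formula is purely notational: from the definition of the $\cS$-operator one has $S^{\spa\kappa_z^{\bT'}(\cE)}_{\hubsi}(-z)=-\left\llangle\hat\phi_\bsi(\btau),\tfrac{\kappa_z^{\bT'}(\cE)}{z+\hpsi}\right\rrangle^{\cX,\bT'}_{0,2}$, so the content is the first equality. My plan is first to reduce to $k=0$ by integration by parts, and then to prove the $k=0$ case by differentiating the genus-zero, $1$-pointed mirror theorem (Theorem \ref{thm:EquivMir}) in a canonical coordinate.

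For the reduction, I would use that $\htheta^k_\bsi=d\hxi^k_\bsi$ and that, from the explicit formula $\hxi^k_\bsi=(-1)^k(d/d\hx)^{k-1}(\theta^0_\bsi/d\hx)$, one has $\hxi^k_\bsi\,d\hx=-\htheta^{k-1}_\bsi$ for $k\ge 1$. On the cycle $\gamma=\mir^+_{\bT'}(\cE)$, integration by parts gives
$$\int_\gamma e^{-\hx/z}\htheta^k_\bsi=-\tfrac{1}{z}\int_\gamma e^{-\hx/z}\htheta^{k-1}_\bsi,$$
with boundary terms vanishing because $e^{-\hx/z}$ decays at $\Re(\hx)\to+\infty$ and $\hxi^k_\bsi$ vanishes at each puncture of $C_q$ for $k\ge 1$ (a local check using $d\hx=-dX/X$); the cycle avoids the ramification points, which are the only interior singularities of $\hxi^k_\bsi$. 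Iterating produces $(-1/z)^k\int_\gamma e^{-\hx/z}\theta^0_\bsi$, and the resulting power of $-1/z$ precisely matches the ratio between the right-hand side for general $k$ and for $k=0$, so it suffices to treat $k=0$.

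For the base case, I would apply $\partial/\partial u^\bsi$ to both sides of Theorem \ref{thm:EquivMir}. On the A-side, inserting $\phi_\bsi(\btau)=\hat\phi_\bsi(\btau)/\sqrt{\Delta^\bsi(\btau)}$ yields
$$\partial_{u^\bsi}\left\llangle\tfrac{\kappa_z^{\bT'}(\cE)}{z(z+\hpsi)}\right\rrangle^{\cX,\bT'}_{0,1}=\tfrac{1}{z\sqrt{\Delta^\bsi(\btau)}}\left\llangle\hat\phi_\bsi(\btau),\tfrac{\kappa_z^{\bT'}(\cE)}{z+\hpsi}\right\rrangle^{\cX,\bT'}_{0,2}.$$
On the B-side, the Frobenius isomorphism \eqref{eqn:FrobIso} identifies $u^\bsi$ with the critical value $\check{u}^\bsi=\hx(p_\bsi)$, and the task is to establish the Gauss--Manin-type variational identity
$$\partial_{\check{u}^\bsi}(e^{-\hx/z}\hy\,d\hx)\equiv \tfrac{z}{\sqrt{-2\,\Delta^\bsi(\btau)}}\,e^{-\hx/z}\,\theta^0_\bsi\pmod{\text{exact forms}}.$$
This is pinned down locally at $p_\bsi$ by $\hx=\check{u}^\bsi+\zeta_\bsi^2$ and $\hy=\check{v}^\bsi+h_1^\bsi\zeta_\bsi+O(\zeta_\bsi^2)$; the global normalization is then fixed by the A-cycle normalization of $\theta^0_\bsi$ and the identification $\sqrt{\Delta^\bsi(\btau)}\propto h_1^\bsi$ furnished by Theorem \ref{R-matrix}. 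Integrating against $\gamma$ (boundary terms again vanishing) and comparing with the A-side then produces the $k=0$ identity.

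The hardest step will be the B-model variational identity above, and in particular the precise matching of the constant $\sqrt{-2\,\Delta^\bsi(\btau)}$. The local singular-part calculation at each $p_\bsi$ is immediate, but its globalization in the relative de Rham cohomology of the oscillatory integrand requires the A-cycle normalization of $\theta^0_\bsi$ in the non-compact orbifold setting together with the mirror identification of the leaf factor; this is essentially the $(g,n)=(0,1)$ content of the $R$-matrix matching underlying \cite[Theorem 7.2]{flz2020remodeling}, and I expect an adaptation of those arguments to suffice.
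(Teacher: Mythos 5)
Your reduction to $k=0$ via integration by parts is correct (and matches the paper's argument, run in the opposite direction as an induction on $k$): the identity $\hxi^k_\bsi\,d\hx = -\htheta^{k-1}_\bsi$ for $k\ge 1$ indeed holds, and iterating gives $\int_\gamma e^{-\hx/z}\htheta^k_\bsi = (-1/z)^k\int_\gamma e^{-\hx/z}\theta^0_\bsi$, which accounts for the $(-1)^kz^{-k}$ ratio between the $k$th and zeroth cases. The second equality in the statement is indeed a rewriting via the $\cS$-operator.

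The gap is in your $k=0$ base case. You propose to apply $\partial/\partial u^\bsi$ to both sides of Theorem \ref{thm:EquivMir}. On the A-side this is a well-defined operation on the big phase space $\hH$. But Theorem \ref{thm:EquivMir} is an equality of functions only along the $\fp$-dimensional small phase space $\btau=\btau(q)$: the B-side (the oscillatory integral) is defined only for the family of mirror curves $C_q$ over $q\in\bC^\fp$. There are $E = \fp+\fg+1 > \fp$ canonical coordinates, so $\partial_{u^\bsi}$ is generically not tangent to the small phase space, and the critical values $\check{u}^\bsi$ cannot be independently varied inside the family. Consequently, your proposed Gauss--Manin-type identity $\partial_{\check{u}^\bsi}(e^{-\hx/z}\hy\,d\hx)\equiv \frac{z}{\sqrt{-2\Delta^\bsi}}e^{-\hx/z}\theta^0_\bsi$ (mod exact) has no well-defined left-hand side within the geometry available to you, and the local-to-global argument you sketch cannot close because the normalization you need lives in ``missing'' deformation directions.

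The paper avoids this by a different and essentially unavoidable move: it uses the Jacobian-ring isomorphism \eqref{eqn:FrobIso} to write $\phi_\bsi(\btau(q))$ as a \emph{quadratic} polynomial $\sum_i A_\bsi^i(q)\,H_{a_i}\star_\btau H_{b_i} - \sum_a B_\bsi^a(q)\,H_a + C_\bsi(q)\,\one$ in the divisor classes (this is where the $\bC^3$-orbifold dimension enters), and invokes \cite[Proposition 6.3]{flz2020remodeling} to transport this to the B-side identity $\frac{h_1^\bsi\htheta^0_\bsi}{2} = \sum_i A_\bsi^i\,\partial^2_{\tau_{a_i}\tau_{b_i}}\Phi + \sum_a B_\bsi^a\,d(\partial_{\tau_a}\Phi/d\hx) + C_\bsi\,d(d\hy/d\hx)$. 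The corresponding second-order differential operator $\sqrt{\Delta^\bsi}\bigl(z^2\sum_i A_\bsi^i\partial^2_{\tau_{a_i}\tau_{b_i}} + z\sum_a B_\bsi^a\partial_{\tau_a} + C_\bsi\bigr)$ lives entirely in the $\tau_a$-directions, so it can legitimately be applied to both sides of Theorem \ref{thm:EquivMir}; the $z$-weights are exactly what make both the A-side (via the QDE/TRR structure) and the B-side (via integration by parts, as in your reduction step) come out right. The normalization $h_1^\bsi = \sqrt{-2/\Delta^\bsi}$ is then a separate, previously established computation, not something you need to re-derive from a variational formula. If you want to rescue the canonical-coordinate idea, you would have to first extend the mirror family to a full $E$-parameter unfolding on the B-side and re-prove the equality there, which is a substantially larger project than the lemma calls for.
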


\begin{proof}
We prove the first equality, starting with $k = 0$. Theorem \ref{thm:EquivMir} implies that
\begin{equation}\label{eqn:LeafBaseEqn}
  2\pi\sqrt{-1} \int_{p\in \mir^+_{\bT'}(\cE)} e^{- \hx(p)/z} \Phi = -\left\llangle \one, \frac{\kappa_z^{\bT'}(\cE)}{z+\hpsi} \right\rrangle^{\cX, \bT'}_{0,2}. 
\end{equation}
We proceed as in the derivation of \cite[Equation (7.4)]{flz2020remodeling}. By the isomorphism \eqref{eqn:FrobIso} and \cite[Proposition 6.3]{flz2020remodeling}, with the $\bST$-basis
$$
  \text{$\one$}, \quad H_1, \dots, H_{\fp}, \quad H_{a_1} \star_{\btau} H_{b_1}, \dots, H_{a_{\fg}} \text{$\star_{\btau}$} H_{b_{\fg}}
$$
of $QH^*_{\bT'}(\cX)$ where $a_1, b_1, \dots, a_{\fg}, b_{\fg} \in \{1, \dots, \fp\}$, if
$$
  \phi_{\bsi}(\btau(q)) = \sum_{i = 1}^{\text{$\fg$}} A_{\bsi}^i(q) H_{a_i} \star_{\btau} H_{b_i} - \sum_{a = 1}^{\text{$\fp$}} B_{\bsi}^a(q) H_a + C_{\bsi}(q) \one,
$$
then
$$
  \frac{h_1^{\bsi}\htheta_{\bsi}^0}{2} = \sum_{i = 1}^{\text{$\fg$}} A_{\bsi}^i(q) \frac{\partial^2 \Phi}{\partial\tau_{a_i}\partial\tau_{b_i}} + \sum_{a = 1}^{\fp} B_{\bsi}^a(q) d\left(\frac{\frac{\partial\Phi}{\partial\tau_a}}{d\hx}\right) + C_{\bsi}(q) d\left(\frac{d\hy}{d\hx}\right).
$$
Moreover, it is computed in \cite[Section 7.1]{flz2020remodeling} that
$$
  h_1^{\bsi}(q)= \sqrt{ \frac{-2}{\Delta^\bsi(\btau)} } \bigg|_{\btau = \btau(q)}.
$$
Thus multiplying the above by $\sqrt{\Delta^\bsi(\btau)} \big|_{\btau = \btau(q)}$ provides a similar relation between $\hat{\phi}_{\bsi}(\btau(q))$ and $-\frac{\htheta_{\bsi}^0}{\sqrt{-2}}$. Now applying the differential operator
$$
  \sqrt{\Delta^\bsi(\btau)} \big|_{\btau = \btau(q)}  \left( z^2 \sum_{i = 1}^{\text{$\fg$}} A_{\bsi}^i(q) \frac{\partial^2 }{\partial\tau_{a_i}\partial\tau_{b_i}} + z \sum_{a = 1}^{\fp} B_{\bsi}^a(q) \frac{\partial}{\partial\tau_a} + C_{\bsi}(q) \right)
$$
to the two sides of \eqref{eqn:LeafBaseEqn} yields the $k=0$ case of the lemma; here the left hand side follows from integration by parts. The case $k \ge 1$ then follows by inductively applying integration by parts.
\end{proof}

We may use Lemma \ref{lem:LeafInt} to deduce the following analog of Theorem \ref{thm:EquivMir} for the genus-zero, 2-pointed case.

\begin{proposition}\label{prop:02Case}
For any $\cE_1, \cE_2 \in K^+_{\bT'}(\cX)$, we have
$$
    (2\pi\sqrt{-1})^2 \int_{p_1\in\mir^+_{\bT'}(\cE_1)} \int_{p_2\in\mir^+_{\bT'}(\cE_2)} e^{- (\hx(p_1)/z_1+\hx(p_2)/z_2)} \omega_{0,2}(p_1,p_2) = - \left\llangle \frac{\kappa_{z_1}^{\bT'}(\cE_1)}{z_1(z_1 + \hpsi)}, \frac{\kappa_{z_2}^{\bT'}(\cE_2)}{z_2(z_2 + \hpsi)} \right\rrangle^{\cX, \bT'}_{0,2}
$$
under the mirror map $\btau = \btau(q)$.
\end{proposition}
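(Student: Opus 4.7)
The plan is to deduce the $(0,2)$ case from the $(0,1)$ case (Lemma \ref{lem:LeafInt}) via the $\cS$-operator unitarity identity \eqref{eqn:SIdentity}, combined with a B-model kernel identity for the fundamental bidifferential $B = \omega_{0,2}$. Substituting $z_i \mapsto -z_i$ in \eqref{eqn:SIdentity} (which converts $\tfrac{1}{z_i - \hpsi}$ into $-\tfrac{1}{z_i+\hpsi}$) and choosing $\{\phi_\alpha\} = \{\hat\phi_{\bsi}(\btau(q))\}_{\bsi\in I_\Sigma}$ yields
$$
\left\llangle \tfrac{\kappa_{z_1}^{\bT'}(\cE_1)}{z_1+\hpsi}, \tfrac{\kappa_{z_2}^{\bT'}(\cE_2)}{z_2+\hpsi} \right\rrangle^{\cX,\bT'}_{0,2} = -\tfrac{1}{z_1+z_2}\sum_{\bsi\in I_\Sigma} \left\llangle \hat\phi_{\bsi}, \tfrac{\kappa_{z_1}^{\bT'}(\cE_1)}{z_1+\hpsi} \right\rrangle^{\cX,\bT'}_{0,2} \left\llangle \hat\phi_{\bsi}, \tfrac{\kappa_{z_2}^{\bT'}(\cE_2)}{z_2+\hpsi} \right\rrangle^{\cX,\bT'}_{0,2}.
$$

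Applying Lemma \ref{lem:LeafInt} at $k=0$ (where $\htheta^0_{\bsi} = \theta^0_{\bsi}$) to each factor converts the 1-point A-model descendant into an oscillatory integral of $\theta^0_{\bsi}$ against the corresponding mirror cycle. Assembling the two factors via the identity above and tracking the signs arising from $\sqrt{-2}$ and $(-z)^{-2}$, the right-hand side of the proposition becomes
$$
-\tfrac{(2\pi\sqrt{-1})^2 z_1 z_2}{2(z_1+z_2)} \int_{p_1 \in \mir^+_{\bT'}(\cE_1)} \int_{p_2 \in \mir^+_{\bT'}(\cE_2)} e^{-\hx(p_1)/z_1 - \hx(p_2)/z_2} \sum_{\bsi \in I_\Sigma} \theta^0_{\bsi}(p_1)\theta^0_{\bsi}(p_2).
$$
Thus the proposition is equivalent to the B-model kernel identity
$$
\int \int e^{-\hx(p_1)/z_1 - \hx(p_2)/z_2} B(p_1,p_2) = -\tfrac{z_1 z_2}{2(z_1+z_2)} \int \int e^{-\hx(p_1)/z_1 - \hx(p_2)/z_2} \sum_{\bsi \in I_\Sigma} \theta^0_{\bsi}(p_1)\theta^0_{\bsi}(p_2)
$$
understood as an identity of formal asymptotic series in $z_1, z_2$.

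To establish this B-model identity, the plan is to expand both sides around each pair of critical points $(p_{\bsi_1}, p_{\bsi_2})$ of $\hx$ in the local coordinates $\zeta_{\bsi}=\sqrt{\hx-\check u^{\bsi}}$, perform Laplace-method (stationary-phase) asymptotics, and match the coefficients. The off-diagonal ($\bsi_1 \neq \bsi_2$) contribution is controlled by the expansion coefficients $B^{\bsi_1,\bsi_2}_{k,l}$ of $B$, which through the formula for $\check B^{\bsi,\bsi'}_{k,l}$ in Section \ref{sec:eynard-orantin} are expressed in terms of the B-model $R$-matrix data $f^{\bsi}_{\bsi'}$. The resulting combinatorial identity is the B-model analog of the $R$-matrix unitarity $R^T(-z)R(z) = \one$ from Theorem \ref{R-matrix}; under the A-B $R$-matrix identification of \cite{flz2020remodeling} (the same identification used in Theorem \ref{thm:graph-match}), it is precisely the $\cS$-operator identity \eqref{eqn:SIdentity} read on the B-model side.

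The main obstacle lies in handling the diagonal singularity of $B$ at $p_1 = p_2$, which does not appear in the $\theta^0_{\bsi}\otimes\theta^0_{\bsi}$ sum on the right. We expect to separate the singular local-diagonal contribution from the regular off-diagonal contribution using the residue characterization $\theta^0_{\bsi}(p) = -\Res_{p'\to p_{\bsi}} B(p,p')\,\zeta_{\bsi}(p')^{-1}$, treating the Bergman kernel contributions near $p_1 = p_2$ as a regularized limit. Once the singular part is isolated, the remaining algebraic identity reduces to the unitarity of the $R$-matrix combined with the explicit formulas for $h_1^{\bsi}$ and the coefficients $B^{\bsi,\bsi'}_{k,l}$ from \cite{flz2020remodeling}.
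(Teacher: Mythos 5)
Your proposal reconstructs the exact chain of equalities used in the paper's proof: the oscillatory integral of $\omega_{0,2}$ factors through a sum over $\bsi \in I_\Sigma$ of products of Laplace transforms of $\theta^0_{\bsi}$, each converted to a $1$-pointed A-model descendant generating function by Lemma~\ref{lem:LeafInt} at $k=0$, and the two factors are then assembled by the $\cS$-operator identity~\eqref{eqn:SIdentity}. You run the chain in the opposite direction (starting from the A-model side and reducing to a B-model identity), which is an equivalent rearrangement, and your bookkeeping of signs and powers of $z_i$ and $\sqrt{-2}$ checks out.

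The genuine gap is the B-model kernel identity
\[
\int\!\!\int e^{-\hx(p_1)/z_1 - \hx(p_2)/z_2}\,\omega_{0,2}(p_1,p_2) \;=\; -\frac{z_1 z_2}{2(z_1+z_2)}\sum_{\bsi\in I_\Sigma}\int\!\!\int e^{-\hx(p_1)/z_1 - \hx(p_2)/z_2}\,\theta^0_{\bsi}(p_1)\,\theta^0_{\bsi}(p_2),
\]
which you leave unproven, yourself flagging the double pole of the Bergman kernel along the diagonal as an unresolved obstacle. Your sketch---stationary phase at pairs of critical points plus $R$-matrix unitarity, with the singular diagonal contribution ``isolated as a regularized limit''---captures the right structure, but the regularization is precisely the nontrivial content of the claim. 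The paper does not re-derive this: it invokes \cite[Lemma 6.9]{flz2020remodeling}, where the Laplace transform of the fundamental bidifferential along Lefschetz thimbles is computed and the $\delta_{\bsi,\bsi'}$ term in the coefficients $\check{B}^{\bsi,\bsi'}_{k,l}$ arises exactly from the diagonal pole. Either cite that lemma directly, or carry out the residue/contour-deformation argument that pins down the diagonal contribution; the ``expect'' in your final paragraph is the step that must be turned into an argument.
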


\begin{proof}
We compute
\begin{align*}
  & (2\pi\sqrt{-1})^2 \int_{p_1\in\mir^+_{\bT'}(\cE_1)} \int_{p_2\in\mir^+_{\bT'}(\cE_2)} e^{- (\hx(p_1)/z_1+\hx(p_2)/z_2)} \omega_{0,2}(p_1,p_2) \\
  & = - \frac{z_1z_2}{2(z_1+z_2)} \sum_{\bsi \in I_\Si} (2\pi\sqrt{-1})^2 \int_{p_1\in\mir^+_{\bT'}(\cE_1)} \int_{p_2\in\mir^+_{\bT'}(\cE_2)} e^{- (\hx(p_1)/z_1+\hx(p_2)/z_2)} \htheta_{\bsi}^0(p_1)\htheta_{\bsi}^0(p_2) \\
  & = \frac{z_1^{-1}z_2^{-1}}{z_1+z_2} \sum_{\bsi \in I_\Si} \left\llangle \hat{\phi}_{\bsi}(\btau(q)), \frac{\kappa_{z_1}^{\bT'}(\cE_1)}{z_1+\hpsi} \right\rrangle^{\cX, \bT'}_{0,2} \left\llangle \hat{\phi}_{\bsi}(\btau(q)), \frac{\kappa_{z_2}^{\bT'}(\cE_2)}{z_2+\hpsi} \right\rrangle^{\cX, \bT'}_{0,2}\\
  & = - \left\llangle \frac{\kappa_{z_1}^{\bT'}(\cE_1)}{z_1(z_1 + \hpsi)}, \frac{\kappa_{z_2}^{\bT'}(\cE_2)}{z_2(z_2 + \hpsi)} \right\rrangle^{\cX, \bT'}_{0,2}.
\end{align*}
Here, the first equality follows from the treatment of oscillatory integrals of $\omega_{0,2}$ in the proof of \cite[Lemma 6.9]{flz2020remodeling}; the second equality follows from Lemma \ref{lem:LeafInt}; the third equality follows from identity \eqref{eqn:SIdentity}.
\end{proof}

\subsection{All-genus descendant mirror theorem}
Finally, we obtain all-genus descendant invariants of $\cX$ via oscillatory integrals of the topological recursion invariants $\omega_{g,n}$ along the mirror cycles provided by the isomorphism
$$
    \mir^+_{\bT'}: K^+_{\bT'}(\cX) \to H_1(\tC_q, \Re (\hx) \gg 0; \bZ).
$$

\begin{definition}\label{def:A-model-Zgn} 
Given $\cE_1, \dots, \cE_n \in K^+_{\bT'}(\cX)$, we define
$$
    Z^+_{g,n}(\cE_1, \dots, \cE_n) := \double{\frac{\kappa_{z_1}^{\bT'}(\cE_1)}{z_1(z_1 + \hpsi)},\dots,\frac{\kappa_{z_n}^{\bT'}(\cE_n)}{z_n(z_n + \hpsi)} }^{\cX, \bT'}_{g,n}.
$$  
\end{definition}

In particular, $Z^+_{0,1}(\cE_1) = Z_{\bT'}(\cE_1)$. 

\begin{definition} \label{def:B-model-Zgn} 
Given $\gamma_1, \dots, \gamma_n \in H_1(\tC_q, \Re (\hx) \gg 0; \bZ)$, we define
$$
    \chZ^+_{g,n}(\gamma_1, \dots, \gamma_n) := (2\pi\sqrt{-1})^n \int_{p_1\in\gamma_1}\cdots\int_{p_n\in\gamma_n } e^{- (\hx(p_1)/z_1+\cdots+\hx(p_n)/z_n)} \omega_{g,n}(p_1,\dots,p_n).
$$
\end{definition}

\begin{theorem}\label{thm:All-genus-mirror}
For any $g \in \bZ_{\ge 0}$, $n \in \bZ_{\ge 1}$, and $\cE_1, \dots, \cE_n \in K^+_{\bT'}(\cX)$, we have
$$
  \chZ^+_{g,n}\left(\mir^+_{\bT'}(\cE_1), \dots, \mir^+_{\bT'}(\cE_n) \right) = (-1)^{g-1+n} Z^+_{g,n}(\cE_1, \dots, \cE_n)
$$
under the mirror map $\btau = \btau(q)$.
\end{theorem}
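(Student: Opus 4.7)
The plan is to follow the outline sketched in Section \ref{sec:main-results}: apply the A-model and B-model graph sum formulas on the two sides, invoke the weight identification of Theorem \ref{thm:graph-match}, and reduce everything to the genus-zero, one-point case already proved in Theorem \ref{thm:EquivMir}. The unstable ranges $(g,n)=(0,1)$ and $(0,2)$ are handled first: the case $(0,1)$ is exactly Theorem \ref{thm:EquivMir} (note $\omega_{0,1}=\hy\, d\hx$ and $Z^+_{0,1}(\cE) = Z_{\bT'}(\cE)$), while the case $(0,2)$ is Proposition \ref{prop:02Case}, which already combines the descendant-leaf formula of Lemma \ref{lem:LeafInt} with the S-operator identity \eqref{eqn:SIdentity}.

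For $2g-2+n>0$, the idea is to start from the B-model graph sum (Theorem \ref{thm:DOSS}),
$$
\omega_{g,n}(p_1,\dots,p_n) = \sum_{\vGa\in \bGa_{g,n}(\cX)} \frac{w_B^{\bu}(\vGa)}{|\Aut(\vGa)|},
$$
and integrate both sides against $e^{-\sum_i \hx(p_i)/z_i}$ along $\gamma_i=\mir^+_{\bT'}(\cE_i)$. All dependence on the marked points is carried by the descendant leaves $\tfrac{1}{\sqrt{-2}}\theta^{k}_{\bsi}(p_i)$, so the integration only modifies the leaf contributions. The key observation is that, after using the relation between $\theta^{k}_{\bsi}$ and $\htheta^{k}_{\bsi}$ via the B-model $R$-matrix (as in \cite[Proposition 6.6]{flz2020remodeling}) and applying Lemma \ref{lem:LeafInt}, the integrated descendant leaves coincide precisely with the A-model descendant leaves $(\cL^{\bu})^{\bsi}_{k}(l_i)$ corresponding to the insertion
$$
\bu_i(\hpsi) = \frac{\kappa_{z_i}^{\bT'}(\cE_i)}{z_i(z_i+\hpsi)},
$$
i.e.\ the insertion whose A-model generating function is $Z^+_{g,n}(\cE_1,\dots,\cE_n)$. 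Explicitly, Lemma \ref{lem:LeafInt} provides the matching formula $2\pi\sqrt{-1}\int_{\gamma_i} e^{-\hx/z_i}\tfrac{-\htheta_{\bsi}^{a}}{\sqrt{-2}} = (-1)^{a}z_i^{-a-2}\,S^{\ \kappa_{z_i}^{\bT'}(\cE_i)}_{\hubsi}(-z_i)$, which is exactly the coefficient $(\tilde u_i)^{\bsi}_{a}$ appearing in the A-model descendant-leaf formula after the S-matrix dressing in \eqref{eqn:tu}.

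With the leaves identified, applying Theorem \ref{thm:graph-match} to each summand $\vGa\in\bGa_{g,n}(\cX)$ produces the sign factor $(-1)^{g(\vGa)-1+n}=(-1)^{g-1+n}$ and converts the integrated B-model weight into the A-model weight $w_A^{\bu}(\vGa)$. Summing over $\vGa$ and using the A-model graph sum (Theorem \ref{thm:Zong}) then yields
$$
\chZ^+_{g,n}\bigl(\mir^+_{\bT'}(\cE_1),\dots,\mir^+_{\bT'}(\cE_n)\bigr) \;=\; (-1)^{g-1+n}\,\llangle \bu_1,\dots,\bu_n\rrangle^{\cX,\bT'}_{g,n} \;=\; (-1)^{g-1+n}\,Z^+_{g,n}(\cE_1,\dots,\cE_n),
$$
under the mirror map $\btau=\btau(q)$, completing the reduction.

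The main obstacle is the leaf-matching step: one must check carefully that the oscillatory integral of the B-model descendant leaves along the cycles $\mir^+_{\bT'}(\cE_i)$ reproduces exactly the dressed A-model leaves $(\tilde u_i)^{\bsi}_{a}$, and in particular that the $R$-matrix conversion between $\theta$ and $\htheta$ interacts correctly with the integration and with the truncation $(\,\cdot\,)_+$ appearing in \eqref{eqn:tu}. In principle, the bulk of this bookkeeping is already present in the proof of \cite[Theorem 7.2]{flz2020remodeling}: the only genuinely new input needed beyond the primary Remodeling Conjecture is Theorem \ref{thm:EquivMir}, which upgrades the genus-zero one-point match from primary to descendant insertions and drives Lemma \ref{lem:LeafInt}. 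Once this descendant version of the base case is in place, the rest of the argument is a verbatim graph-level manipulation, so the essential work is concentrated in justifying the descendant leaf identification.
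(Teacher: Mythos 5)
Your proposal is correct and follows essentially the same route as the paper: dispose of $(g,n)=(0,1),(0,2)$ via Theorem \ref{thm:EquivMir} and Proposition \ref{prop:02Case}, then for $2g-2+n>0$ combine the graph-sum formulas (Theorems \ref{thm:Zong}, \ref{thm:DOSS}), the weight identification of Theorem \ref{thm:graph-match}, the dressing identity \eqref{eqn:tu}, the oscillatory-integral leaf formula of Lemma \ref{lem:LeafInt}, and $\cS$-unitarity to reduce everything to the genus-zero one-point descendant match. The only difference is organizational — you perform the $\theta\to\htheta$ conversion and leaf integration before invoking Theorem \ref{thm:graph-match}, whereas the paper applies Theorem \ref{thm:graph-match} first and then integrates the leaves — but the computation is the same.
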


\begin{comment}
  \begin{align*}
    (2\pi\sqrt{-1})^n \int_{p_1\in\mir^+_{\bT'}(\cE_1)}\cdots\int_{p_n\in\mir^+_{\bT'}(\cE_n)} & e^{- (\hx(p_1)/z_1+\cdots+\hx(p_n)/z_n)} \omega_{g,n}(p_1,\dots,p_n) \\
    & = (-1)^{g-1+n} \double{\frac{\kappa_{z_1}^{\bT'}(\cE_1)}{z_1(z_1 + \hpsi)},\dots,\frac{\kappa_{z_n}^{\bT'}(\cE_n)}{z_n(z_n + \hpsi)} }^{\cX, \bT'}_{g,n}
  \end{align*}
\end{comment}

\begin{proof}
The unstable cases $(g,n) = (0,1)$, $(0,2)$ are Theorem \ref{thm:EquivMir} and Proposition \ref{prop:02Case} respectively. For the stable case $2g-2+n>0$, by Theorem \ref{thm:graph-match}, we have
\begin{align*}
& (2\pi\sqrt{-1})^n \int_{p_1\in\mir^+_{\bT'}(\cE_1)}\cdots\int_{p_n\in\mir^+_{\bT'}(\cE_n)} e^{- (\hx(p_1)/z_1+\cdots+\hx(p_n)/z_n)} \omega_{g,n}(p_1,\dots,p_n)\\
&= (2\pi\sqrt{-1})^n \int_{p_1\in\mir^+_{\bT'}(\cE_1)}\cdots\int_{p_n\in\mir^+_{\bT'}(\cE_n)} e^{- (\hx(p_1)/z_1+\cdots+\hx(p_n)/z_n)} (-1)^{g-1+n} \sum_{\bsi_i,a_i} \left\llangle
\prod_{i=1}^n\tau_{a_i}(\phi_{\bsi_i}(0)) \right\rrangle^{\cX, \bT'}_{g,n} \\
& \quad\quad \cdot \prod_{i=1}^n (\overline u_i)^{\bsi_i}_{a_i} \bigg|_{{(\tilde u_i)}^\bsi_k=-\frac{1}{\sqrt{-2}}\htheta_{\bsi}^k(p_i)}\\
&= (2\pi\sqrt{-1})^n \int_{p_1\in\mir^+_{\bT'}(\cE_1)}\cdots\int_{p_n\in\mir^+_{\bT'}(\cE_n)} e^{- (\hx(p_1)/z_1+\cdots+\hx(p_n)/z_n)} (-1)^{g-1+n} \sum_{\bsi_i,a_i} \left\llangle
\prod_{i=1}^n\tau_{a_i}(\phi_{\bsi_i}(0)) \right\rrangle^{\cX, \bT'}_{g,n} \\
&\quad\quad \cdot \prod_{i=1}^n \Delta^{\bsi_i}\sum_{\brho\in I_\Si} \sum_{k\in \bZ_{\ge 0}}(-1)^{a_i-k} \left([z_i^{a_i-k}] S_{\ \bsi_i}^{\hubrho}(z_i)\right) \frac{(-\htheta_{\brho}^k(p_i))}{\sqrt{-2}}\\
&= (-1)^{g-1+n} \sum_{\bsi_i,a_i} \left\llangle
\prod_{i=1}^n\tau_{a_i}(\phi_{\bsi_i}(0)) \right\rrangle^{\cX, \bT'}_{g,n}\prod_{i=1}^n \Delta^{\bsi_i}\sum_{\brho\in I_\Si} \sum_{k\in \bZ_{\geq 0}} (-1)^{a_i} \left([z_i^{a_i-k}]  S_{\ \bsi_i}^{\hubrho}(z_i)\right) z_i^{-k-2} S^{\ \kappa_{z_i}^{\bT'}(\cE_i)}_{\hubrho}(-z_i)  \\
&= (-1)^{g-1+n} \sum_{\bsi_i,a_i} \left\llangle
\prod_{i=1}^n\tau_{a_i}(\phi_{\bsi_i}(0)) \right\rrangle^{\cX, \bT'}_{g,n}  \prod_{i=1}^n  \Delta^{\bsi_i} (-1)^{a_i}z_i^{-a_i-2} \left(\phi_{\bsi_i}(0), \kappa_{z_i}^{\bT'}(\cE_i) \right)_{\cX, \bT'} \\
&= (-1)^{g-1+n}  \left\llangle \frac{\kappa_{z_1}^{\bT'}(\cE_1)}{z_1(z_1 + \hpsi)},\dots,\frac{\kappa_{z_n}^{\bT'}(\cE_n)}{z_n(z_n + \hpsi)} \right\rrangle^{\cX, \bT'}_{g,n}.
\end{align*}
Here, the second equality follows from \eqref{eqn:tu}; the third equality follows from Lemma \ref{lem:LeafInt}; the fourth equality follows from the unitary condition of the $\cS$-operator.
\end{proof}

\appendix

% !TEX root = descendantBKMP.tex

\section{\texorpdfstring{$K$}{K}-theory with bounded below/above support}\label{appdx:Bounded}
In this section, we introduce $K$-groups with bounded below/above support for toric orbifolds. Our approach follows and generalizes the results of Borisov-Horja \cite{BH06, BH15} on the usual $K$-groups and $K$-groups with compact support. This section supplements the discussion in Section \ref{sect:Sheaves} on the case of toric Calabi-Yau 3-orbifolds.

% \orange{We expect much of this section to be generalizable to the equivariant setting, including the $\bT'$-equivariant case for toric Calabi-Yau 3-orbifolds used in the paper. We defer a detailed treatment to future work.}

\subsection{Geometric setup}
We introduce some notation to be used within this section. Let $N$ be a lattice of rank $r \in \bZ_{\ge 1}$ and $M$ be the dual lattice. Let $C$ be an $r$-dimensional finite rational convex polyhedral cone in $N_{\bR} := N \otimes \bR$, which is not required to be strongly convex and can be the entire $N_{\bR}$. Let $\Sigma$ be a simplicial fan with support $|\Sigma| = C$, and let
$X$ be the $r$-dimensional simplicial toric variety associated to $\Sigma$. Then $X$ admits the action of
an $r$-dimensional algebraic torus $\bT$, whose character lattice is identified with $M$. 
 Let $\cX$ be the induced $r$-dimensional toric orbifold which is the canonical smooth toric Deligne-Mumford stack associated to $\Sigma$ \cite{FMN10}. We will use the same notational conventions for toric geometry as for the case of toric Calabi-Yau 3-orbifolds in the rest of the paper, unless otherwise specified.

Let 
$$
    \pi: \cX \to X_0
$$ 
be the composition of the $\bT$-equivariant map $\cX\to X$ from $\cX$ to its coarse moduli space $X$ and the $\bT$-equivariant 
map $X\to X_0$ from $X$ to its affinization
$$
X_0 := \Spec\left(H^0(X, \cO_X)\right) =  \Spec(\bC[C^\vee \cap M])
$$
where $C^\vee \subseteq M_{\bR} := M \otimes \bR$ is the dual cone of $C$. We call the unique $\bT$-fixed point
in $X_0$ the {\em origin} of $X_0$. The affinization map $X\to X_0$ is projective and $X^{\bT}$ is non-empty, so $X$ is a semi-projective toric variety in the sense of \cite{HS02}. In particular, if  $C=N_\bR$ then $C^\vee=0$, so $X_0=\Spec(\bC)$ is a point, $X$ is a projective toric variety, and $\cX$ is a proper toric orbifold.

The connected components of the inertia stack of $\cX$ are indexed by $\Box(\cX)$. For $v \in \Box(\cX)$, let $\sigma(v) \in \Sigma$ be the minimal cone such that $v \in \Box(\sigma)$, and let 
$$
    N_v := N / N_{\sigma(v)} = N \bigg/ \sum_{i \in I'_{\sigma(v)}} \bZ b_i
$$
which is a finitely generated abelian group possibly with torsion. Then the twisted sector $\cX_v$ indexed by $v$ is the toric Deligne-Mumford stack corresponding to the stacky fan $\Sigma_v$ which is the image of the star of $\sigma(v)$ in $\Sigma$ under the projection $N \to N_v$.

\subsection{Derived categories and $K$-groups}\label{apx-sect:KGroup}
Let $D(\cX) := D^b(\Coh(\cX))$ denote the bounded derived category of coherent sheaves on $\cX$ and $K(\cX) := K_0(D(\cX))$ denote its Grothendieck group. Moreover, let $\Sigma^c$ denote the subset of cones in $\Sigma$ whose interior is contained in the interior of $|\Sigma| = C$. 
Then
$$
    \cX^c := \bigcup_{\sigma \in \Sigma^c} \cV(\sigma),
$$
which is the preimage of the origin under $\pi: \cX \to X_0$, is a proper closed substack of $\cX$. We call 
$\cX^c$ the {\em core} of $\cX$. (The core of the semi-projective variety $X$ is the preimage of the origin under
the affinization map $X\to X_0$; see Definition 3.1 and Theorem 3.2 of \cite{HS02}.)
Let $D^c(\cX)$ denote the full subcategory of $D(\cX)$ consisting of complexes of sheaves whose cohomology sheaves are supported on $\cX^c$, and
$$
    K^c(\cX) := K_0(D^c(\cX))
$$
denote its Grothendieck group. The group $K^c(\cX)$ admits an action of the ring $K(\cX)$ by tensor product. The inclusion of categories gives a natural $K(\cX)$-linear map $K^c(\cX) \to K(\cX)$. 

Borisov-Horja \cite{BH06, BH15} provided combinatorial presentations of these $K$-groups. First, $K(\cX)$ is presented as the quotient of $\bZ[R_1^{\pm 1}, \dots, R_{r+\fp'}^{\pm 1}]$ by the relations
\begin{itemize}
    \item $\prod_{i \in I} (1-R_i)$, for $I \not \in \Sigma$;
    \item $\prod_{i=1}^{r+\fp'} R_i^{\inner{u, b_i}} - 1$, for $u \in M$.
\end{itemize}
Here and below, by a slight abuse of notation, by $I \in \Sigma$ we mean that $I = I'_\sigma$ for some $\sigma \in \Sigma$. Moreover, $K^c(\cX)$ is presented as the $K(\cX)$-module with generators
$$
    G_I, \qquad \text{for $I = I'_\sigma$ with $\sigma \in \Sigma^c$}
$$
and relations
\begin{itemize}
    \item $(1-R_i^{-1}) G_I - G_{I \cup \{i\}}$, for $i \not \in I$, $I \sqcup \{i\} \in \Sigma$;
    \item $(1-R_i^{-1}) G_I$, for $i \not \in I$, $I \sqcup \{i\} \not \in \Sigma$.
\end{itemize}
The natural map $K^c(\cX) \to K(\cX)$ is given by $G_I \mapsto \prod_{i \in I} (1-R_i^{-1})$.

There is an Euler characteristic pairing between $D(\cX)$ and $D^c(\cX)$ given by
$$    
    \inner{\cE, \cE^c} := \sum_k (-1)^k \dim_{\bC} \Ext^k(\cE, \cE^c)
$$
which descends to a non-degenerate pairing
$$
    \chi: K(\cX) \times K^c(\cX) \to \bZ.
$$
Combinatorially, \cite[Section 4]{BH15} gave an explicit description of the Euler characteristic map
\begin{equation}\label{apx-eqn:EulerMap}
    \chi: K^c(\cX) \to \bZ
\end{equation}
whose composition of $\chi$ with the multiplication $K(\cX) \times K^c(\cX) \to K^c(\cX)$,
$$
    \left( \prod_{i=1}^{r+\fp'} R_i^{\alpha_i}, \prod_{i=1}^{r+\fp'} R_i^{\beta_i}G_I \right) \mapsto \prod_{i=1}^{r+\fp'} R_i^{\beta_i - \alpha_i}G_I
$$
gives the Euler characteristic pairing \cite[Theorem 5.2, Lemma 5.3]{BH15}. Note that the input from the $K(\cX)$-factor is dualized.

We will also consider the complexified versions $K := K(\cX) \otimes \bC, K^c := K^c(\cX) \otimes \bC$.

\subsection{Stanley-Reisner cohomology and combinatorial Chern character}\label{apx-sect:CombChern}
The Chen-Ruan cohomology of a semi-projective smooth toric Deligne-Mumford stack coincides with the Stanley-Reisner cohomology \cite{BCS05,JT08}. By \cite{BH06}, as a $\bC$-vector space (and module over the untwisted sector), the Stanley-Reisner cohomology of $\cX$ can be described as
$$
    H = \bigoplus_{v \in \Box(\cX)} H_v
$$
where the factor on the untwisted sector is the quotient of $\bC[D_1, \dots, D_{r+\fp'}]$ by the relations
\begin{itemize}
    \item $\prod_{i \in I} D_i$, for $I \not \in \Sigma$;
    \item $\sum_{i=1}^{r+\fp'} \inner{u, b_i}D_i$, for $u \in M$.
\end{itemize}
The definition of the factor $H_v$ on the twisted sector is similar and based on the closed substack $\cX_v$. Let $Z_v$ denote the ideal generated by the second type of relations above.

Borisov-Horja \cite{BH06} defined a combinatorial Chern character map
$$
    \tch: K \xrightarrow{\sim} H
$$
which is a $\bC$-vector space isomorphism. The projection $\tch_v: K \to H_v$ to the twisted sector $H_v$ is defined by
$$
    \tch_v(R_i) = \begin{cases}
        1 & \text{if } \{i\} \cup I'_{\sigma(v)} \not \in \Sigma,\\
        \exp D_i & \text{if } i \not \in I'_{\sigma(v)}, \{i\} \cup I'_{\sigma(v)} \in \Sigma,\\
        \exp \left(2\pi \sqrt{-1} c_i(v) \right) \prod_{j \not \in \sigma(v)} \tch_v(R_j)^{\inner{m_i, b_j}} & \text{if } i \in I'_{\sigma(v)},   
    \end{cases}
$$
where in the last case $m_i$ is any element of $M_{\bQ}$ such that $\inner{m_i, b_i} = -1$ and $\inner{m_i, b_j} = 0$ for all other $j \in \sigma(v)$. Note that the twisted Chern character $\tch_z$ defined in Section \ref{sect:Sheaves} is a version modified by factors of $-\frac{2\pi\sqrt{-1}}{z}$. As pointed out in \cite[Remark 5.4]{BH06}, although $\tch$ generalizes the usual Chern character for smooth toric varieties, it is not a ring isomorphism if $H$ is endowed the ring structure given by Stanley-Reisner cohomology. As in \cite{BH15}, from now on we equip $H$ with the ring structure induced by that on $K$ via $\tch$.

In the compact support case, analogous results were obtained in \cite{BH15}. Let
$$
    H^c = \bigoplus_{v \in \Box(\cX)} H^c_v
$$
where on the untwisted sector, $H^c_v$ is the $H_v$-module presented as the quotient of the free $\bC[D_1, \dots, D_{r+\fp'}]$-module with generators
$$
    F_I, \qquad \text{for $I = I'_\sigma$ with $\sigma \in \Sigma^c$}
$$
and relations
\begin{itemize}
    \item $D_i F_I - F_{I \cup \{i\}}$, for $i \not \in I$, $I \sqcup \{i\} \in \Sigma$;
    \item $D_i F_I$, for $i \not \in I$, $I \sqcup \{i\} \not \in \Sigma$;
    \item elements given by the action of the ideal $Z_v$.
\end{itemize}
The definition of the factors on the twisted sectors is similar. The natural map $H^c_v \to H_v$ is given by $F_I \mapsto \prod_{i \in I} D_i$.

As shown in \cite[Section 2]{BH15}, for each $v$, $H^c_v$ is the dualizing module of $H_v$ and has 1-dimensional socle. Evaluation at the socle is the unique linear function $\int: H^c_v \to \bC$ which, in the case of the untwisted sector, takes value on $\frac{1}{|G_\sigma|}$ on $F_I$ for any $I = I'_\sigma$, $\sigma \in \Sigma(r)$. The composition of the multiplication $H_v \times H^c_v \to H^c_v$ and $\int$ gives a non-degenerate pairing $H_v \times H^c_v \to \bC$. Summing over all twisted sectors, we have an evaluation 
\begin{equation}\label{apx-eqn:SocleMap}
    \int: H^c \to \bC
\end{equation}
and a multiplication $H \times H^c \to H^c$ whose composition gives a non-degenerate pairing
$$
    (-,-): H \times H^c \to \bC.
$$

Borisov-Horja \cite[Section 3]{BH15} defined a combinatorial Chern character with compact support
$$
    \tch^c: K^c \xrightarrow{\sim} H^c
$$
which is a $\bC$-vector space isomorphism. The projection $\tch^c_v: K^c \to H^c_v$ to the twisted sector $H^c_v$ is defined by
$$
    \tch^c_v \left(\prod_{i=1}^{r+\fp'}R_i^{\alpha_i} G_I \right) = \begin{cases}
        0 & \text{if } I \cup I'_{\sigma(v)} \not \in \Sigma,\\
        \displaystyle \prod_{i=1}^{r+\fp'} \tch_v(R_i)^{\alpha_i} \prod_{i \in I \setminus I'_{\sigma(v)}} \frac{1 - \exp (-D_i)}{D_i} \prod_{i \in I \cap I'_{\sigma(v)}} (1 - \tch_v(R_i)^{-1}) F_{\Ibar} & \text{if } I \cup I'_{\sigma(v)} \in \Sigma,   
    \end{cases}
$$
where in the latter case $\Ibar = I \setminus I'_{\sigma(v)}$ is the set of indices of the cone in the induced fan $\Sigma_v$ corresponding to $I$. With respect to the ring isomorphism $\tch: K \xrightarrow{\sim} H$, $\tch^c$ is an isomorphism of modules \cite[Proposition 3.10]{BH15}. There is a commutative diagram
$$
    \xymatrix{
        K^c \ar[r] \ar[d]_{\tch^c} & K \ar[d]^{\tch} \\
        H^c \ar[r] & H.
    }
$$
Moreover, $\tch^c$ intertwines the Euler characteristic map \eqref{apx-eqn:EulerMap} and the evaluation \eqref{apx-eqn:SocleMap} via a combinatorial Hirzebruch-Riemann-Roch theorem \cite[Proposition 4.5]{BH15}.

\subsection{Sheaves with bounded below/above support}\label{apx-sect:BoundedDef}
Now we come to the main construction of the section. 
Let $\sv \in N$ be a non-zero primitive cocharacter of $\bT$. Let $\bT_0 \cong \bC^*$ be the 1-dimensional subtorus of $\bT$ determined by $\sv$ and $\bT_{0, \bR} \cong U(1)$ be the maximal compact subtorus. Let
$$
    \mu_{\bT_{0, \bR}}: \cX \to \bR
$$
denote the moment map where we use the identification of the dual Lie algebra of $\bT_{0, \bR}$ with $\bR$ provided by $\sv$. We choose $\sv$ generically such that for any flag $(\tau, \sigma) \in F(\Sigma)$, the weight of the $\bT_0$-action on the tangent line at $\fp_\sigma$ along $\fl_\tau$ is non-zero. Equivalently, the image of any $\fl_\tau$ under $\mu_{\bT_{0, \bR}}$ is non-constant.

Let $\Sigma^+$ (resp. $\Sigma^-$) denote the subset of cones $\sigma \in \Sigma$ such that $\mu_{\bT_{0, \bR}}(\cV(\sigma))$ is a bounded below (resp. above) subset of $\bR$. By definition, if $\sigma \in \Sigma^\pm$ and $\sigma' \supseteq \sigma$, then $\sigma' \in \Sigma^\pm$ also. Moreover, we have
$$
    \Sigma^+ \cap \Sigma^- = \Sigma^c.
$$
In particular, if $\sigma \in \Sigma$ is such that $I'_{\sigma} = I'_{\sigma^+} \cup I'_{\sigma^-}$ with $\sigma^\pm \in \Sigma^\pm$, then $\sigma \in \Sigma^c$. 

Let 
$$
    \cX^\pm := \bigcup_{\sigma \in \Sigma^\pm} \cV(\sigma)
$$
whose irreducible components are $\bT$-invariant closed substacks of $\cX$. Note that $\cX^+ \cap \cX^- = \cX^c$. Let $D^\pm(\cX)$ denote the full subcategory of $D(\cX)$ consisting of complexes of sheaves whose cohomology sheaves are supported on $\cX^\pm$, and
$$
    K^\pm(\cX) := K_0(D^\pm(\cX))
$$
denote its Grothendieck group. The group $K^\pm(\cX)$ is a module over the ring $K(\cX)$. The inclusion of categories induce natural $K(\cX)$-linear maps
$$
    K^c(\cX) \to K^\pm(\cX) \to K(\cX).
$$

\begin{remark}\label{apx-rmk:Cocharacter}\rm{
There is a wall-and-chamber structure on $N_{\bR}$ and the construction of $D^\pm(\cX)$, $K^\pm(\cX)$ depends only on the chamber that $\sv$ belongs to. The walls are spanned by facets of the cone $C$, which are dual to the non-compact $\bT$-invariant lines of $\cX$, and the genericity assumption on $\sv$ implies that it does not lie on any wall. The cone $C$ is itself a chamber, and if $\sv \in C$, one checks that
$$
    \Sigma^+ = \Sigma, \qquad \Sigma^- = \Sigma^c.
$$
Thus our discussion on $K^\pm(\cX)$ below will generalize the relation between $K(\cX)$ and $K^c(\cX)$ discussed above. In the special case where $C = N_{\bR}$, i.e. there is only one chamber, $\Sigma = \Sigma^c = \Sigma^\pm$ for any choice of $\sv$ and all $K$-groups are equal.
}\end{remark}

We provide a combinatorial description of $K^\pm(\cX)$ as in \cite{BH15}. Temporarily, let 
$
    K^\pm_{\comb}(\cX)
$
be the $K(\cX)$-module with generators
$$
    G_I, \qquad \text{for $I = I'_\sigma$ with $\sigma \in \Sigma^\pm$}
$$
and relations
\begin{itemize}
    \item $(1-R_i^{-1}) G_I - G_{I \cup \{i\}}$, for $i \not \in I$, $I \sqcup \{i\} \in \Sigma$;
    \item $(1-R_i^{-1}) G_I$, for $i \not \in I$, $I \sqcup \{i\} \not \in \Sigma$.
\end{itemize}
The natural map $K^c(\cX) \to K(\cX)$ factors through a natural map $K^c(\cX) \to K^\pm_{\comb}(\cX)$ given by $G_I \mapsto G_I$. We will prove the following analogue of \cite[Theorem 5.2]{BH15}.

\begin{theorem}\label{apx-thm:KpmPresentation}
We have
$
    K^\pm_{\comb}(\cX) \cong K^\pm(\cX).
$
\end{theorem}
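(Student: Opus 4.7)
The plan is to adapt the proof of the compact-support case \cite[Theorem 5.2]{BH15} to the bounded below/above setting, by constructing a map $\Phi: K^\pm_{\comb}(\cX) \to K^\pm(\cX)$ and showing that it is both surjective and injective.

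First I would define $\Phi$ as the $K(\cX)$-module homomorphism sending each generator $G_I$ (with $I = I'_\sigma$, $\sigma \in \Sigma^\pm$) to the class $[\cO_{\cV(\sigma)}] \in K^\pm(\cX)$, using the convention $R_i = [\cO_{\cX}(-\cD_i)]$ of \cite{BH15}. Since $\cV(\sigma) \subseteq \cX^\pm$, the image class does lie in $K^\pm(\cX)$. The two families of defining relations are verified by the standard Koszul sequences: when $i \notin I$ and $I \sqcup \{i\} \in \Sigma$, the short exact sequence
\[
    0 \to \cO_{\cV(\sigma)}(-\cD_i) \to \cO_{\cV(\sigma)} \to \cO_{\cV(\sigma)\cap \cD_i} \to 0
\]
yields $(1 - R_i^{-1})[\cO_{\cV(\sigma)}] = [\cO_{\cV(\sigma')}]$ with $I'_{\sigma'} = I \sqcup \{i\}$; when $I \sqcup \{i\} \notin \Sigma$ we have $\cV(\sigma) \cap \cD_i = \emptyset$ and $\cO_{\cX}(-\cD_i)|_{\cV(\sigma)}$ is trivial, giving $(1 - R_i^{-1})[\cO_{\cV(\sigma)}] = 0$. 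These checks are local along $\cV(\sigma)$ and are identical to those in \cite[Lemma 5.1]{BH15}, so they transfer unchanged. Surjectivity of $\Phi$ then follows from Lemma \ref{apx-lem:KGroupGenerator} below: any class in $K^\pm(\cX)$ is a $\bZ$-linear combination of $[\iota_{\sigma,*}\cL]$ with $\sigma \in \Sigma^\pm$ and $\cL \in \Pic(\cV(\sigma))$, and since $\iota_\sigma^*: \Pic(\cX) \to \Pic(\cV(\sigma))$ is surjective each such $\cL$ is the restriction of a line bundle on $\cX$, making $[\iota_{\sigma,*}\cL] = M \cdot \Phi(G_{I'_\sigma})$ for some monomial $M$ in the $R_i^{\pm 1}$.

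The main obstacle is injectivity, which I would attack following \cite[Lemma 5.3]{BH15} via the Euler characteristic pairing
\[
    \chi: K^+(\cX) \times K^-(\cX) \to \bZ, \qquad (\cE^+, \cE^-) \mapsto \sum_k (-1)^k \dim_{\bC} \Ext^k(\cE^+, \cE^-),
\]
which is well defined because $\cX^+ \cap \cX^- = \cX^c$ is a proper substack, making all Ext groups finite-dimensional. In parallel one defines a combinatorial pairing $\chi_{\comb}: K^+_{\comb}(\cX) \times K^-_{\comb}(\cX) \to \bZ$ on generators by an explicit Koszul-type formula generalizing \cite[Theorem 5.2]{BH15}, and verifies $\chi(\Phi(x), \Phi(y)) = \chi_{\comb}(x, y)$ by the same local Ext computation used there. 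Non-degeneracy of $\chi_{\comb}$---the technical heart of the proof---can be obtained either by $\bT$-equivariant localization reducing the pairing on generators to the affine charts $\cX_\sigma$, where $K^\pm$ and their pairing admit explicit descriptions in terms of characters of $G_\sigma$, or by factoring $\Phi$ through combinatorial Chern characters $\tch^\pm: K^\pm_{\comb}(\cX) \to H^\pm$ into bounded below/above Stanley--Reisner cohomology groups in which the analogous pairing is visibly perfect. Once $\chi_{\comb}$ is known to be non-degenerate and compatible with $\chi$ under $\Phi \otimes \Phi$, the surjectivity of $\Phi$ for both signs forces $\ker \Phi = 0$. The bulk of the remaining technical work lies in constructing the combinatorial apparatus $(H^\pm, \tch^\pm, \int^\pm)$ and a combinatorial Hirzebruch--Riemann--Roch identity generalizing \cite[Proposition 4.5]{BH15}; once that is in place, the injectivity argument is essentially a transcription of \cite[Sections 4--5]{BH15} to the bounded below/above setting.
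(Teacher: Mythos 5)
Your plan mirrors the paper's strategy at every structural step: define the natural $K(\cX)$-linear map (the paper's $\mu^\pm$, Eq.\ \eqref{apx-eqn:MuMap}), verify the relations by Koszul resolutions, deduce surjectivity from the generation statement (Lemma \ref{apx-lem:KGroupGenerator}), and prove injectivity by identifying the categorical Euler pairing $\chi$ with a combinatorial pairing $\chi_{\comb}$ and showing the latter non-degenerate. Lemma \ref{apx-lem:EulerPairingCompute} does exactly the Koszul Ext computation you describe. So far you are on track.

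The genuine gap is in your treatment of non-degeneracy. You write that after passing through the Chern characters $\tch^\pm$ the pairing on $H^\pm$ is \emph{visibly perfect}, and that the injectivity argument is ``essentially a transcription of [BH15, Sections 4--5].'' Neither is true. In the compact-support case of \cite{BH15}, perfectness of $H \times H^c \to \bC$ comes from $H^c_v$ being the dualizing (Matlis) module of the Artinian local ring $H_v$; that mechanism is not available here, since $H^+_v$ and $H^-_v$ are \emph{both} proper quotients of $H^c_v$ and neither equals $H_v$. The paper must instead prove two new facts. First (Lemma \ref{apx-lem:HPMDim}), $\dim_\bC H^\pm_v = \dim_\bC H_v$: this is established by recognizing the module underlying $H^+_v$ as a relative face module $\bC[\Delta,\Gamma^+]$ for a relative simplicial complex, invoking Cohen--Macaulayness of such modules from Stanley's theory, choosing the linear relations $Z_v$ as a regular sequence of parameters, and computing the Hilbert series via the $h$-vector. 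Second (Lemma \ref{apx-lem:HPMBasis}), an explicit pair of collections $\{a^\pm_{(\sigma,v)}\}$ (constructed, Maulik--Okounkov style, by splitting each $I'_\sigma$ into attracting and repelling parts $I'_{\sigma^+}\sqcup I'_{\sigma^-}$ with respect to the $\bT_0$-moment map) whose pairing matrix is upper-triangular with nonzero diagonal under the moment-map total order on $\Sigma(r)$. Only the combination of these two lemmas yields non-degeneracy. Your proposal takes the hardest step for granted; as written, it does not supply a proof of it, and the BH15 argument cannot simply be transcribed because the duality that makes it work there has no direct analogue. (Your alternative route via $\bT$-equivariant localization on the affine charts is a reasonable thing to attempt, but note the non-equivariant $K^\pm$ do not localize, and the paper in fact reserves localization for the equivariant statement Lemma \ref{apx-lem:EquivKInjective}.)
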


The proof strategy is also analogous and is based on considering the natural map
\begin{equation}\label{apx-eqn:MuMap}
    \mu^\pm: K^\pm_{\comb}(\cX) \to K^\pm(\cX), \qquad \prod_{i=1}^{r+\fp'} R_i^{\alpha_i}G_I \mapsto  \iota_{\sigma, *} \cO_{\cV(\sigma)} \otimes \cO_{\cX}\left( \sum_{i=1}^{r+\fp'} \alpha_i \cD_i \right)
\end{equation}
where $\sigma \in \Sigma^\pm$ is such that $I = I'_\sigma$. We will show that $\mu^\pm$ is an isomorphism. The surjectivity is given by the following statement, which follows from the proof of \cite[Theorem 5.2]{BH15}.

\begin{lemma}\label{apx-lem:KGroupGenerator}
For $\star \in \{\emptyset, c, +, -\}$, the group $K^\star(\cX)$ is additively generated by sheaves of the form
%\begin{equation}\label{apx-eqn:KGroupGenerator}
$$
    \iota_{\sigma, *} \cL
$$
where $\sigma \in \Sigma^*$ is minimal and $\cL$ is a line bundle on $\cV(\sigma)$.
\end{lemma}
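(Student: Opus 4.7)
The plan is to follow the devissage argument in the proof of \cite[Theorem 5.2]{BH15}, observing that its only structural ingredient generalizes to $\Sigma^\pm$. Namely, each index set $\Sigma^\star$ for $\star \in \{\emptyset, c, +, -\}$ is an upper set in the poset of cones of $\Sigma$: if $\sigma \in \Sigma^\star$ and $\sigma \subseteq \tau \in \Sigma$, then $\tau \in \Sigma^\star$. For $\star = c$, this is a supporting-hyperplane argument (the assumption $\text{int}(\sigma) \subseteq \text{int}(|\Sigma|)$ forces $\tau \not\subseteq \partial|\Sigma|$, since otherwise $\sigma \subseteq \tau \subseteq \partial|\Sigma|$). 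For $\star = \pm$, the containment $\sigma \subseteq \tau$ gives $\cV(\tau) \subseteq \cV(\sigma)$, so $\mu_{\bT_{0, \bR}}(\cV(\tau))$ inherits boundedness below/above from $\mu_{\bT_{0, \bR}}(\cV(\sigma))$. Consequently, the closed stratification $\{\cV(\sigma)\}_{\sigma \in \Sigma^\star}$ of $\cX^\star$ is compatible with pushforwards within $D^\star(\cX)$.

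The first step is to reduce, by devissage along this stratification, to sheaves of the form $\iota_{\sigma, *}\cF$ with $\sigma \in \Sigma^\star$ and $\cF$ a coherent sheaf on $\cV(\sigma)$. Given any coherent $\cE$ supported on $\cX^\star$, I would filter $\cE$ by powers of the ideal sheaves of irreducible components of its support (each of which is $\cV(\sigma)$ for some minimal $\sigma \in \Sigma^\star$ meeting $\text{supp}(\cE)$); each successive quotient is annihilated by the ideal of some $\cV(\sigma)$ with $\sigma \in \Sigma^\star$, and is therefore a pushforward from $\cV(\sigma)$.

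The second step is to replace the coefficient sheaf $\cF$ on $\cV(\sigma)$ by line bundles. Each $\cV(\sigma)$ is itself a smooth semi-projective toric Deligne-Mumford stack, defined by the induced stacky fan in $N/N_\sigma$, so by the Borisov-Horja presentation reviewed in Section \ref{apx-sect:KGroup}, $K(\cV(\sigma))$ is generated as an abelian group by monomials in the classes $R_i$, each representing a line bundle. Hence $[\iota_{\sigma, *}\cF] = \sum_j n_j [\iota_{\sigma, *}\cL_j]$ in $K^\star(\cX)$ for line bundles $\cL_j$ on $\cV(\sigma)$.

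Finally, to enforce minimality of $\sigma$, suppose $\sigma \in \Sigma^\star$ is not minimal and pick $\sigma' \subsetneq \sigma$ with $\sigma' \in \Sigma^\star$. Writing $\iota_\sigma = \iota_{\sigma'} \circ \iota'$ for the inclusion $\iota': \cV(\sigma) \hookrightarrow \cV(\sigma')$, we have $\iota_{\sigma, *}\cL = \iota_{\sigma', *}(\iota'_* \cL)$, and the second step applied to $\iota'_* \cL$ on $\cV(\sigma')$ rewrites this as a combination of $\iota_{\sigma', *}\tilde{\cL}$ for line bundles $\tilde{\cL}$ on $\cV(\sigma')$. A finite recursion on the codimension of $\sigma'$ in $\Sigma^\star$ then drives us to a minimal cone. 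No genuine obstacle arises beyond the upper-set verification in the first paragraph; modulo this, the argument proceeds in parallel with the treatment in \cite[Theorem 5.2]{BH15}.
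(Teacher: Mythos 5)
Your overall strategy — extract the upper-set property of $\Sigma^\star$, devissage down to pushforwards from single orbit closures, then invoke the Borisov–Horja presentation of $K(\cV(\sigma))$ to reduce to line bundles — is indeed how the argument goes, and it is the argument the paper intends by pointing at \cite[Theorem 5.2]{BH15}. Your observations on why $\Sigma^\star$ is upward closed, your step 2, and your step 3 (minimality recursion) are all sound.

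The devissage step as you wrote it, however, contains a genuine slip. The irreducible components of $\text{supp}(\cE)$ are \emph{not} in general orbit closures $\cV(\sigma)$: for example, $\cE$ could be a skyscraper sheaf at a non-fixed closed point of a compact toric curve inside $\cX^\star$, and $\text{supp}(\cE)$ is then a single point, not any $\cV(\sigma)$. If one literally filtered $\cE$ by powers of the ideal sheaf of $\text{supp}(\cE)$, the graded pieces would be sheaves on $\text{supp}(\cE)$ with its reduced structure, to which the Borisov–Horja machinery does not apply. The correct filtration is by partial products of powers of the ideal sheaves $\cI_{\sigma_j}$ of the torus-invariant irreducible components $\cV(\sigma_j)$ of $\cX^\star$ itself (i.e., $\sigma_j$ the minimal cones of $\Sigma^\star$). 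This works: $\prod_j \cI_{\sigma_j} \subseteq \bigcap_j \cI_{\sigma_j} = \cI_{\cX^\star}$, so $\prod_j \cI_{\sigma_j}^m \subseteq \cI_{\cX^\star}^m$ annihilates $\cE$ for $m\gg 0$, and one can build a finite filtration by intermediate partial products whose graded pieces are each annihilated by a single $\cI_{\sigma_j}$ and hence pushed forward from that $\cV(\sigma_j)$. (Alternatively, a localization sequence argument inducting on the number of components works; both achieve the same reduction.) Once you make that correction, your proof is complete and matches the intended one.
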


We will prove the injectivity of $\mu^\pm$ in Section \ref{apx-sect:PMEulerPairing} below.

\subsection{Cohomology with bounded below/above support}
Let
$$
    H^\pm = \bigoplus_{v \in \Box(\cX)} H^\pm_v,
$$
where on the untwisted sector, $H^\pm_v$ is the $H_v$-module presented as the quotient of the free $\bC[D_1, \dots, D_{r+\fp'}]$-module with generators
$$
    F_I, \qquad \text{for $I = I'_\sigma$ with $\sigma \in \Sigma^\pm$}
$$
and relations
\begin{itemize}
    \item $D_i F_I - F_{I \cup \{i\}}$, for $i \not \in I$, $I \sqcup \{i\} \in \Sigma$;
    \item $D_i F_I$, for $i \not \in I$, $I \sqcup \{i\} \not \in \Sigma$;
    \item elements given by the action of the ideal $Z_v$.
\end{itemize}
The definition of the factors on the twisted sectors is similar. The natural map $H^c_v \to H_v$ factors through a natural map $H^c_v \to H^\pm_v$ given by $F_I \mapsto F_I$.

\begin{lemma}\label{apx-lem:HPMDim}
We have 
$
    \dim_{\bC} H^\pm = \dim_{\bC} H = \dim_{\bC} H^c =: E.
$
\end{lemma}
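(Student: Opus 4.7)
The plan is to prove the dimension equalities sector-by-sector, then reduce to a Hilbert series computation following the approach of \cite[Section 2]{BH15}, using the key observation that every maximal cone of $\Sigma$ lies in each of $\Sigma^c$, $\Sigma^+$, $\Sigma^-$.

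First, using the direct sum decomposition $H^\star = \bigoplus_{v \in \Box(\cX)} H^\star_v$ for each $\star \in \{\emptyset, c, +, -\}$, I would observe that each summand $H^\star_v$ is given by the analogous Stanley-Reisner construction applied to the star fan $\Sigma_v$ in $N_v$ (which defines the closed substack $\cX_v \subseteq \cX$), equipped with the induced cocharacter $\bar{\sv} \in N_v$ obtained as the image of $\sv$. The compatibilities of $(\Sigma_v)^c$ and $(\Sigma_v)^\pm$ with the restrictions of $\Sigma^c$ and $\Sigma^\pm$ are immediate from the fact that the boundedness of $\mu_{\bT_{0,\bR}}$ on $\cV(\sigma)$ depends only on the image of $\sv$ in the quotient $N/N_{\sigma(v)}$ for any $\sigma \supseteq \sigma(v)$. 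This reduces the lemma to showing $\dim_{\bC} H^+ = \dim_{\bC} H^- = \dim_{\bC} H^c = \dim_{\bC} H$ in the untwisted sector, for an arbitrary semi-projective simplicial toric orbifold with a generic cocharacter.

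Second, the equality $\dim_{\bC} H = \dim_{\bC} H^c$ in the untwisted sector is \cite[Proposition 2.7]{BH15}. The crucial input for the $\pm$-case is that every maximal cone $\sigma \in \Sigma(r)$ belongs to $\Sigma^c$: since $\cV(\sigma) = \fp_\sigma$ is a single $\bT$-fixed point, the image $\mu_{\bT_{0,\bR}}(\cV(\sigma)) \subset \bR$ is a point and is therefore bounded both above and below. Thus $\Sigma(r) \subseteq \Sigma^c \subseteq \Sigma^+ \cap \Sigma^-$, so all four collections $\Sigma$, $\Sigma^c$, $\Sigma^+$, $\Sigma^-$ share the same set of top-dimensional cones. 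To complete the count, I would construct an explicit $\bC$-basis of $H^+$ mirroring the Borisov-Horja construction for $H^c$: for each maximal cone $\sigma \in \Sigma(r)$, pick a cone $\sigma' \in \Sigma^+$ minimal with $I'_{\sigma'} \subseteq I'_\sigma$, and associate to $\sigma$ the element $\bigl(\prod_{i \in I'_\sigma \setminus I'_{\sigma'}} D_i\bigr) \cdot F_{I'_{\sigma'}} \in H^+$. The spanning property follows from the defining relations of $H^+$, since the "missing" generators $F_I$ for $I \in \Sigma \setminus \Sigma^+$ are recovered from minimal ones via iterated application of $F_{J \cup \{i\}} = D_i F_J$.

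The main obstacle is verifying the linear independence of the proposed basis, which requires tracking cancellations induced by the linear relations in $Z_v$ coming from $M$. A complementary and potentially cleaner route is to establish a non-degenerate pairing $H^+ \times H^- \to \bC$ of $S$-modules extending the Borisov-Horja pairing $H \times H^c \to \bC$, which would immediately yield $\dim H^+ = \dim H^-$; combined with a Hilbert series comparison in top degree (where only contributions from the common set $\Sigma(r)$ of maximal cones survive), this would give the common value $\dim H$. Either approach suffices to complete the proof.
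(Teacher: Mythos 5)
Your proposal does not succeed, and the gap is not cosmetic. The paper's proof of this lemma rests on a single technical pillar that your plan never identifies: the module $A_v^+$ (the quotient of $\bC[D_1,\dots,D_{r+\fp'}]$-module with generators $F_I$ before imposing the linear relations $Z_v$) is the \emph{relative face module} $\bC[\Delta,\Gamma^+]$ of the relative simplicial complex $(\Delta,\Gamma^+)$, where $\Delta$ triangulates an $(r-1)$-ball and $\Gamma^+$ triangulates an $(r-2)$-ball on $\partial\Delta$. By \cite[Corollary III.7.3(iii)]{Stanley96} this module is Cohen-Macaulay, so the linear system of parameters $\theta_1,\dots,\theta_r$ coming from $M$ is a regular sequence; this lets one read off $\dim_{\bC} H_v^+ = \sum_k h_k = f_{r-1} = |\Sigma(r)|$ directly from the $h$-vector. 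Without the Cohen-Macaulay input, there is no control on how much the linear quotient by $Z_v$ collapses $A_v^+$, which is precisely the difficulty you acknowledge but do not resolve.

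Your proposed explicit basis is in fact incorrect, not merely unjustified. The element you associate to $\sigma\in\Sigma(r)$, namely $\bigl(\prod_{i\in I'_\sigma\setminus I'_{\sigma'}} D_i\bigr)F_{I'_{\sigma'}}$, equals $F_{I'_\sigma}$ in $H^+$ after iterating the module relation $D_iF_J = F_{J\cup\{i\}}$. So all your proposed basis elements lie in the top graded piece $(H_v^+)_r$, whose dimension is $h_r$, typically much smaller than $|\Sigma(r)|$. (Already in the compact-support/projective case, the $F_{I'_\sigma}$ for $\sigma\in\Sigma(r)$ all land in the $1$-dimensional socle of $H^c_v$.) Compare with the basis the paper actually constructs in Lemma \ref{apx-lem:HPMBasis}: $a_{(\sigma,v)}^+ = F_{I'_{\sigma^+}}$, of degree $|I'_{\sigma^+}|$, which \emph{varies} with $\sigma$; this is essential for the upper-triangular pairing argument to give linear independence.

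Your fallback route via the pairing is circular. Non-degeneracy of the pairing $H^+\times H^-\to\bC$ is what Lemma \ref{apx-lem:HPMBasis} proves, and its proof uses the present dimension count as an input: the upper-triangular pairing matrix shows the $E$ elements on each side are linearly independent, and it is only because $\dim H^\pm = E$ (from this lemma) that one concludes they span, hence that the pairing is non-degenerate on all of $H^+\times H^-$. Inverting the logical order, as your plan requires, would need an independent proof of non-degeneracy, which you do not supply. The observations you do make — the sector-by-sector reduction and the fact that $\Sigma(r)\subseteq\Sigma^c$ — are correct and are indeed used in the paper, but they are not enough to carry the argument.
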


\begin{proof}
We show equality of dimension on the untwisted sector $v \in \Box(\cX)$ and the argument for the twisted sectors is similar. The argument is inspired by that in \cite[Remark 2.5]{BH15} and describes $H_v^\pm$ in the context of \emph{relative} Stanley-Reisner theory, for which we refer to \cite{Stanley96}. Moreover, we may assume that $\Sigma^c \neq \Sigma^+ \neq \Sigma$ since otherwise $H_v^+ = H_v$ or $H_v^c$. This is possible only if $C \neq N_{\bR}$ and $r \ge 2$.

Let $R_v := \bC[D_1, \dots, D_{r+\fp'}]$. In this proof we use $\dim$ (resp. $\dim_{\bC}$) to denote the dimension as an $R_v$-module (resp. a $\bC$-vector space).

Let $A_v$ be the quotient of $R_v$ by the relations $\prod_{i \in I} D_i$ for $I \not \in \Sigma$. (We have $H_v = A_v /Z_vA_v$.) In other words, $A_v$ is the face ring $\bC[\Delta]$ of the simplicial complex $\Delta$ associated to the simplicial fan $\Sigma$, i.e. $\Delta$ is defined on the set of 1-cones in $\Sigma$ and is the set of subsets of 1-cones which span cones in $\Sigma$. Topologically, $\Delta$ is a triangulation of an $(r-1)$-ball.

% By \cite[Theorem 2.2]{Munkres84}, $A_v$ is Cohen-Macaulay over $\bC[D_1, \dots, D_{r+\fp'}]$.

% Then \cite[Theorem 2.2]{Munkres84} implies again that the face ring $\bC[\Gamma^+]$ is Cohen-Macaulay.

Let $A_v^+$ be the $A_v$-module defined using the presentation of $H_v^+$ above except that the relations do not include the elements in $Z_v$. (We have $H_v^+ = A_v^+/Z_vA_v^+$.) On the other hand, let $\Gamma^+$ be the simplicial subcomplex of $\Delta$ associated to the subfan $\Sigma \setminus \Sigma^+$. Since $\Sigma^c \neq \Sigma^+ \neq \Sigma$, topologically $\Gamma^+$ is a triangulation of a non-empty, simply connected subset of $\partial \Delta$. The genericity of $\sv$ further ensures that $\Gamma^+$ is $(r-2)$-dimensional, so that it triangulates an $(r-2)$-ball. Now $A_v^+$ can be described as the relative face module associated to the relative simplicial complex $(\Delta, \Gamma^+)$:
$$
    A_v^+ = \bC[\Delta, \Gamma^+] := \ker \bC[\Delta] \twoheadrightarrow \bC[\Gamma^+].
$$
By \cite[Corollary III.7.3(iii)]{Stanley96}, $A_v^+$ is Cohen-Macaulay. Now $\dim \bC[\Delta] = r$ and $\dim \bC[\Gamma^+] = r-1$ \cite[Theorem II.1.3]{Stanley96}, which imply that $\dim A_v^+ = r$. Then $\depth$ $A_v^+ = r$ also.

To the $(r-1)$-dimensional relative simplicial complex $(\Delta, \Gamma^+)$, one associates the \emph{$f$-vector} $(f_{-1}, f_0, \dots, f_{r-1})$ where $f_{i-1}$ is the number of $(i-1)$-dimensional faces in $\Delta \setminus \Gamma^+$, or equivalently, the number of $i$-cones in $\Sigma^+$. Note that $\Sigma^+$ contains all the maximal cones in $\Sigma$ and thus
$$
    f_{r-1} = |\Sigma(r)|.
$$
Moreover, consider the $\bZ$-grading on $A_v^+$ defined by $\deg F_I = |I|$ for all $I$ and $\deg D_i = 1$ for all $i$. Let $\left(A_v^+ \right)_i$ denote the homogeneous degree-$i$ component of $A_v^+$. Then to $(\Delta, \Gamma^+)$ one also associates the \emph{$h$-vector} $(h_0, h_1, \dots, h_r)$ defined such that the Hilbert series of $\bC[\Delta, \Gamma^+] = A_v^+$ is of form
$$
    P_{A_v^+}(t) := \sum_{i \in \bZ_{\ge 0}} \dim_{\bC} \left(A_v^+ \right)_i t^i  = \frac{h_0 + h_1 t + \cdots h_r t^r}{(1-t)^r}.
$$
The $f$- and $h$-vectors are recovered from one another by
$$
    h_k = \sum_{i = 0}^k (-1)^{k-i} {r-i \choose k-i} f_{i-1}, \qquad f_{i-1} = \sum_{k=0}^i {r-k \choose i-k} h_k.
$$
In particular,
$
    h_0 + h_1 + \cdots + h_r = f_{r-1} = |\Sigma(r)|.
$

Now take a $\bZ$-basis $u_1, \dots, u_r$ of $M$ and consider the elements
$$
    \theta_j := \sum_{i=1}^{r+\fp'} \inner{u_j, b_i}D_i
$$
which are linear in $R_v$ and generate the ideal $Z_v$. Moreover, since $\dim A_v^+/Z_vA_v^+ = 0 = \dim A_v^+ - r$, $(\theta_1, \dots, \theta_r)$ is a linear sequence of parameters for $A_v^+$, and is furthermore a regular sequence since $A_v^+$ is Cohen-Macaulay \cite[Theorem I.5.9]{Stanley96}. Therefore, the Hilbert series of $H_v^+ = A_v^+/Z_vA_v^+$ is obtained from that of $A_v^+$ by
$$
    P_{H_v^+}(t) = (1-t)^r P_{A_v^+}(t) = h_0 + h_1 t + \cdots h_r t^r.
$$
In particular,
$$
    \dim_{\bC} H_v^+ = h_0 + h_1 + \cdots + h_r = |\Sigma(r)|.
$$
\end{proof}

Consider the multiplication $H^+_v \times H^-_v \to H^c_v$ defined by
$$
    \left( \prod_{i=1}^{r+\fp'} D_i^{\alpha_i}F_{I^+}, \prod_{i=1}^{r+\fp'} D_i^{\beta_i}F_{I^-} \right) \mapsto 
    \begin{cases}
        \displaystyle \prod_{i=1}^{r+\fp'} D_i^{\alpha_i + \beta_i} \prod_{i \in I^+ \cap I^-} D_i F_{I^+ \cup I^-} & \text{if } I^+ \cup I^- \in \Sigma,\\
        0 & \text{otherwise,}
    \end{cases}
$$
which after composition with the evaluation \eqref{apx-eqn:SocleMap} gives a pairing
\begin{equation}\label{apx-eqn:HPMPairing}
    (-,-): H^+ \times H^- \to \bC.
\end{equation}
We now construct bases of $H^\pm$ and evaluate the pairing on them. The elements in the bases will be indexed by the set
$$
    \fI := \{(\sigma, v) : \sigma \in \Sigma(r), v \in \Box(\sigma)\}
$$
whose cardinality is $E$. Let $\sigma \in \Sigma(r)$ and consider the untwisted sector $v \in \Box(\sigma)$; the construction for the twisted sectors is similar. The genericity of $\sv$ implies that there is a unique partition $I'_{\sigma} = I'_{\sigma^+} \sqcup I'_{\sigma^-}
$ such that $\sigma^\pm \in \Sigma^\pm$. Writing $I^\pm := I'_{\sigma^\pm}$, we set
$$
    a_{(\sigma, v)}^\pm := F_{I^\pm} \qquad \in H^\pm.
$$

% \orange{Figure illustrating the construction...}

\begin{lemma}\label{apx-lem:HPMBasis}
The $E$-by-$E$ matrix obtained by taking the pairing \eqref{apx-eqn:HPMPairing} between the collections $\{a_{(\sigma, v)}^\pm\}$ is invertible. In particular, each collection is a basis and the pairing \eqref{apx-eqn:HPMPairing} is non-degenerate.
\end{lemma}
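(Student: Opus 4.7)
The plan is to show that the $E$-by-$E$ pairing matrix is triangular with nonzero diagonal entries, with respect to an ordering of $\fI$ induced by the moment map height $\mu_{\bT_{0,\bR}}$.

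First, I will reduce to the untwisted sector. The multiplication $H^+_v \times H^-_v \to H^c_v$ and the evaluation $\int: H^c_v \to \bC$ are defined componentwise in $v \in \Box(\cX)$, so the pairing matrix is block-diagonal with respect to $\Box(\cX)$. For each $v$, the corresponding block reduces via the star fan $\Sigma_v$ of $\sigma(v)$ to the analogous untwisted-sector statement for the toric closed substack $\cX_v$; the set of $\sigma \in \Sigma(r)$ with $v \in \Box(\sigma)$ corresponds to the maximal cones of $\Sigma_v$. Thus it suffices to treat the untwisted sector of $\cX$, with matrix indexed by $\Sigma(r)$. On the diagonal, the partition $I'_\sigma = I^+_\sigma \sqcup I^-_\sigma$ is disjoint and exhausts $I'_\sigma$, so $(a^+_\sigma, a^-_\sigma) = \int F_{I'_\sigma} = 1/|G_\sigma| \neq 0$.

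For an off-diagonal entry with $\sigma \neq \sigma'$, a direct computation from the multiplication formula together with the grading shows that $(a^+_\sigma, a^-_{\sigma'}) \neq 0$ requires both $I^+_\sigma \cap I^-_{\sigma'} = \emptyset$ and $I^+_\sigma \cup I^-_{\sigma'} = I'_{\sigma''}$ for some (necessarily unique) $\sigma'' \in \Sigma(r)$, in which case the entry equals $1/|G_{\sigma''}|$; otherwise the image in $H^c$ either lies below the socle degree or is annihilated by the residual $D_i$ factors from $I^+_\sigma \cap I^-_{\sigma'}$. Order $\Sigma(r)$ by $\sigma \preceq \sigma'$ iff $\mu_{\bT_{0,\bR}}(\fp_\sigma) \le \mu_{\bT_{0,\bR}}(\fp_{\sigma'})$, which by genericity of $\sv$ is a total order. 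I claim the non-vanishing condition forces $\sigma \preceq \sigma'' \preceq \sigma'$. Geometrically, $F_{I^+_\sigma}$ represents the fundamental class $[\cV(\sigma^+)]$, which is the closure of the Bialynicki--Birula descending cell at $\fp_\sigma$ for the $\bT_0$-action; its moment image is a face of the moment polytope of $\cX$ with $\mu_{\bT_{0,\bR}}(\fp_\sigma)$ as the minimum in the $\sv$-direction, since the $1$-cones of $\sigma^+$ are precisely those along which the $\bT_0$-weights at $\fp_\sigma$ are positive. Hence $\cV(\sigma^+) \subseteq \mu_{\bT_{0,\bR}}^{-1}([\mu_{\bT_{0,\bR}}(\fp_\sigma), \infty))$, and the containment $\sigma^+ \subseteq \sigma''$ forces $\fp_{\sigma''} \in \cV(\sigma^+)$ and hence $\mu_{\bT_{0,\bR}}(\fp_\sigma) \le \mu_{\bT_{0,\bR}}(\fp_{\sigma''})$; symmetrically $\mu_{\bT_{0,\bR}}(\fp_{\sigma''}) \le \mu_{\bT_{0,\bR}}(\fp_{\sigma'})$.

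The main obstacle will be the rigorous verification of the moment-polytope containment in the semi-projective toric orbifold setting. My approach will be to work affine-locally on the chart $\cX_\sigma$ around $\fp_\sigma$: the $\bT_0$-weights on tangent directions are positive exactly for the $1$-cones in $I^+_\sigma$, so $\cV(\sigma^+) \cap \cX_\sigma$ is locally cut out by the vanishing of coordinates in the negative-weight directions and $\mu_{\bT_{0,\bR}}$ strictly increases along the positive-weight directions from $\fp_\sigma$. The containment then globalizes by continuity and properness of $\mu_{\bT_{0,\bR}}$ on $\bT$-invariant subvarieties in the semi-projective setting. With triangularity established, the matrix has determinant $\prod_{\sigma \in \Sigma(r)} 1/|G_\sigma|$, nonzero, so it is invertible; combined with the twisted-sector reduction we conclude invertibility in general, that $\{a^+_{(\sigma, v)}\}$ and $\{a^-_{(\sigma, v)}\}$ are bases of $H^+$ and $H^-$, respectively, and that the pairing \eqref{apx-eqn:HPMPairing} is non-degenerate.
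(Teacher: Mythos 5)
Your overall strategy --- order $\Sigma(r)$ by the moment map value $\mu_{\bT_{0,\bR}}(\fp_\sigma)$, split off the twisted sectors, and show the pairing matrix is triangular with nonzero diagonal --- is exactly the route the paper takes. The reduction by twisted sector is sound, the diagonal computation is sound, and the geometric mechanism you invoke in the last paragraph (a point of $\cV(\sigma^+)\cap\cV((\sigma')^-)$ has $\mu$-value $\geq \mu(\fp_\sigma)$ and $\leq\mu(\fp_{\sigma'})$) is the right one.

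However, the ``direct computation'' in your third paragraph contains a genuine error. You claim that $(a^+_\sigma, a^-_{\sigma'}) \neq 0$ \emph{requires} $I^+_\sigma \cap I^-_{\sigma'} = \emptyset$, asserting that otherwise the element is ``annihilated by the residual $D_i$ factors from $I^+_\sigma \cap I^-_{\sigma'}$.'' This is not how the module $H^c_v$ behaves: the relation $D_i F_I - F_{I\cup\{i\}}$ is only imposed for $i \notin I$, and $D_i F_I$ for $i \in I$ is in general a nonzero element (one rewrites $D_i$ using the linear relations $Z_v$ and picks up nontrivial $F_{I'_{\sigma''}}$ terms). Concretely, take $r = 2$ with rays $\rho_1 = (1,0)$, $\rho_2 = (1,1)$, $\rho_3 = (1,2)$, $\rho_4 = (0,1)$, maximal cones $\sigma_1 = \mathrm{Cone}(\rho_1,\rho_2)$, $\sigma_2 = \mathrm{Cone}(\rho_2,\rho_3)$, $\sigma_3 = \mathrm{Cone}(\rho_3,\rho_4)$, and $\sv = (1,-1)$. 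Then $I^+_{\sigma_1} = \{2\} = I^-_{\sigma_2}$, so the intersection is $\{2\}\neq\emptyset$, yet using $D_1 + D_2 + D_3 = 0$ one finds $D_2 F_{\{2\}} = -F_{\{1,2\}} - F_{\{2,3\}}$, hence $(a^+_{\sigma_1}, a^-_{\sigma_2}) = -2 \neq 0$. (This entry is consistent with triangularity since $\mu(\fp_{\sigma_1}) < \mu(\fp_{\sigma_2})$, but it violates your claimed characterization of the nonzero entries.) Because of this, your fourth paragraph --- which argues $\sigma \preceq \sigma'' \preceq \sigma'$ via the specific maximal cone $\sigma''$ with $I'_{\sigma''} = I^+_\sigma\cup I^-_{\sigma'}$ --- cannot be applied as stated: when the intersection is nonempty, that $\sigma''$ does not exist as a maximal cone.

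The fix is simpler, and is what the paper does: a nonzero entry only requires $I^+_\sigma\cup I^-_{\sigma'} \in \Sigma$, i.e. $\cV(\sigma^+)\cap\cV((\sigma')^-) \neq \emptyset$. Your moment-map containments then already give $\mu(\fp_\sigma) \leq \mu(\fp_{\sigma'})$ without any reference to a maximal $\sigma''$ or a formula for the entry. So the geometric core of your proposal is correct, but you should delete the claimed value ``$1/|G_{\sigma''}|$'' and the intersection-emptiness requirement, and replace them by the weaker (and correct) observation that nonvanishing forces $I^+_\sigma\cup I^-_{\sigma'}$ to span a cone of $\Sigma$.
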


\begin{proof}
We consider the pairing matrix under a fixed ordering of the elements $\{a_{(\sigma, v)}^\pm\}$, which is given by a total order on the index set $\fI$ chosen as follows. First, the pairs $(\sigma, v)$ are ordered according to the total order $\le$ on $\Sigma(r)$ determined by the values of $\fp_\sigma$ under the moment map $\mu_{\bT_{0, \bR}}$. Then for each $\sigma \in \Sigma(r)$, the $|\Box(\sigma)|$ pairs $(\sigma, v)$ are ordered in a fixed arbitrary way. The square submatrix given by
$$
    \left( a_{(\sigma, v_1)}^+, a_{(\sigma, v_2)}^- \right), \qquad v_1, v_2 \in \Box(\sigma)
$$
is diagonal with non-zero diagonal entries given by the reciprocals of the orders of the isotropy groups of $\fp_\sigma$ in the twisted sectors. Now, if $\sigma_1 > \sigma_2$, the genericity of $\sv$ implies that $\cV(\sigma_1^+) \cap \cV(\sigma_2^-) = \emptyset$, or equivalently $I'_{\sigma_1^+} \cup I'_{\sigma_2^-} \not  \in \Sigma$. This implies that
$$
    \left( a_{(\sigma_1, v_1)}^+, a_{(\sigma_2, v_2)}^- \right) = 0
$$
for any $v_1, v_2$. Therefore, the $E$-by-$E$ pairing matrix is upper-triangular with non-zero diagonal entries, and is thus invertible.
\end{proof}

\begin{remark}\rm{
Our construction of the bases is motivated by the Maulik-Okounkov \cite{MO19} stable envelopes for holomorphic symplectic varieties. Consider the case $\cX$ is smooth. The moment map $\mu_{\bT_{0, \bR}}: \cX \to \bR$ given by the choice of non-zero cocharacter $\sv$ of $\bT$ (or cocharacter chamber; see Remark \ref{apx-rmk:Cocharacter}) is a Morse function on $\cX$. Then in the above, $\cV(\sigma^\pm)$ is the closure of the attracting/repelling set of the critical point $\fp_\sigma$. We note that \cite{MO19} uses \emph{transitive closures} of attracting/repelling sets which is equally viable for our purpose.
%Our construction is different from that of \cite{MO19} in that we use $\bT$-fixed points instead of $\bT_0$-fixed components, and that we do not use \emph{transitive} closures of attracting/repelling sets which is equally viable for our purpose.
}\end{remark}

\begin{remark}\rm{
Let $v \in \Box(\cX)$. Since the ring $H_v$ is Artinian local, its dualizing module $H_v^c$ is also its Matlis module, i.e. the injective hull of its residue field $\bC$. The proof of Lemma \ref{apx-lem:HPMBasis} shows that the multiplication $H^+_v \times H^-_v \to H^c_v$ is a non-degenerate pairing of $H_v$-modules, i.e. the induced map
$$
    H_v^\pm \to \Hom_{H_v}(H_v^\mp, H_v^c)
$$
is injective. Applying the exact functor $\Hom_{H_v}(-, H_v^c)$, we see that the map
$$
    H_v^\mp = \Hom_{H_v}(\Hom_{H_v}(H_v^\mp, H_v^c), H_v^c) \to \Hom_{H_v}(H_v^\pm, H_v^c)
$$
is surjective. Thus the maps are isomorphisms, the pairing $H^+_v \times H^-_v \to H^c_v$ is perfect, and $H_v^\pm$ are Matlis dual to each other.
} 
\end{remark}

Let $K^\pm  := K^\pm_{\comb}(\cX) \otimes \bC$. Similar to $\tch$ and $\tch^c$, we may define
$$
    \tch^\pm: K^\pm \to H^\pm
$$
where the projection $\tch_v^\pm: K^\pm \to H_v^\pm$ to the twisted sector $H_v^\pm$ is defined by
$$
    \tch_v^\pm \left(\prod_{i=1}^{r+\fp'}R_i^{\alpha_i} G_I \right) = \begin{cases}
        0 & \text{if } I \cup I'_{\sigma(v)} \not \in \Sigma,\\
        \displaystyle \prod_{i=1}^{r+\fp'} \tch_v(R_i)^{\alpha_i} \prod_{i \in I \setminus I'_{\sigma(v)}} \frac{1 - \exp (-D_i)}{D_i} \prod_{i \in I \cap I'_{\sigma(v)}} (1 - \tch_v(R_i)^{-1}) F_{\Ibar} & \text{if } I \cup I'_{\sigma(v)} \in \Sigma.   
    \end{cases}
$$
The proof of \cite[Proposition 3.10]{BH15} implies the following statement.

\begin{lemma}\label{apx-lem:PMChern}
The map $\tch^\pm$ is an isomorphism of $\bC$-vector spaces and an isomorphism of modules with respect to the ring isomorphism $\tch: K \xrightarrow{\sim} H$.
\end{lemma}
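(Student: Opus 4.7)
The plan is to mirror the proof of \cite[Proposition 3.10]{BH15}, which establishes the analogous statement for $\tch^c \colon K^c \to H^c$, with $\Sigma^c$ replaced throughout by $\Sigma^\pm$. The argument has three logical steps: well-definedness of $\tch^\pm$, compatibility with the $K$-action (i.e., the module property), and bijectivity; once the first two are in place, the ``isomorphism of modules with respect to $\tch$'' is tautological, and the ``isomorphism of $\bC$-vector spaces'' is a consequence of bijectivity.

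First I would check that the formula for $\tch^\pm$ respects the two families of defining relations of $K^\pm_{\comb}(\cX)$, namely $(1-R_i^{-1})G_I - G_{I \cup \{i\}}$ for $i \notin I$, $I \sqcup \{i\} \in \Sigma$, and $(1-R_i^{-1})G_I$ for $i \notin I$, $I \sqcup \{i\} \notin \Sigma$. A key combinatorial point is that $\Sigma^\pm$ is upward closed in $\Sigma$: if $I = I'_\sigma \in \Sigma^\pm$ and $I \sqcup \{i\} \in \Sigma$, then the enlarged cone automatically lies in $\Sigma^\pm$, so $G_{I \cup \{i\}}$ is a valid generator. Moreover, on each twisted sector $v \in \Box(\cX)$, the formula for $\tch_v^\pm$ on the generator $G_I$ is given by the very same expression as for $\tch_v^c$, so the sector-by-sector verification in \cite[Section 3]{BH15} applies verbatim. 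The module property $\tch^\pm(R \cdot G) = \tch(R) \cdot \tch^\pm(G)$ reduces by $\bC$-linearity to the generating cases $R = R_j$, $G = G_I$, and the ensuing case analysis (depending on whether $j \in I$, $j \in I'_{\sigma(v)}$, $\{j\} \cup I'_{\sigma(v)} \in \Sigma$, etc.) is again formally identical to the corresponding BH15 computation.

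For bijectivity, I would compare presentations. Viewed as modules over the common ring $K \cong H$ (via $\tch$), both $K^\pm$ and $H^\pm$ admit finite presentations with generators indexed by $\{I'_\sigma : \sigma \in \Sigma^\pm\}$ whose defining relations, once $R_i^{-1}$ is identified with $\exp(-D_i)$, differ only by multiplication by the units $u_i := (1-\exp(-D_i))/D_i$; each $u_i$ has constant term $1$ and is a unit in the Artinian local ring $H_v$ since $D_i$ is nilpotent there. The explicit formula for $\tch^\pm(G_I)$ encodes precisely this unit change-of-presentation, supplemented on twisted sectors by the factors $(1 - \tch_v(R_i)^{-1})$ and the relabeling $I \mapsto \Ibar$, so the induced map of presentations is an isomorphism. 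As an independent sanity check, Lemma \ref{apx-lem:HPMDim} gives $\dim_\bC H^\pm = E$, and the leading-term expansion $\tch^\pm(G_I) = F_I + (\text{terms involving } F_J \text{ with } J \supsetneq I)$ combined with downward induction on $|I|$ establishes surjectivity directly.

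The main obstacle is the bijectivity step: making the presentation-comparison rigorous requires careful tracking of how the unit factors $u_i$ interact with the twisted-sector corrections in the definition of $\tch_v^\pm$, and verifying that the change-of-basis behaves consistently across all inertia components simultaneously. However, this is precisely the content of the corresponding argument in \cite[Section 3]{BH15}, and it transports to our setting by replacing the enumeration over $\Sigma^c$ with one over $\Sigma^\pm$ throughout, using the upward-closedness of $\Sigma^\pm$ noted above to ensure that the algebra closes correctly under enlargement of index sets.
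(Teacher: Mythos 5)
Your proposal takes essentially the same route as the paper, which simply observes that the proof of \cite[Proposition 3.10]{BH15} carries over with $\Sigma^c$ replaced by $\Sigma^\pm$. You have correctly identified the key point that makes the transport work — the upward-closedness of $\Sigma^\pm$ in $\Sigma$ ensures the relations close — and the dimension check via Lemma \ref{apx-lem:HPMDim} is a sound supplementary verification of bijectivity.
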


\subsection{Euler characteristic pairings}\label{apx-sect:PMEulerPairing}
On the categorical level, there is a natural pairing between $D^+(\cX)$ and $D^-(\cX)$ given by
\begin{equation}\label{apx-eqn:CatPairingFormula}
    \inner{\cE^+, \cE^-} := \sum_k (-1)^k \dim_{\bC} \Ext^k(\cE^+, \cE^-).
\end{equation}
The pairing is well-defined since by spectral sequences, the $\Ext$ groups are computed in terms of cohomology groups of local $\Ext$ sheaves from sheaves supported on $\cX^+$ to sheaves supported on $\cX^-$, which are finite dimensional because $\cX^+ \cap \cX^- = \cX^c$ is proper. Moreover, the pairing descends to
\begin{equation}\label{apx-eqn:CatPairing}
    \chi: K^+(\cX) \times K^-(\cX) \to \bZ.
\end{equation}
Combinatorially, consider the multiplication $K^+_{\comb}(\cX) \times K^-_{\comb}(\cX) \to K^c(\cX)$,
$$
    \left( \prod_{i=1}^{r+\fp'} R_i^{\alpha_i}G_{I^+}, \prod_{i=1}^{r+\fp'} R_i^{\beta_i}G_{I^-} \right) \mapsto \prod_{i=1}^{r+\fp'} R_i^{\beta_i - \alpha_i} (-1)^{|I^+|} \prod_{i \in I^+} R_i \prod_{i \in I^+ \cap I^-} (1 - R_i^{-1}) G_{I^+ \cup I^-}
$$
where the $K^+_{\comb}(\cX)$-factor is dualized. This composed with the Euler characteristic map $\chi: K^c(\cX) \to \bZ$ \eqref{apx-eqn:EulerMap} gives a pairing
\begin{equation}\label{apx-eqn:CombPairing}
    \chi_{\comb}: K^+_{\comb}(\cX) \times K^-_{\comb}(\cX) \to \bZ.
\end{equation}

We now compute the categorical pairing and compare it to the combinatorial version. The computation technique follows \cite{BO19}. For an index set $I$, consider the Koszul complex
$$
    \cK_I^\bullet = \bigotimes_{i \in I} \left(\cO_{\cX}(-\cD_i) \to \cO_{\cX} \right) =  \left(\cE_I^{|I|} \to \cdots \to \cE_I^1 \to \cE_I^0 \right) 
$$
where for $s = 0, \dots, |I|$,
$$
    \cE_I^s = \bigoplus_{S \subseteq I, |S| = s} \cO_{\cX}\left( - \sum_{i \in S} \cD_i \right)
$$
and has cohomological degree $-s$. In particular,
$$
    \cE_I^0 = \cO_{\cX}, \qquad \cE_I^{|I|} = \cO_{\cX}\left( - \sum_{i \in I} \cD_i \right) =: \cL_I^\vee.
$$
In the Grothendieck group, we have
$$
    \sum_{s \in \bZ_{\ge 0}} (-1)^s [\cE_I^s] = \prod_{i \in I} \left( [\cO_{\cX}] - [\cO_{\cX}(-\cD_i)] \right).
$$

\begin{lemma}\label{apx-lem:EulerPairingCompute}
For line bundles $\cL_1, \cL_2$ on $\cX$ and $\sigma^\pm \in \Sigma^\pm$, setting $I^\pm = I'_{\sigma^\pm}$ and $\sigma \in \Sigma^c$ with $I'_\sigma = I^+ \cup I^-$, we have
$$
    \chi \left(\iota_{\sigma^+, *} \cO_{\cV(\sigma^+)} \otimes \cL_1, \iota_{\sigma^-, *} \cO_{\cV(\sigma^-)} \otimes \cL_2 \right) = (-1)^{|I^+|} \sum_{s \in \bZ_{\ge 0}} (-1)^{s} \chi \left(\iota_{\sigma, *} \cO_{\cV(\sigma)} \otimes \cE_{I^+ \cap I^-}^s \otimes \cL_{I^+} \otimes \cL_1^\vee \otimes \cL_2 \right).
$$
In particular, the pairing $\chi$ agrees with $\chi_{\comb}$ on the images of $\mu^\pm$ \eqref{apx-eqn:MuMap}.
\end{lemma}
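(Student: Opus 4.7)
The plan is to compute the Euler characteristic pairing by combining Grothendieck duality for regular embeddings into the smooth orbifold $\cX$ with a Koszul-style analysis of the derived tensor product $\iota_{\sigma^+, *} \cO_{\cV(\sigma^+)} \otimes^L \iota_{\sigma^-, *} \cO_{\cV(\sigma^-)}$. First, since $\cV(\sigma^+) \subset \cX$ is a smooth complete intersection of the toric divisors $\{\cD_i\}_{i \in I^+}$ with normal bundle determinant $\iota_{\sigma^+}^* \cL_{I^+}$, Grothendieck--Verdier duality combined with the projection formula gives the derived dual
$$
    \mathrm{RHom}_{\cO_{\cX}}(\iota_{\sigma^+, *} \cO_{\cV(\sigma^+)}, \cO_{\cX}) \cong \iota_{\sigma^+, *} \cO_{\cV(\sigma^+)} \otimes \cL_{I^+}[-|I^+|].
$$
As $\iota_{\sigma^+,*}\cO_{\cV(\sigma^+)} \otimes \cL_1$ is a perfect complex on the smooth stack $\cX$, I will use the identity $\mathrm{RHom}(\cF, \cG) \cong \mathrm{RHom}(\cF, \cO_{\cX}) \otimes^L \cG$ for perfect $\cF$ to reduce the pairing to the Euler characteristic on $\cX$ of
$$
    \cL_{I^+} \otimes \cL_1^\vee \otimes \cL_2 \otimes \iota_{\sigma^+, *}\cO_{\cV(\sigma^+)} \otimes^L \iota_{\sigma^-, *}\cO_{\cV(\sigma^-)}
$$
multiplied by $(-1)^{|I^+|}$.

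The heart of the argument is then to identify
$$
    \iota_{\sigma^+, *}\cO_{\cV(\sigma^+)} \otimes^L \iota_{\sigma^-, *}\cO_{\cV(\sigma^-)} \cong \iota_{\sigma, *}\cO_{\cV(\sigma)} \otimes \cK_{I^+ \cap I^-}^\bullet
$$
in $D(\cX)$. The idea is to resolve $\iota_{\sigma^-,*}\cO_{\cV(\sigma^-)}$ by the Koszul complex $\cK_{I^-}^\bullet$ and factor it as $\cK_{I^- \setminus I^+}^\bullet \otimes \cK_{I^+ \cap I^-}^\bullet$ along the partition $I^- = (I^- \setminus I^+) \sqcup (I^+ \cap I^-)$. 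Two observations then drive the computation: (i) the assumption $I^+ \cup I^- = I'_\sigma \in \Sigma$ makes the sections $\{s_i\}_{i \in I^- \setminus I^+}$ a regular sequence on $\cV(\sigma^+)$, so $\cK_{I^- \setminus I^+}^\bullet \otimes \iota_{\sigma^+, *}\cO_{\cV(\sigma^+)}$ is quasi-isomorphic to $\iota_{\sigma, *}\cO_{\cV(\sigma)}$; (ii) for each $i \in I^+ \cap I^-$, the defining section of $\cD_i$ vanishes identically on $\cV(\sigma) \subseteq \cD_i$, so the Koszul differential on $\cK_{I^+ \cap I^-}^\bullet \otimes \iota_{\sigma, *}\cO_{\cV(\sigma)}$ is zero and the complex splits as $\bigoplus_{s \in \bZ_{\ge 0}} \cE_{I^+ \cap I^-}^s \otimes \iota_{\sigma, *}\cO_{\cV(\sigma)}[s]$. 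Taking alternating sums in $K(\cX)$ and substituting back produces the claimed identity.

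For the ``in particular'' clause, Lemma \ref{apx-lem:KGroupGenerator} shows that the images of $\mu^\pm$ are additively generated by classes of the form $\iota_{\sigma^\pm, *}\cO_{\cV(\sigma^\pm)} \otimes \cL$, so it suffices to check equality on such classes. A direct unwinding of the combinatorial multiplication $K^+_{\comb}(\cX) \times K^-_{\comb}(\cX) \to K^c(\cX)$ defining $\chi_{\comb}$, composed with the Borisov--Horja Euler map \eqref{apx-eqn:EulerMap} on $K^c(\cX)$ and the identification via $\tch^c$, yields exactly the alternating sum on the right-hand side of the lemma.

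I expect the main technical hurdle to be the careful justification of the perfect-complex duality formula and of observations (i) and (ii) in the orbifold setting -- in particular, verifying that the Koszul-regularity argument and the vanishing of differentials on $\cV(\sigma)$ go through as genuine isomorphisms in $D(\cX)$, together with the correct bookkeeping of the projection formula on the smooth Deligne--Mumford stack $\cX$.
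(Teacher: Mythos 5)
Your proposal is correct and takes essentially the same approach as the paper: both proofs hinge on resolving one of the two structure sheaves by its Koszul complex, dualizing to produce the $(-1)^{|I^+|}\cL_{I^+}$ twist, and then using the two key observations — that $\{s_i\}_{i\in I^\mp\setminus I^\pm}$ forms a regular sequence on $\cV(\sigma^\pm)$ (since $I^+\cup I^-\in\Sigma$), and that the Koszul differentials for $i\in I^+\cap I^-$ vanish identically on $\cV(\sigma)$ — to reduce to Euler characteristics on $\cV(\sigma)$. The paper packages this via the local-to-global spectral sequence for $\Ext$ and the projection formula, whereas you package it via Grothendieck--Verdier duality and the derived tensor product $\iota_{\sigma^+,*}\cO\otimes^L\iota_{\sigma^-,*}\cO$; these are equivalent formulations of the same computation.
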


\begin{proof}
The Koszul complex $\cK_{I^+}^\bullet$ is a locally free resolution of $\iota_{\sigma^+, *} \cO_{\cV(\sigma^+)}$. By the local-to-global spectral sequence for $\Ext$ groups, we have
\begin{align*}
    & \chi \left(\iota_{\sigma^+, *} \cO_{\cV(\sigma^+)} \otimes \cL_1, \iota_{\sigma^-, *} \cO_{\cV(\sigma^-)} \otimes \cL_2 \right) \\
    & = \sum_{s, i \in \bZ_{\ge 0}} (-1)^{s+i} h^i(\cX, \cHom(\cE_{I^+}^s, \iota_{\sigma^-, *} \cO_{\cV(\sigma^-)} \otimes \cL_1^\vee \otimes \cL_2)) \\
    & = \sum_{s, i \in \bZ_{\ge 0}} (-1)^{s+i} h^i \left(\cV(\sigma^-), \left(\cE_{I^+}^s\right)^\vee \otimes \cL_1^\vee \otimes \cL_2 \big|_{\cV(\sigma^-)}\right) \\
    & = (-1)^{|I^+|} \sum_{s, i \in \bZ_{\ge 0}} (-1)^{s+i} h^i \left(\cV(\sigma^-), \cE_{I^+}^s \otimes \cL_{I^+} \otimes \cL_1^\vee \otimes \cL_2 \big|_{\cV(\sigma^-)}\right) \\
    & = (-1)^{|I^+|} \sum_{s, i \in \bZ_{\ge 0}} (-1)^{s+i} h^i \left(\cV(\sigma), \cE_{I^+ \cap I^-}^s \otimes \cL_{I^+} \otimes \cL_1^\vee \otimes \cL_2 \big|_{\cV(\sigma)}\right) \\
    & = (-1)^{|I^+|} \sum_{s \in \bZ_{\ge 0}} (-1)^{s} \chi \left(\iota_{\sigma, *} \cO_{\cV(\sigma)} \otimes \cE_{I^+ \cap I^-}^s \otimes \cL_{I^+} \otimes \cL_1^\vee \otimes \cL_2 \right).
\end{align*}
\end{proof}

\begin{lemma}\label{apx-lem:EulerPairingND}
The pairing $\chi_{\comb}$ \eqref{apx-eqn:CombPairing} is non-degenerate.
\end{lemma}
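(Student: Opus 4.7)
The plan is to complexify and transport $\chi_{\comb}$ to the cohomological side, where Lemma \ref{apx-lem:HPMBasis} supplies non-degeneracy. This mirrors the strategy of \cite[Theorem 5.2]{BH15} for the compact-support pairing $\chi: K(\cX) \times K^c(\cX) \to \bZ$. First, tensor with $\bC$ to obtain $\chi_{\comb, \bC}: K^+ \times K^- \to \bC$. By Lemma \ref{apx-lem:PMChern} combined with Lemma \ref{apx-lem:HPMDim}, both $K^\pm$ have $\bC$-dimension $E$, so non-degeneracy of $\chi_{\comb, \bC}$ implies non-degeneracy of $\chi_{\comb}$ over $\bZ$ (modulo torsion, which can be excluded as in \cite[Section 5]{BH15} using the bases coming from the proof of Lemma \ref{apx-lem:HPMBasis}).

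The central step is to show that the diagram
\[
\xymatrix{
K^+ \times K^- \ar[r] \ar[d]_{\tch^+ \times \tch^-} & K^c \ar[r]^{\chi_{\bC}} \ar[d]^{\tch^c} & \bC \ar@{=}[d] \\
H^+ \times H^- \ar[r] & H^c \ar[r]^{\int} & \bC
}
\]
commutes, up to a multiplicatively invertible adjustment absorbed into the Chern characters. The top row is the definition of $\chi_{\comb, \bC}$, and the bottom row is the pairing \eqref{apx-eqn:HPMPairing} followed by the socle evaluation \eqref{apx-eqn:SocleMap}. Commutativity of the right square is \cite[Proposition 4.5]{BH15}. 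For the left square, one computes on monomial generators $\prod_i R_i^{\alpha_i} G_{I^\pm}$: the Koszul-type factor $(-1)^{|I^+|} \prod_{i \in I^+} R_i \prod_{i \in I^+ \cap I^-}(1-R_i^{-1})$ appearing in the $K$-theoretic multiplication matches, under $\tch^\pm$ and $\tch^c$, the cohomological product $F_{I^+} \cdot F_{I^-} = \prod_{i \in I^+ \cap I^-} D_i \cdot F_{I^+ \cup I^-}$, modulo the invertible Todd-type factors $\prod \frac{1-e^{-D_i}}{D_i}$ built into the definitions of the Chern characters. These factors act as units on each twisted sector and thus do not disturb non-degeneracy. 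Combined with Lemma \ref{apx-lem:HPMBasis}, this yields non-degeneracy of $\chi_{\comb, \bC}$, hence of $\chi_{\comb}$.

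The main obstacle is the monomial bookkeeping for the left square: signs, Koszul factors, Todd factors, and orbifold-age contributions from each $v \in \Box(\cX)$ must be tracked simultaneously, sector-by-sector. The genericity hypothesis on $\sv$ enters here to guarantee that $I^+ \cup I^-$ lies in $\Sigma$ precisely when $\cV(\sigma^+) \cap \cV(\sigma^-) \neq \emptyset$, so the supports of the $K$- and cohomological products match. Although more elaborate than the compact-support case treated in \cite[Proposition 3.10, Theorem 5.2]{BH15}, this verification is a direct extension of that argument and introduces no new conceptual ingredient beyond the basis construction of Lemma \ref{apx-lem:HPMBasis}.
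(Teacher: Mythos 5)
Your proof takes essentially the same route as the paper: complexify, transport the $K$-theoretic pairing to cohomology via $\tch^\pm$ and $\tch^c$, and invoke the non-degeneracy of the cohomological pairing from Lemma \ref{apx-lem:HPMBasis}. The paper is marginally cleaner in the bookkeeping — rather than absorbing the dualization and the Koszul twist $(-1)^{|I^+|}\prod_{i\in I^+}R_i$ into ``invertible adjustments'' of the Chern characters as you do, it first replaces $\chi_{\comb}$ with the un-dualized auxiliary pairing $\chi'_{\comb}$ (noting the two are non-degenerate simultaneously), which is the pairing exactly intertwined with \eqref{apx-eqn:HPMPairing} by $\tch^\pm$.
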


\begin{proof}
We follow the proof of \cite[Corollary 4.6]{BH15}. The non-degeneracy of $\chi_{\comb}$ is equivalent to that of another pairing 
$$
    \chi'_{\comb}: K^+_{\comb}(\cX) \times K^-_{\comb}(\cX) \to \bZ
$$
defined by the multiplication $K^+_{\comb}(\cX) \times K^-_{\comb}(\cX) \to K^c(\cX)$,
$$
    \left( \prod_{i=1}^{r+\fp'} R_i^{\alpha_i}G_{I^+}, \prod_{i=1}^{r+\fp'} R_i^{\beta_i}G_{I^-} \right) \mapsto \prod_{i=1}^{r+\fp'} R_i^{\alpha_i + \beta_i} \prod_{i \in I^+ \cap I^-} (1 - R_i^{-1}) G_{I^+ \cup I^-}
$$
where the $K^+_{\comb}(\cX)$-factor is not dualized. On the other hand, the isomorphisms $\tch^\pm$ intertwine $\chi'_{\comb}$ (extended over $\bC$) and the cohomological pairing \eqref{apx-eqn:HPMPairing} which is non-degenerate by Lemma \ref{apx-lem:HPMBasis}. Thus $\chi'_{\comb}$ is also non-degenerate.
\end{proof}
    
\begin{proof}[Proof of Theorem \ref{apx-thm:KpmPresentation}]
The theorem follows from that the map $\mu^\pm$ \eqref{apx-eqn:MuMap} is an isomorphism. The surjectivity is given in Section \ref{apx-sect:BoundedDef} and the injectivity follows from Lemmas \ref{apx-lem:EulerPairingCompute}, \ref{apx-lem:EulerPairingND}.
\end{proof}

\begin{remark} \rm{
It would be interesting to investigate whether the pairing $\chi$ \eqref{apx-eqn:CatPairing} is unimodular (cf. \cite[Remark 4.7]{BH15}).
}
\end{remark}

\subsection{Equivariant case}\label{apx-sect:Equivariant}
We now consider the equivariant case with respect to a subtorus $\bT' \subseteq \bT$. Let $D_{\bT'}(\cX) := D^b(\Coh_{\bT'}(\cX))$ denote the bounded derived category of $\bT'$-equivariant coherent sheaves on $\cX$ and $K_{\bT'}(\cX) := K_0(D_{\bT'}(\cX))$ denote its Grothendieck group. For $\star \in \{c, +, -\}$, let $D^\star_{\bT'}(\cX)$ denote the full subcategory of $D_{\bT'}(\cX)$ consisting of complexes of sheaves whose cohomology sheaves are supported on $\cX^\star$, and $K^\star_{\bT'}(\cX) := K_0(D^\star_{\bT'}(\cX))$ denote its Grothendieck group. We have the following straightforward generalization of Lemma \ref{apx-lem:KGroupGenerator}.

\begin{lemma}\label{apx-lem:EquivKGroupGenerator}
For $\star \in \{\emptyset, c, +, -\}$, the group $K^\star_{\bT'}(\cX)$ is additively generated by sheaves of the form
$$
    \iota_{\sigma, *} \cL
$$
where $\sigma \in \Sigma^\star$ is minimal and $\cL$ is a $\bT'$-equivariant line bundle on $\cV(\sigma)$.
\end{lemma}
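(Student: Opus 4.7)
The plan is to mirror the argument for the non-equivariant version (Lemma \ref{apx-lem:KGroupGenerator}) while systematically equipping every resolution and filtration with a $\bT'$-equivariant structure; this is possible because $\cX$ and each $\cV(\sigma)$ are smooth toric Deligne–Mumford stacks, for which $\bT'$-equivariant coherent sheaves admit finite resolutions by $\bT'$-equivariant locally free sheaves (the equivariant analog of Thomason's result, applied to a smooth separated DM stack with the resolution property).

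First, I would reduce to the case of a single $\bT'$-equivariant coherent sheaf $\cF$ whose set-theoretic support is contained in $\cX^\star$: any object of $D^\star_{\bT'}(\cX)$ is an alternating sum in $K$-theory of its $\bT'$-equivariant cohomology sheaves, all of which have support in $\cX^\star$. I would then induct on $\dim \mathrm{supp}(\cF)$. The irreducible components of $\mathrm{supp}(\cF)$ are of the form $\cV(\sigma)$ with $\sigma \in \Sigma^\star$; picking a top-dimensional such $\cV(\sigma)$ and filtering $\cF$ by powers of its ideal sheaf $\cI_\sigma$ (an exhaustive $\bT'$-equivariant filtration whose successive quotients are $\bT'$-equivariantly pushed forward from $\cV(\sigma)$) shows, via the equivariant exact sequences, that $[\cF]$ equals a sum of classes of the form $[\iota_{\sigma,*}\cE_\sigma]$ for $\bT'$-equivariant coherent sheaves $\cE_\sigma$ on $\cV(\sigma)$, plus a correction whose support has strictly smaller dimension and which is handled by induction. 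Applying the equivariant resolution property on each $\cV(\sigma)$, each $\cE_\sigma$ further has a finite $\bT'$-equivariant locally free resolution, so in $K^\star_{\bT'}(\cX)$ we can express $[\iota_{\sigma,*}\cE_\sigma]$ as an integer combination of classes of $\iota_{\sigma,*}\cL$ with $\cL$ a $\bT'$-equivariant line bundle on $\cV(\sigma)$ (using that on a smooth toric DM stack every $\bT'$-equivariant locally free sheaf is $\bT'$-equivariantly filtered by line bundles, or, alternatively, that its class in the equivariant $K$-group is a sum of line bundle classes).

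Finally, if $\sigma \in \Sigma^\star$ is not minimal, I would eliminate it using the $\bT'$-equivariant Koszul complex associated to a cone $\tau \subset \sigma$ with $\tau \in \Sigma^\star$: writing $\cV(\sigma) = \bigcap_{i \in I'_\sigma \setminus I'_\tau}(\cV(\tau) \cap \cD_i)$, the $\bT'$-equivariant Koszul resolution
\[
\bigotimes_{i \in I'_\sigma \setminus I'_\tau}\bigl(\cO_{\cV(\tau)}(-\cD_i)\to \cO_{\cV(\tau)}\bigr)
\]
expresses $[\iota_{\sigma,*}\cL]$, after tensoring with an equivariant extension $\tilde\cL$ of $\cL$ to $\cV(\tau)$, as an alternating sum of classes $[\iota_{\tau,*}\cL']$ of $\bT'$-equivariant line bundles pushed forward from $\cV(\tau)$. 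Iterating downward along faces shows that such reductions terminate at minimal $\sigma \in \Sigma^\star$.

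The step I expect to require the most care is the reduction to \emph{minimal} $\sigma \in \Sigma^\star$ when $\star \in \{+,-\}$, because the face $\tau \subset \sigma$ that one pairs off in a Koszul step must itself lie in $\Sigma^\star$; this needs to be checked from the bounded-below/above geometry of $\mu_{\bT_{0,\bR}}$ on orbit closures of $\sigma$, but it is guaranteed by the defining property of $\Sigma^\pm$ since any face of a cone in $\Sigma^\pm$ whose orbit closure has the correct boundedness also lies in $\Sigma^\pm$. Apart from this bookkeeping, every step is a direct equivariant upgrade of the arguments in \cite{BH15}.
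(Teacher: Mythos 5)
Your overall strategy (devissage onto the support, reduction to line bundles via the equivariant resolution property, Koszul reduction to minimal cones) is the right one and matches what the paper implicitly does: the paper gives no argument here beyond labeling the lemma a "straightforward generalization" of the non-equivariant Lemma~\ref{apx-lem:KGroupGenerator}, which is itself delegated to the proof of Theorem~5.2 in Borisov--Horja. So there is nothing to compare step-by-step; the question is whether your sketch would actually close.

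There is one step that does not close as written. You assert that the irreducible components of $\mathrm{supp}(\cF)$ are of the form $\cV(\sigma)$ with $\sigma\in\Sigma^\star$, and then filter by $\cI_\sigma$-adic powers with $\sigma$ one such component. This is false in general: a $\bT'$-equivariant coherent sheaf has $\bT'$-invariant support, but since $\bT'$ is a proper subtorus of $\bT$, a $\bT'$-invariant irreducible closed subset of $\cX^\star$ need not be a $\bT$-orbit closure; and even if you force the filtration by the ideal of a toric component $\cV(\sigma)$ of $\cX^\star$ (rather than of $\mathrm{supp}(\cF)$), the $\cI_\sigma$-adic filtration does not terminate unless $\mathrm{supp}(\cF)\subseteq\cV(\sigma)$, so the induction on dimension of support does not get off the ground. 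The standard fix is either (i) invoke Quillen devissage first to identify $K^\star_{\bT'}(\cX)\cong G_0^{\bT'}(\cX^\star)$ (using smoothness of $\cX$), and then use the filtration of $\cX^\star$ by the closed unions $F_j:=\bigcup_{\sigma\in\Sigma^\star,\ \dim\sigma\ge j}\cV(\sigma)$, together with $\bT'$-equivariant localization sequences, so that the successive open strata are finite disjoint unions of $\bT$-orbits and one reduces to $\bT'$-equivariant $K$-theory of a single orbit, which is generated by characters; or (ii) for a sheaf $\cG$ on $\cX^\star_{\mathrm{red}}$, use the single short exact sequence $0\to\cJ_\sigma\cG\to\cG\to\cG/\cJ_\sigma\cG\to0$ with $\cJ_\sigma=\cI_{\cV(\sigma)/\cX^\star_{\mathrm{red}}}$ and induct on the number of irreducible components of $\cX^\star$ rather than on $\dim\mathrm{supp}$. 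Either route lands you in the situation you describe (equivariant sheaves on the toric strata $\cV(\sigma)$, not necessarily minimal) and your equivariant Koszul step then correctly reduces to minimal $\sigma\in\Sigma^\star$; your remark about checking that the intermediate face $\tau$ lies in $\Sigma^\star$ is fine, since minimality of $\sigma$ in $\Sigma^\star$ is precisely the statement that no proper face of $\sigma$ lies in $\Sigma^\star$, so a non-minimal $\sigma$ always has a proper face $\tau\in\Sigma^\star$ to Koszul down to.
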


The group $K^\star_{\bT'}(\cX)$ is a module over the ring $K_{\bT'}(\cX)$. The inclusions $\Sigma^c \subseteq \Sigma^\pm \subseteq \Sigma$ induce natural $K_{\bT'}(\cX)$-linear maps
\begin{equation}\label{apx-eqn:EquivKNatural} 
    K^c_{\bT'}(\cX) \to K^\pm_{\bT'}(\cX) \to K_{\bT'}(\cX).
\end{equation}

\begin{lemma}\label{apx-lem:EquivKInjective}
Assume that
\begin{equation}\label{apx-eqn:TActionCond}
    \cX^{\bT'} = \cX^{\bT} = \bigsqcup_{\sigma \in \Sigma(r)} \fp_\sigma.
\end{equation}
Then the natural maps \eqref{apx-eqn:EquivKNatural} are injective.
\end{lemma}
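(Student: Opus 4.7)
The strategy is to deduce injectivity from $\bT'$-equivariant localization at torus-fixed points, combined with the freeness of each of the groups $K^\star_{\bT'}(\cX)$ (for $\star \in \{\emptyset, c, +, -\}$) as a module over the representation ring $R_{\bT'} = K_{\bT'}(\mathrm{pt})$. The assumption \eqref{apx-eqn:TActionCond} is what makes both ingredients available: $\cX^{\bT'}$ is the finite disjoint union of the gerbe-points $\{\fp_\sigma\}_{\sigma \in \Sigma(r)}$, and each $\fp_\sigma$ lies in $\cV(\sigma) \subseteq \cX^c \subseteq \cX^\pm$ since $\Sigma(r) \subseteq \Sigma^c$. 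Consequently, for every $\star$, restriction to fixed points yields a well-defined $R_{\bT'}$-linear map
\[
    \iota^* : K^\star_{\bT'}(\cX) \to \bigoplus_{\sigma \in \Sigma(r)} K_{\bT'}(\fp_\sigma),
\]
and these maps are compatible with the natural morphisms in \eqref{apx-eqn:EquivKNatural}, in the sense that $\iota^*$ for $K^\star_{\bT'}(\cX)$ factors through $\iota^*$ for $K_{\bT'}(\cX)$.

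First I would establish freeness of $K^\star_{\bT'}(\cX)$ as an $R_{\bT'}$-module by upgrading the Borisov-Horja combinatorial presentations of Sections \ref{apx-sect:KGroup} and \ref{apx-sect:BoundedDef} to the $\bT'$-equivariant setting. Specifically, $K_{\bT'}(\cX)$ is presented as the quotient of $R_{\bT'}[R_1^{\pm 1}, \dots, R_{r+\fp'}^{\pm 1}]$ by the Stanley-Reisner relations $\prod_{i \in I}(1-R_i)$ for $I \notin \Sigma$ together with the \emph{character-twisted} linear relations $\prod_{i=1}^{r+\fp'} R_i^{\inner{u, b_i}} - \chi_{\bT'}(u)$ for $u \in M$, where $\chi_{\bT'}(u) \in R_{\bT'}$ is the character of $\bT'$ induced by $u$ under the projection $M \twoheadrightarrow M'$. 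An analogous presentation holds for $K^\star_{\bT'}(\cX)$ in terms of the generators $G_I$. Condition \eqref{apx-eqn:TActionCond} ensures the resulting modules are free of rank $E$ over $R_{\bT'}$ (this can either be seen directly from the presentation, or deduced from the Białynicki-Birula decomposition associated to a generic one-parameter subgroup $\lambda: \bC^* \to \bT'$, whose fixed locus equals $\cX^{\bT'}$ by \eqref{apx-eqn:TActionCond} and which decomposes $\cX^\star$ into affine cells indexed by $\{\sigma \in \Sigma(r)\}$). In particular, $K^\star_{\bT'}(\cX)$ is torsion-free over $R_{\bT'}$.

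Next I would invoke the Thomason-Vezzosi-Vistoli equivariant localization theorem for $\bT'$-equivariant $K$-theory of Deligne-Mumford stacks: under \eqref{apx-eqn:TActionCond}, there exists a multiplicative set $\cS \subseteq R_{\bT'}$ (generated by elements of the form $1 - \chi$ for the non-trivial $\bT'$-characters appearing in the normal bundles of $\cX^{\bT'}$ inside $\cX$) such that $\iota^*$ becomes an isomorphism after inverting $\cS$, simultaneously on $K_{\bT'}(\cX)$ and on each $K^\star_{\bT'}(\cX)$. Combined with torsion-freeness from the previous paragraph, this gives that the unlocalized $\iota^*$ is already injective on $K^\star_{\bT'}(\cX)$ for every $\star$.

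To finish, suppose $\cE \in K^c_{\bT'}(\cX)$ (resp.\ $K^\pm_{\bT'}(\cX)$) lies in the kernel of the natural map to $K^\pm_{\bT'}(\cX)$ (resp.\ $K_{\bT'}(\cX)$). Because $\iota^*$ on $K^c_{\bT'}(\cX)$ factors through $K^\pm_{\bT'}(\cX)$ and $\iota^*$ on $K^\pm_{\bT'}(\cX)$ factors through $K_{\bT'}(\cX)$, we get $\iota^* \cE = 0$, and injectivity of $\iota^*$ on the source group forces $\cE = 0$. The main obstacle in executing this plan is the freeness statement in the second paragraph: verifying that the character-twisted presentation of $K_{\bT'}(\cX)$ is correct and yields a free $R_{\bT'}$-module of the expected rank (including careful handling of twisted sectors via inertia stacks), or equivalently, constructing an explicit equivariant Białynicki-Birula basis compatible with the support conditions defining $\cX^\star$. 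Once this is in place, the remaining steps are immediate from Thomason's theorem.
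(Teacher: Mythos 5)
Your approach is genuinely different from the paper's, and it contains a real gap that you have correctly identified but not resolved. The hinge of your argument is the claim that $K^\star_{\bT'}(\cX)$ is free (hence torsion-free) over $R_{\bT'}$ for all $\star \in \{\emptyset, c, +, -\}$, so that injectivity of $\iota^*$ after localizing at $\cS$ lifts to injectivity before localization. You propose to get this either from a ``character-twisted'' Borisov--Horja presentation or from a Białynicki--Birula decomposition, but you prove neither. This is not a small verification: the paper itself does not establish $\bT'$-equivariant combinatorial presentations of $K^c_{\bT'}(\cX)$ or $K^\pm_{\bT'}(\cX)$ and explicitly defers them to future work in Section \ref{apx-sect:Equivariant}. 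For $K_{\bT'}(\cX)$ freeness is standard in the semi-projective toric setting, but for the groups with support conditions $K^\pm_{\bT'}(\cX)$ it is considerably less so and would need a careful equivariant upgrade of Theorem \ref{apx-thm:KpmPresentation} (including the correct generator/relation set over $R_{\bT'}$ and a degeneration or regular-sequence argument to get freeness). As written, your proof invokes exactly the result the paper avoids having to prove.

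The paper circumvents this by a filtration argument that never needs global freeness. It orders $\Sigma(r)$ by the moment-map values of the fixed points and filters $\cX^+$ by the closed substacks $\cV_i = \bigcup_{j \ge i} \cV(\sigma_j^+)$. Given $F \in K^+_{\bT'}(\cX)$ mapping to zero in $K_{\bT'}(\cX)$, one writes $F = \iota_{\sigma_1^+,*}(F_1) + F'$ with $F'$ supported on $\cV_2$. Since $\fp_{\sigma_1}$ is the only $\bT'$-fixed point in $\cV_1 \setminus \cV_2$, localization on $\cX$ and on $\cV(\sigma_1^+)$ forces $F_1|_{\fp_{\sigma_1}} = 0$; here the only torsion-freeness used is that of $K_{\bT'}(\fp_{\sigma_1}) \cong R_{\bT'} \otimes R(G_{\sigma_1})$, which is immediate. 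This pushes the support of $F$ down to $\cV_2$, and induction finishes. The trade-off is clear: your route, if the freeness could be established, would give a cleaner one-step argument via $\iota^*$ and would likely also yield the equivariant presentations as a by-product; the paper's inductive route is more elementary and relies only on localization for pushforwards to $\cX$ and to $\cV(\sigma_1^+)$ together with the trivial torsion-freeness of $K$-theory of a gerbe-point. To salvage your proposal you would need to actually prove the equivariant analogue of Theorem \ref{apx-thm:KpmPresentation} and the freeness over $R_{\bT'}$, which is a substantially larger project than the lemma being proved.
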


Condition \eqref{apx-eqn:TActionCond} implies that the tangent $\bT'$-weights at any $\fp_\sigma$ along any torus-invariant curve $\fl_\tau$ is non-trivial, so that we may apply localization on the $\bT'$-equivariant $K$-groups of toric substacks of $\cX$ \cite{Thomason87, Thomason92}. The condition is met for instance when $\cX$ is Calabi-Yau and $\bT'$ is the Calabi-Yau subtorus. Note that in the non-equivariant case the lemma fails in general.

\begin{proof}
In view of Remark \ref{apx-rmk:Cocharacter}, it suffices to show the injectivity of $K^+_{\bT'}(\cX) \to K_{\bT'}(\cX)$, which we denote by $i$. We will make use of a filtration of the support $\cX^+$ by closed substacks constructed as follows. Consider the total order $\sigma_1 < \cdots < \sigma_{|\Sigma(r)|}$ on $\Sigma(r)$ defined by $\bT_0$ as in the proof of Lemma \ref{apx-lem:HPMBasis}, and for each $i$ let $\sigma_i^+ \in \Sigma^+$ be the cone associated to $\sigma_i$ there. Setting
$$
    \cV_i := \bigcup_{j \ge i} \cV(\sigma_j^+),
$$
we have a descending chain of closed substacks
$$
    \cX^+ = \cV_1^+ \supseteq \cdots \supseteq \cV_{|\Sigma(r)|}^+  \supset \cV_{|\Sigma(r)|+1}^+ = \emptyset.
$$

Now take $F \in K^+_{\bT'}(\cX)$ such that $i(F) = 0 \in K_{\bT'}(\cX)$. By definition, $F$ is supported on $\cV_1$. Note that $\cV(\sigma_1^+)$ is an irreducible component of $\cV_1$ whose complement is contained in $\cV_2$. We may write
\begin{equation}\label{eqn:FDecompose}
    F = \iota_{\sigma_1^+, *}(F_1) + F'
\end{equation}
for some $F_1 \in K_{\bT'}(\cV(\sigma_1^+))$ and $F' \in K_{\bT'}^+(\cX)$ such that $F'$ is supported on $\cV_2$. Note that $\fp_{\sigma_1}$ is the only $\bT'$-fixed point in $\cV_1 \setminus \cV_2$. We have a sequence of maps
$$
    \xymatrix{
        K_{\bT'}(\fp_{\sigma_1}) \ar[r] & K_{\bT'}(\cV(\sigma_1^+)) \ar[r]^{\iota_{\sigma_1^+, *}} & K_{\bT'}^+(\cX) \ar[r]^{i} & K_{\bT'}(\cX).
    }
$$
whose composition is the pushforward $\iota_{\sigma_1,*}: K_{\bT'}(\fp_{\sigma_1}) \to K_{\bT'}(\cX)$. By $\bT'$-equivariant localization on $\cX$, the pushforward $K_{\bT'}(\cX^{\bT'}) \to K_{\bT'}(\cX)$ is an isomorphism after localizing at the finite collection of elements $[\cL_\chi] - [\cO_{\cX}]$ where $\chi$ ranges through the non-trivial characters appearing in the tangent $\bT'$-weights at the fixed points and $\cL_\chi$ is the corresponding line bundle. A similar statement holds for $\bT'$-equivariant localization on $\cV(\sigma_1^+)$. Therefore in the decomposition \eqref{eqn:FDecompose}, since $i(F) = 0$ and the support of $F'$ is disjoint from $\fp_{\sigma_1}$, we have $F_1 \big|_{\fp_{\sigma_1}} = 0$. Thus $F_1$ is supported on the maximal $\bT'$-invariant closed substack of $\cV(\sigma_1^+)$ that is disjoint from $\fp_{\sigma_1}$, which is in fact contained in $\cV_2$. This implies that $F$ is supported on $\cV_2$. Continuing the argument inductively, we eventually arrive at $F = 0$.
\end{proof}

We expect much of the discussion on the non-equivariant case $K^\star(\cX)$, $\star \in \{\emptyset, c, +, -\}$, in this appendix to be generalizable to the $\bT'$-equivariant case, including a combinatorial characterization of $K^\star_{\bT'}(\cX)$ similar to Theorem \ref{apx-thm:KpmPresentation}. Moreover, the formula \eqref{apx-eqn:CatPairingFormula} defines a pairing between $D^+_{\bT'}(\cX)$ and $D^-_{\bT'}(\cX)$ which descends to $\chi_{\bT'}: K^+_{\bT'}(\cX) \times K^-_{\bT'}(\cX) \to \bZ$, and we expect to generalize the study in Section \ref{apx-sect:PMEulerPairing}. We defer a detailed treatment to future work.

\begin{comment}

We specialize to the case where $\cX$ is Calabi-Yau. Let $\bT' \subset \bT$ denote the Calabi-Yau subtorus. We assume that $r \ge 2$ and the 1-torus $\bT_0$ chosen in Section \ref{apx-sect:BoundedDef} is contained in $\bT'$. This implies that any minimal cone in $\Sigma^\pm$ has dimension in $\{1, \dots, r-1\}$, or equivalently, any irreducible component of $\cX^\pm$ has dimension in $\{1, \dots, r-1\}$.

% \begin{lemma}\label{lem:PointClassZero}
% For any maximal cone $\sigma \in \Sigma(r)$, setting $I = I'_\sigma$, we have $F_I = 0$ in $H^\pm$ and $G_I = 0$ in $K^\pm$.
% \end{lemma}

% \begin{proof}
% Consider the $f-$ and $h$-vectors of the relative simplicial complex $(\Delta, \Gamma^\pm)$ studied in the proof of Lemma \ref{apx-lem:HPMDim}. 

% \end{proof}

\begin{lemma}%\label{apx-lem:EquivEulerPairingCompute}
Computation of $\bT'$-equivariant pairing $\chi_{\bT'}$ on the generators \eqref{apx-eqn:EquivKGroupGenerator}
\end{lemma}
    
\begin{proof}
Similar to Lemma \ref{apx-lem:EulerPairingCompute}, following the method of Borisov-Orlov \cite{BO19}
\end{proof}

\begin{lemma}
The pairing $\chi_{\bT'}$ \eqref{apx-eqn:EquivCatPairing} is non-degenerate.
\end{lemma}

\begin{proof}
Try to take totally and oppositely ordered bases with upper-triangular pairing 
\end{proof}

\begin{lemma}
We have similar combinatorial characterizations of $K^\star_{\bT'}(\cX)$ for $\star \in \{\emptyset, c, +, -\}$.
\end{lemma}

\begin{proof}
Proof may rely on computing the categorical pairings and comparing with combinatorial pairings
\end{proof}

\end{comment}

% !TEX root = descendantBKMP.tex

\section{Oscillatory integrals solve equivariant GKZ}\label{appdx:PF}
In this section, we prove Proposition \ref{prop:IntSolvesPF} that oscillatory integrals on the mirror curve are solutions to the $\bT'$-equivariant GKZ-type system \eqref{eqn:GKZ}. It is clear from the definitions that the integrals are annihilated by $\bE$, and thus we focus on the operators $\bD_\beta^{\bT'}$ for $\beta \in \bL$. We work with integrals on $C_q$ and consider a flat relative cycle
$
    \gamma \in H_1(C_q, \Re (\hx) \gg 0; \bZ).
$
The statement about integrals on $\tC_q$ can be reduced to this case by projecting relative cycles under the covering map $\tC_q \to C_q$.

Recall from Section \ref{sec:mirror-curve} that the equation of the mirror curve $C_q$ is
$$
    H(X, Y, q) = X^{\fr}Y^{-\fs} + Y^{\fm} + 1 + \sum_{i = 4}^{3 + \fp} a_i(q)X^{m_i}Y^{n_i}
$$
where $a_i(q) = \prod_{a = 1}^{\fp} q_a^{s_{ai}}$. We start with some elementary computations.

\begin{lemma}\label{lem:LinearAlgebra}
Let $j \in \{4, \dots, 3+\fp\}$. For $i = 1, 2, 3$ we have
$$
\sum_{a = 1}^{\fp} m_1^{(a)}s_{aj} = - \frac{1}{\fr}m_j, \quad 
\sum_{a = 1}^{\fp} m_2^{(a)}s_{aj} = - \frac{\fs}{\fr \fm} m_j - \frac{1}{\fm}n_j, \quad
\sum_{a = 1}^{\fp} m_3^{(a)}s_{aj} = - 1 + \frac{\fs + \fm}{\fr \fm} m_j + \frac{1}{\fm}n_j.
$$
For $i \in \{4, \dots, 3+\fp\}$ we have 
$
    \sum_{a = 1}^{\fp} m_i^{(a)}s_{aj} = \begin{cases}
        1 & \text{if } i = j,\\
        0 & \text{if } i \neq j.
    \end{cases}
$
\end{lemma}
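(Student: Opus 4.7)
The plan is to reduce both parts of the lemma to a single identity, namely the expansion of $D_i$ in the $\bQ$-basis $\{D_j\}_{j \geq 4}$ of $\bL^\vee_\bQ$. Combining the defining relations $H_a = \sum_{j \geq 4} s_{aj} D_j$ and $D_i = \sum_{a=1}^{\fp} m_i^{(a)} H_a$ gives
$$
D_i \;=\; \sum_{j \geq 4} \Bigl(\sum_{a=1}^{\fp} m_i^{(a)} s_{aj} \Bigr) D_j,
$$
so that for every $i$, the quantity $\sum_a m_i^{(a)} s_{aj}$ is precisely the coefficient of $D_j$ when $D_i$ is expressed in this basis. This immediately settles the second part of the lemma: for $i \in \{4,\dots,3+\fp\}$, $D_i$ is itself a basis element, so the coefficient is $\delta_{ij}$.

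For $i = 1, 2, 3$ it remains to compute the basis expansions of $D_1, D_2, D_3$. The short exact sequence $0 \to M \to \tM \to \bL^\vee \to 0$ from Section~\ref{sect:LineBundles} gives, for every $u \in M$, the relation
$$
\sum_{i=1}^{3+\fp} \langle u, b_i\rangle\, D_i \;=\; 0
$$
in $\bL^\vee$. I would apply this successively with $u = e_1^\vee, e_2^\vee, e_3^\vee$. Using the coordinates
$$
b_1 = (\fr, -\fs, 1), \qquad b_2 = (0, \fm, 1), \qquad b_3 = (0,0,1), \qquad b_i = (m_i, n_i, 1) \ (i \geq 4),
$$
the three resulting linear equations read $\fr D_1 + \sum_{j \geq 4} m_j D_j = 0$, then $-\fs D_1 + \fm D_2 + \sum_{j \geq 4} n_j D_j = 0$, and finally $D_1 + D_2 + D_3 + \sum_{j \geq 4} D_j = 0$ (the last from the Calabi-Yau normalization $\langle e_3^\vee, b_i\rangle = 1$).

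Solving this triangular system in order gives
$$
D_1 = -\tfrac{1}{\fr}\sum_{j\geq 4} m_j D_j, \quad D_2 = \sum_{j\geq 4}\Bigl(-\tfrac{\fs m_j}{\fr\fm} - \tfrac{n_j}{\fm}\Bigr) D_j, \quad D_3 = \sum_{j\geq 4}\Bigl(-1 + \tfrac{(\fs+\fm)m_j}{\fr\fm} + \tfrac{n_j}{\fm}\Bigr) D_j,
$$
and reading off the coefficient of $D_j$ in each expansion produces exactly the three formulas claimed in the lemma. There is no real obstacle here: the argument is purely formal linear algebra, and the only bookkeeping to watch is the sign in the $-\fs$ entry of $b_1$ and the uniform last coordinate coming from the Calabi-Yau condition, both of which are responsible for the ``$-1$'' appearing only in the $i = 3$ case.
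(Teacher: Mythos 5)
Your proof is correct and takes essentially the same approach as the paper: both use the relations $\sum_i \langle u, b_i\rangle D_i = 0$ in $\bL^\vee$ for $u = e_1^\vee, e_2^\vee, e_3^\vee$ and compare coefficients in the basis $\{D_j\}_{j \geq 4}$. Your framing — identifying $\sum_a m_i^{(a)}s_{aj}$ explicitly as the $j$-th coordinate of $D_i$ in that basis, then solving the triangular system — is slightly more transparent, but the underlying linear algebra and the recursive chain $i=1 \Rightarrow i=2 \Rightarrow i=3$ are identical to the paper's.
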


\begin{comment}
\begin{enumerate}[label=(\roman*)]
    \item $
        \sum_{a = 1}^{\fp} m_1^{(a)}s_{aj} = - \frac{1}{\fr}m_j.
    $

    \item $
        \sum_{a = 1}^{\fp} m_2^{(a)}s_{aj} = - \frac{\fs}{\fr \fm} m_j - \frac{1}{\fm}n_j.
    $

    \item $
        \sum_{a = 1}^{\fp} m_3^{(a)}s_{aj} = - 1 + \frac{\fs + \fm}{\fr \fm} m_j + \frac{1}{\fm}n_j.
    $

    \item For $i \in \{4, \dots, 3+\fp\}$, 
    $
        \sum_{a = 1}^{\fp} m_i^{(a)}s_{aj} = \begin{cases}
            1 & \text{if } i = j,\\
            0 & \text{if } i \neq j.
        \end{cases}
    $
\end{enumerate}
\end{comment}

\begin{proof}
The case $i \in \{4, \dots, 3+\fp\}$ directly follows from the definitions. The case $i = 1$ case follows from reading off the coefficient of $D_j$ from
$$
    \sum_{j = 4}^{3+\fp} (-m_j) D_j = \fr D_1 = \sum_{a = 1}^{\fp} \fr m_1^{(a)} H_a = \sum_{j = 4}^{3+\fp} \sum_{a = 1}^{\fp} \fr m_1^{(a)}s_{aj} D_j.
$$
The case $i = 2$ follows from the case $i = 1$ and
$$
    \sum_{j = 4}^{3+\fp} (-n_j) D_j = -\fs D_1 + \fm D_2 = \sum_{a = 1}^{\fp} (-\fs m_1^{(a)} + \fm m_2^{(a)})H_a = \sum_{j = 4}^{3+\fp} \sum_{a = 1}^{\fp} (-\fs m_1^{(a)} + \fm m_2^{(a)})s_{aj} D_j.
$$
The case $i = 3$ follows from the cases $i = 1, 2$ and
$$
    \sum_{j = 4}^{3+\fp} - D_j = D_1 + D_2 + D_3 = \sum_{a = 1}^{\fp} (m_1^{(a)} + m_2^{(a)} + m_3^{(a)}) H_a = \sum_{j = 4}^{3+\fp} \sum_{a = 1}^{\fp} (m_1^{(a)} + m_2^{(a)} + m_3^{(a)})s_{aj} D_j.
$$
\end{proof}

Consider
$$
    H_X := \frac{\partial H}{\partial \log X} = \fr X^{\fr}Y^{-\fs} + \sum_{i = 4}^{3 + \fp} m_ia_i(q)X^{m_i}Y^{n_i}, \quad
    H_Y := \frac{\partial H}{\partial \log Y} = -\fs X^{\fr}Y^{-\fs} + \fm Y^{\fm} + \sum_{i = 4}^{3 + \fp} n_ia_i(q)X^{m_i}Y^{n_i}.
$$

\begin{comment}
\begin{align*}
    & H_X := \frac{\partial H}{\partial \log X} = \fr X^{\fr}Y^{-\fs} + \sum_{i = 4}^{3 + \fp} m_ia_i(q)X^{m_i}Y^{n_i},\\
    & H_Y := \frac{\partial H}{\partial \log Y} = -\fs X^{\fr}Y^{-\fs} + \fm Y^{\fm} + \sum_{i = 4}^{3 + \fp} n_ia_i(q)X^{m_i}Y^{n_i}.
\end{align*}
\end{comment}

\begin{lemma}\label{lem:PartialH}
For $i = 1, 2, 3$ we have
$$
    \partial_1 H = X^{\fr}Y^{-\fs} - \frac{1}{\fr}H_X, \quad
    \partial_2 H = Y^{\fm} - \frac{\fs}{\fr \fm} H_X - \frac{1}{\fm}H_Y, \quad
    \partial_3 H = 1 - H + \frac{\fs + \fm}{\fr \fm} H_X + \frac{1}{\fm}H_Y.
$$
For $i \in \{4, \dots, 3+\fp\}$ we have $\partial_i H = a_i(q)X^{m_i}Y^{n_i}$.
\end{lemma}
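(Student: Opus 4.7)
The plan is to reduce this lemma directly to Lemma \ref{lem:LinearAlgebra} together with the definitions of $H$, $H_X$, and $H_Y$. The key observation is that $\partial_a$ acts trivially on the three monomial terms $X^{\fr}Y^{-\fs}$, $Y^{\fm}$, $1$ of $H$ since they do not involve $q$, and $q_a\,\partial a_j/\partial q_a = s_{aj}a_j(q)$ for $j\in\{4,\dots,3+\fp\}$. Therefore
\[
    \partial_i H \;=\; \sum_{a=1}^{\fp} m_i^{(a)}\,\partial_a H \;=\; \sum_{j=4}^{3+\fp}\left(\sum_{a=1}^{\fp} m_i^{(a)}\,s_{aj}\right) a_j(q)\,X^{m_j}Y^{n_j}.
\]
This reduces the computation of $\partial_i H$ to evaluating the inner sums, which is exactly what Lemma \ref{lem:LinearAlgebra} provides.

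Next, I would handle the four cases of $i$ in turn. For $i\in\{4,\dots,3+\fp\}$ the statement is immediate: the inner coefficient is $\delta_{ij}$, giving $\partial_i H = a_i(q)X^{m_i}Y^{n_i}$. For $i=1$, Lemma \ref{lem:LinearAlgebra} gives coefficient $-m_j/\fr$, so $\partial_1 H = -\frac{1}{\fr}\sum_{j=4}^{3+\fp} m_j a_j(q)X^{m_j}Y^{n_j}$; rewriting the sum via $H_X - \fr X^{\fr}Y^{-\fs}$ yields $X^{\fr}Y^{-\fs} - \frac{1}{\fr}H_X$. For $i=2$, combining the contributions $-\frac{\fs}{\fr\fm}m_j$ and $-\frac{1}{\fm}n_j$ and substituting via $H_X$ and $H_Y$ gives cancellation of the $X^{\fr}Y^{-\fs}$ terms and leaves $Y^{\fm}-\frac{\fs}{\fr\fm}H_X - \frac{1}{\fm}H_Y$.

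The only case with any bookkeeping is $i=3$, where an additional constant $-1$ appears inside the coefficient; one isolates $\sum_{j=4}^{3+\fp}a_j(q)X^{m_j}Y^{n_j} = H - X^{\fr}Y^{-\fs}-Y^{\fm}-1$ and then checks that the $X^{\fr}Y^{-\fs}$ and $Y^{\fm}$ contributions from the three pieces cancel, leaving exactly $1-H+\frac{\fs+\fm}{\fr\fm}H_X+\frac{1}{\fm}H_Y$. There is no genuine obstacle here — the identity is essentially a linear-algebra bookkeeping statement matching the torus weights of the three preferred rays $b_1,b_2,b_3$ against the Mori-cone data. The mild subtlety is just ensuring the sign and coefficient conventions of $H_X$, $H_Y$ agree with those in the statement, after which the four cases fall out by direct substitution.
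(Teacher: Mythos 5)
Your proof is correct and follows essentially the same route as the paper: expand $\partial_i H = \sum_a m_i^{(a)}\partial_a H$ into the sum over $j$, invoke Lemma \ref{lem:LinearAlgebra} for the inner coefficients, and then substitute back using $H_X - \fr X^{\fr}Y^{-\fs}$, $H_Y + \fs X^{\fr}Y^{-\fs} - \fm Y^{\fm}$, and (for $i=3$) $H - X^{\fr}Y^{-\fs} - Y^{\fm} - 1$. The cancellation checks you sketch for $i=2,3$ are exactly what the paper carries out.
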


\begin{comment}
\begin{enumerate}[label=(\roman*)]
    \item $
        \partial_1 H = X^{\fr}Y^{-\fs} - \frac{1}{\fr}H_X.
    $

    \item $
        \partial_2 H = Y^{\fm} - \frac{\fs}{\fr \fm} H_X - \frac{1}{\fm}H_Y.
    $

    \item $
        \partial_3 H = 1 - H + \frac{\fs + \fm}{\fr \fm} H_X + \frac{1}{\fm}H_Y.
    $

    \item For $i = 4, \dots, 3 + \fp$,
    $
        \partial_i H = a_i(q)X^{m_i}Y^{n_i}.
    $
\end{enumerate}
\end{comment}

\begin{proof}
For $i = 1, \dots, 3 + \fp$, we have
$$
    \partial_i H =  \sum_{a = 1}^{\fp} m_i^{(a)} \partial_a H = \sum_{a = 1}^{\fp} m_i^{(a)} \sum_{j = 4}^{3+\fp} s_{aj}a_j(q)X^{m_j}Y^{n_j} = \sum_{j = 4}^{3+\fp} \left( \sum_{a = 1}^{\fp} m_i^{(a)}s_{aj}\right) a_j(q)X^{m_j}Y^{n_j}.
$$
Below we apply Lemma \ref{lem:LinearAlgebra}. The case $i \in \{4, \dots, 3 + \fp\}$ directly follows. For $i = 1$, we have
$$
    \sum_{j = 4}^{3+\fp} \left( \sum_{a = 1}^{\fp} m_1^{(a)}s_{aj}\right) a_j(q)X^{m_j}Y^{n_j} = \sum_{j = 4}^{3+\fp} -\frac{1}{\fr} m_ja_j(q)X^{m_j}Y^{n_j} = X^{\fr}Y^{-\fs} - \frac{1}{\fr} H_X.
$$
For $i = 2$, we have
$$
    \sum_{j = 4}^{3+\fp} \left( \sum_{a = 1}^{\fp} m_2^{(a)}s_{aj}\right) a_j(q)X^{m_j}Y^{n_j} = \sum_{j = 4}^{3+\fp} \left(-\frac{\fs}{\fr \fm} m_j - \frac{1}{\fm}n_j \right)a_j(q)X^{m_j}Y^{n_j} = Y^{\fm} - \frac{\fs}{\fr \fm} H_X - \frac{1}{\fm}H_Y.
$$
Finally for $i = 3$, we have
$$
    \sum_{j = 4}^{3+\fp} \left( \sum_{a = 1}^{\fp} m_3^{(a)}s_{aj}\right) a_j(q)X^{m_j}Y^{n_j} = \sum_{j = 4}^{3+\fp} \left(-1+\frac{\fs + \fm}{\fr \fm} m_j + \frac{1}{\fm}n_j \right)a_j(q)X^{m_j}Y^{n_j}
    = 1 - H + \frac{\fs + \fm}{\fr \fm} H_X + \frac{1}{\fm}H_Y.
$$
\end{proof}

\begin{comment}
\begin{align*}
    \sum_{j = 4}^{3+\fp} \left( \sum_{a = 1}^{\fp} m_3^{(a)}s_{aj}\right) a_j(q)X^{m_j}Y^{n_j} & = \sum_{j = 4}^{3+\fp} \left(-1+\frac{\fs + \fm}{\fr \fm} m_j + \frac{1}{\fm}n_j \right)a_j(q)X^{m_j}Y^{n_j} \\
    & = 1 - H + \frac{\fs + \fm}{\fr \fm} H_X + \frac{1}{\fm}H_Y.
\end{align*}    
\end{comment}

We denote
$
    \omega := \frac{dXdY}{XY}, \Phi = \hy d\hx = \log Y d\left(\log X + \frac{u_2}{u_1} \log Y \right).
$
Let
$$
    \Res = \Res_{C_q}: H^2((\bC^*)^2 \setminus C_q) \to H^1(C_q)
$$
denote the Poincar\'e residue operator. As computed in \cite[Section 5.3]{flz2020remodeling}, for any $a = 1, \dots, \fp$, we have
$$
    \partial_a \Phi = \partial_a \left(\log Y d\log X \right)= \Res \left(\frac{\partial_a H}{H}\omega \right).
$$
Therefore for any $i = 1, \dots, 3+\fp$, we have
$$
    \partial_i \Phi = \Res \left(\frac{\partial_i H}{H}\omega \right).
$$
Moreover, for any $(m, n) \in \bZ^2$, we have
$$
    \Res \left(\frac{X^mY^n}{H}\omega \right) = -\frac{X^mY^n}{H_Y} d\log X.
$$
In particular, we have
$$
    \Res \left(\frac{H_X}{H}\omega \right) = -\frac{H_X}{H_Y}d\log X = d\log Y = -u_1 d\hy, \quad
    \Res \left(\frac{H_Y}{H}\omega \right) = -\frac{H_Y}{H_Y}d\log X = -d\log X = \frac{1}{u_1}d\hx - u_2d\hy.
$$

\begin{lemma}\label{lem:Equiv1}
For $i = 1, \dots, 3+\fp$, we have
$$
    \left(\partial_i - \frac{w^i}{z} \right) \int_\gamma e^{-\hx/z}\Phi = \int_\gamma e^{-\hx/z} \Res \left(\frac{a_i(q)X^{m_i}Y^{n_i}}{H} \omega \right).
$$
\end{lemma}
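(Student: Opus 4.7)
\medskip

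\noindent\textbf{Proof plan for Lemma \ref{lem:Equiv1}.} The plan is to apply $\partial_i$ under the integral sign, use the residue formula $\partial_i\Phi = \Res(\partial_i H/H\cdot\omega)$ together with Lemma \ref{lem:PartialH}, and then absorb the ``correction'' terms coming from $H_X$, $H_Y$, $H$ by integration by parts. Since $\gamma$ is a flat relative cycle (its boundary sits in $\{\Re(\hx)\gg 0\}$, which is stable under small variations of $q$), we may differentiate under the integral:
\[
\partial_i \int_\gamma e^{-\hx/z}\Phi = \int_\gamma e^{-\hx/z}\,\partial_i\Phi
 = \int_\gamma e^{-\hx/z}\,\Res\!\left(\frac{\partial_i H}{H}\,\omega\right),
\]
where in the first equality we use that $e^{-\hx/z}$ is pulled back from $(\bC^*)^2$ and carries no $q$-dependence, and in the second equality we use the residue computation recalled just before the lemma.

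\medskip

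The case $i\in\{4,\dots,3+\fp\}$ is immediate from Lemma \ref{lem:PartialH}: $\partial_i H = a_i(q) X^{m_i}Y^{n_i}$ and $w^i=0$. For $i=1,2,3$, Lemma \ref{lem:PartialH} expresses $\partial_i H$ as $a_i(q)X^{m_i}Y^{n_i}$ (where $a_1=a_2=a_3=1$, $(m_1,n_1)=(\fr,-\fs)$, $(m_2,n_2)=(0,\fm)$, $(m_3,n_3)=(0,0)$) plus a linear combination of $H_X$, $H_Y$, and (when $i=3$) $-H$. Applying $\Res$ to these correction terms gives
\[
\Res\!\left(\frac{H_X}{H}\omega\right) = -u_1\,d\hy,\qquad
\Res\!\left(\frac{H_Y}{H}\omega\right) = \frac{1}{u_1}d\hx - u_2\,d\hy,\qquad
\Res\!\left(\frac{H}{H}\omega\right) = \Res(\omega) = 0,
\]
as already recorded in the excerpt. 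Thus the correction to $\int_\gamma e^{-\hx/z}\Res(\partial_i H/H\,\omega)$ is a $\bC$-linear combination of $\int_\gamma e^{-\hx/z} d\hx$ and $\int_\gamma e^{-\hx/z} d\hy$.

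\medskip

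These two remaining integrals are evaluated by integration by parts, using that $e^{-\hx/z}\to 0$ at the ``ends'' of $\gamma$ because $\Re(\hx)\gg 0$ there (with $z>0$ small). From $e^{-\hx/z}d\hx = -z\,d(e^{-\hx/z})$ one gets $\int_\gamma e^{-\hx/z} d\hx = 0$, and from $d(e^{-\hx/z}\hy) = -\tfrac{1}{z}e^{-\hx/z}\Phi + e^{-\hx/z}d\hy$ one gets $\int_\gamma e^{-\hx/z} d\hy = \tfrac{1}{z}\int_\gamma e^{-\hx/z}\Phi$. A short bookkeeping calculation, using the explicit values of $w^1=\tfrac{u_1}{\fr}$, $w^2=\tfrac{\fs u_1+\fr u_2}{\fr\fm}$, $w^3=-\tfrac{(\fs+\fm)u_1+\fr u_2}{\fr\fm}$, then shows that the correction terms combine to exactly $\tfrac{w^i}{z}\int_\gamma e^{-\hx/z}\Phi$, which moves across to give the claimed identity.

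\medskip

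The only mildly subtle point is the justification of differentiation under the integral, i.e.\ that the Gauss-Manin connection is compatible with $\partial_a\Phi = \Res(\partial_a H/H\,\omega)$ once one has chosen a flat representative of $\gamma$; this is exactly the content of the computation in \cite[Section 5.3]{flz2020remodeling} that we invoked, and the exponential factor $e^{-\hx/z}$ produces no new $q$-dependence because $\hx$ is defined on the ambient torus $(\bC^*)^2$. Everything else is linear algebra (Lemma \ref{lem:PartialH}) plus two boundary-free integrations by parts.
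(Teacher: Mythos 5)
Your proposal is correct and follows essentially the same route as the paper's proof: decompose $\partial_i H$ via Lemma \ref{lem:PartialH}, take residues, and kill the correction terms by integration by parts using the vanishing of $e^{-\hx/z}$ at the ends of $\gamma$. The only (cosmetic) difference is that you compute the two elementary integrals $\int_\gamma e^{-\hx/z}\,d\hx = 0$ and $\int_\gamma e^{-\hx/z}\,d\hy = \tfrac{1}{z}\int_\gamma e^{-\hx/z}\Phi$ once and then do linear bookkeeping, whereas the paper recognizes the full exact forms $d(e^{-\hx/z}\hy)$ and $d(e^{-\hx/z})$ case by case; the two organizations are the same integration by parts.
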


\begin{proof}
We apply Lemma \ref{lem:PartialH} to
$
    \partial_i \Phi = \Res \left(\frac{\partial_i H}{H}\omega \right).
$
For $i \in \{4, \dots, 3 + \fp\}$, where $w^i = 0$, the lemma directly follows. For $i = 1$, we have
\begin{align*}
    \left(\partial_1 - \frac{w^1}{z} \right) \int_\gamma e^{-\hx/z} \Phi &= \int_\gamma e^{-\hx/z}\Res \left(\frac{X^{\fr}Y^{-\fs} - \frac{1}{\fr} H_X}{H}\omega \right) - \frac{u_1}{\fr}\frac{e^{-\hx/z}}{z}\hy d\hx \\
    & = \int_\gamma e^{-\hx/z}\Res \left(\frac{X^{\fr}Y^{-\fs}}{H}\omega \right) + \frac{u_1}{\fr}d\left(e^{-\hx/z}\hy\right) \\
    & = \int_\gamma e^{-\hx/z}\Res \left(\frac{X^{\fr}Y^{-\fs}}{H}\omega \right).
\end{align*}
For $i = 2$, we have
\begin{align*}
    \left(\partial_2 - \frac{w^2}{z}\right) \int_\gamma e^{-\hx/z}\Phi & = \int_\gamma e^{-\hx/z}\Res \left(\frac{Y^{\fm} - \frac{\fs}{\fr \fm} H_X - \frac{1}{\fm}H_Y}{H}\omega \right) - \left(\frac{\fs u_1}{\fr \fm} + \frac{u_2}{\fm}  \right)\frac{e^{-\hx/z}}{z}\hy d\hx\\
    & = \int_\gamma e^{-\hx/z}\Res \left(\frac{Y^{\fm}}{H}\omega \right) + \left(\frac{\fs u_1}{\fr \fm} + \frac{u_2}{\fm}  \right)d\left(e^{-\hx/z}\hy\right) + \frac{z}{\fm u_1}d\left(e^{-\hx/z}\right) \\
    & = \int_\gamma e^{-\hx/z}\Res \left(\frac{Y^{\fm}}{H}\omega \right).
\end{align*}
Finally for $i = 3$, we have
\begin{align*}
    \left(\partial_3 - \frac{w^3}{z} \right) \int_\gamma \Phi & = \int_\gamma e^{-\hx/z} \Res \left(\frac{1 - H + \frac{\fs + \fm}{\fr \fm} H_X + \frac{1}{\fm}H_Y}{H}\omega \right) + \left(\frac{(\fs + \fm) u_1}{\fr \fm} + \frac{u_2}{\fm} \right) \frac{e^{-\hx/z}}{z}\hy d\hx \\
    & = \int_\gamma e^{-\hx/z}\Res \left(\frac{1}{H}\omega - \omega \right) -\left(\frac{(\fs + \fm) u_1}{\fr \fm} + \frac{u_2}{\fm} \right)d\left(e^{-\hx/z}\hy\right) - \frac{z}{\fm u_1}d\left(e^{-\hx/z}\right) \\
    & = \int_\gamma e^{-\hx/z}\Res \left(\frac{1}{H}\omega \right).
\end{align*}
\end{proof}

\begin{lemma}\label{lem:Equiv2}
Let $k \in \bZ_{\ge 1}$ with a splitting
$
    k = \sum_{i = 1}^{3 + \fp} k_i
$
where $k_i \in \bZ_{\ge 0}$ for all $i$. For $i = 1, \dots, 3+\fp$, we have
\begin{align*}
    \left( \partial_i - k_i - \frac{w^i}{z} \right) & \int_\gamma (-1)^{k-1}(k-1)! e^{-\hx/z}\Res \left(\frac{\prod_{j = 1}^{3 + \fp} \left(a_j(q)X^{m_j}Y^{n_j}\right)^{k_j}}{H^k} \omega \right) \\
    = & \int_\gamma (-1)^{k}k! e^{-\hx/z}\Res \left(\frac{a_i(q)X^{m_i}Y^{n_i}\prod_{j = 1}^{3 + \fp} \left(a_j(q)X^{m_j}Y^{n_j}\right)^{k_j}}{H^{k+1}} \omega \right).
\end{align*}
In other words, applying the operator $\partial_i - k_i - \frac{w^i}{z}$ increases the index $k_i$ (and thus $k$) by 1.
\end{lemma}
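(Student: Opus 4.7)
The plan is to parallel the proof of Lemma \ref{lem:Equiv1} by computing $\partial_i$ directly on the integrand, handling higher-order poles via the tube map $\int_\gamma \Res_{C_q}(\alpha) = \frac{1}{2\pi\sqrt{-1}}\int_{\tilde\gamma}\alpha$. Since $\hx$ is a function on $(\bC^*)^2$ independent of $q$, $\partial_i$ passes through $e^{-\hx/z}$, and writing $F_{\vec k} := \prod_j (a_j(q) X^{m_j}Y^{n_j})^{k_j}$, the Leibniz rule gives
\begin{equation*}
\partial_i \Res\left(\frac{F_{\vec k}}{H^k}\omega\right) = \frac{\partial_i F_{\vec k}}{F_{\vec k}} \Res\left(\frac{F_{\vec k}}{H^k}\omega\right) - k\Res\left(\frac{F_{\vec k}\partial_i H}{H^{k+1}}\omega\right),
\end{equation*}
where $\partial_i F_{\vec k} = \alpha_i F_{\vec k}$ for a scalar $\alpha_i$ read off from Lemma \ref{lem:LinearAlgebra}, and Lemma \ref{lem:PartialH} expresses $\partial_i H$ in terms of $H$, $H_X$, $H_Y$, and the ``toric'' monomials $X^{\fr}Y^{-\fs}$, $Y^{\fm}$, $1$ (which coincide with $a_i X^{m_i}Y^{n_i}$ for $i=1,2,3$).

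For $i \in \{4,\dots,3+\fp\}$, the identity is immediate: $w^i=0$, $\alpha_i=k_i$, and $\partial_i H = a_i X^{m_i}Y^{n_i}$, so multiplication by $(-1)^{k-1}(k-1)!\, e^{-\hx/z}$ and integration over $\gamma$ produces exactly $k_i\,\Omega_{\vec k}+ \Omega_{\vec k + e_i}$, where $\Omega_{\vec k}$ denotes the integrand in question. Subtracting $k_i\,\Omega_{\vec k}$ yields the claim.

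For $i \in \{1,2,3\}$ the key technical step is an integration-by-parts identity on the ambient torus, lowering the pole order from $k+1$ to $k$. From
\begin{equation*}
\frac{F H_X}{H^{k+1}}\omega = \frac{F_X}{k H^k}\omega - d\!\left(\frac{F}{k H^k}\,d\log Y\right), \qquad \frac{F H_Y}{H^{k+1}}\omega = \frac{F_Y}{k H^k}\omega + d\!\left(\frac{F}{k H^k}\,d\log X\right),
\end{equation*}
where $F_X,F_Y$ denote $\partial_{\log X},\partial_{\log Y}$, combined with $e^{-\hx/z}d\alpha = d(e^{-\hx/z}\alpha) + \tfrac{1}{z}e^{-\hx/z}d\hx\wedge\alpha$ and the computations $d\hx\wedge d\log Y = -u_1\omega$, $d\hx\wedge d\log X = u_2\omega$, Stokes on the tube $\tilde\gamma$ around $\gamma$ (whose boundary contributions vanish because $\Re\hx\to\infty$ at the open ends of $\gamma$) yields
\begin{equation*}
\int_\gamma e^{-\hx/z}\Res\!\left(\frac{F H_X}{H^{k+1}}\omega\right) = \frac{1}{k}\int_\gamma e^{-\hx/z}\Res\!\left(\frac{F_X}{H^k}\omega\right) + \frac{u_1}{kz}\int_\gamma e^{-\hx/z}\Res\!\left(\frac{F}{H^k}\omega\right),
\end{equation*}
and the analogous formula for $H_Y$ with $u_1 \mapsto u_2$. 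Plugging these into the Leibniz expansion of $\partial_i\Omega_{\vec k}$ and using Lemma \ref{lem:LinearAlgebra} to identify $\alpha_i$, $F_{\vec k,X}/F_{\vec k} = \fr k_1 + \sum_{j\ge 4}k_j m_j$, and $F_{\vec k,Y}/F_{\vec k} = -\fs k_1 + \fm k_2 + \sum_{j\ge 4}k_j n_j$, together with the explicit expressions $w^1 = u_1/\fr$, $w^2 = (\fs u_1 + \fr u_2)/(\fr\fm)$, $w^3 = -w^1-w^2$, one verifies that every $\Omega_{\vec k}$-coefficient collapses to exactly $k_i + w^i/z$, leaving only $\Omega_{\vec k+e_i}$ on the right.

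The main obstacle will be keeping the bookkeeping straight for $i=3$, where $\partial_3 H$ contains the extra lower-order term $1-H$. The $-kF_{\vec k}(1-H)/H^{k+1}$ piece contributes both $\Omega_{\vec k+e_3}$ (from the $1/H^{k+1}$ part) and a spurious $k\,\Omega_{\vec k}$ (from the $1/H^k$ part). This spurious term must combine with the $j\ge 4$ sum in $\alpha_3$ and with the $H_X$, $H_Y$ contributions (whose $u_1$, $u_2$ parts reassemble precisely into $w^3/z$ via $w^3=-\tfrac{(\fs+\fm)u_1 + \fr u_2}{\fr\fm}$) to leave exactly $k_3\,\Omega_{\vec k}$. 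The cancellation is not conceptually subtle, but it requires tracking several competing sums indexed by $j\ge 4$ and verifying that they annihilate, which is the only genuinely computational part of the argument.
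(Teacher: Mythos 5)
Your proposal is correct and follows essentially the same route as the paper's proof: Leibniz rule on the integrand, Lemmas \ref{lem:LinearAlgebra} and \ref{lem:PartialH} to expand $\partial_i H$ and the $\alpha_i$ coefficients, then absorbing the $H_X, H_Y$ contributions into exact forms whose integrals over $\gamma$ vanish. The only (cosmetic) difference is that the paper keeps $e^{-\hx/z}$ inside the operator $\frac{\partial}{\partial\log X}$, so the $u_1/z$ term emerges directly from differentiating the exponential, whereas you separate $e^{-\hx/z}$ out and recover the same term from $d\hx\wedge d\log Y=-u_1\omega$ in the Stokes computation; the $i=3$ bookkeeping you left unverified does close as described.
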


\begin{proof}
We have
\begin{align*}
    & \left(\partial_i - k_i - \frac{w^i}{z} \right) \frac{e^{-\hx/z}\prod_{j = 1}^{3 + \fp} \left(a_j(q)X^{m_j}Y^{n_j}\right)^{k_j}}{H^k}  \\
    & = \left( \sum_{a = 1}^{\fp} m_i^{(a)} \sum_{j = 4}^{3+\fp} k_js_{aj} H  -k \partial_i H - k_iH - \frac{w^i}{z}H\right) \frac{e^{-\hx/z}\prod_{j = 1}^{3 + \fp} \left(a_j(q)X^{m_j}Y^{n_j}\right)^{k_j}}{H^{k+1}}\\
    & = \left(\left(-k_i - \frac{w^i}{z} + \sum_{j = 4}^{3+\fp} k_j \left(\sum_{a = 1}^{\fp} m_i^{(a)} s_{aj}\right)   \right) H  -k \partial_i H\right) \frac{e^{-\hx/z}\prod_{j = 1}^{3 + \fp} \left(a_j(q)X^{m_j}Y^{n_j}\right)^{k_j}}{H^{k+1}} .
\end{align*}
We proceed by applying Lemmas \ref{lem:LinearAlgebra}, \ref{lem:PartialH}. For $i \in \{4, \dots, 3 + \fp\}$, where $w^i = 0$, we have
$$
    \left(-k_i + \sum_{j = 4}^{3+\fp} k_j \left(\sum_{a = 1}^{\fp} m_i^{(a)} s_{aj}\right) \right) H  -k \partial_i H = \left(-k_i + k_i\right) H - k a_i(q)X^{m_i}Y^{n_i} = - k a_i(q)X^{m_i}Y^{n_i}
$$
which implies the lemma. For $i = 1$, we have
\begin{align*}
    & \left(\left(-k_1 - \frac{w^1}{z} + \sum_{j = 4}^{3+\fp} k_j \left(\sum_{a = 1}^{\fp} m_1^{(a)} s_{aj}\right) \right) H  -k \partial_1 H \right)  \frac{e^{-\hx/z}\prod_{j = 1}^{3 + \fp} \left(a_j(q)X^{m_j}Y^{n_j}\right)^{k_j}}{H^{k+1}} \\
    & =  \left( \left(-k_1 - \frac{u_1}{\fr z}  - \frac{1}{\fr} \sum_{j = 4}^{3+\fp} k_j m_j \right) H  -k \left( X^{\fr}Y^{-\fs} - \frac{1}{\fr}H_X \right)  \right)  \frac{e^{-\hx/z}\prod_{j = 1}^{3 + \fp} \left(a_j(q)X^{m_j}Y^{n_j}\right)^{k_j}}{H^{k+1}}\\
    & = \left( -\frac{k X^{\fr}Y^{-\fs}}{H} - \frac{1}{\fr} \frac{\partial}{\partial \log X} \right) \frac{e^{-\hx/z}\prod_{j = 1}^{3 + \fp} \left(a_j(q)X^{m_j}Y^{n_j}\right)^{k_j}}{H^k}.
\end{align*}
Then
\begin{align*}
    & \Res \left( \left( -\frac{k X^{\fr}Y^{-\fs}}{H} - \frac{1}{\fr} \frac{\partial}{\partial \log X} \right) \frac{e^{-\hx/z}\prod_{j = 1}^{3 + \fp} \left(a_j(q)X^{m_j}Y^{n_j}\right)^{k_j}}{H^k} \omega \right)\\
    & = \Res \left( -k X^{\fr}Y^{-\fs} \frac{e^{-\hx/z}\prod_{j = 1}^{3 + \fp} \left(a_j(q)X^{m_j}Y^{n_j}\right)^{k_j}}{H^{k+1}} \omega -  d \left( \frac{e^{-\hx/z}\prod_{j = 1}^{3 + \fp} \left(a_j(q)X^{m_j}Y^{n_j}\right)^{k_j}}{H^k} \frac{d\log Y}{\fr} \right) \right)\\
    & = \Res \left( -k X^{\fr}Y^{-\fs} \frac{e^{-\hx/z}\prod_{j = 1}^{3 + \fp} \left(a_j(q)X^{m_j}Y^{n_j}\right)^{k_j}}{H^{k+1}} \omega \right)
\end{align*}
which implies the lemma. For $i = 2$, we have
\begin{align*}
    & \left(\left(-k_2 - \frac{w^2}{z} + \sum_{j = 4}^{3+\fp} k_j \left(\sum_{a = 1}^{\fp} m_2^{(a)} s_{aj}\right) \right) H  -k \partial_2 H \right)  \frac{e^{-\hx/z}\prod_{j = 1}^{3 + \fp} \left(a_j(q)X^{m_j}Y^{n_j}\right)^{k_j}}{H^{k+1}} \\
    & =  \left( \left(-k_2 - \frac{\fs u_1}{\fr \fm z} - \frac{\fs}{\fr \fm}\sum_{j = 4}^{3+\fp} k_j m_j - \frac{u_2}{\fm z} - \frac{1}{\fm} \sum_{j = 4}^{3+\fp} k_j n_j  \right) H  -k \left( Y^{\fm} - \frac{\fs}{\fr \fm}H_X - \frac{1}{\fm}H_Y \right)  \right)  \\
        & \quad \cdot \frac{e^{-\hx/z}\prod_{j = 1}^{3 + \fp} \left(a_j(q)X^{m_j}Y^{n_j}\right)^{k_j}}{H^{k+1}}\\
    & = \left( -\frac{k Y^{\fm}}{H} - \frac{\fs}{\fr\fm} \frac{\partial}{\partial \log X} - \frac{1}{\fm} \frac{\partial}{\partial \log Y}  \right) \frac{e^{-\hx/z}\prod_{j = 1}^{3 + \fp} \left(a_j(q)X^{m_j}Y^{n_j}\right)^{k_j}}{H^k}.
\end{align*}
Then
\begin{align*}
    & \Res \left( \left(-\frac{k Y^{\fm}}{H} - \frac{\fs}{\fr\fm} \frac{\partial}{\partial \log X} - \frac{1}{\fm} \frac{\partial}{\partial \log Y} \right) \frac{e^{-\hx/z}\prod_{j = 1}^{3 + \fp} \left(a_j(q)X^{m_j}Y^{n_j}\right)^{k_j}}{H^k} \omega \right) \\
    & = \Res \left( -k Y^{\fm} \frac{e^{-\hx/z}\prod_{j = 1}^{3 + \fp} \left(a_j(q)X^{m_j}Y^{n_j}\right)^{k_j}}{H^{k+1}} \omega -  d \left( \frac{e^{-\hx/z}\prod_{j = 1}^{3 + \fp} \left(a_j(q)X^{m_j}Y^{n_j}\right)^{k_j}}{H^k} \left(\frac{\fs d\log Y}{\fr\fm} + \frac{d\log X}{\fm} \right) \right) \right) \\
    & = \Res \left( -k Y^{\fm} \frac{e^{-\hx/z}\prod_{j = 1}^{3 + \fp} \left(a_j(q)X^{m_j}Y^{n_j}\right)^{k_j}}{H^{k+1}} \omega \right)
\end{align*}
which implies the lemma. Finally for $i = 3$, we have
\begin{align*}
    & \left(\left(-k_3 - \frac{w^3}{z} + \sum_{j = 4}^{3+\fp} k_j \left(\sum_{a = 1}^{\fp} m_3^{(a)} s_{aj}\right) \right) H  -k \partial_3 H \right)  \frac{e^{-\hx/z}\prod_{j = 1}^{3 + \fp} \left(a_j(q)X^{m_j}Y^{n_j}\right)^{k_j}}{H^{k+1}} \\
    & =  \left( \left(-k_3 - \sum_{j = 4}^{3+\fp} k_j + \frac{(\fs + \fm)u_1}{\fr \fm z} +  \frac{\fs + \fm}{\fr \fm}\sum_{j = 4}^{3+\fp} k_j m_j + \frac{u_2}{\fm z}+ \frac{1}{\fm} \sum_{j = 4}^{3+\fp} k_j n_j  \right) H  -k \left( 1 - H + \frac{\fs + \fm}{\fr \fm}H_X + \frac{1}{\fm}H_Y \right)  \right) \\
    & \quad \cdot \frac{e^{-\hx/z}\prod_{j = 1}^{3 + \fp} \left(a_j(q)X^{m_j}Y^{n_j}\right)^{k_j}}{H^{k+1}}\\
    & = \left( - \frac{k}{H} + \frac{\fs + \fm}{\fr\fm} \frac{\partial}{\partial \log X} + \frac{1}{\fm} \frac{\partial}{\partial \log Y}  \right) \frac{e^{-\hx/z}\prod_{j = 1}^{3 + \fp} \left(a_j(q)X^{m_j}Y^{n_j}\right)^{k_j}}{H^k}.
\end{align*}
Then
\begin{align*}
    & \Res \left( \left( - \frac{k}{H} + \frac{\fs + \fm}{\fr\fm} \frac{\partial}{\partial \log X} + \frac{1}{\fm} \frac{\partial}{\partial \log Y} \right) \frac{e^{-\hx/z}\prod_{j = 1}^{3 + \fp} \left(a_j(q)X^{m_j}Y^{n_j}\right)^{k_j}}{H^k} \omega \right) \\
    & = \Res \left( -k \frac{e^{-\hx/z}\prod_{j = 1}^{3 + \fp} \left(a_j(q)X^{m_j}Y^{n_j}\right)^{k_j}}{H^{k+1}} \omega +  d \left( \frac{e^{-\hx/z}\prod_{j = 1}^{3 + \fp} \left(a_j(q)X^{m_j}Y^{n_j}\right)^{k_j}}{H^k} \left(\frac{(\fs f+\fm) d\log Y}{\fr\fm} + \frac{d\log X}{\fm} \right) \right)  \right) \\
    & = \Res \left( -k \frac{e^{-\hx/z}\prod_{j = 1}^{3 + \fp} \left(a_j(q)X^{m_j}Y^{n_j}\right)^{k_j}}{H^{k+1}} \omega \right)
\end{align*}
which implies the lemma.
\end{proof}

%(w^1, \dots, w^{3+\fp}) := \left( \frac{1}{\fr}u_1, \frac{\fs}{\fr \fm} u_1 + \frac{1}{\fm} u_2 , -\frac{\fs + \fm}{\fr \fm} u_1 - \frac{1}{\fm} u_2, 0, \dots, 0 \right).

\begin{proof}[Proof of Proposition \ref{prop:IntSolvesPF}]
For $\beta \in \bL$, the proposition follows from iteratively applying Lemmas \ref{lem:Equiv1}, \ref{lem:Equiv2} to compute $\bD_{\beta}^{\bT'} \int_\gamma e^{-\hx/z}\Phi$ and at the end using
$$
    q^\beta = \prod_{i = 4}^{\fp} a_i(q)^{\inner{D_i, \beta}}.
$$
\end{proof}

\printbibliography

\end{document}